\newcommand{\ds}{\displaystyle}
\renewcommand{\subset}{\subseteq}
\renewcommand{\supset}{\supseteq}
\renewcommand{\subsetneq}{\varsubsetneq}
\newcommand{\overbar}[1]{\mkern 1.5mu\overline{\mkern-1.5mu#1\mkern-1.5mu}\mkern 1.5mu}
\newcommand{\R}{\ensuremath{\mathbf{R}}}
\newcommand{\Rplus}{\ensuremath{\mathbf{R}_{\geqslant0}}}
\newcommand{\Rplusbar}{\ensuremath{\overbar{\mathbf{R}}_{\geqslant0}}}
\newcommand{\Rplusnul}{\ensuremath{\mathbf{R}_{>0}}}
\newcommand{\Rplusnulbar}{\ensuremath{\overbar{\mathbf{R}}_{>0}}}
\newcommand{\Rn}{\ensuremath{\mathbf{R}^n}}
\newcommand{\Rplusn}{\ensuremath{\mathbf{R}_{\geqslant0}^n}}
\newcommand{\Rplusbarn}{\ensuremath{\overbar{\mathbf{R}}_{\geqslant0}^n}}
\newcommand{\Rplusnuln}{\ensuremath{\mathbf{R}_{>0}^n}}
\newcommand{\Rplusnulbarn}{\ensuremath{\overbar{\mathbf{R}}_{>0}^n}}
\newcommand{\N}{\ensuremath{\mathbf{N}}}
\newcommand{\Z}{\ensuremath{\mathbf{Z}}}
\newcommand{\Zn}{\ensuremath{\mathbf{Z}^n}}
\newcommand{\Zplus}{\ensuremath{\mathbf{Z}_{\geqslant0}}}
\newcommand{\Zplusbar}{\ensuremath{\overbar{\mathbf{Z}}_{\geqslant0}}}
\newcommand{\Zplusn}{\ensuremath{\mathbf{Z}_{\geqslant0}^n}}
\newcommand{\Zplusnul}{\ensuremath{\mathbf{Z}_{>0}}}
\newcommand{\Zplusnuln}{\ensuremath{\mathbf{Z}_{>0}^n}}
\newcommand{\Zplusnulbarn}{\ensuremath{\overbar{\mathbf{Z}}_{>0}^n}}
\newcommand{\Q}{\ensuremath{\mathbf{Q}}}
\newcommand{\Qplusnul}{\ensuremath{\mathbf{Q}_{>0}}}
\newcommand{\C}{\ensuremath{\mathbf{C}}}
\newcommand{\Ccross}{\ensuremath{\mathbf{C}^{\times}}}
\newcommand{\Ccrossn}{\ensuremath{(\mathbf{C}^{\times})^n}}
\newcommand{\Qp}{\ensuremath{\mathbf{Q}_p}}
\newcommand{\Qpn}{\ensuremath{\mathbf{Q}_p^n}}
\newcommand{\Qpxn}{\ensuremath{(\mathbf{Q}_p^{\times})^n}}
\newcommand{\Zp}{\ensuremath{\mathbf{Z}_p}}
\newcommand{\Zpx}{\ensuremath{\mathbf{Z}_p^{\times}}}
\newcommand{\Zpn}{\ensuremath{\mathbf{Z}_p^n}}
\newcommand{\Zpxn}{\ensuremath{(\mathbf{Z}_p^{\times})^n}}
\newcommand{\Fp}{\ensuremath{\mathbf{F}_p}}
\newcommand{\Fpcross}{\ensuremath{\mathbf{F}_p^{\times}}}
\newcommand{\Fpcrossn}{\ensuremath{(\mathbf{F}_p^{\times})^n}}
\newcommand{\LL}{\ensuremath{\mathbb{L}}}
\newcommand{\MC}{\ensuremath{\mathcal{M}_{\mathbf{C}}}}
\renewcommand{\N}{\Zplus}
\renewcommand{\phi}{\varphi}
\DeclareMathOperator{\aff}{aff}
\DeclareMathOperator{\ac}{ac}
\DeclareMathOperator{\supp}{supp}
\DeclareMathOperator{\mult}{mult}
\DeclareMathOperator{\ord}{ord}
\DeclareMathOperator{\cone}{cone}
\DeclareMathOperator{\adj}{adj}
\DeclareMathOperator{\lcm}{lcm}
\DeclareMathOperator{\Res}{Res}
\newcommand{\Zf}{\ensuremath{Z_f}}
\newcommand{\Zof}{\ensuremath{Z^0_f}}
\newcommand{\Zofs}{\ensuremath{Z^0_f(s)}}
\newcommand{\Ztopof}{\ensuremath{Z^{\mathrm{top},0}_f}}
\newcommand{\Zmotof}{\ensuremath{Z^{\mathrm{mot},0}_f}}
\newcommand{\Zmotoft}{\ensuremath{Z^{\mathrm{mot},0}_f(T)}}
\newcommand{\Gf}{\ensuremath{\Gamma_f}}
\newcommand{\Gglf}{\ensuremath{\Gamma^{\mathrm{gl}}_f}}
\newcommand{\ft}{\ensuremath{f_{\tau}}}
\newcommand{\fbart}{\ensuremath{\overline{f_{\tau}}}}
\newcommand{\Dtu}{\ensuremath{\Delta_{\tau}}}
\newcommand{\Dti}{\ensuremath{\Delta_{\tau}^{\infty}}}
\newcommand{\barchi}{\ensuremath{\overline{\chi}}}
\providecommand{\abs}[1]{\lvert#1\rvert}
\theoremstyle{plain}
\newtheorem{theorem}{Theorem}[section]
\newtheorem{lemma}[theorem]{Lemma}
\newtheorem{proposition}[theorem]{Proposition}
\newtheorem{corollary}[theorem]{Corollary}
\newtheorem{conjecture}[theorem]{Conjecture}
\theoremstyle{definition}
\newtheorem{definition}[theorem]{Definition}
\newtheorem{example}[theorem]{Example}
\theoremstyle{remark}
\newtheorem{remark}[theorem]{Remark}
\newtheorem{remarks}[theorem]{Remarks}
\newtheorem{notation}[theorem]{Notation}
\newcommand{\aA}{\ensuremath{\alpha_A}}
\newcommand{\aB}{\ensuremath{\alpha_B}}
\newcommand{\aC}{\ensuremath{\alpha_C}}
\newcommand{\bA}{\ensuremath{\beta_A}}
\newcommand{\bB}{\ensuremath{\beta_B}}
\newcommand{\bC}{\ensuremath{\beta_C}}
\newcommand{\dA}{\ensuremath{\delta_A}}
\newcommand{\DA}{\ensuremath{\Delta_A}}
\newcommand{\dB}{\ensuremath{\delta_B}}
\newcommand{\DB}{\ensuremath{\Delta_B}}
\newcommand{\dC}{\ensuremath{\delta_C}}
\newcommand{\Dtnul}{\ensuremath{\Delta_{\tau_0}}}
\newcommand{\Dteen}{\ensuremath{\Delta_{\tau_1}}}
\newcommand{\DAB}{\ensuremath{\Delta_{[AB]}}}
\newcommand{\DAC}{\ensuremath{\Delta_{[AC]}}}
\newcommand{\DAD}{\ensuremath{\Delta_{[AD]}}}
\newcommand{\DBC}{\ensuremath{\Delta_{[BC]}}}
\newcommand{\DBD}{\ensuremath{\Delta_{[BD]}}}
\newcommand{\DCD}{\ensuremath{\Delta_{[CD]}}}
\newcommand{\fAB}{\ensuremath{\phi_{AB}}}
\newcommand{\fBC}{\ensuremath{\phi_{BC}}}
\newcommand{\fAtwee}{\ensuremath{\phi_{A2}}}
\newcommand{\fBtwee}{\ensuremath{\phi_{B2}}}
\newcommand{\fCtwee}{\ensuremath{\phi_{C2}}}
\newcommand{\vereen}{\ensuremath{\{0,\ldots,\mu_1-1\}}}
\newcommand{\vertwee}{\ensuremath{\{0,\ldots,\mu_2-1\}}}
\newcommand{\verdrie}{\ensuremath{\{0,\ldots,\mu_3-1\}}}
\newcommand{\verA}{\ensuremath{\{0,\ldots,\mu_A-1\}}}
\newcommand{\verB}{\ensuremath{\{0,\ldots,\mu_B-1\}}}
\newcommand{\Fnuls}{\bigl(p^{\sigma_0+m_0s}-1\bigr)}
\newcommand{\Feens}{\bigl(p^{\sigma_1+m_1s}-1\bigr)}
\newcommand{\Ftwees}{\bigl(p^{\sigma_2+m_2s}-1\bigr)}
\newcommand{\Fdries}{\bigl(p^{\sigma_3+m_3s}-1\bigr)}
\newcommand{\Feen}{\bigl(p^{\sigma_1+m_1s_0}-1\bigr)}
\newcommand{\Ftwee}{\bigl(p^{\sigma_2+m_2s_0}-1\bigr)}
\newcommand{\Fdrie}{\bigl(p^{\sigma_3+m_3s_0}-1\bigr)}
\providecommand{\dds}[1]{\frac{d}{ds}\left.\left[#1\right]\right\rvert_{s=s_0}}
\newcommand{\barpsi}{\ensuremath{\overline{\psi}}}
\begin{document}
\title[Monodromy Conjecture for non-degenerated surface singularities]{Igusa's $p$-adic local zeta function and the Monodromy Conjecture for non-degenerated surface singularities}
\author{Bart Bories}
\address{Department of Mathematics, KU Leuven, Celestijnenlaan 200b -- box 2400, 3001 Leuven, Belgium}
\email{bart.bories@wis.kuleuven.be}
\author{Willem Veys}
\email{wim.veys@wis.kuleuven.be}
\subjclass[2010]{Primary 14D05, 11S80, 11S40, 14E18, 14J17; Secondary 52B20, 32S40, 58K10}
\date{\today}
\keywords{Monodromy Conjecture, Igusa's zeta function, motivic zeta function, surface singularity, non-degenerated, lattice polytope}

\hyphenation{po-ly-he-dra}

\begin{abstract}
In 2011 Lemahieu and Van Proeyen proved the Monodromy Conjecture for the local topological zeta function of a non-de\-gen\-er\-ated surface singularity. We start from their work and obtain the same result for Igusa's $p$-adic and the motivic zeta function. In the $p$-adic case, this is, for a polynomial $f\in\Z[x,y,z]$ satisfying $f(0,0,0)=0$ and non-degenerated with respect to its Newton polyhedron, we show that every pole of the local $p$-adic zeta function of $f$ induces an eigenvalue of the local monodromy of $f$ at some point of $f^{-1}(0)\subset\C^3$ close to the origin.

Essentially the entire paper is dedicated to proving that, for $f$ as above, certain candidate poles of Igusa's $p$-adic zeta function of $f$, arising from so-called $B_1$-facets of the Newton polyhedron of $f$, are actually not poles. This turns out to be much harder than in the topological setting. The combinatorial proof is preceded by a study of the integral points in three-dimensional fundamental parallelepipeds. Together with the work of Lemahieu and Van Proeyen, this main result leads to the Monodromy Conjecture for the $p$-adic and motivic zeta function of a non-degenerated surface singularity.
\end{abstract}

\maketitle

\setcounter{tocdepth}{2}
\tableofcontents

\setcounter{section}{-1}
\section{Introduction}
\subsection{Igusa's zeta function and the Monodromy Conjecture}
For a prime $p$, we denote by \Qp\ the field of $p$-adic numbers and by \Zp\ its subring of $p$-adic integers. We denote by $|\cdot|$ the $p$-adic norm on \Qp. Let $n\in\Zplusnul$ and denote by $|dx|=|dx_1\wedge\cdots\wedge dx_n|$ the Haar measure on \Qpn, so normalized that \Zpn\ has measure one.

\begin{definition}[Igusa's $p$-adic local zeta function] Let $p$ be a prime number, $f(x)=f(x_1,\ldots,x_n)$ a polynomial in $\Qp[x_1,\ldots,x_n]$, and $\Phi$ a Schwartz--Bruhat function on \Qpn, i.e., a locally constant function $\Phi:\Qpn\to\C$ with compact support. Igusa's $p$-adic local zeta function associated to $f$ and $\Phi$ is defined as
\begin{equation*}
Z_{f,\Phi}:\{s\in\C\mid\Re(s)>0\}\to\C:s\mapsto\int_{\Qpn}|f(x)|^s\Phi(x)|dx|.
\end{equation*}

We will mostly consider the case where $\Phi$ is the characteristic function of either \Zpn\ or $p\Zpn=(p\Zp)^n$. By Igusa's $p$-adic zeta function \Zf\ of $f$ (without mentioning $\Phi$), we mean $Z_{f,\Phi}$, where $\Phi=\chi(\Zpn)$ is the characteristic function of \Zpn. By the local Igusa zeta function \Zof\ of $f$, we mean $Z_{f,\Phi}$, where $\Phi=\chi(p\Zpn)$ is the characteristic function of $p\Zpn$.
\end{definition}

Using resolution of singularities, Igusa \cite{Igu74} proves in 1974 that $Z_{f,\Phi}$ is a rational function in the variable $t=p^{-s}$; more precisely, he shows that there exists a rational function $\widetilde{Z}_{f,\Phi}\in\Q(t)$, such that $Z_{f,\Phi}(s)=\widetilde{Z}_{f,\Phi}(p^{-s})$ for all $s\in\C$ with $\Re(s)>0$. Denoting the meromorphic continuation of $Z_{f,\Phi}$ to the whole complex plane again with $Z_{f,\Phi}$, he also obtains a set of candidate poles for $Z_{f,\Phi}$ in terms of numerical data associated to an embedded resolution of singularities of the locus $f^{-1}(0)\subset\Qpn$. In 1984 Denef \cite{Den84} proves the rationality of $Z_{f,\Phi}$ in an entirely different way, using $p$-adic cell decomposition.

For a prime number $p$ and $f(x)=f(x_1,\ldots,x_n)\in\Zp[x_1,\ldots,x_n]$, Igusa's zeta function \Zf\ is closely related to the numbers $N_l$ of solutions in $(\Zp/p^l\Zp)^n$ of the polynomial congruences $f(x)\equiv0\bmod p^l$ for $l\geqslant1$. For instance, the poles of \Zf\ determine the behavior of the numbers $N_l$ for $l$ big enough.

The poles of Igusa's zeta function are also the subject of the Monodromy Conjecture, formulated by Igusa in 1988. It predicts a remarkable connection between the poles of \Zf\ and the eigenvalues of the local monodromy of $f$. The conjecture is motivated by analogous results for Archimedean local zeta functions (over \R\ or \C\ instead of \Qp) and---of course---by all known examples supporting it. If the Monodromy Conjecture were true, it would explain why generally only few of the candidate poles arising from an embedded resolution of singularities, are actually poles.

\begin{conjecture}[Monodromy Conjecture for Igusa's $p$-adic zeta function over \Qp]\label{mcigusa1}
\textup{\cite{Igu88}}. Let $f(x_1,\ldots,x_n)$ be a polynomial in $\Z[x_1,\ldots,x_n]$. For almost all\,\footnote{By \lq almost all\rq\ we always mean \lq all, except finitely many\rq, unless expressly stated otherwise.} prime numbers $p$, we have the following. If $s_0$ is a pole of Igusa's local zeta function \Zf\ of $f$, then $e^{2\pi i\Re(s_0)}$ is an eigenvalue of the local monodromy operator acting on some cohomology group of the Milnor fiber of $f$ at some point of the hypersurface $f^{-1}(0)\subset\C^n$.
\end{conjecture}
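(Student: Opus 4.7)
The plan is to exploit embedded resolution of singularities to reduce both sides of the conjecture to the same discrete data and then to match poles with eigenvalues via A'Campo's formula. Fix an embedded resolution $h:Y\to\A^n_{\Qp}$ of $f^{-1}(0)$ with simple normal crossings, and let $\{E_i\}_{i\in T}$ be the irreducible components of $h^{-1}(f^{-1}(0))$, equipped with numerical data $(N_i,\nu_i)$, where $N_i=\mult_{E_i}(f\circ h)$ and $\nu_i-1=\mult_{E_i}(h^{*}dx)$. By Igusa's theorem every pole of \Zf\ lies in $\{-\nu_i/N_i:i\in T\}$, and Denef's explicit formula expresses \Zf\ as a sum over subsets $I\subset T$ of contributions weighted by $p$-adic measures of the strata $E_I^{\circ}=(\bigcap_{i\in I}E_i)\setminus(\bigcup_{j\notin I}E_j)$, where the candidate pole $-\nu_i/N_i$ is produced by a factor $1/(p^{\nu_i+N_is}-1)$. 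On the topological side, A'Campo's formula reads, at each $x\in f^{-1}(0)$,
\begin{equation*}
Z^{\mathrm{mon}}_{f,x}(T)=\prod_{i\in T}(1-T^{N_i})^{\chi(E_i^{\circ}\cap h^{-1}(x))},
\end{equation*}
so $e^{-2\pi i\nu_i/N_i}$ is an eigenvalue of local monodromy at $x$ whenever $\chi(E_i^{\circ}\cap h^{-1}(x))\neq 0$. For each actual pole $s_0$ one must therefore exhibit $i\in T$ with $-\nu_i/N_i=s_0$ and a point $x\in f^{-1}(0)$ close to the origin with $\chi(E_i^{\circ}\cap h^{-1}(x))\neq 0$.

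The principal obstacle is that this matching is not automatic: many indices $i$ can share the same ratio $-\nu_i/N_i$, their contributions to the residue of \Zf\ at $s_0$ can cancel, and the Euler characteristics $\chi(E_i^{\circ}\cap h^{-1}(x))$ can simultaneously vanish. In fact, the conjecture is equivalent to a nontrivial cancellation statement: the contrapositive asserts that whenever no eigenvalue can be produced by A'Campo at any nearby point of $f^{-1}(0)$, the corresponding residue of \Zf\ must cancel. Without a concrete model of the resolution there is no systematic way to detect non-cancellation from the abstract numerical data alone, which is why the conjecture remains open in full generality.

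A tractable route, and the one followed in this paper, is to specialize to a class of $f$ for which the resolution, the $(N_i,\nu_i)$, and the Euler characteristics admit a uniform combinatorial description, namely $f$ non-degenerate with respect to its Newton polyhedron $\Gf$. A toric modification induced by a simplicial subdivision of the dual fan of $\Gf$ then provides an embedded resolution whose divisors are indexed by faces of $\Gf$, the numerical data $(N_\tau,\nu_\tau)$ are affine-linear functionals on $\Gf$, and the Euler characteristics of the strata $E_\tau^{\circ}$ translate into lattice invariants of $\tau$ together with its neighbours. Under this specialization the Monodromy Conjecture becomes a purely combinatorial statement about $\Gf$. For $n=3$, the surviving difficulty — and the heart of the present paper — is that certain candidate poles coming from so-called $B_1$-facets of $\Gf$ are not matched by any eigenvalue visible through A'Campo and must therefore be shown to be fake poles. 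Lemahieu and Van Proeyen settled the analogous statement for the topological zeta function; the hard part of the proof here will be to establish the same cancellation for \Zf, where the combinatorics of the residues involves delicate sums over integer points of three-dimensional fundamental parallelepipeds attached to the dual cones, a $p$-adic refinement substantially harder than its topological counterpart.
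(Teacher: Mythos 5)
The statement you were asked to prove is a conjecture (\cite{Igu88}) that the paper itself does not prove: it remains open in general, and the paper only establishes its local version (Conjecture~\ref{mcigusa1lokaal}) for non-degenerated polynomials in three variables (Theorem~\ref{mcigusandss}). Your proposal correctly recognizes this and accurately mirrors the paper's strategy for the case it does handle --- passing to the Newton-polyhedron combinatorics via a toric resolution (equivalently, the Denef--Hoornaert formula), producing eigenvalues through A'Campo/Varchenko, and isolating the genuinely hard step as the cancellation of the $B_1$-facet candidate poles --- so, apart from the unavoidable fact that no proof of the full conjecture exists, there is nothing to fault.
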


There is a local version of this conjecture considering Igusa's zeta function on a small enough neighborhood of $0\in\Qpn$ and local monodromy only at points of $f^{-1}(0)\subset\C^n$ close to the origin.

\begin{conjecture}[Local version of Conjecture~\ref{mcigusa1}]\label{mcigusa1lokaal}
Let $f(x_1,\ldots,x_n)$ be a polynomial in $\Z[x_1,\ldots,x_n]$ with $f(0)=0$. For almost all prime numbers $p$ and for $k$ big enough, we have the following. If $s_0$ is a pole of $Z_{f,\chi(p^k\Zpn)}$, then $e^{2\pi i\Re(s_0)}$ is an eigenvalue of the local monodromy of $f$ at some point of the hypersurface $f^{-1}(0)\subset\C^n$ close to the origin.
\end{conjecture}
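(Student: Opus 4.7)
The plan is to restrict to the setting actually treated in this paper---namely $n=3$ and $f\in\Z[x,y,z]$ with $f(0)=0$ non-degenerated over \C\ with respect to its Newton polyhedron \Gf---and to prove the statement in that case; the conjecture in full generality is wide open. First I would construct an embedded resolution $h\colon Y\to\A^3$ of $f^{-1}(0)$ by means of a toric modification arising from a regular simplicial subdivision of the dual fan of \Gf; that such a resolution exists in the non-degenerate case is a classical theorem of Varchenko. Pulling back $p^k\Zpn$ through $h$ and applying Igusa's change-of-variables formula yields an explicit rational-function expression for $Z_{f,\chi(p^k\Zpn)}(s)$ as a sum, indexed by the simplicial cones of the subdivision, of products of geometric series in $p^{-s}$ and $p^{-1}$. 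The candidate poles that come out of this computation are of the form $s_0=-\sigma_\tau/m_\tau+(2\pi i/\log p)\Z$, with $(\sigma_\tau,m_\tau)$ the numerical data attached to a facet $\tau$ of \Gf.

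The second step is a case distinction on facets. For every facet $\tau$ of \Gf\ that is \emph{not} a so-called $B_1$-facet one verifies, by combining A'Campo's formula for the monodromy zeta function with the toric resolution, that $e^{2\pi i\Re(s_0)}$ with $s_0=-\sigma_\tau/m_\tau$ is an eigenvalue of the local monodromy of $f$ at some point of the strict transform of $f^{-1}(0)$ lying over and---by taking $k$ large enough---arbitrarily close to the origin. This part runs essentially parallel to the corresponding step for the topological zeta function carried out by Lemahieu and Van Proeyen.

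The hard part, which occupies essentially the entire paper, is to show that any candidate pole $s_0$ whose only contributing facets are $B_1$-facets is in fact \emph{not} a pole of $Z_{f,\chi(p^k\Zpn)}(s)$. In the topological setting the required cancellation is a finite linear identity in $s$; in the $p$-adic setting each simplicial cone contributes a product of geometric series in $p^{-s}$ and $p^{-1}$, and the cancellation must hold for almost all primes $p$ simultaneously. The strategy is to (i) subdivide the cones dual to $B_1$-facets into a standard family of simplicial subcones and enumerate the lattice points of the associated three-dimensional fundamental parallelepipeds, and (ii) combine these enumerations across neighbouring facets so that the total residue at $s_0$ vanishes. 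The main obstacle is precisely this combinatorial-arithmetic bookkeeping of lattice points in three-dimensional parallelepipeds, which is why the paper is preceded by a detailed study of such parallelepipeds.

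Once the $B_1$-facet cancellation is established, every actual pole of $Z_{f,\chi(p^k\Zpn)}$ is accounted for by the second step, and Conjecture~\ref{mcigusa1lokaal} follows in the non-degenerate surface case. The motivic version is obtained by running essentially the same argument in the Grothendieck ring of varieties, since the same combinatorial cancellation identities then apply to the motivic residue.
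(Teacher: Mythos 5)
The statement you were asked to prove is Conjecture~\ref{mcigusa1lokaal}, which is stated in the paper only as an \emph{open conjecture} and is not proved there in any generality; the paper establishes the special case $n=3$ with $f$ non-degenerated with respect to its Newton polyhedron (Theorem~\ref{mcigusandss}). You recognise this explicitly and restrict to that special case, which is the honest and correct thing to do. So what can be compared is your plan for that special case against the paper's Theorem~\ref{mcigusandss} and its proof.

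Within the special case, your outline is broadly faithful but departs from the paper's actual route in two ways worth noting. First, you propose to construct an explicit toric embedded resolution of $f^{-1}(0)$ via a regular simplicial subdivision of the dual fan and then push $Z_{f,\chi(p^k\Zpn)}$ through Igusa's change-of-variables formula. The paper never builds a resolution: it starts directly from the Denef--Hoornaert combinatorial formula (Theorem~\ref{formdenhoor}), which expresses $\Zof$ as a sum over compact faces $\tau$ of $\Gf$ of terms $L_\tau S(\Dtu)$, with the $S(\Dtu)$ already packaged as rational functions of $p^{-s}$ through fundamental-parallelepiped sums $\Sigma(\delta_i)$. This is not merely cosmetic, because the entire lattice-point analysis of Section~\ref{fundpar} is keyed to those parallelepiped sums as they appear in that formula; a toric-resolution detour would force you to re-derive the same structure. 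Second, a small but real slip: you speak of a monodromy eigenvalue \lq\lq at some point of the strict transform\ldots arbitrarily close to the origin by taking $k$ large enough.\rq\rq\ The $k$ in $\chi(p^k\Zpn)$ shrinks the $p$-adic domain of integration, not the complex neighborhood $U$; the monodromy eigenvalue lives at a point of $f^{-1}(0)\cap U\subset\C^3$, and the paper's result is proved already for $k=1$, with the parameter $k$ being irrelevant to where the eigenvalue is located. Also note that the paper does not reprove the Varchenko/A'Campo step; it cites Theorem~\ref{theoAenL} and Proposition~\ref{propAenL} from Lemahieu--Van Proeyen wholesale.

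The genuine gap is that your proposal correctly identifies the hard step---Theorem~\ref{maintheoartdrie}, the vanishing of residues at candidate poles contributed only by $B_1$-facets---but does not carry out any of it. That step is the entire content of the paper: the study of integral points in three-dimensional fundamental parallelepipeds (Section~\ref{fundpar}), the explicit descriptions of $H_A,H_B,H_C$ and their generators $\xi_A,\xi_B,\ldots$, the seven-case analysis over Sections~\ref{secgeval1art3}--\ref{secgeval7art3}, and in particular the floor/ceiling identities such as \eqref{finalcheckcas3} and \eqref{finalcheckcasevier} that make the residues cancel as identities in $p$. Saying \lq\lq combine these enumerations so that the total residue vanishes\rq\rq\ names the goal, not a proof; in the $p$-adic setting (unlike the topological one) the cancellation is a nontrivial polynomial identity in $p$ and in a root of unity $\beta$, and the paper spends dozens of pages establishing it. As written, your proposal is an accurate table of contents for the paper's proof rather than a proof.
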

%

There exists a stronger, related conjecture, also due to Igusa and also inspired by the analogous theorem in the Archimedean case.

\begin{conjecture}\label{Smcigusa1}
\textup{\cite{Igu88}}. Let $f(x_1,\ldots,x_n)\in\Z[x_1,\ldots,x_n]$. For almost all prime numbers $p$, we have the following. If $s_0$ is a pole of \Zf, then $\Re(s_0)$ is a root of the Bernstein--Sato polynomial $b_f(s)$ of $f$.

There is a local version of this conjecture considering $Z_{f,\chi(p^k\Zpn)}$ for $k$ big enough and the local Bernstein--Sato polynomial $b_f^0(s)$ of $f$.
\end{conjecture}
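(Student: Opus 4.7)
The plan is to mirror, at the stronger Bernstein--Sato level, the strategy of this paper for the Monodromy Conjecture. Under the same hypothesis $f\in\Z[x,y,z]$ with $f(0,0,0)=0$ and non-degenerated with respect to its Newton polyhedron $\Gf$, I would try to match, for almost all primes $p$, each actual pole of $Z_{f,\chi(p^k\Zpn)}$ with a root of the local $b$-function $b_f^0(s)$. The first step is to reuse the toric embedded resolution of $f^{-1}(0)$ associated to a simplicial refinement of the dual fan of $\Gf$. This produces the familiar list of candidate poles $-\nu_\tau/N_\tau$ indexed by the compact facets $\tau$ of $\Gf$ (together with the coordinate data). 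The central theorem of the present paper---that candidates arising from $B_1$-facets are not actual poles---then trims this list down to the ``essential'' facets, exactly as in the monodromy argument.

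Next I would invoke the combinatorial description of roots of $b_f$ in the non-degenerated case. For $n=2$ one has Loeser's theorem, which identifies a combinatorial set of roots directly from the Newton polyhedron; for general $n$ one has the partial results of Budur--Musta\c{t}\u{a}--Saito bounding roots via jumping numbers of multiplier ideals read off from $\Gf$. The idea is to show that every surviving candidate pole $-\nu_\tau/N_\tau$ lies in such a combinatorially predicted root set, using the fact that the analysis of Lemahieu and Van Proeyen---carried over to the $p$-adic setting in this paper---singles out per actual pole an identifiable contributing facet whose numerical data can be fed into the Budur--Musta\c{t}\u{a}--Saito machinery.

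The main obstacle is precisely this last matching. Roots of $b_f$ are far more rigid than monodromy eigenvalues: there is no A'Campo-type formula, and the microlocal or $V$-filtration techniques that settle the plane curve case do not extend to surfaces in the generality required here. A serious attempt at Conjecture~\ref{Smcigusa1} for non-degenerated surfaces would depend on producing, facet by facet, an explicit element in the $V$-filtration (or equivalently a jumping number of the correct multiplier ideal on a toric chart) whose associated root equals $-\nu_\tau/N_\tau$. Until such a refinement of Budur--Musta\c{t}\u{a}--Saito is in place, the combinatorial vanishing proved in this paper will give the correct list of $p$-adic pole candidates, but the translation into roots of $b_f^0$ remains the decisive missing ingredient; this is why I expect that even under full non-degeneracy the stronger conjecture above will require substantially more than the arguments assembled here.
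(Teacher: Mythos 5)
This statement is a \emph{conjecture} in the paper (Igusa's strong monodromy conjecture, attributed to \cite{Igu88}); the paper does not prove it, and it remains open even for non-degenerated surface singularities. The paper proves only the weaker Conjecture~\ref{mcigusa1lokaal} in that setting --- the monodromy version, about eigenvalues $e^{2\pi i\Re(s_0)}$ --- via Theorems~\ref{theoAenL}, \ref{maintheoartdrie} and Proposition~\ref{propAenL}, and the passage from eigenvalues to roots of $b_f$ is precisely the content of the stronger conjecture and is not addressed here. Your ``proposal'' is honest in that it does not claim to close this gap, and your diagnosis of the obstruction is correct: there is no A'Campo--Varchenko-type combinatorial formula for the roots of $b_f$ in three or more variables, Loeser's Theorem~\ref{theoloesernondeg} needs extra non-natural hypotheses, and the Budur--Musta\c{t}\u{a}--Saito multiplier-ideal bounds give containments of jumping numbers rather than the pointwise identification of each surviving candidate $-\sigma(v)/m(v)$ with an actual root of $b_f^0$. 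So there is indeed a genuine gap, but it is a gap in the state of the art, not merely in your write-up; you should present this as a discussion of an open problem rather than a proof, and in particular avoid phrasing such as ``the first step is'' and ``the idea is to show'' which suggests the argument is expected to go through. If the aim is only to reproduce what the paper establishes, the correct target is Conjecture~\ref{mcigusa1lokaal} together with Malgrange's result that roots of $b_f$ give monodromy eigenvalues (which yields the implication Conjecture~\ref{Smcigusa1} $\Rightarrow$ Conjecture~\ref{mcigusa1}, not the converse).
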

%
%

Malgrange \cite{malgrange} proved in 1983 that if $f(x_1,\ldots,x_n)\in\C[x_1,\ldots,x_n]$ and $s_0$ is a root of the Bernstein--Sato polynomial of $f$, then $e^{2\pi is_0}$ is a monodromy eigenvalue of $f$. Therefore Conjecture~\ref{Smcigusa1} implies Conjecture~\ref{mcigusa1}.

The above conjectures were verified by Loeser for polynomials in two variables \cite{Loe88} and for non-degenerated polynomials in several variables subject to extra non-natural technical conditions (see \cite{Loe90} or Theorem~\ref{theoloesernondeg}). In higher dimension or in a more general setting, there are various partial results, e.g., \cite{ACLM02,ACLM05,BorTVBN,BMTmcha,HMY07,LVmcndss,LV09,Loe90,NV10bis,VVmcid2,Vey93,Vey06}.

In \cite{LVmcndss} Lemahieu and Van Proeyen prove the Monodromy Conjecture for the local topological zeta function (a kind of limit of Igusa zeta functions) of a non-degenerated surface singularity. Hence they achieve the result of Loeser for the topological zeta function in dimension three without the extra conditions.

\subsection{Statement of the main theorem}
The (first) goal of this paper is to obtain the result of Lemahieu and Van Proeyen for the original local Igusa zeta function, i.e., to prove Conjecture~\ref{mcigusa1lokaal} for a polynomial in three variables that is non-degenerated over \C\ and \Fp\ with respect to its Newton polyhedron. Before formulating our theorem precisely, let us first define the Newton polyhedron of a polynomial and the notion of non-degeneracy.

\begin{definition}[Newton polyhedron]\label{def_NPad}
Let $R$ be a ring. For $\omega=(\omega_1,\ldots,\omega_n)\in\Zplusn$, we denote by $x^{\omega}$ the corresponding monomial $x_1^{\omega_1}\cdots x_n^{\omega_n}$ in $R[x_1,\ldots,x_n]$. Let $f(x)=f(x_1,\ldots,x_n)=\sum_{\omega\in\Zplusn}a_{\omega}x^{\omega}$ be a nonzero polynomial over $R$ satisfying $f(0)=0$. Denote the support of $f$ by $\supp(f)=\{\omega\in\Zplusn\mid a_{\omega}\neq0\}$. The Newton polyhedron \Gf\ of $f$ is then defined as the convex hull in \Rplusn\ of the set
\begin{equation*}
\bigcup_{\omega\in\supp(f)}\omega+\Rplusn.
\end{equation*}
The global Newton polyhedron $\Gglf$ of $f$ is defined as the convex hull of $\supp(f)$. Clearly we have $\Gf=\Gglf+\Rplusn$.
\end{definition}

\begin{notation}\label{notftauart3}
Let $f$ be as in Definition \ref{def_NPad}. For every face\footnote{By a face of $\Gf$ we mean $\Gf$ itself or one of its proper faces, which are the intersections of $\Gf$ with a supporting hyperplane. See, e.g., \cite{Roc70}.} $\tau$ of the Newton polyhedron $\Gf$ of $f$, we put
\begin{equation*}
\ft(x)=\sum_{\omega\in\tau}a_{\omega}x^{\omega}.
\end{equation*}
\end{notation}

\begin{definition}[Non-degenerated over \C]
Let $f(x)=f(x_1,\ldots,x_n)$ be a nonzero polynomial in $\C[x_1,\ldots,x_n]$ satisfying $f(0)=0$. We say that $f$ is non-degenerated over \C\ with respect to all the faces of its Newton polyhedron \Gf, if for every\footnote{Thus also for \Gf.} face $\tau$ of \Gf, the zero locus $\ft^{-1}(0)\subset\C^n$ of \ft\ has no singularities in \Ccrossn.

We say that $f$ is non-degenerated over \C\ with respect to all the compact faces of its Newton polyhedron, if the same condition is satisfied, but only for the compact faces $\tau$ of \Gf.
\end{definition}

\addtocounter{footnote}{-1}
\begin{definition}[Non-degenerated over \Qp]\label{def_non-degenerated2}
Let $f(x)=f(x_1,\ldots,x_n)$ be a nonzero polynomial in $\Qp[x_1,\ldots,x_n]$ satisfying $f(0)=0$. We say that $f$ is non-degenerated over \Qp\ with respect to all the faces of its Newton polyhedron \Gf, if for every\footnotemark\ face $\tau$ of \Gf, the zero locus $\ft^{-1}(0)\subset\Qpn$ of \ft\ has no singularities in \Qpxn.

We say that $f$ is non-degenerated over \Qp\ with respect to all the compact faces of its Newton polyhedron, if we have the same condition, but only for the compact faces $\tau$ of \Gf.
\end{definition}

\begin{notation}\label{notftaubarart3}
For $f\in\Zp[x_1,\ldots,x_n]$, we denote by $\overline{f}$ the polynomial over \Fp, obtained from $f$, by reducing each of its coefficients modulo $p\Zp$.
\end{notation}

\addtocounter{footnote}{-1}
\begin{definition}[Non-degenerated over \Fp]\label{def_non-degenerated3}
Let $f(x)=f(x_1,\ldots,x_n)$ be a non\-zero polynomial in $\Zp[x_1,\ldots,x_n]$ satisfying $f(0)=0$. We say that $f$ is non-degenerated over \Fp\ with respect to all the faces of its Newton polyhedron \Gf, if for every\footnotemark\ face $\tau$ of \Gf, the zero locus of the polynomial \fbart\ has no singularities in \Fpcrossn, or, equivalently, the system of polynomial congruences
\begin{equation*}
\left\{
\begin{aligned}
\ft(x)&\equiv0\bmod p,\\
\frac{\partial \ft}{\partial x_i}(x)&\equiv0\bmod p;\quad i=1,\ldots,n;
\end{aligned}
\right.
\end{equation*}
has no solutions in \Zpxn.

We say that $f$ is non-degenerated over \Fp\ with respect to all the compact faces of its Newton polyhedron, if the same condition is satisfied, but only for the compact faces $\tau$ of \Gf.
\end{definition}

\begin{remarks}\label{verndcndfp}
\begin{enumerate}
\item Let $f(x_1,\ldots,x_n)\in\Z[x_1,\ldots,x_n]$ be a nonzero polynomial satisfying $f(0)=0$. Suppose that $f$ is non-degenerated over \C\ with respect to all the (compact) faces of its Newton polyhedron \Gf. Then $f$ is non-degenerated over \Fp\ with respect to all the (compact) faces of \Gf, for almost all $p$. This is a consequence of the Weak Nullstellensatz.
\item The condition of non-degeneracy is a generic condition in the following sense. Let $\Gamma\subset\Rplusn$ be a Newton polyhedron. Then almost all\footnote{By \lq almost all\rq\ we mean the following. Let $B$ be any bounded subset of \Rplusn\ that contains all vertices of $\Gamma$. Put $N=\#\Zn\cap\Gamma\cap B$, and associate to every $f(x)\in\C[x_1,\ldots,x_n]$ with $\Gf=\Gamma$ and $\supp(f)\subset B$ an $N$-tuple containing its coefficients. Then the set of $N$-tuples corresponding to a non-degenerated polynomial, is Zariski-dense in $\C^N$.} pol\-y\-no\-mi\-als $f(x)\in\C[x_1,\ldots,x_n]$ with $\Gf=\Gamma$ are non-degenerated over \C\ with respect to all the faces of $\Gamma$. (The same is true if we replace \C\ by \Qp.)
\end{enumerate}
\end{remarks}

We can now state our main theorem.

\begin{theorem}[Monodromy Conjecture for Igusa's $p$-adic local zeta function of a non-degenerated surface singularity]\label{mcigusandss}
Let $f(x,y,z)\in\Z[x,y,z]$ be a nonzero polynomial in three variables satisfying $f(0,0,0)=0$, and let $U\subset\C^3$ be a neighborhood of the origin. Suppose that $f$ is non-degenerated over \C\ with respect to all the compact faces of its Newton polyhedron, and let $p$ be a prime number such that $f$ is also non-degenerated over \Fp\ with respect to the same faces.\footnote{By Remark~\ref{verndcndfp}(i) this is the case for almost all prime numbers $p$.} Suppose that $s_0$ is a pole of the local Igusa zeta function \Zof\ associated to $f$. Then $e^{2\pi i\Re(s_0)}$ is an eigenvalue of the local monodromy of $f$ at some point of $f^{-1}(0)\cap U$.
\end{theorem}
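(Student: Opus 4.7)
The plan is to follow the strategy of Lemahieu--Van Proeyen for the topological zeta function and to supply the extra $p$-adic ingredient, namely that the candidate poles produced by $B_1$-facets do not survive as poles of \Zof. The starting point is the Denef--Hoornaert formula for \Zof: choose a simplicial subdivision $\Sigma$ of \Rplusn\ (with $n=3$) that is compatible with the normal fan of \Gf\ and whose $1$-skeleton contains every primitive inward normal of every compact facet of \Gf. Writing each three-dimensional cone $\delta\in\Sigma$ as $\delta=\cone(a_1,a_2,a_3)$ with primitive generators $a_i\in\Zplusn$ and associated numerical data $\sigma_i=a_i\cdot(1,1,1)$, $m_i=m(a_i)\perdef\min_{\omega\in\Gf}a_i\cdot\omega$, one obtains an explicit expression of the form
\begin{equation*}
Z^0_f(s)\;=\;p^{-3}\sum_{\delta\in\Sigma^{(3)}}L_\delta(s)\,\Sigma_\delta(s)\prod_{i=1}^{3}\frac{1}{p^{\sigma_i+m_i s}-1},
\end{equation*}
where $L_\delta(s)$ is a polynomial in $p^{-s}$ depending on the number of \Fpcrossn-points of a toric stratum cut out by $\bar f_{\tau_\delta}$ and $\Sigma_\delta(s)$ counts integer points in the fundamental parallelepiped of $(a_1,a_2,a_3)$. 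The non-degeneracy over \Fp\ is exactly what is needed to apply this formula, and it immediately pins down the set of candidate poles as $\{-\sigma_\tau/N_\tau+2\pi i k/(N_\tau\log p)\}$ where $\tau$ runs over the compact facets of \Gf\ with data $(N_\tau,\sigma_\tau)$ and $k\in\Z$.

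Second, I would set up the $B_1$-facet dichotomy of Lemahieu--Van Proeyen: a compact facet $\tau$ is called $B_1$ when it is essentially a triangle with one vertex on a coordinate axis having no further neighbouring simplicial data, so that the corresponding candidate pole is known to be \emph{not} a pole of \Ztopof. The central combinatorial result to be established, which is the bulk of the paper, is the $p$-adic analogue: \emph{if the candidate pole $s_0=-\sigma_\tau/N_\tau+2\pi i k/(N_\tau\log p)$ is contributed only by $B_1$-facets $\tau$ with $\Re(s_0)=-\sigma_\tau/N_\tau$ and by compact facets for which $s_0$ is already known to cancel, then $s_0$ is not a pole of \Zof.} The argument would sum the Denef--Hoornaert contributions of all three-dimensional cones $\delta$ meeting the cone over each such $B_1$-facet, group them according to the adjacency structure, and show by explicit rational-function manipulation that the expected pole at $s_0$ cancels. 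The key quantities that must be evaluated are the lattice-point counts $\Sigma_\delta(s_0)$ in the three-dimensional fundamental parallelepipeds, which is precisely where the $p$-adic case is harder than the topological one: in the topological limit $p\to1$ these counts reduce to volumes and the cancellation becomes a clean statement about polynomials in $s$, whereas here one must keep track of individual integer points together with factors $(p^{\sigma_i+m_is_0}-1)$ that are nonzero in general but conspire to produce the cancellation.

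Third, once the $B_1$-cancellation is in hand, given an actual pole $s_0$ of \Zof\ there must exist a compact non-$B_1$ facet $\tau$ of \Gf\ with $\Re(s_0)=-\sigma_\tau/N_\tau$. For such a $\tau$ I would invoke the geometric part of the Lemahieu--Van Proeyen argument verbatim: a toric embedded resolution $h\colon Y\to\C^3$ of $f^{-1}(0)$ associated to a regular refinement of $\Sigma$ produces an exceptional divisor $E_\tau$ whose numerical data are $(N_\tau,\sigma_\tau)$. Using A'Campo's formula for the monodromy zeta function of $f$ along intersections of components of $h^{-1}(f^{-1}(0))$ and exploiting the classification of non-$B_1$ facets into several combinatorial types, one exhibits a point $P$ of $f^{-1}(0)\cap U$ at which the local monodromy has $e^{-2\pi i\sigma_\tau/N_\tau}=e^{2\pi i\Re(s_0)}$ as an eigenvalue on some cohomology group of the Milnor fiber. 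Since $f$ is non-degenerated over \C, all relevant intersections have the expected form and the A'Campo count cannot accidentally vanish.

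The main obstacle is clearly the combinatorial $B_1$-cancellation in the second step. The topological proof treats $\Ztopof$ as a sum of rational functions in $s$ alone, and cancellation at $s_0$ amounts to the vanishing of a polynomial; in the $p$-adic setting, each summand has a non-trivial factor $L_\delta(s)$ and a fundamental-parallelepiped sum $\Sigma_\delta(s)$ that depend on $p$, and the cancellation is no longer a statement over \Q\ but an identity that must hold for almost all $p$. Consequently, the proof of this cancellation would occupy most of the paper, and would require (i) a careful study of the integer points of three-dimensional fundamental parallelepipeds attached to the simplicial cones adjacent to a $B_1$-facet, and (ii) a case-by-case combinatorial reduction of the various configurations in which a $B_1$-facet can appear inside \Gf. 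Steps one and three are then comparatively short: the first is the standard Denef--Hoornaert setup, and the third is an essentially geometric translation of the combinatorial Lemahieu--Van Proeyen argument.
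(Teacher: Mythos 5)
Your proposed architecture has a genuine gap at step two. You claim that \emph{every} candidate pole $s_0$ contributed only by $B_1$-facets fails to be an actual pole of $\Zof$, and you conclude in step three that an actual pole $s_0$ must therefore be witnessed by a non-$B_1$ facet. Both statements are false, and the paper presents an explicit counterexample (Example~\ref{denefvbart3}, due to Denef): for $f=x^3+xy+y^2+z^2$ with $p\geqslant3$, the value $s_0=-3/2$ is contributed only by two $B_1$-simplices $\tau_0,\tau_1$, yet it is a genuine pole of $\Zof$. This happens precisely because $\tau_0$ and $\tau_1$ are $B_1$ with respect to \emph{different} variables and share an edge; in that configuration the expected cancellation does not occur. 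A cancellation theorem in the unqualified form you state cannot hold.

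The paper's actual strategy is a trichotomy, and the piece your proposal omits is Proposition~\ref{propAenL}. The full argument is: (a) Theorem~\ref{theoAenL} (Lemahieu--Van Proeyen) handles poles contributed by some non-$B_1$ facet directly via Varchenko/A'Campo; (b) Proposition~\ref{propAenL} handles poles contributed by two $B_1$-facets that are $B_1$ for different variables and share an edge — here the pole \emph{may survive}, and one instead shows directly that $e^{2\pi i\Re(s_0)}$ is a monodromy eigenvalue; and (c) Theorem~\ref{maintheoartdrie}, the paper's main combinatorial contribution, shows that $s_0$ is not a pole in the \emph{remaining} case — only $B_1$-facets contribute and any two of them are either $B_1$ for the same variable or share at most a vertex. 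Your proposal collapses (b) and (c) into one blanket cancellation claim, which is refuted by Denef's example; without the hypothesis restricting how the contributing $B_1$-facets can overlap, the cancellation argument you sketch in step two cannot be completed, and your step three does not follow. The rest of your framework (Denef--Hoornaert setup, fundamental-parallelepiped analysis, the $p$-adic versus topological distinction, the case-by-case structure) is in the spirit of what the paper does, but the load-bearing combinatorial theorem needs the precise hypotheses of Theorem~\ref{maintheoartdrie}, and the case excluded by those hypotheses needs its own monodromy argument rather than a vanishing argument.
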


Next we want to state two results of Denef and Hoornaert on Igusa's zeta function for non-degenerated polynomials. To do so, we need some notions that are closely related to Newton polyhedra. We introduce them in the following subsection (see also \cite{BorIZFs,DH01,DL92,HMY07}).

\subsection{Preliminaries on Newton polyhedra}\label{premartdrie}
We gave the definition of a Newton polyhedron in Definition~\ref{def_NPad}. Now we introduce some related notions.

\begin{definition}[$m(k)$]\label{def_mfad}
Let $R$ be a ring, and let $f(x)=f(x_1,\ldots,x_n)$ be a nonzero polynomial over $R$ satisfying $f(0)=0$. For $k\in\Rplusn$, we define
\begin{equation*}
m(k)=\inf_{x\in\Gf}k\cdot x,
\end{equation*}
where $k\cdot x$ denotes the scalar product of $k$ and $x$.
\end{definition}

The infimum in the definition above is actually a minimum, where the minimum can as well be taken over the global Newton polyhedron \Gglf\ of $f$, which is a compact set, or even over the finite set $\supp(f)$.

\begin{definition}[First meet locus]\label{def_firstmeetlocusad}
Let $f$ be as in Definition \ref{def_mfad} and $k\in\Rplusn$. We define the first meet locus of $k$ as the set
\begin{equation*}
F(k)=\{x\in\Gf\mid k\cdot x=m(k)\},
\end{equation*}
which is always a face of \Gf.
\end{definition}

\begin{definition}[Primitive vector]
A vector $k\in\Rn$ is called primitive if the components of $k$ are integers whose greatest common divisor is one.
\end{definition}

\begin{definition}[\Dtu]\label{def_Dfad}
Let $f$ be as in Definition \ref{def_mfad}. For a face $\tau$ of \Gf, we call
\begin{equation*}
\Dtu=\{k\in\Rplusn\mid F(k)=\tau\}
\end{equation*}
the cone associated to $\tau$. The \Dtu\ are the equivalence classes of the equivalence relation $\sim$ on \Rplusn, defined by
\begin{equation*}
k\sim k'\qquad\textrm{if and only if}\qquad F(k)=F(k').
\end{equation*}
The \lq cones\rq\ \Dtu\ thus form a partition of \Rplusn:
\begin{equation*}
\{\Dtu\mid \tau\ \mathrm{is\ a\ face\ of}\ \Gf\}=\Rplusn/\sim.
\end{equation*}
\end{definition}

The \Dtu\ are in fact relatively open\footnote{A subset of \Rplusn\ is called relatively open if it is open in its affine closure.} convex cones\footnote{A subset $C$ of \Rn\ is called a convex cone if it is a convex set and $\lambda x\in C$ for all $x\in C$ and all $\lambda\in\Rplusnul$.} with a very specific structure, as stated in the following lemma.

\begin{lemma}[Structure of the \Dtu]\label{lemma_struc_Dftad}
\textup{\cite[Lemma 2.6]{DH01}}. Let $f$ be as in Definition~\ref{def_mfad}. Let $\tau$ be a proper face of \Gf\ and let $\tau_1,\ldots,\tau_r$ be the facets\footnote{A facet is a face of codimension one.} of \Gf\ that contain $\tau$. Let $v_1,\ldots,v_r$ be the unique primitive vectors in $\Zplusn\setminus\{0\}$ that are perpendicular to $\tau_1,\ldots,\tau_r$, respectively. Then the cone \Dtu\ associated to $\tau$ is the convex cone
\begin{equation*}
\Dtu=\{\lambda_1v_1+\lambda_2v_2+\cdots+\lambda_rv_r\mid \lambda_j\in\Rplusnul\},
\end{equation*}
and its dimension\footnote{The dimension of a convex cone is the dimension of its affine hull.} equals $n-\dim\tau$.
\end{lemma}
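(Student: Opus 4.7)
The plan is to identify $\Dtu$ with the relative interior of the inner normal cone to $\Gf$ at $\tau$, and to exploit that for a convex polyhedron this normal cone equals the positive hull of the primitive inner normals to the facets containing $\tau$. As a preliminary, each facet $\tau_j$ of $\Gf$ is supported by a half-space $\{x\mid v_j\cdot x\geqslant m(v_j)\}$, and the non-negativity of the entries of $v_j$ is forced by $\Gf=\Gglf+\Rplusn$ (a negative entry would make $v_j\cdot x$ decrease without bound along the corresponding coordinate ray inside $\Gf$). A standard fact from polyhedral geometry also yields $\tau=\bigcap_{j=1}^{r}\tau_j$, since any face of a polyhedron equals the intersection of the facets containing it.

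For the forward inclusion, let $k=\sum_{j=1}^{r}\lambda_j v_j$ with each $\lambda_j\in\Rplusnul$, and pick any $y\in\tau$. For every $x\in\Gf$,
\begin{equation*}
k\cdot x-k\cdot y=\sum_{j=1}^{r}\lambda_j\bigl(v_j\cdot x-v_j\cdot y\bigr)\geqslant 0,
\end{equation*}
with equality if and only if $v_j\cdot x=v_j\cdot y=m(v_j)$ for every $j$ (using $\lambda_j>0$), equivalently $x\in\bigcap_{j=1}^{r}\tau_j=\tau$. Hence $F(k)=\tau$, i.e.\ $k\in\Dtu$.

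The reverse inclusion is the main obstacle. Pick $y_0$ in the relative interior of $\tau$; then the facets of $\Gf$ passing through $y_0$ are exactly $\tau_1,\ldots,\tau_r$, so locally near $y_0$ the polyhedron $\Gf$ is cut out by just the inequalities $v_j\cdot x\geqslant m(v_j)$ for $j=1,\ldots,r$, and its tangent cone at $y_0$ is $T=\{d\in\R^n\mid v_j\cdot d\geqslant 0,\ j=1,\ldots,r\}$. The minimality property $F(k)=\tau$ forces $k\cdot d\geqslant 0$ on all of $T$, and Farkas' lemma then gives $k=\sum_{j=1}^{r}\mu_j v_j$ with $\mu_j\geqslant 0$. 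To upgrade to strict positivity, I would invoke the standard fact that the relative interior of the polyhedral cone $\cone(v_1,\ldots,v_r)$ coincides with $\{\sum_j\lambda_j v_j\mid\lambda_j>0\}$; equivalently, if some $\mu_j$ vanished, applying the forward calculation to the $v_i$'s with $\mu_i>0$ would give $F(k)=\bigcap_{\mu_i>0}\tau_i\supsetneq\tau$, a contradiction with $F(k)=\tau$.

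Finally, for the dimension, from $\aff(\tau)=\bigcap_{j=1}^{r}\{x\mid v_j\cdot x=m(v_j)\}$ one reads off that the direction space of $\tau$ is $\bigcap_j v_j^{\perp}$, so its orthogonal complement is the linear span of $v_1,\ldots,v_r$; since $\Dtu$ is open in this span, $\dim\Dtu=n-\dim\tau$.
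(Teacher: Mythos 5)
The paper states this lemma as a citation of \cite[Lemma 2.6]{DH01} and gives no proof, so I can only assess the argument on its own terms. Your forward inclusion, your use of a relative-interior point $y_0$ of $\tau$ to identify the tangent cone $T$, and your appeal to Farkas to get $k=\sum_j\mu_jv_j$ with $\mu_j\geqslant0$ are all correct.

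The gap is in the upgrade to strict positivity. The representation $k=\sum_j\mu_jv_j$ is \emph{not} unique when $v_1,\ldots,v_r$ are linearly dependent (i.e., when the normal cone is non-simplicial, which happens exactly when $\tau$ lies in \lq too many\rq\ facets), so having some $\mu_j=0$ in \emph{one} representation does not put $k$ on the boundary of $\cone(v_1,\ldots,v_r)$. Moreover, your claimed contradiction $F(k)=\bigcap_{\mu_i>0}\tau_i\supsetneq\tau$ simply fails in the non-simplicial case: if four facets $\tau_1,\ldots,\tau_4$ of a $3$-dimensional $\Gf$ meet at a vertex $V=\tau$ with $\tau_1\cap\tau_3=\{V\}$ (e.g.\ a square-pyramid normal cone), then $k=\mu_1v_1+\mu_3v_3$ with $\mu_1,\mu_3>0$ already satisfies $F(k)=\tau$, even though $\mu_2=\mu_4=0$ in this representation; such a $k$ \emph{is} in the relative interior (a linear relation among the $v_j$ lets you rewrite it with all four coefficients positive), so the lemma is fine, but your purported contradiction does not arise. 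What you actually need to show is that $F(k)=\tau$ forces $k\in\mathrm{relint}\bigl(\cone(v_1,\ldots,v_r)\bigr)$, and for that you must use the \emph{full} strength of the hypothesis: besides $k\cdot d\geqslant0$ on $T$, the condition $F(k)\subset\tau$ gives $k\cdot d>0$ for every $d\in T$ outside the lineality space of $T$. If $k$ lay on a proper face of $T^{\ast}=\cone(v_1,\ldots,v_r)$, that face would be cut out by some supporting $u\in T\setminus\mathrm{lin}(T)$ with $k\cdot u=0$, and then $y_0+\varepsilon u\in\Gf\setminus\tau$ would attain the minimum, contradicting $F(k)=\tau$. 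With this corrected, the rest of your argument (including the dimension count via $\aff\tau=\bigcap_j\{v_j\cdot x=m(v_j)\}$) goes through.
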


\begin{definition}[Rational, simplicial, simple]\label{def_rationalconead}
For $v_1,\ldots,v_r\in\Rn\setminus\{0\}$, we call
\begin{equation*}
\Delta=\cone(v_1,\ldots,v_r)=\{\lambda_1v_1+\lambda_2v_2+\cdots+\lambda_rv_r\mid \lambda_j\in\Rplusnul\}
\end{equation*}
the cone strictly positively spanned by the vectors $v_1,\ldots,v_r$. When the $v_1,\ldots,v_r$ can be chosen from \Zn, we call it a rational cone. If we can choose $v_1,\ldots,v_r$ linearly independent over \R, then $\Delta$ is called a simplicial cone. If $\Delta$ is rational and $v_1,\ldots,v_r$ can be chosen from a \Z-module basis of \Zn, we call $\Delta$ a simple cone.
\end{definition}

It follows from Lemma~\ref{lemma_struc_Dftad} that the topological closures $\overbar{\Dtu}$\footnote{$\overbar{\Dtu}=\{\lambda_1v_1+\lambda_2v_2+\cdots+\lambda_rv_r\mid \lambda_j\in\Rplus\}=\{k\in\Rplusn\mid F(k)\supset\tau\}$.} of the cones \Dtu\ form a fan\footnote{A fan $\mathcal{F}$ is a finite set of rational polyhedral cones such that every face of a cone in $\mathcal{F}$ is contained in $\mathcal{F}$ and the intersection of each two cones $C$ and $C'$ in $\mathcal{F}$ is a face of both $C$ and $C'$.} of rational polyhedral cones\footnote{A rational polyhedral cone is a closed convex cone, generated by a finite subset of \Zn.}.

\begin{remark}
The function $m$ from Definition~\ref{def_mfad} is linear on each $\overbar{\Dtu}$.
\end{remark}

We state without proofs the following two lemmas (see, e.g., \cite{DH01}).

\begin{lemma}[Simplicial decomposition]
Let $\Delta$ be the cone strictly positively spanned by the vectors $v_1,\ldots,v_r\allowbreak\in\Rplusn\setminus\{0\}$. Then there exists a finite partition of $\Delta$ into cones $\delta_i$, such that each $\delta_i$ is strictly positively spanned by a \R-linearly independent subset of $\{v_1,\ldots,v_r\}$. We call such a decomposition a simplicial decomposition of $\Delta$ without introducing new rays.
\end{lemma}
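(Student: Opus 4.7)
The plan is to reduce the statement to the classical polytopal fact that every polytope admits a triangulation using only its own vertices. Set $\bar\Delta:=\{\sum_{i=1}^r\lambda_iv_i\mid\lambda_i\in\Rplus\}$, the topological closure of $\Delta$. Since every $v_i$ lies in $\Rplusn\setminus\{0\}$, the linear functional $\ell(x):=x_1+\cdots+x_n$ is strictly positive on $\bar\Delta\setminus\{0\}$; hence $\bar\Delta$ contains no line, the affine slice $P:=\bar\Delta\cap\{x\mid\ell(x)=1\}$ is a compact polytope, and $\Delta$ equals the relative interior of $\bar\Delta$. Because vertices of $P$ correspond precisely to extremal rays of $\bar\Delta$---which are necessarily generated by some of the $v_i$---the vertex set of $P$ is a subset of $\{v_i/\ell(v_i)\mid i=1,\ldots,r\}$.

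Next I would invoke a vertex-preserving triangulation of $P$, provable by induction on $\dim P$: pick any vertex $w$ of $P$, inductively triangulate each facet of $P$ not containing $w$ using only its own vertices, and cone each resulting facet simplex over $w$. This gives $P=\bigcup_jS_j$ with every $S_j$ a simplex whose vertices are of the form $v_{i_1}/\ell(v_{i_1}),\ldots,v_{i_k}/\ell(v_{i_k})$, and the set $\{v_{i_1},\ldots,v_{i_k}\}$ is linearly independent in $\Rn$ because affine independence inside the hyperplane $\{\ell=1\}$ is equivalent to linear independence of the underlying vectors.

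Coning each $S_j$ back to the origin produces closed simplicial subcones $\bar\sigma_j:=\{\sum_l\mu_lv_{i_l}\mid\mu_l\in\Rplus\}$ which cover $\bar\Delta$ and have pairwise disjoint relative interiors. The collection of all distinct faces of the $\bar\sigma_j$ forms a fan $\mathcal{F}$, and $\bar\Delta$ is the disjoint union of the relative interiors of the cones in $\mathcal{F}$. Let $\mathcal{F}'$ be the subfamily of cones whose relative interior is contained in $\Delta=\operatorname{relint}(\bar\Delta)$. Then $\Delta=\bigsqcup_{\sigma\in\mathcal{F}'}\operatorname{relint}(\sigma)$, and each $\operatorname{relint}(\sigma)$ is strictly positively spanned (in the paper's sense) by the linearly independent generators of $\sigma$, which form a subset of $\{v_1,\ldots,v_r\}$. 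Setting $\delta_i:=\operatorname{relint}(\sigma)$ for $\sigma\in\mathcal{F}'$ gives the required partition.

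The main obstacle in a full write-up is the vertex-preserving triangulation of $P$; the inductive argument sketched above is standard but must be stated with some care. Every other step is routine, relying on well-known correspondences: cones versus their cross-section polytopes, linear versus affine independence of the generators versus their normalised cross-section points, and the identification $\Delta=\operatorname{relint}(\bar\Delta)$.
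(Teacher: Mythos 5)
The paper states this lemma without proof, referring to Denef and Hoornaert, so there is no in-paper argument to compare against; your proposal stands on its own. It is correct and is the standard reduction: since all $v_i$ lie in $\Rplusn\setminus\{0\}$, the functional $\ell(x)=x_1+\cdots+x_n$ is strictly positive on $\bar\Delta\setminus\{0\}$, so $\bar\Delta$ is pointed, the slice $P=\bar\Delta\cap\{\ell=1\}$ is a polytope whose vertices lie among $\{v_i/\ell(v_i)\}$, a vertex-preserving (placing/pulling) triangulation of $P$ exists, and coning it over the origin gives a simplicial fan $\mathcal{F}$ refining $\bar\Delta$ whose rays are among $\Rplus v_1,\ldots,\Rplus v_r$. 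The one step you use implicitly and should spell out in a full write-up is the dichotomy behind $\Delta=\bigsqcup_{\sigma\in\mathcal{F}'}\operatorname{relint}(\sigma)$: for every $\sigma\in\mathcal{F}$, either $\operatorname{relint}(\sigma)\subset\operatorname{relint}(\bar\Delta)=\Delta$ or $\operatorname{relint}(\sigma)\cap\Delta=\emptyset$. This follows since a supporting hyperplane of $\bar\Delta$ that meets $\operatorname{relint}(\sigma)$ must contain all of $\sigma$, so any $\sigma$ whose relative interior touches the relative boundary of $\bar\Delta$ lies entirely in a proper face of $\bar\Delta$. With that observation made explicit, the proof is complete, and it in fact yields the slightly stronger conclusion that the decomposition can be chosen to use only the extremal generators among $v_1,\ldots,v_r$.
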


\begin{lemma}[Simple decomposition]
Let $\Delta$ be a rational simplicial cone. Then there exists a finite partition of $\Delta$ into simple cones. (In general, such a decomposition requires the introduction of new rays.)
\end{lemma}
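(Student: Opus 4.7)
I first establish the stronger statement that the closure $\overbar{\Delta}$ admits a simplicial fan refinement whose every maximal cone is simple; the lemma then follows by keeping only the relatively open cones of this refinement that lie in the relative interior $\Delta$ of $\overbar{\Delta}$, using that every face of a simple cone is again simple. After replacing each ray generator by its primitive integer representative (which does not affect the set $\overbar{\Delta}$), I may assume $\overbar{\Delta}=\bigl\{\sum_{i=1}^{r}\mu_iv_i\bigm|\mu_i\in\Rplus\bigr\}$ with $v_1,\ldots,v_r\in\Zn$ primitive. Put $M=\Zn\cap\operatorname{span}_{\R}(v_1,\ldots,v_r)$ and
\[
\mult(\overbar{\Delta})=[M:\Z v_1+\cdots+\Z v_r]=\#\bigl(\Zn\cap\Pi\bigr),\qquad\Pi=\Bigl\{\sum_{i=1}^{r}\lambda_iv_i\Bigm|0\leqslant\lambda_i<1\Bigr\}.
\]
Standard lattice theory yields the key equivalence: $\mult(\overbar{\Delta})=1$ if and only if $(v_1,\ldots,v_r)$ extends to a \Z-basis of \Zn, if and only if $\overbar{\Delta}$ is simple.

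\textbf{Induction on $\mult(\overbar{\Delta})$.} If $\mult(\overbar{\Delta})=1$, the trivial refinement works. Otherwise, pick a nonzero $w\in\Pi\cap\Zn$ and write $w=\sum_{j}\lambda_jv_j$ with $\lambda_j\in[0,1)$; let $w'\in\Zn$ be the primitive vector on the ray $\Rplus w$, so $w=kw'$ for some $k\in\Znulplus$, and set $J=\{j\mid\lambda_j>0\}$. For each $j\in J$ let $\overbar{\Delta}_j$ be the closed simplicial cone obtained by replacing $v_j$ in the generating list of $\overbar{\Delta}$ by $w'$. A short convex-geometric check (this is the stellar subdivision of $\overbar{\Delta}$ along $\Rplus w'$) shows that $\{\overbar{\Delta}_j\}_{j\in J}$ together with all their faces is a simplicial fan refining $\overbar{\Delta}$. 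By multilinearity of the determinant in the $j$-th column,
\[
\mult(\overbar{\Delta}_j)=\frac{\lambda_j}{k}\,\mult(\overbar{\Delta})<\mult(\overbar{\Delta}),
\]
so the induction hypothesis applies to each $\overbar{\Delta}_j$ and produces a simple refinement of it. Gluing these refinements along their common faces furnishes the sought-for simple refinement of $\overbar{\Delta}$.

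\textbf{Main obstacle.} The main subtlety is the bookkeeping between the paper's convention of \emph{relatively open} cones (as used in the statement and in the partition $\{\Dtu\}$ of \Rplusn) and the \emph{closed} cones on which fans, faces, and stellar subdivisions live most naturally. The plan sidesteps this by carrying out the entire induction at the level of closed simplicial fans and extracting the desired partition of the open cone $\Delta$ only at the very end. Apart from this, the sole calculation of substance is the determinantal identity above guaranteeing a strict drop in multiplicity at each stellar step, which is exactly what makes the induction terminate; everything else is formal.
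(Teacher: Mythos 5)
The paper itself states this lemma without proof, citing \cite{DH01}; your approach (stellar subdivision of the closed cone and induction on multiplicity) is the standard one, and the base case, the determinant identity $\mult(\overbar{\Delta}_j)=\frac{\lambda_j}{k}\mult(\overbar{\Delta})$, and the equivalence $\mult=1\Leftrightarrow$ simple are all correct.

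There is, however, a genuine gap in the sentence ``Gluing these refinements along their common faces furnishes the sought-for simple refinement of $\overbar{\Delta}$.'' After the stellar subdivision you apply the induction hypothesis to each $\overbar{\Delta}_j$ \emph{independently}, obtaining a simple refinement of each. But the induction hypothesis only asserts existence, not canonicity, so the refinements of $\overbar{\Delta}_{j_1}$ and $\overbar{\Delta}_{j_2}$ need not restrict to the same subdivision of the shared face $\overbar{\Delta}_{j_1}\cap\overbar{\Delta}_{j_2}$. In that case the union is not a fan (relative interiors of cones from the two sides overlap on that face), and the ``gluing'' fails. This is the classic pitfall in toric desingularisation arguments.

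Two standard repairs. (a) Strengthen the induction from cones to \emph{fans}: at each step pick a cone of maximal multiplicity in the current fan, a primitive lattice point $w'$ in its half-open parallelepiped, and perform the stellar subdivision of the \emph{whole} fan at $\Rplus w'$; stellar subdivision of a fan automatically yields a fan, so compatibility is free, and one checks that the maximum multiplicity cannot increase while the number of cones attaining it eventually decreases. (b) Drop the ambition of producing a fan: the lemma only asks for a \emph{partition}. The relative interiors of all faces $\sigma$ of the stellar-subdivision fan with $\mathrm{relint}(\sigma)\subset\Delta$ already partition $\Delta$; each such $\sigma$ has $\mult(\sigma)\mid\mult(\overbar{\Delta}_j)<\mult(\overbar{\Delta})$ for some $j$, so the induction hypothesis applies to each piece separately, and the union of the resulting partitions is a partition of $\Delta$ because the pieces are pairwise disjoint to begin with — no gluing compatibility is needed. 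With either fix your argument is complete.
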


Finally, we need the following notion, which is related to the notion of a simple cone.

\begin{definition}[Multiplicity]\label{def_multad}
Let $v_1,\ldots,v_r$ be \Q-linearly independent vectors in \Zn. The multiplicity of $v_1,\ldots,v_r$, denoted by $\mult(v_1,\ldots,v_r)$, is defined as the index of the lattice $\Z v_1+\cdots+\Z v_r$ in the group of points with integral coordinates in the subspace spanned by $v_1,\ldots,v_r$ of the \Q-vector space~$\Q^n$.

If $\Delta$ is the cone strictly positively spanned by $v_1,\ldots,v_r$, then we define the multiplicity of $\Delta$ as the multiplicity of $v_1,\ldots,v_r$, and we denote it by $\mult\Delta$.
\end{definition}

The following is well-known (see, e.g., \cite[\S 5.3, Thm.~3.1]{Adkins}).

\begin{proposition}
Let $v_1,\ldots,v_r$ be \Q-linearly independent vectors in \Zn. The multiplicity of $v_1,\ldots,v_r$ equals the cardinality of the set
\begin{equation*}
\Zn\cap\left\{\sum\nolimits_{j=1}^rh_jv_j\;\middle\vert\;h_j\in[0,1)\text{ for }j=1,\ldots,r\right\}.
\end{equation*}
Moreover, this number is the greatest common divisor of the absolute values of the determinants of all $(r\times r)$-submatrices of the $(r\times n)$-matrix whose rows contain the coordinates of $v_1,\ldots,v_r$.
\end{proposition}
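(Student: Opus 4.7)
Write $V=\mathrm{span}_{\mathbf{R}}(v_1,\ldots,v_r)\subset\mathbf{R}^n$, $L=\mathbf{Z} v_1+\cdots+\mathbf{Z} v_r$, and $L'=\mathbf{Z}^n\cap V$, so that by definition $\mult(v_1,\ldots,v_r)=[L':L]$. Denote the half-open parallelepiped by
\begin{equation*}
P=\left\{\sum\nolimits_{j=1}^r h_j v_j\;\middle\vert\; h_j\in[0,1)\right\}\subset V.
\end{equation*}
The plan is to establish the cardinality statement by exhibiting $\mathbf{Z}^n\cap P$ as a full set of coset representatives for $L'/L$, and then to derive the gcd statement from the Smith Normal Form of the matrix $M$ whose rows are $v_1,\ldots,v_r$.

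First I would prove that $P$ is a fundamental domain for the translation action of $L$ on $V$. Since $v_1,\ldots,v_r$ form an $\mathbf{R}$-basis of $V$, every $x\in V$ has unique real coordinates $(h_1,\ldots,h_r)$ with $x=\sum h_j v_j$, and writing $h_j=\lfloor h_j\rfloor+\{h_j\}$ shows that $x$ equals a unique element of $P$ modulo $L$. Applying this to elements of $L'$: each $x\in L'$ has a unique decomposition $x=\ell+p$ with $\ell\in L$ and $p\in P$, and since $L\subset L'\subset\mathbf{Z}^n$, the translate $p=x-\ell$ lies in $\mathbf{Z}^n\cap P$. Conversely, every $p\in\mathbf{Z}^n\cap P$ lies in $V\cap\mathbf{Z}^n=L'$. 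This yields a bijection $L'/L\xrightarrow{\sim}\mathbf{Z}^n\cap P$, hence $\#(\mathbf{Z}^n\cap P)=[L':L]=\mult(v_1,\ldots,v_r)$, proving the first assertion.

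For the gcd characterization, I would invoke the Smith Normal Form: there exist unimodular matrices $U\in GL_r(\mathbf{Z})$ and $W\in GL_n(\mathbf{Z})$ and positive integers $d_1\mid d_2\mid\cdots\mid d_r$ (the invariant factors of $M$) such that $UMW=D$, where $D$ is the $r\times n$ matrix with $D_{ii}=d_i$ and all other entries zero. The linear map $W^{\mathsf{T}}:\mathbf{Z}^n\to\mathbf{Z}^n$ is an automorphism that carries the rows of $M$ to the rows of $MW$, so it sends $L$ and $L'$ bijectively onto the analogous lattices built from $MW$; multiplying on the left by $U$ only replaces the generating set $v_1,\ldots,v_r$ of $L$ with another $\mathbf{Z}$-basis of $L$, leaving both $L$ and $L'$ unchanged. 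Hence $\mult(v_1,\ldots,v_r)$ and the gcd of all $(r\times r)$-minors of $M$ agree, respectively, with the analogous quantities for $D$.

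In the diagonal case $M=D$, one reads off directly that $L=d_1\mathbf{Z}\oplus\cdots\oplus d_r\mathbf{Z}\oplus\{0\}^{n-r}$ and $L'=\mathbf{Z}^r\oplus\{0\}^{n-r}$, so $[L':L]=d_1 d_2\cdots d_r$; moreover the only $(r\times r)$-submatrix of $D$ with nonzero determinant is the leading block, with determinant $d_1 d_2\cdots d_r$, so the gcd of $(r\times r)$-minors of $D$ is also $d_1\cdots d_r$. The remaining step, and the only one requiring a small argument, is to verify that the gcd of $(r\times r)$-minors is invariant under the left multiplication by $U$ and right multiplication by $W$: by the Cauchy–Binet formula, every $(r\times r)$-minor of $UMW$ is a $\mathbf{Z}$-linear combination of $(r\times r)$-minors of $M$ (and vice versa, using $U^{-1}$, $W^{-1}$), so the two gcds coincide. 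Combining these observations gives $\mult(v_1,\ldots,v_r)=d_1\cdots d_r=\gcd\{(r\times r)\text{-minors of }M\}$, finishing the proof.

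The only mildly delicate point is the last invariance argument, but once Smith Normal Form and Cauchy–Binet are on the table it is routine; the bulk of the work really lies in recognizing $\mathbf{Z}^n\cap P$ as the natural system of coset representatives for $L'/L$.
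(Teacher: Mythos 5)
The paper does not prove this proposition; it simply cites it as well-known (referring to Adkins--Weintraub, \S 5.3, Thm.~3.1), so there is no argument in the paper to compare against. Your proof is correct and follows the standard route: the bijection $L'/L\xrightarrow{\sim}\Zn\cap P$ via the unique representative of each coset in the half-open parallelepiped is exactly the fundamental-domain argument one would expect, and the reduction to Smith Normal Form combined with Cauchy--Binet to show that both $[L':L]$ and $\gcd$ of the $(r\times r)$-minors are invariant under $U\in GL_r(\Z)$ and $W\in GL_n(\Z)$, and both equal $d_1\cdots d_r$ in the diagonal case, is the usual textbook derivation of the first Fitting invariant. One small remark on phrasing: you write that ``$W^{\mathsf{T}}$ carries the rows of $M$ to the rows of $MW$''; what is meant is right multiplication $x\mapsto xW$ on row vectors (equivalently $W^{\mathsf{T}}$ on column vectors), which is fine, but it is cleaner to say directly that right multiplication by the unimodular $W$ is a $\Z$-automorphism of $\Zn$ sending $V$ to the span of the rows of $MW$, hence sending $L'=\Zn\cap V$ and $L$ to the corresponding lattices for $MW$; left multiplication by $U$ then only changes the chosen $\Z$-basis of $L$ and leaves $L$, $V$, $L'$ untouched. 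With that gloss the argument is airtight.
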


\begin{remark}\label{premartdrieeinde}
Let $\Delta$ be as in Definition~\ref{def_multad}. Note that $\Delta$ is simple if and only if $\mult\Delta=\mult(v_1,\ldots,v_r)=1$.
\end{remark}

\subsection{Theorems of Denef and Hoornaert}
\begin{notation}\label{notsigmakartdrieintro}
For $k=(k_1,\ldots,k_n)\in\Rn$, we denote $\sigma(k)=k_1+\cdots+k_n$.
\end{notation}

\begin{theorem}\label{theodenef1}
\textup{\cite{Den18,Den95}}.\footnote{The theorem was announced in \cite{Den18} and a proof is written down in \cite{Den95}.} Let $f(x_1,\ldots,x_n)\in\Qp[x_1,\ldots,x_n]$ be a nonzero polynomial with $f(0,\ldots,0)=0$, and $\Phi$ a Schwartz--Bruhat function on \Qpn. Let $\tau_1,\ldots,\tau_r$ be all the facets of \Gf, and let $v_1,\ldots,v_r$ be the unique primitive vectors in $\Zplusn\setminus\{0\}$ that are perpendicular to $\tau_1,\ldots,\tau_r$, respectively. Suppose that $f$ is non-degenerated over \Qp\ with respect to all the compact faces of its Newton polyhedron, and suppose that the support of $\Phi$ is contained in a small enough neighborhood of the origin. If $s_0$ is a pole of $Z_{f,\Phi}$, then
\begin{equation}\label{scpsvolgdenef}
\begin{aligned}
s_0&=-1+\frac{2k\pi i}{\log p}\qquad\text{for some $k\in\Z$, or}\\
s_0&=-\frac{\sigma(v_j)}{m(v_j)}+\frac{2k\pi i}{m(v_j)\log p}
\end{aligned}
\end{equation}
for some $j\in\{1,\ldots,r\}$ with $m(v_j)\neq0$ and some $k\in\Z$.
\end{theorem}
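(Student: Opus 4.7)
The plan is to follow the standard $p$-adic cell-decomposition approach of Denef (cf.~\cite{DH01,Den95}). Without loss of generality I may assume $f\in\Zp[x]$ (up to an overall factor $p^{rs}$ that does not affect the pole structure). Since $\Phi$ is locally constant with support in a small enough neighborhood of the origin, standard reductions let me concentrate on the main contribution, namely an integral of the form $\int_{p^N\Zpn}|f(x)|^s|dx|$ for $N$ large; any remainder yields, for $N$ large enough, only rational functions whose poles are already of the announced form. I then partition $p^N\Zpn$, up to a set of measure zero, into the $p$-adic cells
\begin{equation*}
E_k=\bigl\{x\in\Qpn\bigm|\ord(x_i)=k_i\text{ for all }i\bigr\},\qquad k\in\Znulplusn,\ k_i\geqslant N.
\end{equation*}

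On $E_k$, the substitution $x_i=p^{k_i}u_i$ with $u\in\Zpxn$, together with the identity
\begin{equation*}
f\bigl(p^{k_1}u_1,\ldots,p^{k_n}u_n\bigr)=p^{m(k)}\Bigl(\ft(u)+\sum_{\omega\notin\tau}a_\omega\,p^{k\cdot\omega-m(k)}u^{\omega}\Bigr),
\end{equation*}
where $\tau=F(k)$ and the second summand lies in $p\,\Zp[u]$, extracts a factor $p^{-\sigma(k)-sm(k)}$ and leaves a residual integral over $\Zpxn$ which to leading order in $p$ only sees $\ft$. The crux is to use the non-degeneracy of $f$ over \Qp\ with respect to $\tau$---automatic here, since $k\in\Znulplusn$ forces $\tau$ to be a compact face of $\Gf$---to compute this residual integral, via Hensel's lemma applied on the smooth locus of the reduction of $\ft$ to \Fpcrossn, as a rational function $L_\tau(s)$ of $p^{-s}$ depending only on $\tau$. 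A standard transverse one-variable computation, essentially $\int_{p\Zp}|y|^s|dy|$, shows that the only possible pole of $L_\tau(s)$ is at $s=-1+2k\pi i/\log p$.

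Grouping the cell contributions by their face $\tau=F(k)$ then yields
\begin{equation*}
Z_{f,\Phi}(s)=\sum_\tau L_\tau(s)\,S_\tau(s)+R(s),\qquad S_\tau(s)=\sum_{k\in\Dtu\cap\Znulplusn}p^{-\sigma(k)-sm(k)},
\end{equation*}
with $R(s)$ a polynomial in $p^{-s}$ absorbing the truncation $k_i\geqslant N$. For each cone $\Dtu$ I apply the simplicial decomposition lemma of Subsection~\ref{premartdrie}---without introducing new rays---to subdivide $\Dtu$ into simplicial subcones whose ray generators lie in $\{v_1,\ldots,v_r\}$, the primitive normals to the facets of \Gf\ containing $\tau$. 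The standard generating-function formula for lattice points of an open simplicial cone $\delta$ then writes $S_\tau(s)$ as a $\Z$-linear combination of rational functions
\begin{equation*}
\frac{P_\delta(p^{-s})}{\prod_{j\in J_\delta}\bigl(1-p^{-\sigma(v_j)-sm(v_j)}\bigr)},
\end{equation*}
with $P_\delta\in\Z[p^{-s}]$ coming from the half-open fundamental parallelepiped of $\delta$. No further simple decomposition is carried out, so the only denominators that can appear are those indexed by $v_1,\ldots,v_r$.

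Reading off the poles, any pole of $Z_{f,\Phi}$ must be a zero of $1-p^{-s-1}$ (from some $L_\tau$) or of $1-p^{-\sigma(v_j)-sm(v_j)}$ with $m(v_j)\ne 0$ (from some $S_\tau$); when $m(v_j)=0$, the relation $\sigma(v_j)\geqslant 1$ makes $1-p^{-\sigma(v_j)}$ a nonzero constant, contributing no pole. This is precisely the dichotomy in~\eqref{scpsvolgdenef}. The principal technical obstacle is the explicit evaluation of $L_\tau(s)$: one must show that $\int_{\Zpxn}|\ft(u)+p\,r_k(u)|^s|du|$ depends only on $\tau$ (not on the particular $k\in\Dtu$) and carries only the claimed pole at $s=-1+2k\pi i/\log p$. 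This is exactly where the full strength of non-degeneracy over \Qp\ enters, via a stratification of $\Zpxn$ according to the vanishing of the reduction of $\ft$ modulo $p$ and an application of Hensel's lemma on the smooth part that reduces the transverse direction to the standard one-variable computation.
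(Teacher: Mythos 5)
The paper does not prove this theorem itself — it is Denef's result, quoted from \cite{Den18,Den95} — so the only question is whether your sketch would actually work. Your global structure is exactly the standard cell-decomposition argument underlying \cite{DH01} and \cite{Den95}: partition $p^N\Zpn$ into the monomial cells $E_k$, extract $p^{-\sigma(k)-sm(k)}$ from the substitution $x_i=p^{k_i}u_i$, group by the first meet locus $\tau=F(k)$, and then decompose each $\Dtu$ simplicially without new rays so that the denominators only involve the primitive facet normals $v_j$. That part is sound, and the final pole dichotomy is read off correctly.

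The gap is in the one step you yourself flag as ``the principal technical obstacle,'' the evaluation of the residual integral $L_\tau(s)=\int_{\Zpxn}|\ft(u)+p\,r_k(u)|^s|du|$. You propose to stratify $\Zpxn$ by the vanishing of $\overline{\ft}$ and apply Hensel's lemma ``on the smooth part.'' That is precisely the mechanism behind Theorem~\ref{formdenhoor}, but it requires non-degeneracy over $\Fp$ (Definition~\ref{def_non-degenerated3}): one needs $\overline{\ft}$ and its reduced gradient to have no common zero in $\Fpcrossn$, so that a unit partial derivative is available wherever $\ft\equiv 0\pmod p$. Theorem~\ref{theodenef1}, however, only assumes non-degeneracy over $\Qp$ (Definition~\ref{def_non-degenerated2}), which asserts smoothness of the $\Qp$-rational zero locus $\ft^{-1}(0)\cap\Qpxn$ but says nothing about the reduction mod $p$; the gradient may well be $\equiv0\pmod p$ at points of $\Zpxn$ where $\ft$ vanishes mod $p$, and then the one-step Hensel lifting and the resulting clean formula for $L_\tau$ simply fail. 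The actual argument under $\Qp$-non-degeneracy is more delicate: one uses compactness of $\Zpxn$ to obtain a uniform bound $\min_i|\partial_i\ft(u)|\geqslant p^{-e}$ on the compact set $\{u\in\Zpxn\mid\ft(u)=0\}$, subdivides $\Zpxn$ into sufficiently small polydiscs, and applies a refined Hensel/implicit-function argument on each; the hypothesis that $\supp\Phi$ lies in a small enough neighborhood of the origin (equivalently, $N$ large, hence $k\cdot\omega-m(k)$ large for $\omega\notin\tau$) is needed precisely to push the perturbation $p\,r_k(u)$ below this Hensel threshold so that the residual integral stabilizes to a rational function with poles only at $\Re(s)=-1$. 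As written, your sketch silently strengthens the hypothesis to $\Fp$-non-degeneracy and does not use the ``small support'' condition where it is actually needed.
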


Essential in the proof of Theorem~\ref{mcigusandss} is the following combinatorial formula for \Zof\ for non-degenerated polynomials due to Denef and Hoornaert.

\begin{theorem}\label{formdenhoor}
\textup{\cite[Thm.~4.2]{DH01}}. Let $f(x)=f(x_1,\ldots,x_n)$ be a nonzero pol\-y\-no\-mi\-al in $\Zp[x_1,\ldots,x_n]$ satisfying $f(0)=0$. Suppose that $f$ is non-de\-gen\-er\-ated over \Fp\ with respect to all the compact faces of its Newton polyhedron \Gf. Then the local Igusa $p$-adic zeta function associated to $f$ is the meromorphic complex function
\begin{equation}\label{cfDHinlartdrie}
\Zof=\sum_{\substack{\tau\mathrm{\ compact}\\\mathrm{face\ of\ }\Gf}}L_{\tau}S(\Dtu),
\end{equation}
with
\begin{gather*}
L_{\tau}:s\mapsto L_{\tau}(s)=\left(\frac{p-1}{p}\right)^n-\frac{N_{\tau}}{p^{n-1}}\frac{p^s-1}{p^{s+1}-1},\\
N_{\tau}=\#\left\{x\in\Fpcrossn\;\middle\vert\;\fbart(x)=0\right\},
\end{gather*}
and
\begin{equation*}
S(\Dtu):s\mapsto S(\Dtu)(s)=\sum_{k\in\Zn\cap\Dtu}p^{-\sigma(k)-m(k)s}
\end{equation*}
for every compact face $\tau$ of \Gf.

The $S(\Dtu)$ can be calculated as follows. Choose a decomposition $\{\delta_i\}_{i\in I}$ of the cone \Dtu\ into simplicial cones $\delta_i$ without introducing new rays. Then clearly
\begin{equation}\label{deelsomform2}
S(\Dtu)=\sum_{i\in I}S(\delta_i),
\end{equation}
in which
\begin{equation*}
S(\delta_i):s\mapsto S(\delta_i)(s)=\sum_{k\in\Zn\cap\delta_i}p^{-\sigma(k)-m(k)s}.
\end{equation*}
Suppose that the cone $\delta_i$ is strictly positively spanned by the linearly independent primitive vectors $v_j$, $j\in J_i$, in $\Zplusn\setminus\{0\}$. Then we have
\begin{equation*}
S(\delta_i)(s)=\frac{\Sigma(\delta_i)(s)}{\prod_{j\in J_i}(p^{\sigma(v_j)+m(v_j)s}-1)},
\end{equation*}
with $\Sigma(\delta_i)$ the function
\begin{equation}\label{defSigmadiThDH}
\Sigma(\delta_i):s\mapsto\Sigma(\delta_i)(s)=\sum_hp^{\sigma(h)+m(h)s},
\end{equation}
where $h$ runs through the elements of the set
\begin{equation*}
H(v_j)_{j\in J_i}=\Z^n\cap\lozenge(v_j)_{j\in J_i},
\end{equation*}
with
\begin{equation*}
\lozenge(v_j)_{j\in J_i}=\left\{\sum\nolimits_{j\in J_i}h_jv_j\;\middle\vert\;h_j\in[0,1)\text{ for all }j\in J_i\right\}
\end{equation*}
the fundamental parallelepiped spanned by the vectors $v_j$, $j\in J_i$.
\end{theorem}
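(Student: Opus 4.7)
The plan is to compute
$\Zof(s) = \int_{p\Zpn} |f(x)|^s |dx|$
by a coordinate-wise $p$-adic valuation stratification, handle each stratum via Hensel's lemma and non-degeneracy, and finally perform a geometric-series computation indexed by the cones \Dtu. Ignoring the measure-zero set where some $x_i = 0$, each $x \in p\Zpn$ has a unique valuation vector $k = (v(x_1), \ldots, v(x_n)) \in \Zplusnuln$. The change of variables $x_i = p^{k_i} u_i$, $u_i \in \Zpx$, has Jacobian of $p$-adic absolute value $p^{-\sigma(k)}$, so
\begin{equation*}
\Zof(s) = \sum_{k \in \Zplusnuln} p^{-\sigma(k)} \int_{\Zpxn} \bigl|f(p^{k_1}u_1, \ldots, p^{k_n}u_n)\bigr|^s |du|.
\end{equation*}
Grouping by the face $\tau = F(k)$, and noting that the primitive normal to any non-compact facet of \Gf\ must lie in a coordinate hyperplane, one checks that $\Dtu \cap \Zn$ is contained in $\Zplusnuln$ and is nonempty only when $\tau$ is compact; hence only compact faces contribute.

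For each such $k$, the defining property of $\tau = F(k)$ together with the integrality of $k$ and $\omega$ forces $k \cdot \omega = m(k)$ on $\tau$ and $k \cdot \omega \geq m(k)+1$ off $\tau$, so
$f(p^{k_1}u_1, \ldots, p^{k_n}u_n) = p^{m(k)} \bigl(\ft(u) + p h_k(u)\bigr)$
for some $h_k \in \Zp[u]$; thus the inner integral equals $p^{-m(k)s}$ times $\int_{\Zpxn} |\ft(u) + p h_k(u)|^s |du|$. I would stratify \Zpxn\ by reduction modulo~$p$. On each of the $(p-1)^n - N_\tau$ fibers $\bar u + p\Zpn$ above a point with $\fbart(\bar u) \neq 0$, the integrand is identically $1$ and the fiber has measure $p^{-n}$. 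On each of the $N_\tau$ fibers above a zero of \fbart, non-degeneracy over \Fp\ guarantees $\partial \fbart/\partial u_i(\bar u) \neq 0$ for some $i$, so Hensel's lemma produces a measure-preserving analytic coordinate change on the fiber that straightens $\ft + p h_k$ to a single coordinate, reducing the fiber integral to $p^{-(n-1)} \int_{p\Zp} |v|^s |dv|$. Adding the two contributions produces precisely $L_\tau(s)$, which is independent of the particular $k \in \Dtu \cap \Zn$. Consequently
\begin{equation*}
\Zof(s) = \sum_{\tau \text{ compact}} L_\tau(s) \sum_{k \in \Zn \cap \Dtu} p^{-\sigma(k) - m(k)s} = \sum_\tau L_\tau(s)\, S(\Dtu)(s).
\end{equation*}

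It remains to evaluate $S(\Dtu)$. I would apply the simplicial decomposition lemma to split \Dtu\ as a disjoint union of rational simplicial cones $\delta_i = \cone(v_j)_{j \in J_i}$ without introducing new rays, whence $S(\Dtu) = \sum_i S(\delta_i)$. For a single simplicial $\delta_i$, each $k \in \Zn \cap \delta_i$ has a unique expansion $k = h + \sum_j n_j v_j$ with $n_j \in \Zplus$ and $h$ in the appropriate half-open fundamental parallelepiped; linearity of $\sigma$ and of $m$ on $\overline{\delta_i} \subset \overline{\Dtu}$ converts $S(\delta_i)$ into a product of geometric series $\sum_{n_j \geq 0} p^{-n_j(\sigma(v_j) + m(v_j) s)}$ times a finite sum over the lattice points of the parallelepiped. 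Passing between the $(0,1]$-parallelepiped (the natural one for the strictly positive cone) and the $[0,1)$-parallelepiped $\lozenge(v_j)$ via the involution $h \mapsto \sum_j v_j - h$ turns this into the stated ratio $\Sigma(\delta_i)(s)/\prod_{j \in J_i}(p^{\sigma(v_j) + m(v_j) s} - 1)$.

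The main obstacle is the fiber computation above zeros of \fbart: converting the combinatorial non-degeneracy hypothesis into an honest $p$-adic analytic change of variables requires a careful Hensel-type argument with explicit control of the Jacobian as a unit in \Zp, and one must verify that the resulting torus integral depends only on $\tau$ so that it factors out of the sum over $k$. The cone-counting part is conceptually routine, but the half-open parallelepiped conversion is essential to recover exactly the $\Sigma(\delta)$ written in the statement rather than a variant indexed by the opposite parallelepiped.
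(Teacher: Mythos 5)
Your proof is correct. The paper itself quotes this theorem from Denef--Hoornaert without proof, but the detailed proof it gives of the motivic analogue (Theorem~\ref{formlocmotzf}, via Lemmas~\ref{motlemmaeen}--\ref{motcoroltwee}) follows exactly the strategy you describe — stratification by the valuation vector $k$, the expansion $f(p^{k_1}u_1,\ldots,p^{k_n}u_n)=p^{m(k)}\bigl(\ft(u)+ph_k(u)\bigr)$ with $\tau=F(k)$ compact precisely when $k\in\Zplusnuln$, Hensel's lemma at the zeros of $\fbart$ supplied by non-degeneracy to produce $L_{\tau}(s)$ independently of $k$, and the geometric-series evaluation over simplicial cones with the passage between the $(0,1]$- and $[0,1)$-parallelepipeds via $h\mapsto\sum_jv_j-h$ — so your argument matches the intended proof in all essentials.
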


\begin{remark}
There exists a global version of this formula for \Zf; the condition is that $f$ is non-degenerated over \Fp\ with respect to \underline{all} the faces of its Newton polyhedron, and the sum \eqref{cfDHinlartdrie} should be taken over \underline{all} the faces of \Gf\ as well (including \Gf\ itself). In the few definitions that follow, we state everything for the local Igusa zeta function \Zof, since this zeta function is the subject of our theorem. Nevertheless, all notions and results have straightforward analogues for \Zf\ (see \cite{DH01}).
\end{remark}

The formula for \Zof\ in the theorem confirms (under slightly different conditions) the result of Denef that if $s_0$ is a pole of \Zof, it must be one of the numbers \eqref{scpsvolgdenef} from Theorem~\ref{theodenef1}. We call these numbers the candidate poles of \Zof.

\subsection{Expected order and contributing faces}
\begin{definition}[Expected order of a candidate pole]
Let $f$ be as in Theorem~\ref{formdenhoor}, and suppose that $s_0$ is a candidate pole of \Zof. We define the expected order of the candidate pole $s_0$ (as a pole of \Zof\ with respect to the formula in Theorem~\ref{formdenhoor}) as
\begin{equation}\label{defexpordpole}
\max\{\text{order of $s_0$ as a pole of $L_{\tau}S(\Dtu)$}\mid\text{$\tau$ face of \Gf}\}.
\end{equation}
Hereby we agree that the order of $s_0$ as a pole of $L_{\tau}S(\Dtu)$ equals zero, if $s_0$ is not a pole of $L_{\tau}S(\Dtu)$. Note that if $\Re(s_0)\neq-1$, we may omit $L_{\tau}$ in \eqref{defexpordpole}.
\end{definition}

\begin{remark}
Clearly the expected order of a candidate pole $s_0$ of \Zof\ is an upper bound for the actual order of $s_0$ as a pole of \Zof.
\end{remark}

\begin{definition}[Contributing vector/face/cone]\label{defcontrinlad}
Let $f$ be as in Theorem~\ref{formdenhoor}, and suppose that $s_0$ is a candidate pole of \Zof. We say that a primitive vector $v\in\Zplusn\setminus\{0\}$ contributes to $s_0$ if $p^{\sigma(v)+m(v)s_0}=1$, or, equivalently, if
\begin{equation*}
s_0=-\frac{\sigma(v)}{m(v)}+\frac{2k\pi i}{m(v)\log p}
\end{equation*}
for some $k\in\Z$. We say that a facet $\tau$ of \Gf\ contributes to the candidate pole $s_0$, if the unique primitive vector $v\in\Zplusn\setminus\{0\}$ that is perpendicular to $\tau$, contributes to $s_0$. More generally, a face of \Gf\ is said to contribute to $s_0$, if it is contained in one or more contributing facets of \Gf. Finally, we say that a cone $\delta=\cone(v_1,\ldots,v_r)$, minimally\footnote{By \lq minimally\rq\ we mean that $\delta\neq\cone(v_1,\ldots,v_{j-1},v_{j+1},\ldots,v_r)$ for all $j\in\{1,\ldots,r\}$.} strictly positively spanned by the primitive vectors $v_1,\ldots,v_r\in\Zplusn\setminus\{0\}$, contributes to $s_0$, if one or more of the vectors $v_j$ contribute to $s_0$. Note that in this way a face $\tau$ of \Gf\ contributes to $s_0$ if and only if its associated cone \Dtu\ does so.
\end{definition}

Let $f$ be as above, and suppose that $s_0$ is a candidate pole of \Zof\ with $\Re(s_0)\neq-1$. From Theorem~\ref{formdenhoor} it should be clear that if we want to investigate whether $s_0$ is actually a pole or not, we only need to consider the sum $\sum L_{\tau}S(\Dtu)$ over the contributing compact faces $\tau$ of \Gf. Furthermore, if, for a contributing compact face $\tau$, in order to deal with $S(\Dtu)$, we consider a simplicial subdivision $\{\delta_i\}_i$ of the cone \Dtu, we only need to take into account the terms of \eqref{deelsomform2} corresponding to the contributing simplicial cones in $\{\delta_i\}_i$, in order to decide whether $s_0$ is a pole or not.

\begin{remark}
Let $f$ be as in Theorem~\ref{formdenhoor}. Let $\tau$ be a facet of \Gf\ and $s_0$ a candidate pole of \Zof. One easily checks that if $\tau$ contributes to $s_0$, then $\Re(s_0)=-1/t_0$, where $(t_0,\ldots,t_0)$ denotes the intersection point of the affine support $\aff\tau$ of $\tau$ with the diagonal $\{(t,\ldots,t)\mid t\in\R\}\subset\Rn$ of the first orthant (see also \cite[Prop.~5.1]{DH01}).
\end{remark}

\subsection{$B_1$-facets and the structure of the proof of the main theorem}
The proof of Theorem~\ref{mcigusandss} consists of three results, namely Theorem~\ref{theoAenL}, Proposition~\ref{propAenL}, and Theorem~\ref{maintheoartdrie}, all stated below. The first two results have been proved by Lemahieu and Van Proeyen in \cite{LVmcndss}; the last one is the subject of the current paper; its proof covers Sections~\ref{fundpar}--\ref{secgeval7art3} (pp.~\pageref{fundpar}--\pageref{eindegrbew}). In order to state the theorems, we have one last important notion to introduce: that of a $B_1$-facet.

\begin{definition}[$B_1$-facet]\label{defbeenfacetad}
Let $R$ be a ring and $n\in\Z_{\geqslant2}$. Let $f(x)=f(x_1,\ldots,x_n)$ be a nonzero polynomial over $R$ satisfying $f(0)=0$. We call a facet $\tau$ of \Gf\ a $B_1$-simplex for a variable $x_i\in\{x_1,\ldots,x_n\}$, if $\tau$ is a simplex with $n-1$ vertices in the coordinate hyperplane $\{x_i=0\}$ and one vertex in the hyperplane $\{x_i=1\}$. We call a facet of \Gf\ a $B_1$-simplex, if it is a $B_1$-simplex for some variable $x_i$.

A facet $\tau$ of \Gf\ is called non-compact for a variable $x_j\in\{x_1,\ldots,x_n\}$, if for every point $(x_1,\ldots,x_n)\in\tau$, we have $(x_1,\ldots,x_{j-1},x_j+1,x_{j+1},\ldots,x_n)\in\tau$. For $j\in\{1,\ldots,n\}$, we shall denote by $\pi_j$ the projection
\begin{equation*}
\pi_j:\Rn\to\R^{n-1}:(x_1,\ldots,x_n)\mapsto(x_1,\ldots,x_{j-1},x_{j+1},\ldots,x_n).
\end{equation*}

Suppose that $n\geqslant3$. We call a facet $\tau$ of \Gf\ a non-compact $B_1$-facet for a variable $x_i$, if $\tau$ is non-compact for precisely one variable $x_j\neq x_i$ and $\pi_j(\tau)$ is a $B_1$-simplex in $\R^{n-1}$ for the variable $x_i$. A facet of \Gf\ is called a non-compact $B_1$-facet, if it is a non-compact $B_1$-facet for some variable $x_i$.

Finally, we call a facet of \Gf\ a $B_1$-facet (or $B_1$ for short) for a variable $x_i$, if it is either a $B_1$-simplex for $x_i$ or a non-compact $B_1$-facet for $x_i$; we call it a $B_1$-facet when it is $B_1$ for some variable $x_i$.
\end{definition}

The first step in the proof of Theorem~\ref{mcigusandss} is the fact that \lq almost all\rq\ candidate poles of \Zof\ induce monodromy eigenvalues; \lq almost all\rq\ means all, except---possibly---those that are only contributed by $B_1$-facets.

\begin{theorem}[On the candidate poles of \Zof\ contributed by non-$B_1$-facets]\label{theoAenL}
Cfr.\ \textup{\cite[Theorem~10]{LVmcndss}}. Let $f$ and $p$ be as in Theorem~\ref{mcigusandss}. Let $s_0$ be a candidate pole of \Zof\ and suppose that $s_0$ is contributed by a facet of \Gf\ that is not a $B_1$-facet. Then $e^{2\pi i\Re(s_0)}$ is an eigenvalue of the local monodromy of $f$ at some point of the surface $f^{-1}(0)\subset\C^3$ close to the origin.
\end{theorem}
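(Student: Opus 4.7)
The plan is to adapt the strategy of Lemahieu and Van Proeyen from \cite{LVmcndss}: since the monodromy eigenvalues of $f$ at points of $f^{-1}(0)$ close to the origin are a purely geometric invariant of $f$ (independent of the choice of $p$-adic versus topological zeta function), it suffices to exhibit, for every candidate pole $s_0$ contributed by a non-$B_1$-facet $\tau_0$ of $\Gf$, a point $P\in f^{-1}(0)\cap U$ at which $e^{2\pi i\Re(s_0)}$ occurs as a monodromy eigenvalue. Here the candidate poles of $\Zof$ and of the local topological zeta function $\Ztopof$ share the same real parts, so one can recycle the output of \cite{LVmcndss} without modification. The first step is therefore to construct a toric embedded resolution $h\colon Y\to\C^3$ of $f^{-1}(0)$ obtained from any simple subdivision of the dual fan of $\Gf$; non-degeneracy of $f$ with respect to the compact faces of $\Gf$ guarantees that $h$ is indeed an embedded resolution over a neighbourhood of $0$, and the exceptional divisor $E_{v}$ associated with a primitive ray $v$ has numerical data $(N_v,\nu_v)=(m(v),\sigma(v))$.

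Next I would invoke A'Campo's formula for the monodromy zeta function: for each $Q\in f^{-1}(0)$ close to the origin,
\begin{equation*}
\zeta_{f,Q}(t)=\prod_{v}\bigl(1-t^{N_v}\bigr)^{-\chi(E_v^\circ\cap h^{-1}(Q))},
\end{equation*}
where $E_v^\circ$ is the open stratum of $E_v$ obtained by removing its intersections with all other components of $h^{-1}(f^{-1}(0))$. Every monodromy eigenvalue of $f$ at $Q$ of order dividing some $N_v$ can be read off from this formula, provided that the corresponding Euler-characteristic exponents do not all cancel. Fix the contributing facet $\tau_0$ with primitive normal $v_0$, so that $\Re(s_0)=-\sigma(v_0)/m(v_0)$ and the target eigenvalue is an $m(v_0)$-th root of unity. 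The task reduces to producing a point $Q$ near the origin at which the aggregate exponent
\begin{equation*}
\sum_{v:\,N_v\mid m(v_0),\ \zeta^{N_v}=1}\chi\bigl(E_v^\circ\cap h^{-1}(Q)\bigr)
\end{equation*}
is nonzero for $\zeta=e^{2\pi i\Re(s_0)}$, so that the cyclotomic factor is not cancelled in $\zeta_{f,Q}(t)$.

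The core of the argument, carried out by Lemahieu and Van Proeyen, is a case analysis over the combinatorial type of $\tau_0$: compact facets are split according to the number and type of bounding edges and adjacent facets, and non-compact facets are treated according to the direction(s) of non-compactness. In each case, one exhibits either a suitable zero-dimensional toric stratum (an intersection $E_{\tau_0}\cap E_\sigma\cap E_{\sigma'}$) or a one-dimensional toric curve on $E_{\tau_0}$ whose contribution survives in A'Campo's product. Crucially, the $B_1$ hypothesis is exactly what guarantees that no global cancellation of the $\tau_0$-contribution occurs against contributions coming from neighbouring facets: a $B_1$-facet is precisely the combinatorial obstruction where such cancellation can happen. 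For every other facet type, Lemahieu and Van Proeyen identify, for each of the finitely many subcases, an explicit geometric witness—typically a point of $f^{-1}(0)$ obtained by combining vertex monomials of $\tau_0$ with monomials of a neighbouring facet—at which the Euler-characteristic computation gives a nonzero net exponent.

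The main obstacle is the bookkeeping in this combinatorial case analysis, in particular verifying that the candidate witness point actually lies in $f^{-1}(0)\cap U$ and that the relevant strata have the claimed Euler characteristics and intersection behaviour. I would import the cases from \cite{LVmcndss} verbatim and only check that the conclusions, phrased there in terms of topological zeta function poles, transfer to our candidate poles $s_0=-\sigma(v_0)/m(v_0)+2k\pi i/(m(v_0)\log p)$; this transfer is automatic because only $\Re(s_0)=-\sigma(v_0)/m(v_0)$ enters the eigenvalue $e^{2\pi i\Re(s_0)}$, which is the same quantity produced by their argument.
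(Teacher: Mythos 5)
Your proposal captures the paper's argument exactly: the theorem is cited from Lemahieu--Van Proeyen (Theorem~10 of their paper), and the transfer to the $p$-adic zeta function rests on precisely the observation you make, that $e^{2\pi i\Re(s_0)}$ depends only on $\Re(s_0)=-\sigma(v_0)/m(v_0)$, which is the same number for $\Zof$ and $\Ztopof$. Your rendering of their underlying proof via A'Campo's formula applied to a toric resolution is equivalent to the Varchenko-formula route the paper attributes to them (Varchenko's formula is A'Campo's formula instantiated on exactly such a resolution); the only slip is in the aggregate-exponent condition, where the sum should range over $v$ with $\zeta^{N_v}=1$, equivalently $\mathrm{ord}(\zeta)\mid N_v$, rather than $N_v\mid m(v_0)$, but this is repaired automatically once you import the Lemahieu--Van Proeyen case analysis verbatim as you propose.
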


The proof of the theorem above relies on Varchenko's formula \cite{Varchenko} for the zeta function of monodromy of $f$ (at the origin) in terms of the Newton polyhedron of $f$, which in turn relies on A'Campo's formula \cite{AC75} for the same zeta function in terms of an embedded resolution of singularities of $f^{-1}(0)\subset\C^3$.

In this context, we would also like to mention the results of Denef--Sperber \cite{denefsperber} and Cluckers \cite{CluckersDUKE,CluckersTAMS} on exponential sums associated to non-degenerated polynomials. Here one also obtains nice results when imposing certain conditions on the faces of the Newton polyhedron that are similar to the one in the theorem above.

This is probably also a good place to state the result of Loeser on the Monodromy Conjecture for non-degenerated singularities. Loeser proves (in general dimension) a result similar to Theorem~\ref{theoAenL}, imposing several, rather technical conditions on the Newton polyhedron's facets.

\begin{theorem}\label{theoloesernondeg}
\textup{\cite{Loe90}}. Let $f(x_1,\ldots,x_n)\in\C[x_1,\ldots,x_n]$ be a nonzero pol\-y\-no\-mial with $f(0,\ldots,0)=0$. Suppose that $f$ is non-degenerated over \C\ with respect to all the compact faces of its Newton polyhedron. Let $\tau_0$ be a compact facet of \Gf, and let $\tau_1,\ldots,\tau_r$ be all the facets of \Gf\ that are different, but not disjoint from $\tau_0$. Denote by $v_0,v_1,\ldots,v_r$ the unique primitive vectors in $\Zplusn\setminus\{0\}$ that are perpendicular to $\tau_0,\tau_1,\ldots,\tau_r$, respectively. Suppose that
\begin{enumerate}
\item ${\ds\frac{\sigma(v_0)}{m(v_0)}<1}$\quad and that
\item ${\ds\frac{1}{\mult(v_0,v_j)}\left(\sigma(v_j)-\frac{\sigma(v_0)}{m(v_0)}m(v_j)\right)\not\in\Z}$\quad for all $j\in\{1,\ldots,r\}$.
\end{enumerate}
Then $-\sigma(v_0)/m(v_0)$ is a root of the local Bernstein--Sato polynomial $b_f^0$ of $f$. Hereby $\mult(v_0,v_j)$ denotes the multiplicity of $v_0$ and $v_j$ (cfr.\ Def\-i\-ni\-tion~\ref{def_multad}).
\end{theorem}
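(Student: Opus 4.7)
My plan is to attack this through a toric embedded resolution of singularities adapted to the Newton polyhedron of $f$, combined with the Kashiwara--Malgrange description of $b$-functions via resolutions and a Hodge-theoretic analysis of the nearby cycle sheaf.

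First, I would construct a toric embedded resolution $\pi:Y\to\C^n$ of $(f^{-1}(0),0)$ from a smooth simplicial refinement of the fan $\{\overbar{\Dtu}\mid\tau\text{ a face of }\Gf\}$, arranging that the rays through $v_0,v_1,\ldots,v_r$ all belong to it; such a refinement exists by standard toric methods, and the non-degeneracy of $f$ with respect to the compact faces of $\Gf$ guarantees that $\pi$ is an honest embedded resolution near the origin. To each ray $\R_{\geqslant0}v$ of the refined fan corresponds an exceptional toric divisor $E_v\subset Y$, with the multiplicities of $\pi^*f$ and the Jacobian along $E_v$ equal to $N_v=m(v)$ and $\nu_v-1=\sigma(v)-1$ respectively, so the candidate root of $b_f^0$ attached to $E_{v_0}$ is $\alpha_0:=-\sigma(v_0)/m(v_0)$, which by hypothesis~(i) lies in $(-1,0)$.

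Second, I would invoke the theorem of Kashiwara--Malgrange, refined by M.~Saito, describing the roots of the local $b$-function via the $V$-filtration along a resolution: the roots lie in $\{-\nu_v/N_v-k\mid v\text{ a ray},\,k\in\Zplus\}$, and a candidate is realized as an actual root precisely when it appears in the Hodge spectrum of $f$ at~$0$. Because $\alpha_0\in(-1,0)$, the spectral number $\alpha_0+1\in(0,1)$ sits in the top graded piece, and no shift $\alpha_0-k$ with $k\in\Zplusnul$ can reproduce it from another divisor. The problem therefore reduces to showing that this spectral number appears with positive multiplicity.

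Third, the computation of the multiplicity localizes on the stratification of $E_{v_0}$ induced by its intersections with the neighbouring components $E_{v_j}$. Away from the intersections, the contribution is the dimension of a cohomology group of a line bundle on an open toric subvariety of $E_{v_0}$, which is computable from the Newton polyhedron and is nonzero as soon as the facet $\tau_0$ is genuine. At a generic point of $E_{v_0}\cap E_{v_j}$, the resolution is locally the product of a smooth factor with the toric resolution of the 2-cone $\cone(v_0,v_j)$; the latter introduces a cyclic quotient singularity of order $\mult(v_0,v_j)$, and the characters of the quotient group acting on the relevant graded piece of the nearby cycle sheaf are of the form $\exp\!\bigl(2\pi i\tfrac{1}{\mult(v_0,v_j)}(\sigma(v_j)-\tfrac{\sigma(v_0)}{m(v_0)}m(v_j))\bigr)$. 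Hypothesis~(ii) states precisely that all of these characters are nontrivial; consequently the intersection strata do not contribute to the $\alpha_0$-eigenspace of the monodromy, and the open-stratum contribution from $E_{v_0}$ survives.

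The main obstacle I foresee is executing the Hodge-theoretic bookkeeping cleanly. The notion \lq contribution of $E_{v_0}$ to the spectrum\rq\ is encoded in the mixed Hodge module structure on the nearby cycles of $f$ along $E_{v_0}$, and the relevant weights and $V$-filtration levels must be tracked through the combinatorial data of the fan. Condition~(ii), interpreted as a non-resonance statement, is tailored exactly so that the restriction-corestriction maps between the top open stratum of $E_{v_0}$ and its boundary strata are injective on the $\alpha_0$-eigenspace; proving this injectivity is, to my mind, the technical heart of the argument.
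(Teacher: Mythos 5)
The paper does not prove Theorem~\ref{theoloesernondeg} --- it is cited verbatim from Loeser \cite{Loe90}, so there is no in-paper argument to compare against. Evaluating your sketch on its own terms: you correctly identify the right circle of ideas (toric embedded resolution adapted to \Gf, the Kashiwara--Malgrange $V$-filtration, and conditions (i)--(ii) as non-resonance conditions preventing cancellation of the contribution from $E_{v_0}$), and this is broadly the framework Loeser works in, though his original argument is phrased via Archimedean complex powers and residue computations rather than mixed Hodge modules.

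However, as a proof your write-up has genuine gaps that go beyond the ``bookkeeping'' you flag. First, the bridge ``a candidate is realized as an actual root precisely when it appears in the Hodge spectrum'' is not a theorem: the correct and usable direction (spectral numbers in $(0,1]$ produce roots of $b_f$, due to Budur/Saito) postdates Loeser by over a decade and is only a one-way implication, so you must be explicit that you only need that direction, and you must justify why a spectral contribution that is \emph{supported} on the open stratum of $E_{v_0}$ actually survives into the spectrum of $(f,0)$. Second, the non-vanishing claim for the open-stratum contribution --- ``nonzero as soon as the facet $\tau_0$ is genuine'' --- is asserted, not proved; the relevant count is of lattice points in the dilated simplex over $\tau_0$ in a specific residue class, and hypothesis~(i), i.e.\ $\sigma(v_0)<m(v_0)$ (equivalently $(1,\ldots,1)$ lies strictly below $\aff\tau_0$), is exactly what is needed to guarantee this count is positive, so it must be used here and not just noted as ensuring $\alpha_0\in(-1,0)$. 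Third, the character formula $\exp\!\bigl(2\pi i\,\tfrac{1}{\mult(v_0,v_j)}(\sigma(v_j)-\tfrac{\sigma(v_0)}{m(v_0)}m(v_j))\bigr)$ is written down without derivation, and it is precisely the place where hypothesis~(ii) enters; since you yourself call the resulting injectivity ``the technical heart,'' the argument as written is a plan rather than a proof of the theorem.
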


By the result of Malgrange \cite{malgrange} we mentioned earlier, under the conditions of the theorem, we also have that $e^{-2\pi i\sigma(v_0)/m(v_0)}$ is an eigenvalue of the local monodromy of $f$ at some point of $f^{-1}(0)\subset\C^n$ close to the origin. Loeser proves that this remains true if we replace Condition~(i) by $\sigma(v_0)/m(v_0)\not\in\Z$.

Let us go back to Theorem~\ref{theoAenL} and the $B_1$-facets. What can we say about the candidate poles of \Zof\ that are exclusively contributed by $B_1$-facets? In 1984 Denef announced the following theorem (in general dimension) on candidate poles of $Z_{f,\Phi}$ that are contributed by a single $B_1$-simplex.
%
%

\begin{theorem}\label{deneftheonp1}
Cfr.\ \textup{\cite{Den18,DenefSargos92}}.\footnote{The theorem was announced in \cite{Den18} and a proof is sketched in the real case in \cite{DenefSargos92}. This proof is adaptable to the $p$-adic case, but except for dimension three, a complete detailed proof has not been written down yet.} Let $f(x_1,\ldots,x_n)\in\Qp[x_1,\ldots,x_n]$ be a non\-zero polynomial with $f(0,\ldots,0)=0$, and $\Phi$ a Schwartz--Bruhat function on \Qpn. Suppose that $f$ is non-degenerated over \Qp\ with respect to all the compact faces of its Newton polyhedron, and suppose that the support of $\Phi$ is contained in a small enough neighborhood of the origin. Let $\tau_0,\tau_1,\ldots,\tau_r$ be all the facets of \Gf, and let $v_0,v_1,\ldots,v_r$ be the unique primitive vectors in $\Zplusn\setminus\{0\}$ that are perpendicular to $\tau_0,\tau_1,\ldots,\tau_r$, respectively. Suppose that $\tau_0$ is a $B_1$-simplex, that $\sigma(v_0)/m(v_0)\neq1$, and that $\sigma(v_0)/m(v_0)\neq\sigma(v_j)/m(v_j)$ for all $j\in\{1,\ldots,r\}$. Then there is no pole $s_0$ of $Z_{f,\Phi}$ with $\Re(s_0)=-\sigma(v_0)/m(v_0)$.
\end{theorem}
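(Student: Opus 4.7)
The plan is to combine the Denef--Hoornaert formula (Theorem~\ref{formdenhoor}, suitably generalised to an arbitrary Schwartz--Bruhat $\Phi$ supported near the origin) with a combinatorial cancellation argument driven by the $B_1$-structure of $\tau_0$. Since $\sigma(v_0)/m(v_0)\neq1$, every $L_\tau$ is holomorphic at $s_0$, so it suffices to prove that $\sum_\tau L_\tau(s)S(\Dtu)(s)$ is holomorphic at $s_0=-\sigma(v_0)/m(v_0)$ (modulo the imaginary period $2\pi i/(m(v_0)\log p)$). By the non-resonance hypothesis $\sigma(v_0)/m(v_0)\neq\sigma(v_j)/m(v_j)$, the only facet of \Gf\ that contributes to $s_0$ is $\tau_0$ itself, so only the compact faces of \Gf\ contained in $\tau_0$ can contribute.

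\textbf{Step 1: the contributing faces.} Write the vertices of $\tau_0$ as $A_1,\ldots,A_{n-1}$ in $\{x_i=0\}$ and $A_n$ in $\{x_i=1\}$. Any face of the simplex $\tau_0$ not containing $A_n$ lies entirely in $\{x_i=0\}$ and is therefore an unbounded face of \Gf; hence the compact subfaces of \Gf\ lying in $\tau_0$ are precisely the $2^{n-1}$ simplices $\sigma_S=\mathrm{conv}(\{A_n\}\cup\{A_k:k\in S\})$ for $S\subseteq\{1,\ldots,n-1\}$. For each such $\sigma_S$, its cone $\overbar{\Delta_{\sigma_S}}$ is generated by $v_0$ together with primitive normals $w_k$ ($k\notin S$) of the facets of \Gf\ adjacent to $\tau_0$ across $\mathrm{conv}(\tau_0\setminus\{A_k\})$. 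In particular every simplicial piece in a subdivision of $\Delta_{\sigma_S}$ has $v_0$ among its generators, so
\[
G_S(s):=\bigl(p^{\sigma(v_0)+m(v_0)s}-1\bigr)\,S(\Delta_{\sigma_S})(s)
\]
is holomorphic at $s_0$ by non-resonance. Consequently, the theorem reduces to the identity
\[
\sum_{S\subseteq\{1,\ldots,n-1\}}L_{\sigma_S}(s_0)\,G_S(s_0)\;=\;0.
\]

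\textbf{Step 2: exploit the $B_1$-structure.} The vertex $A_n$ supplies a monomial divisible by $x_i$ in $\overbar{f_{\sigma_S}}$, while the vertices $A_k$ ($k\in S$) contribute monomials independent of $x_i$. Thus $\overbar{f_{\sigma_S}}=x_i\,\overbar{u_S}(\hat{x_i})+\overbar{w_S}(\hat{x_i})$ where $\hat{x_i}$ denotes the remaining variables, so the fibration $x_i\mapsto(x_1,\ldots,\hat{x_i},\ldots,x_n)$ exhibits $N_{\sigma_S}$ as $(p-1)^{n-1}$ minus a term that depends on $S$ through $\overbar{w_S}$ only. On the parallelepiped side, after a unimodular change of coordinates adapted to $v_0$ and the $w_k$'s, each lattice point $h=h_0v_0+\sum_{k\notin S}h_k w_k\in\lozenge(v_0,(w_k)_{k\notin S})$ is determined by the residue of its $v_0$-component together with its image in $\lozenge((w_k)_{k\notin S})$. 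Because $v_0\cdot A_n$ differs from $v_0\cdot A_k$ by exactly one for the $i$-coordinate, the $B_1$-geometry forces a neat factorisation of $\Sigma(\delta)(s_0)$ into an $S$-independent \emph{parallelepiped} factor and an $S$-dependent \emph{boundary} factor.

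\textbf{Step 3: telescope.} Once both $L_{\sigma_S}(s_0)$ and $G_S(s_0)$ have been written as products with an $S$-dependent and an $S$-independent factor, the residual dependence on $S$ enters only through a simple inclusion-–exclusion over the subsets of $\{1,\ldots,n-1\}\setminus S$ (these index the extra primitive normals $w_k$ allowed into the cone $\Delta_{\sigma_S}$). The alternating sum telescopes to zero, yielding the required identity and hence the vanishing of the pole at $s_0$.

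\textbf{Expected main obstacle.} Steps 1 and 3 are formal; the genuine work lies in Step 2, namely in producing the right factorisation of the lattice-point sums $\Sigma(\delta)(s_0)$ over $\Z^n\cap\lozenge(v_0,(w_k)_{k\notin S})$. In the real/topological case Denef--Sargos replace these discrete sums by integrals, where integration by parts supplies the cancellation cheaply; in the $p$-adic case one must track integer points in three- (or higher-) dimensional fundamental parallelepipeds by hand, and the combinatorics of these points---in particular their coordinates in the $v_0$-direction versus the $w_k$-directions---becomes subtle as soon as the multiplicities $\mult(v_0,w_{k_1},\ldots,w_{k_m})$ exceed one. A careful study of the integer points of such parallelepipeds (as announced in the abstract for $n=3$) is therefore the key ingredient that must be developed before the telescoping in Step 3 can be carried through.
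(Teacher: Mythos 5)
The paper does not actually prove this theorem; it cites it with a footnote explaining that a complete detailed proof in arbitrary dimension has not been written down. However, Section~\ref{secgeval1art3} (Case~I) proves the $n=3$, local-$\Phi$ version, so that is what your sketch should be measured against.

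Your Step~1 contains a concrete error that already breaks the argument for $n=3$. You claim that any face of the simplex $\tau_0$ not containing the apex $A_n$ lies in $\{x_i=0\}$ and ``is therefore an unbounded face of $\Gf$,'' and you discard those faces. But faces of faces of a polyhedron are faces: the vertices $A_1,\ldots,A_{n-1}$, the base $\mathrm{conv}(A_1,\ldots,A_{n-1})$, and all intermediate subfaces of $\tau_0$ that lie in $\{x_i=0\}$ are \emph{compact} faces of $\Gf$. (The unbounded thing is the facet $\Gf\cap\{x_i=0\}$, not these subfaces.) Each of them is contained in $\tau_0$, hence has $v_0$ among the generators of its closed cone, hence has a genuine pole at $s_0$. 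In the paper's $n=3$ computation they supply the terms
\[
L_A(s_0)\frac{\Sigma(\delta_A)(s_0)}{(p^{\sigma_2+m_2s_0}-1)(p-1)},\quad
L_B(s_0)\frac{\Sigma(\delta_B)(s_0)}{(p^{\sigma_1+m_1s_0}-1)(p-1)},\quad
L_{[AB]}(s_0)\frac{\Sigma(\Delta_{[AB]})(s_0)}{p-1},
\]
and the vanishing of the residue is achieved precisely by balancing these against the contributions of $C$, $[AC]$, $[BC]$, and $\tau_0$. Dropping them leaves an identity that is false in general.

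Steps~2 and 3 are too schematic to assess as stated, but the route they describe is not the one the paper's Case~I follows: there is no inclusion--exclusion over subsets $S$. Instead one parameterizes $\Z^3\cap\lozenge(v_0,v_1,v_2)$ explicitly using the group structure developed in Section~\ref{fundpar}, writes $\Sigma_A$, $\Sigma_B$, $\Sigma_C$ as finite geometric series, and reduces the vanishing of the residue to the single integrality statement $\bigl(\xi_A\mu_B+\xi_B\mu_A+\Psi\bigr)/\mu_A\mu_B\in\Z$, which follows from the primitivity of $v_0$ together with the adjugate identity $\Psi v_0-\mu_Av_1-\mu_Bv_2=(0,0,\mu_C)$. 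Your instinct that the real difficulty sits in the integer-point combinatorics of the fundamental parallelepipeds is right, but the telescoping picture you propose is not where the cancellation comes from.
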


We can restate Denef's theorem as follows.

\begin{theorem}\label{deneftheonp1bis}
Cfr.\ \textup{\cite{Den18,DenefSargos92}}. Let $f(x_1,\ldots,x_n)\in\Qp[x_1,\ldots,x_n]$ be a non\-zero polynomial with $f(0,\ldots,0)=0$, and $\Phi$ a Schwartz--Bruhat function on \Qpn. Suppose that $f$ is non-degenerated over \Qp\ with respect to all the compact faces of its Newton polyhedron, and suppose that the support of $\Phi$ is contained in a small enough neighborhood of the origin. Let $s_0\neq-1$ be a real candidate pole of $Z_{f,\Phi}$. Suppose that exactly one facet of \Gf\ contributes to $s_0$ and that this facet is a $B_1$-simplex. Then there exists no pole of $Z_{f,\Phi}$ with real part $s_0$.
\end{theorem}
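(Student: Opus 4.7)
The plan is to apply the Denef--Hoornaert formula of Theorem~\ref{formdenhoor} (in its standard adaptation to $Z_{f,\Phi}$ for $\supp\Phi$ close to $0$) and then verify that the residue of $Z_{f,\Phi}$ at $s_0$ vanishes. By Definition~\ref{defcontrinlad} and Lemma~\ref{lemma_struc_Dftad}, a compact face $\tau$ of \Gf\ contributes to $s_0$ exactly when $v_0$ is among the primitive normals spanning \Dtu, i.e.\ when $\tau\subset\tau_0$. By hypothesis no other facet of \Gf\ contributes to $s_0$, so the remaining summands are holomorphic at $s_0$; because $s_0\neq-1$ is real, the factors $L_\tau$ are holomorphic there as well. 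Hence the expected order of $s_0$ is $1$, and it suffices to show that its residue vanishes.

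For each $\tau\subset\tau_0$ I would pick a simplicial decomposition $\{\delta_i\}_{i\in I_\tau}$ of \Dtu\ without introducing new rays, arranged so that $v_0$ is a generator of every $\delta_i$ in which $v_0$ appears. Each contributing simplicial cone then has the form $\delta=\cone(v_0,w_1,\ldots,w_{k-1})$, where the $w_j$ are primitive normals to non-contributing facets of \Gf; among the factors in the denominator of $S(\delta)(s)$ only $p^{\sigma(v_0)+m(v_0)s}-1$ vanishes at $s_0$, and it does so simply. A direct residue computation then reduces the statement $\Res_{s=s_0}Z_{f,\Phi}(s)=0$ to the purely combinatorial identity
\[
\sum_{\tau\subset\tau_0}L_\tau(s_0)\sum_{\delta\text{ contributing in }\{\delta_i\}_{i\in I_\tau}}\frac{\Sigma(\delta)(s_0)}{\prod_{j=1}^{k-1}\bigl(p^{\sigma(w_j)+m(w_j)s_0}-1\bigr)}=0.
\]

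To establish this cancellation I would exploit the rigid geometry of a $B_1$-simplex. With $\tau_0$ being $B_1$ for some variable $x_i$, the quantities $m(v_0)$, $\sigma(v_0)$ and the $i$-th coordinate $v_{0,i}$ of $v_0$ are tied by a single explicit linear relation (reading off from the vertex of $\tau_0$ lying in $\{x_i=1\}$), and the equation $p^{\sigma(v_0)+m(v_0)s_0}=1$ promotes this to a multiplicative identity governing $\Sigma(\delta)(s_0)$. Concretely, on each fundamental parallelepiped $\lozenge(v_0,w_1,\ldots,w_{k-1})$ a suitable translation in a fixed $e_i$-direction (modulo the lattice spanned by $v_0,w_1,\ldots,w_{k-1}$) should induce a fixed-point-free involution on its integer points whose two members contribute to $\Sigma(\delta)(s_0)$ with opposite weights; these involutions then glue across adjacent simplicial cones and across subfaces $\tau\subset\tau_0$ into a global pairing that collapses the double sum above to zero.

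The main obstacle will be the bookkeeping behind this cancellation across the various strata. One must match integer points lying in $k$-dimensional fundamental parallelepipeds of different dimensions as $\tau$ ranges over the subfaces of $\tau_0$, while keeping the weights $L_\tau(s_0)\prod_{j=1}^{k-1}\bigl(p^{\sigma(w_j)+m(w_j)s_0}-1\bigr)^{-1}$ in register across neighbouring cones. In the topological analogue of Lemahieu--Van Proeyen~\cite{LVmcndss} only integrated data (lattice-point counts and cone volumes) survive the residue, and the cancellation is almost formal; in the $p$-adic setting each lattice point of each fundamental parallelepiped must be tracked individually. This is precisely the reason why the remainder of the present paper is devoted to a painstaking three-dimensional analysis of integer points in fundamental parallelepipeds, and why in general $n$ the only available proof is the real-case sketch of \cite{DenefSargos92}.
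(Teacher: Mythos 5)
Your outline diverges from what the paper actually does, and the key step you propose does not hold up as stated.

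First, a structural point: the paper does not prove Theorem~\ref{deneftheonp1bis} in general dimension $n$. It is attributed to Denef, with a footnote explaining that a complete $p$-adic proof has only been written down for $n=3$; the $n=3$ argument is precisely Case~I of Section~\ref{secgeval1art3}. So the only proof available to compare against is that case.

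The Case~I argument is not a term-by-term cancellation of lattice points. After reducing to $R_1'=\frac{1}{1-p^{-s_0-1}}\bigl(\frac{\Sigma_A}{F_2}+\frac{\Sigma_B}{F_1}+1\bigr)+\frac{\Sigma_C}{F_1F_2}$, the paper uses the explicit parametrization of the lattice points in $H_A$, $H_B$, $H_C$ coming from Section~\ref{fundpar} to write each $\Sigma_V$ as a closed geometric-series expression (Equations~\eqref{formsigmaAfincaseen}, \eqref{formsigmaBfincaseen}, \eqref{formsigmaCfincaseen}). The engine is that $p^{w\cdot v_0}=1$ at a contributed candidate pole, so the $v_0$-coordinate of a point only enters through its fractional part, and the triple sum over the parameters $(i,j,k)$ factorizes. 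The final vanishing $R_1'=0$ is then an identity of rational expressions, whose crux is the number-theoretic fact~\eqref{eqprovecaseeen}, namely $\xi_A\mu_B+\xi_B\mu_A+\Psi\equiv0\bmod\mu_A\mu_B$, which comes from the primitivity of $v_0$ and the adjugate identity~\eqref{identiteitreccaseeen}.

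Your proposed mechanism is different and, as formulated, does not work. Translation by $e_i$ modulo $\Lambda=\Z v_0+\Z w_1+\cdots+\Z w_{k-1}$ is not an involution unless $2e_i\in\Lambda$, which there is no reason to expect. Moreover, for a \emph{real} candidate pole $s_0$ the individual terms $p^{\sigma(h)+m(h)s_0}$ in a fixed $\Sigma(\delta)(s_0)$ are strictly positive real numbers, so no two of them can ``contribute with opposite weights''; the cancellation in the paper only appears after summing entire geometric series and combining the faces with the $L_\tau$ and denominator factors. Finally, the theorem's conclusion concerns \emph{every} candidate pole with real part $s_0$, including the imaginary translates $s_0+2k\pi i/(m_0\log p)$; the proposal only addresses the residue at $s_0$ itself. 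Until the pairing is described so precisely that it is demonstrably a bijection with the claimed sign property across the relevant strata, and the argument covers the imaginary translates, the proposal has a genuine gap.
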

%
%

Denef noticed that one cannot expect this theorem to be generally true for candidate poles that are contributed by several $B_1$-simplices. He gave the following counterexample\footnote{Denef in fact showed that for $f=x^n+xy+y^m+z^2$, the candidate pole $-3/2$ (which is contributed by two $B_1$-simplices) is an actual pole of $Z_{f,\Phi}$ for $n,m$ big enough.} in dimension three. We will discuss the example in detail, as it also illustrates Denef and Hoornaert's formula.

%
%
\begin{figure}
\centering
\psset{unit=.0888\textwidth}
\subfigure[Newton polyhedron \Gf\ of $f=x^3+xy+y^2+z^2$ and its faces]{
\psset{Beta=20}\psset{Alpha=65}
\begin{pspicture}(-2.85,-2.6)(4.82,5)
\pstThreeDCoor[xMin=0,yMin=0,zMin=0,xMax=6,yMax=5,zMax=5,labelsep=5pt,linecolor=black,linewidth=1.0pt]
{
\psset{linecolor=black,linewidth=.5pt,linestyle=dashed,subticks=1}
\pstThreeDLine(1,0,0)(1,1,0)\pstThreeDLine(1,1,0)(0,1,0)
}
{
\psset{dotstyle=none,dotscale=1,drawCoor=false}
\psset{linecolor=lightgray,opacity=.6,linewidth=1.2pt,linejoin=1}
\pstThreeDLine*(0,0,5)(0,0,2)(3,0,0)(6,0,0)(6,0,5)(0,0,5)
\pstThreeDLine*(0,0,5)(0,0,2)(0,2,0)(0,5,0)(0,5,5)(0,0,5)
\pstThreeDLine*(6,0,0)(3,0,0)(1,1,0)(0,2,0)(0,5,0)(6,5,0)(6,0,0)
}
{
\psset{dotstyle=none,dotscale=1,drawCoor=false}
\psset{linecolor=black,linewidth=1.2pt,linejoin=1,arrows=->}
\psset{fillstyle=none}
\pstThreeDLine(0,0,2)(0,0,5)
\pstThreeDLine(3,0,0)(6,0,0)
\pstThreeDLine(0,2,0)(0,5,0)
}
{
\psset{dotstyle=none,dotscale=1,drawCoor=false}
\psset{linecolor=black,linewidth=1.2pt,linejoin=1}
\psset{fillcolor=lightgray,opacity=.6,fillstyle=solid}
\pstThreeDLine(0,0,2)(1,1,0)(3,0,0)(0,0,2)(0,2,0)(1,1,0)
}
\pstThreeDPut[pOrigin=t](3,0,-0.2){$A$}
\pstThreeDPut[pOrigin=t](3,0,-0.53){\scriptsize$(3,0,0)$}
\pstThreeDPut[pOrigin=t](1,1,-0.2){$B$}
\pstThreeDPut[pOrigin=t](1,1,-0.53){\scriptsize$(1,1,0)$}
\pstThreeDPut[pOrigin=t](0,2,-0.2){$C$}
\pstThreeDPut[pOrigin=t](0,2,-0.53){\scriptsize$(0,2,0)$}
\pstThreeDPut[pOrigin=bl](0,0.05,2.15){$D${\scriptsize\,$(0,0,2)$}}
\pstThreeDPut[pOrigin=c](1.33,0.33,0.66){$\tau_0$}
\pstThreeDPut[pOrigin=c](0.33,1,0.66){$\tau_1$}
\pstThreeDPut[pOrigin=c](0,2.5,2.5){$\tau_2$}
\pstThreeDPut[pOrigin=c](3,0,2.5){$\tau_3$}
\pstThreeDPut[pOrigin=c](4,3.333,0){$\tau_4$}
\pstThreeDPut[pOrigin=bl](6,0,0.35){$l_x$}
\pstThreeDPut[pOrigin=br](0,5,0.2){$l_y$}
\pstThreeDPut[pOrigin=tl](0,0.15,4.99){$l_z$}
\end{pspicture}
}
\\[+2.25ex]
\subfigure[Cones \Dtu\ associated to the faces $\tau$ of \Gf]{
\psset{unit=.058\textwidth}
\psset{Beta=10}\psset{Alpha=35}
\begin{pspicture}(-9.75,-2)(7.22,10.6)
\pstThreeDCoor[xMin=0,yMin=0,zMin=0,xMax=10,yMax=10,zMax=10,nameX={},nameY={},nameZ={},labelsep=5pt,linecolor=gray,linewidth=1.0pt]
{
\psset{linecolor=gray,linewidth=.5pt,linejoin=1,linestyle=dashed,fillcolor=lightgray,fillstyle=none}
}
{
\psset{linecolor=black,linewidth=1.1pt,linejoin=1,fillcolor=lightgray,fillstyle=none,arrows=->}
\pstThreeDLine(0,0,0)(10,0,0)
\pstThreeDLine(0,0,0)(0,10,0)
\pstThreeDLine(0,0,0)(0,0,10)
}
{
\psset{labelsep=2.5pt}
\uput[90](0,1.2){\psframebox*[framesep=1.5pt,framearc=0]{\darkgray$v_4$}}
}
{
\psset{linecolor=black,linewidth=1.1pt,linejoin=1,fillcolor=lightgray,fillstyle=none}
\pstThreeDLine(0,0,0)(2.22,4.44,3.33)
\pstThreeDLine(0,0,0)(3.33,3.33,3.33)
}
{
\psset{linecolor=darkgray,linewidth=1.2pt,linejoin=1,arrows=->,arrowscale=1,fillcolor=lightgray,fillstyle=none}
\pstThreeDLine(0,0,0)(2,4,3)
\pstThreeDLine(0,0,0)(1.2,1.2,1.2)
\pstThreeDLine(0,0,0)(1.2,0,0)
\pstThreeDLine(0,0,0)(0,1.2,0)
\pstThreeDLine(0,0,0)(0,0,1.2)
}
{
\psset{linecolor=white,linewidth=4pt,linejoin=1,fillcolor=lightgray,fillstyle=none}
\pstThreeDLine(3.10,3.55,3.33)(2.45,4.22,3.33)
\pstThreeDLine(3,4,3)(1.66,6.66,1.66)
}
{
\psset{linecolor=black,linewidth=.4pt,linejoin=1,fillcolor=lightgray,fillstyle=none}
\pstThreeDLine(10,0,0)(3.33,3.33,3.33)
\pstThreeDLine(0,10,0)(2.22,4.44,3.33)
\pstThreeDLine(0,0,10)(0,10,0)(10,0,0)(0,0,10)
\pstThreeDLine(0,0,10)(2.22,4.44,3.33)(3.33,3.33,3.33)(0,0,10)
}
{
\psset{linecolor=black,linewidth=.4pt,linejoin=1,linestyle=dashed,fillcolor=lightgray,fillstyle=none}
\pstThreeDLine(3.33,3.33,3.33)(1.665,6.665,1.665)\pstThreeDLine(1.665,6.665,1.665)(0,10,0)
}
{
\psset{labelsep=3pt}
\uput[-30](.33,1){\darkgray$v_0$}
\uput[-90](-1,-.16){\darkgray$v_2$}
\uput[-90](.7,-.21){\darkgray$v_3$}
}
{
\psset{labelsep=2.5pt}
\uput[180](-.3,.8){\darkgray$v_1$}
}
{
\psset{labelsep=5pt}
\uput[27](2.875,4.215){$\Delta_{l_x}$}
\uput[146](-4.1,4.425){$\Delta_{l_y}$}
\uput[-93](-1.225,-1.21){$\Delta_{l_z}$}
\uput[0](5.75,-1.42){$y,\Delta_{\tau_3}$}
\uput[180](-8.2,-1){$\Delta_{\tau_2},x$}
}
{
\psset{labelsep=4.7pt}
}
\rput(1.9,3.50){$\Delta_A$}
\rput(-.0333,4.4){\psframebox*[framesep=1.3pt,framearc=0]{$\Delta_B$}}
\rput(-2.6,3.50){$\Delta_C$}
\rput(1.05,1.79){$\delta_1$}
\rput(2.475,.53){\psframebox*[framesep=1.3pt,framearc=0]{$\delta_2$}}
\rput(-1.7,.4){$\delta_3$}
%
%
\pstThreeDNode(2.22,4.44,3.33){dt0}
\pstThreeDNode(3.33,3.33,3.33){dt1}
\pstThreeDNode(1.11,2.22,6.65){AB}
\pstThreeDNode(1.11,7.20,1.66){AD}
\pstThreeDNode(1.66,1.66,6.65){BC}
\pstThreeDNode(2.664,3.996,3.33){BD}
\pstThreeDNode(6.65,1.66,1.66){CD}
\rput[l](-9.7,4.6){\rnode{CDlabel}{$\Delta_{[CD]}$}}
\rput[Bl](-9.7,10.18){\rnode{dt1label}{$\Delta_{\tau_1}$}}
\rput[B](-4.8,10.18){\rnode{BClabel}{$\Delta_{[BC]}$}}
\rput[B](0,10.18){$z,\Delta_{\tau_4}$}
\rput[B](3.3,10.18){\rnode{ABlabel}{$\Delta_{[AB]}$}}
\rput[r](7.24,6.3){\rnode{dt0label}{$\Delta_{\tau_0}$}}
\rput[Br](7.24,10.18){\rnode{BDlabel}{$\Delta_{[BD]}$}}
\rput[r](7.24,2.4){\rnode{ADlabel}{$\Delta_{[AD]}$}}
\nccurve[linewidth=.3pt,nodesepB=2.5pt,nodesepA=1pt,angleA=-115,angleB=0]{->}{dt0label}{dt0}
\ncline[linewidth=.3pt,nodesepB=2.5pt,nodesepA=2pt]{->}{dt1label}{dt1}
\ncline[linewidth=.3pt,nodesepB=2pt,nodesepA=1.5pt]{->}{ABlabel}{AB}
\ncline[linewidth=.3pt,nodesepB=2pt,nodesepA=1pt]{->}{ADlabel}{AD}
\ncline[linewidth=.3pt,nodesepB=2pt,nodesepA=1pt]{->}{BClabel}{BC}
\ncline[linewidth=.3pt,nodesepB=2pt,nodesepA=1.5pt]{->}{BDlabel}{BD}
\ncline[linewidth=.3pt,nodesepB=2pt,nodesepA=2.5pt]{->}{CDlabel}{CD}
\end{pspicture}
}
\psset{Beta=30}\psset{Alpha=45}
\caption{Combinatorial data associated to $f=x^3+xy+y^2+z^2$}
\label{figvoorbeeldart3}
\end{figure}
%
%

\begin{example}[Actual pole of \Zof\ only contributed by $B_1$-facets]\label{denefvbart3}
{\normalfont [Denef, 1984]}. Let $p\geqslant3$ be a prime number and consider $f=x^3+xy+y^2+z^2\in\Zp[x,y,z]$. The Newton polyhedron \Gf\ of $f$ and the cones associated to its faces are drawn in Figure~\ref{figvoorbeeldart3}. One checks that $f$ is non-degenerated over \Fp\ with respect to all the compact faces of \Gf\ ($p\neq2$).

Table~\ref{tabelfacetsart3} gives an overview of the facets $\tau_j$ of \Gf, their associated numerical data $(m(v_j),\sigma(v_j))$, and their associated candidate poles of \Zof. Facets $\tau_0$ and $\tau_1$ are $B_1$-simplices, while $\tau_2,\tau_3,\tau_4$ lie in coordinate hyperplanes and hence do not yield any candidate poles. Poles of \Zof\ are therefore among the numbers
\begin{equation*}
s_k=-\frac{3}{2}+\frac{k\pi i}{3\log p},\quad s'_l=-1+\frac{2l\pi i}{\log p};\qquad k,l\in\Z.
\end{equation*}
The candidate poles $s_k$ with $3\nmid k$ are only contributed by $\tau_0$ and have expected order one, while the $s_k$ with $3\mid k$ are contributed by $\tau_0$ and $\tau_1$; the latter have expected order two since the contributing facets $\tau_0$ and $\tau_1$ share the edge $[BD]$.

We will now calculate \Zof\ using Theorem~\ref{formdenhoor} in order to find out which candidate poles are actually poles. Table~\ref{tabelconesart3} provides an overview of \Gf's compact faces and their associated cones and all the data needed to fill in the theorem's formula. The numbers $N_{\tau}$ that appear in the $L_{\tau}(s)$ are listed in the third column. Hereby $N_0$ and $N_1$ represent the numbers
\begin{align*}
N_0&=\#\left\{(x,z)\in(\Fpcross)^2\;\middle\vert\;x^3+z^2=0\right\}\qquad\text{and}\\
N_1&=\#\left\{(y,z)\in(\Fpcross)^2\;\middle\vert\;y^2+z^2=0\right\}.
\end{align*}

The $S(\Dtu)(s)$ can be calculated based on the data on the cones \Dtu\ in the right-hand side of Table~\ref{tabelconesart3}. We find that \Dtu\ is simplicial for every (compact) face $\tau$ of \Gf, except for $\tau=D$. Those cones \Dtu, $\tau\neq D$, are even simple, except for $\Delta_A,\Delta_B,\Delta_{[AB]}$, whose corresponding fundamental parallelepipeds contain besides the origin also the integral point $(1,2,2)$ (see Table~\ref{tabelfacetsart3}). In order to calculate $S(\Delta_D)(s)$, we choose to decompose $\Delta_D$ into the simplicial cones $\delta_1,\delta_2,\delta_3$ that happen to be simple as well (see Table~\ref{tabelconesart3}).

We now obtain \Zof\ as
\begin{equation*}
\Zofs=\sum_{\substack{\tau\mathrm{\ compact}\\\mathrm{face\ of\ }\Gf}}L_{\tau}(s)S(\Dtu)(s)=
\frac{(p-1)(p^{s+3}-1)}{p^3(p^{s+1}-1)(p^{2s+3}-1)}.
\end{equation*}
Note that \Zofs\ does not depend on $N_0$ or $N_1$. We conclude that the candidate poles that are only contributed by a single $B_1$-simplex are not poles. On the other hand we find that the numbers $s_{3k}$, despite being only contributed by $B_1$-simplices, are indeed poles, although their order is lower than expected.
\end{example}

%
%
\begin{table}
\centering
\caption{Numerical data associated to $(1,2,2)$ and the facets of \Gf}\label{tabelfacetsart3}
{
\setlength{\belowrulesep}{.65ex}
\setlength{\aboverulesep}{.4ex}
\setlength{\belowbottomsep}{0ex}
\setlength{\defaultaddspace}{.5em}
\begin{tabular}{*{6}{l}}\toprule
facet $\tau$ & $\tau$ com- & primitive vector & \multirow{2}*{$m(v)$} & \multirow{2}*{$\sigma(v)$} & candidate poles of \Zof\\
of \Gf & pact? & $v\perp\tau$ & & & contributed by $\tau$\\\midrule
$\tau_0$ & yes & $v_0(2,4,3)$ & $6$ & $9$ & ${\ds -\frac32+\frac{k\pi i}{3\log p};\ k\in\Z}$\\\addlinespace
$\tau_1$ & yes & $v_1(1,1,1)$ & $2$ & $3$ & ${\ds -\frac32+\frac{k\pi i}{\log p};\ k\in\Z}$\\\addlinespace
$\tau_2$ & no  & $v_2(1,0,0)$ & $0$ & $1$ & none\\\addlinespace
$\tau_3$ & no  & $v_3(0,1,0)$ & $0$ & $1$ & none\\\addlinespace
$\tau_4$ & no  & $v_4(0,0,1)$ & $0$ & $1$ & none\\\midrule[\heavyrulewidth]
 & & integral point $h$ & $m(h)$ & $\sigma(h)$ & \\\cmidrule{3-5}
 & & $(1,2,2)$ & $3$ & $5$ & \\\bottomrule
\end{tabular}
}
\end{table}
%
%

%
%
\begin{sidewaystable}
\caption{Data associated to the compact faces of \Gf\ and their associated cones}\label{tabelconesart3}
{
\setlength{\belowrulesep}{.8ex}
\setlength{\aboverulesep}{.7ex}
\setlength{\belowbottomsep}{0ex}
\setlength{\defaultaddspace}{.57em}
\begin{tabular}{*{9}{l}}\toprule
face $\tau$ & \multirow{2}*{\fbart} & \multirow{2}*{$N_{\tau}$} & \multirow{2}*{$L_{\tau}(s)$} & cone & $\dim$ & primitive & $\mult$ & \multirow{2}*{$S(\Dtu)(s),S(\delta_i)(s)$}\\
of \Gf & & & & $\Dtu,\delta_i$ & $\Dtu,\delta_i$ & generators & $\Dtu,\delta_i$ & \\\midrule
$A$ & $x^3$ & $0$ & $\bigl(\frac{p-1}{p}\bigr)^3$ & $\Delta_A$ & $3$ & $v_0,v_3,v_4$ & $2$ & $\frac{1+p^{3s+5}}{(p^{6s+9}-1)(p-1)^2}$\\\addlinespace
$B$ & $xy$ & $0$ & $\bigl(\frac{p-1}{p}\bigr)^3$ & $\Delta_B$ & $3$ & $v_0,v_1,v_4$ & $2$ & $\frac{1+p^{3s+5}}{(p^{6s+9}-1)(p^{2s+3}-1)(p-1)}$\\\addlinespace
$C$ & $y^2$ & $0$ & $\bigl(\frac{p-1}{p}\bigr)^3$ & $\Delta_C$ & $3$ & $v_1,v_2,v_4$ & $1$ & $\frac{1}{(p^{2s+3}-1)(p-1)^2}$\\\midrule[.03em]
 & & & & $\delta_1$ & $3$ & $v_0,v_1,v_3$ & $1$ & $\frac{1}{(p^{6s+9}-1)(p^{2s+3}-1)(p-1)}$\\\addlinespace
 & & & & $\delta_2$ & $2$ & $v_1,v_3$     & $1$ & $\frac{1}{(p^{2s+3}-1)(p-1)}$\\\addlinespace
 & & & & $\delta_3$ & $3$ & $v_1,v_2,v_3$ & $1$ & $\frac{1}{(p^{2s+3}-1)(p-1)^2}$\\\addlinespace
$D$ & $z^2$ & $0$ & $\bigl(\frac{p-1}{p}\bigr)^3$ & $\Delta_D$ & $3$ & $v_0,v_1,v_2,v_3$ & -- & $\frac{p^{6s+10}-1}{(p^{6s+9}-1)(p^{2s+3}-1)(p-1)^2}$\\\midrule[.03em]
$[AB]$ & $x^3+xy$ & $(p-1)^2$ & $\bigl(\frac{p-1}{p}\bigr)^3-\bigl(\frac{p-1}{p}\bigr)^2\frac{p^s-1}{p^{s+1}-1}$ & $\Delta_{[AB]}$ & $2$ & $v_0,v_4$ & $2$ & $\frac{1+p^{3s+5}}{(p^{6s+9}-1)(p-1)}$\\\addlinespace
$[BC]$ & $xy+y^2$ & $(p-1)^2$ & $\bigl(\frac{p-1}{p}\bigr)^3-\bigl(\frac{p-1}{p}\bigr)^2\frac{p^s-1}{p^{s+1}-1}$ & $\Delta_{[BC]}$ & $2$ & $v_1,v_4$ & $1$ & $\frac{1}{(p^{2s+3}-1)(p-1)}$\\\addlinespace
$[AD]$ & $x^3+z^2$ & $(p-1)N_0$ & $\bigl(\frac{p-1}{p}\bigr)^3-\frac{(p-1)N_0}{p^2}\frac{p^s-1}{p^{s+1}-1}$ & $\Delta_{[AD]}$ & $2$ & $v_0,v_3$ & $1$ & $\frac{1}{(p^{6s+9}-1)(p-1)}$\\\addlinespace
$[BD]$ & $xy+z^2$ & $(p-1)^2$ & $\bigl(\frac{p-1}{p}\bigr)^3-\bigl(\frac{p-1}{p}\bigr)^2\frac{p^s-1}{p^{s+1}-1}$ & $\Delta_{[BD]}$ & $2$ & $v_0,v_1$ & $1$ & $\frac{1}{(p^{6s+9}-1)(p^{2s+3}-1)}$\\\addlinespace
$[CD]$ & $y^2+z^2$ & $(p-1)N_1$ & $\bigl(\frac{p-1}{p}\bigr)^3-\frac{(p-1)N_1}{p^2}\frac{p^s-1}{p^{s+1}-1}$ & $\Delta_{[CD]}$ & $2$ & $v_1,v_2$ & $1$ & $\frac{1}{(p^{2s+3}-1)(p-1)}$\\\addlinespace
$\tau_0$ & $x^3+xy+z^2$ & $(p-1)^2-N_0$ & $\bigl(\frac{p-1}{p}\bigr)^3-\frac{(p-1)^2-N_0}{p^2}\frac{p^s-1}{p^{s+1}-1}$ & $\Delta_{\tau_0}$ & $1$ & $v_0$ & $1$ & $\frac{1}{p^{6s+9}-1}$\\\addlinespace
$\tau_1$ & $xy+y^2+z^2$ & $(p-1)^2-N_1$ & $\bigl(\frac{p-1}{p}\bigr)^3-\frac{(p-1)^2-N_1}{p^2}\frac{p^s-1}{p^{s+1}-1}$ & $\Delta_{\tau_1}$ & $1$ & $v_1$ & $1$ & $\frac{1}{p^{2s+3}-1}$\\\bottomrule
\end{tabular}
}
\end{sidewaystable}
%
%

In situations as in Example~\ref{denefvbart3} that are not covered by Theorem~\ref{theoAenL}, one needs to prove that the pole in question induces a monodromy eigenvalue. This is done by Lemahieu and Van Proeyen in the following proposition and forms the second step in the proof of Theorem~\ref{mcigusandss}. Note that the two $B_1$-simplices in the example are $B_1$ with respect to different variables.

\begin{proposition}\label{propAenL}
Cfr.\ \textup{\cite[Theorem~15]{LVmcndss}}. Let $f$ and $p$ be as in Theorem~\ref{mcigusandss}. Let $s_0$ be a candidate pole of \Zof\ and suppose that $s_0$ is contributed by two $B_1$-facets of \Gf\ that are \underline{not} $B_1$ for a same variable and that have an edge in common. Then $e^{2\pi i\Re(s_0)}$ is an eigenvalue of the local monodromy of $f$ at some point of the surface $f^{-1}(0)\subset\C^3$ close to the origin.
\end{proposition}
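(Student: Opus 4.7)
The plan is to exhibit a point $P\in f^{-1}(0)$ close to (but distinct from) the origin at which the local monodromy of $f$ has $e^{2\pi i\Re(s_0)}$ as an eigenvalue. The main tool is A'Campo's formula for the zeta function of monodromy, combined with Varchenko's combinatorial formula in terms of \Gf.

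Let $\tau_0,\tau_1$ be the two $B_1$-facets contributing to $s_0$, with perpendicular primitive vectors $v_0,v_1\in\Zplusn\setminus\{0\}$. After relabeling we may assume $\tau_0$ is $B_1$ for $x$ and $\tau_1$ is $B_1$ for $y$, and we let $e=\tau_0\cap\tau_1$ denote their shared edge. The first step is a case analysis of the admissible configurations, splitting according to whether each $\tau_i$ is a compact $B_1$-simplex or a non-compact $B_1$-facet and according to the position of $e$. In each case one argues that $e$ must have one endpoint in $\{x=1\}\cap\{y=1\}$ and the opposite endpoint on (or tending to infinity along) the $z$-axis, from which one reads off explicit formulas for $v_0$ and $v_1$ and verifies that both contribute to $s_0$.

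Next, I pick a regular simplicial refinement of the dual fan of \Gf\ that contains the rays through $v_0$ and $v_1$ together with the $2$-cone $\cone(v_0,v_1)$ dual to $e$. Non-degeneracy of $f$ over \C\ ensures that the associated toric morphism $\pi\colon Y\to\C^3$ is an embedded resolution of $f^{-1}(0)$ with normal-crossings divisor, and the exceptional components $E_{v_0},E_{v_1}$ appear with multiplicities $m(v_0),m(v_1)$ in $\pi^{*}f$. I choose $P$ to be a generic point on a one-dimensional stratum near the origin, situated so that $\pi^{-1}(P)$ meets $E_{v_0}$ and $E_{v_1}$ but avoids the deeper coordinate strata along which the Euler-characteristic contributions of these divisors cancel at the origin. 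A'Campo's formula then reads
\begin{equation*}
\zeta_{f,P}(t)=\prod_{i}\bigl(1-t^{N_i}\bigr)^{\chi(E_i^{\circ}\cap\pi^{-1}(P))},
\end{equation*}
and each Euler characteristic is computable combinatorially via the local geometry of \Gf\ at $P$.

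The final step is to show that the Euler-characteristic contribution of $E_{v_0}$ (or $E_{v_1}$) above $P$ is nonzero. The crucial use of the hypothesis comes here: precisely because $\tau_0$ and $\tau_1$ are $B_1$ for \emph{different} variables, the cancellation that kills these contributions over the origin (and which underlies the invisibility of a single $B_1$-simplex in Theorem~\ref{deneftheonp1bis}) no longer holds at the chosen $P$. Consequently the factor $(1-t^{m(v_0)})$ survives in $\zeta_{f,P}(t)$ with nonzero exponent, whence $e^{-2\pi i\sigma(v_0)/m(v_0)}=e^{2\pi i\Re(s_0)}$ appears as an eigenvalue of the local monodromy of $f$ at $P$. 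The main obstacle I expect is the case-by-case Euler-characteristic bookkeeping, especially when one or both $\tau_i$ are non-compact $B_1$-facets: in each geometric configuration one must both identify the correct $P$ and verify that the cancellation genuinely fails there, which is exactly where the ``different variables'' hypothesis enters in an essential way.
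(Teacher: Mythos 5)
The paper's own ``proof'' is a one-paragraph citation: it simply notes that the statement is part of Theorem~15 of Lemahieu--Van Proeyen, whose argument (for the topological zeta function) uses Varchenko's combinatorial formula for the monodromy zeta function in terms of the Newton polyhedron, and observes that the passage from $\Ztopof$ to $\Zof$ is harmless because contributing facets and their real candidate poles match up. Your proposal is a genuinely different route, going directly through A'Campo's formula on an explicit toric resolution rather than through Varchenko's combinatorial reformulation. That could in principle be made to work, but as written there is a concrete gap and the decisive step is asserted rather than proved.

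The concrete gap is in your choice of $P$. You pick $P$ ``on a one-dimensional stratum near the origin, situated so that $\pi^{-1}(P)$ meets $E_{v_0}$ and $E_{v_1}$.'' But if both $\tau_0$ and $\tau_1$ are compact $B_1$-simplices, the primitive normals $v_0$ and $v_1$ typically have all three coordinates strictly positive, so $E_{v_0}$ and $E_{v_1}$ lie entirely over the origin of $\C^3$ and $\pi^{-1}(P)$ does \emph{not} meet them for any $P\neq 0$. This is exactly what happens in Denef's Example~\ref{denefvbart3}: $f=x^3+xy+y^2+z^2$, $v_0=(2,4,3)$, $v_1=(1,1,1)$, and the eigenvalue $e^{2\pi i(-3/2)}=-1$ really occurs in the monodromy at the origin itself (one checks this via Sebastiani--Thom after putting $x^3+xy+y^2$ into the normal form $uv$), not at a nearby point of a one-dimensional stratum. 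So in the central case your argument should actually be at $P=0$, not at a nearby point, and the subsequent ``avoid the deeper strata'' heuristic does not apply. Beyond that, the crux of the whole matter---that the exponent $\chi(E_{v_0}^{\circ}\cap\pi^{-1}(P))$ is nonzero, i.e.\ that no cancellation occurs---is exactly where Lemahieu--Van Proeyen do real work (via Varchenko's formula and a careful case analysis of volumes of faces), and your write-up explicitly defers it (``the main obstacle I expect is the case-by-case Euler-characteristic bookkeeping''). Without that bookkeeping the claim is unproved; with it, you would essentially be reproving the relevant part of \cite[Thm.~15]{LVmcndss}, which is what the paper cites.
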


The proof of the proposition again uses Varchenko's formula and is part of the proof of Theorem~15 in \cite{LVmcndss}. In this paper one considers the local topological zeta function \Ztopof\ instead of \Zof; however, the candidate poles of \Ztopof\ are precisely the real parts of the candidate poles of \Zof, and whenever a facet of \Gf\ contributes to a candidate pole $s_0$ of \Zof, it contributes to the candidate pole $\Re(s_0)$ of \Ztopof\ as well; therefore Proposition~\ref{propAenL} follows in the same way.

In order to conclude Theorem~\ref{mcigusandss}, we want to prove that the remaining candidate poles, i.e., candidate poles only contributed by $B_1$-facets, but not satisfying the conditions of Proposition~\ref{propAenL}, are actually not poles. The result is---under slightly different conditions and in dimension three---an optimization of Theorem~\ref{deneftheonp1bis}, partially allowing the candidate pole $s_0$ to be contributed by several $B_1$-facets, including non-compact ones. This is the final step of the proof.

\begin{theorem}[On candidate poles of \Zof\ only contributed by $B_1$-facets]\label{maintheoartdrie}
Let $f(x,y,z)\in\Zp[x,y,z]$ be a nonzero polynomial in three variables with $f(0,0,0)=0$. Suppose that $f$ is non-degenerated over \Fp\ with respect to all the compact faces of its Newton polyhedron. Let $s_0$ be a candidate pole of \Zof\ with $\Re(s_0)\neq-1$, and suppose that $s_0$ is only contributed by $B_1$-facets of \Gf. Assume also that for any pair of contributing $B_1$-facets, we have that
\begin{itemize}
\item[-] either they are $B_1$-facets for a same variable,
\item[-] or they have at most one point in common.
\end{itemize}
Then $s_0$ is not a pole of \Zof.
\end{theorem}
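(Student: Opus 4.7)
The plan is to exploit Theorem~\ref{formdenhoor}. Since $\Re(s_0)\neq-1$, each function $L_\tau(s)$ is holomorphic at $s_0$, and consequently a pole of \Zof\ at $s_0$ can only be produced by the sum $\sum_\tau L_\tau(s)S(\Dtu)(s)$ taken over the compact faces $\tau$ of \Gf\ that contribute to $s_0$. By definition such faces are exactly the compact faces contained in some contributing facet of \Gf, which by hypothesis are all $B_1$-facets. What I must show is that the total principal part of $\sum_\tau L_\tau S(\Dtu)$ at $s_0$ vanishes.

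First I would organise the contributing $B_1$-facets into (at most three) families, indexed by the variable $x$, $y$, or $z$ for which they are $B_1$. By the hypothesis of the theorem, two contributing $B_1$-facets from different families share at most a single point. Hence the union of contributing compact faces splits along these families except possibly at such common vertices, and the cancellation problem can be analysed family by family. After a coordinate permutation we may therefore concentrate on one fixed family, say the $B_1$-facets for $z$.

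For this analysis I would choose, for every contributing face $\tau$ in the family, a simplicial decomposition of $\overline{\Dtu}$, compatible along common edges with the decompositions chosen for neighbouring contributing faces, so that shared lower-dimensional cones are treated consistently. Using the formula
\[
S(\delta_i)(s)=\frac{\Sigma(\delta_i)(s)}{\prod_{j\in J_i}\bigl(p^{\sigma(v_j)+m(v_j)s}-1\bigr)},
\]
the contribution of each simplicial piece is controlled by the generating function $\Sigma(\delta_i)(s)$ over the integral points of the fundamental parallelepiped $\lozenge(v_j)_{j\in J_i}$. The essential preparatory step, to which the paper devotes a full section, is a precise description of these integral points when one of the spanning vectors is the primitive normal $v_\tau$ of a $B_1$-facet: the very particular shape of $v_\tau$, inherited from the coordinate-hyperplane vertices of $\tau$, forces a rigid structure on the relevant lattice points.

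With this description in hand, the final step is to evaluate the principal part at $s_0$ of each $L_\tau(s)S(\Dtu)(s)$ for $\tau$ in the family, to sum over $\tau$, and to exhibit cancellation. The base case is that of a $B_1$-simplex whose only contributing neighbour is itself, which reduces to Denef's Theorem~\ref{deneftheonp1bis}. The \emph{main obstacle} is the combinatorial bookkeeping required when several $B_1$-facets in the same family are glued along common edges: one must match the numerators $\Sigma(\delta_i)$ coming from the parallelepipeds of $\tau$, of its neighbours, and of the shared edges and vertices, and check that after combining all contributions the residues at $s_0$ cancel identically. Separate attention is needed for the non-compact $B_1$-facets, where one of the spanning vectors of \Dtu\ is a standard basis vector, and for the isolated vertices where two distinct families meet; it is precisely the disjointness hypothesis on $B_1$-facets for different variables that guarantees no configuration more intricate than those that can be enumerated actually arises.
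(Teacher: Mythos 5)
Your outline identifies the right overall strategy --- the same one the paper follows: isolate the terms of Theorem~\ref{formdenhoor} coming from faces contained in contributing $B_1$-facets, split into clusters according to the variable for which they are $B_1$, choose compatible simplicial decompositions of the relevant cones $\Delta_V$, describe the lattice points in the fundamental parallelepipeds, and show that the residues at $s_0$ cancel. That much is correct, and it is also essentially what Sections~\ref{fundpar}--\ref{secgeval7art3} do. However, what you call ``the main obstacle'' and ``combinatorial bookkeeping'' \emph{is} the proof; your proposal states that the cancellation must be checked, but it does not check it, and so it establishes nothing. The entire content of Theorem~\ref{maintheoartdrie} is that these residues vanish, and you have not shown that.

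Beyond the overall incompleteness, there are two concrete gaps. First, you assert that the base case ``reduces to Denef's Theorem~\ref{deneftheonp1bis}.'' You cannot lean on that theorem: as the footnote to Theorem~\ref{deneftheonp1} records, the $p$-adic statement in dimension three has not been fully written down in the literature, and the paper therefore proves Case~I (one contributing $B_1$-simplex, Section~\ref{secgeval1art3}) from scratch, using the parallelepiped description of Section~\ref{fundpar} and the cancellation that ends in \eqref{eqprovecaseeen}. Second, your proposal never confronts the issue of non-real candidate poles. When two contributing facets $\tau_0$ and $\tau_1$ share an edge, the imaginary part of $s_0$ must be a multiple of $2\pi/(\gcd(m_0,m_1)\log p)$, and the key quantity $p^{(\xi_B(w\cdot v_0)+w\cdot v_1)/\mu_B}$ may or may not equal $1$; the computations split into the cases $n^{\ast}\mid n$ and $n^{\ast}\nmid n$ in Sections~\ref{secgeval1art3}--\ref{secgeval7art3}, and the two branches lead to rather different identities (compare \eqref{finalcheckcas3} with \eqref{finalcheckcas3dn}). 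A correct proof must handle both, and nothing in your sketch indicates how. In short, your roadmap points in the right direction, but all of the substantive work --- the explicit description of $H$, $H_1$, $H_2$, $H_3$ from Theorem~\ref{algfp}, the closed forms for $\Sigma_{\bullet}$ and $\Sigma_{\bullet}'$, the floor-function identities proved via Lemma~\ref{lemmaspecialevorm}, and the final residue cancellations in all seven cases --- is missing.
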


This is the key theorem of the paper that we will prove in the next eight sections. The theorem has been proved for the local topological zeta function by Lemahieu and Van Proeyen \cite[Proposition~14]{LVmcndss}. In our proof we will consider the same seven cases as they did, distinguishing all possible configurations of contributing $B_1$-facets. The idea is to calculate in every case the residue(s) of \Zof\ in the candidate pole in question $s_0$, based on Denef and Hoornaert's formula for \Zof\ for non-degenerated $f$ (Theorem~\ref{formdenhoor}).

The main difficulty in comparison with the topological zeta function approach lies in the calculation of $\Sigma(\delta)(s_0)$ and $\Sigma(\delta)'(s_0)$ for different simplicial cones $\delta$ (see Equation~\eqref{defSigmadiThDH} in Theorem~\ref{formdenhoor}). Where for the topological zeta function it is sufficient to consider the multiplicity of a cone, for the $p$-adic zeta function one has to sum over the lattice points that yield this multiplicity. Lemahieu and Van Proeyen used a computer algebra package to manipulate the rational expressions they obtained for the topological zeta function and so achieved their result; in the $p$-adic case this approach is no longer possible.

In order to deal with the aforementioned sums, we study in Section~\ref{fundpar} the integral points in a three-dimensional fundamental parallelepiped. The aim is to achieve an explicit description of the points that we can use in the rest of the proof to calculate the sums $\Sigma(\delta)(s_0)$ and $\Sigma(\delta)'(s_0)$ over those points. These calculations often lead to polynomial expressions with floor or ceil functions in the exponents; dealing with them forms the second main difficulty of the proof. A third complication is due to the existence of imaginary candidate poles; their residues are usually harder to calculate than those of their real colleagues.
%

\begin{remark}
Although everything in this paper is formulated for \Qp, the results can be generalized in a straightforward way to arbitrary $p$-adic fields. The reason is that Denef and Hoornaert's formula for Igusa's zeta function has a very similar form over finite field extensions of \Qp.
\end{remark}

\subsection{Overview of the paper}
As mentioned before, Section~\ref{fundpar} contains an elaborate study of the integral points in three-dimensional fundamental parallelepipeds.

Sections~\ref{secgeval1art3}--\ref{secgeval7art3} cover the proof of Theorem~\ref{maintheoartdrie}; every section treats one possible configuration of $B_1$-facets contributing to a same candidate pole.

In Section~\ref{sectkarakter} we verify the analogue of Theorem~\ref{maintheoartdrie} for Igusa's zeta function of a polynomial $f(x_1,\ldots,x_n)\in\Zp[x_1,\ldots,x_n]$ and a non-trivial character of \Zpx. This leads to the Monodromy Conjecture in this case as well.

In Section~\ref{sectmotivisch} we state and prove the motivic version of our main theorem; i.e., we obtain the motivic Monodromy Conjecture for a non-degenerated surface singularity. This section also contains a detailed proof of the motivic analogue of Denef and Hoornaert's formula. Our objective is to obtain a formula for the motivic zeta function as an element in the ring $\MC[[T]]$, as it is defined, rather than as an element in some localization of $\MC[[T]]$.\footnote{The ring \MC\ denotes the localization of the Grothendieck ring of complex algebraic varieties with respect to the class of the affine line, while $T$ is a formal indeterminate.} This explains the technicality of the formula and its proof.

\section{On the integral points in a three-dimensional fundamental parallelepiped spanned by primitive vectors}\label{fundpar}
\setcounter{subsection}{-1}
\subsection{Introduction}\label{introductionalgfp}
We recall some basic definitions and results.

\begin{definition}[Primitive vector] A vector $w=(a_1,\ldots,a_n)\in\Zn$ is called primitive if $\gcd(a_1,\ldots,a_n)=1$.
\end{definition}

\begin{definition}[Fundamental parallelepiped] Let $w_1,\ldots,w_t$ be \R-linear independent primitive vectors in $\Zplus^n$. We call the set
\begin{equation*}
\lozenge(w_1,\ldots,w_t)=\left\{\sum\nolimits_{j=1}^th_jw_j\;\middle\vert\;h_j\in[0,1);\quad j=1,\ldots,t\right\}\subset\Rplusn
\end{equation*}
the fundamental parallelepiped spanned by the vectors $w_1,\ldots,w_t$.
\end{definition}

\begin{definition}[Multiplicity] The number of integral points (i.e., points with integer coordinates) in a fundamental parallelepiped $\lozenge(w_1,\ldots,w_t)$ is called the multiplicity of the fundamental parallelepiped. We denote it by
\begin{equation*}
\mult\lozenge(w_1,\ldots,w_t)=\#\left(\Zn\cap\lozenge(w_1,\ldots,w_t)\right).
\end{equation*}
\end{definition}

The following result is well-known.

\begin{proposition}\label{multipliciteit}
Let $w_1,\ldots,w_t$ be \R-linear independent primitive vectors in $\Zplus^n$. The multiplicity of the fundamental parallelepiped $\lozenge(w_1,\ldots,w_t)$ equals the greatest common divisor of the absolute values of the determinants of all $(t\times t)$-submatrices of the $(t\times n)$-matrix whose rows contain the coordinates of $w_1,\ldots,w_t$.
\end{proposition}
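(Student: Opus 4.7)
The plan is to interpret the multiplicity as a lattice index and then compute this index from the Smith normal form of the $t \times n$ integer matrix $M$ whose rows are $w_1,\ldots,w_t$. Let $H$ denote the $\Q$-linear span of $w_1,\ldots,w_t$ inside $\Q^n$, and introduce the two rank-$t$ lattices
\[
L = \Z w_1 + \cdots + \Z w_t \qquad \text{and} \qquad L' = H \cap \Z^n,
\]
so that $L \subseteq L'$. The statement will then follow from the two equalities $\mult \lozenge(w_1,\ldots,w_t) = [L':L]$ and $[L':L] = \gcd\{\text{$(t\times t)$-minors of }M\}$.

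For the first equality, I would use that every $v \in H$ has a unique expansion $v = \sum_j r_j w_j$ with $r_j \in \Q$, making $p(v) := \sum_j \{r_j\} w_j$ the unique point of $\lozenge(w_1,\ldots,w_t)$ congruent to $v$ modulo $L$. Restricting to $v \in L'$ and noting that $\lozenge(w_1,\ldots,w_t) \subseteq H$ so that $\lozenge(w_1,\ldots,w_t) \cap L' = \lozenge(w_1,\ldots,w_t) \cap \Z^n$, the assignment $v + L \mapsto p(v)$ provides a bijection between $L'/L$ and $\Z^n \cap \lozenge(w_1,\ldots,w_t)$, yielding the first equality.

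For the second equality, I would write $M = UDP$ in Smith normal form with $U \in GL_t(\Z)$, $P \in GL_n(\Z)$, and $D = [\mathrm{diag}(d_1,\ldots,d_t) \mid 0]$ for elementary divisors $d_1 \mid \cdots \mid d_t$ (well-defined since $M$ has rank $t$). Right multiplication by $P^{-1}$ is an automorphism of $\Z^n$ that sends $L$ to the row lattice of $UD$, which equals the row lattice of $D$ because $U$ is unimodular, and sends $L'$ to $\Z e_1 \oplus \cdots \oplus \Z e_t$. This gives $L'/L \cong \bigoplus_{j=1}^t \Z/d_j\Z$, so $[L':L] = d_1 \cdots d_t$. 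Since the gcd of the $(t\times t)$-minors of an integer matrix is invariant under unimodular row and column operations, that gcd for $M$ coincides with the corresponding one for $D$, which is visibly $d_1 \cdots d_t$ (the only nonzero such minor of $D$).

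The result is classical and no individual step poses a serious obstacle; the main care needed is bookkeeping in the lattice-theoretic step, in particular verifying that right multiplication by $P^{-1}$ simultaneously identifies $H \cap \Z^n$ with the standard sublattice $\Z e_1 \oplus \cdots \oplus \Z e_t$ and $L$ with $\Z d_1 e_1 \oplus \cdots \oplus \Z d_t e_t$, so that the index can be read off directly.
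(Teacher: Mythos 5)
Your proof is correct and complete. The paper itself does not prove this proposition: it is stated with the remark "The following result is well-known," and the closely related proposition in Subsection~0.2 is simply referenced to \cite[\S 5.3, Thm.~3.1]{Adkins}, so there is no argument in the paper to compare against. Your route---identifying $\mult\lozenge(w_1,\ldots,w_t)$ with the index $[L':L]$ of $L=\Z w_1+\cdots+\Z w_t$ in $L'=H\cap\Z^n$ via the fractional-part retraction, and then reading off $[L':L]$ and the gcd of maximal minors simultaneously from the Smith normal form---is the standard textbook proof, and all the steps hold. Two small remarks. First, writing $\lozenge(w_1,\ldots,w_t)\subseteq H$ is a slight abuse: the parallelepiped lives in the $\R$-span of $w_1,\ldots,w_t$, not in its $\Q$-span $H$; what you actually use (and what is true) is that every \emph{integer} point of $\lozenge(w_1,\ldots,w_t)$ lies in $H$, because the $\R$-span is cut out by linear equations with rational coefficients, so its intersection with $\Q^n$ is exactly $H$. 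Second, you never invoke primitivity of the $w_j$, and indeed the result holds, with your proof unchanged, for arbitrary $\R$-linearly independent integer vectors; this is also the level of generality at which the paper states the Subsection~0.2 version.
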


\begin{notation}\label{notatiematrices}
We will denote the determinant of a square, real matrix $M=(a_{ij})_{ij}$ by $\det M=\lvert a_{ij}\rvert_{ij}$ and its absolute value by $\abs{\det M}=\lVert a_{ij}\rVert_{ij}$.
\end{notation}

For the rest of this section, we fix three linearly independent primitive vectors $w_1,w_2,w_3\in\Zplus^3$ and denote their coordinates by
\begin{equation*}
w_1(a_1,b_1,c_1),\qquad w_2(a_2,b_2,c_2),\qquad\text{and}\qquad w_3(a_3,b_3,c_3).
\end{equation*}
We also fix notations for the following sets and their cardinalities (cfr.\ Figure~\ref{fundpar3D}):
\begin{alignat*}{3}
H&=\Z^3\cap\lozenge(w_1,w_2,w_3),&\qquad\qquad\ \ \mu&=\#H&&=\mult\lozenge(w_1,w_2,w_3),\\
H_1&=\Z^3\cap\lozenge(w_2,w_3)\subset H,&\mu_1&=\#H_1&&=\mult\lozenge(w_2,w_3),\\
H_2&=\Z^3\cap\lozenge(w_1,w_3)\subset H,&\mu_2&=\#H_2&&=\mult\lozenge(w_1,w_3),\\
H_3&=\Z^3\cap\lozenge(w_1,w_2)\subset H,&\mu_3&=\#H_3&&=\mult\lozenge(w_1,w_2).
\end{alignat*}

%
%
\begin{figure}
\psset{unit=.06\textwidth}
\psset{Alpha=46}
\centering
\begin{pspicture}(-4.8,-2.8)(4.9,5.75)
{\darkgray\pstThreeDCoor[linecolor=darkgray,linewidth=.8pt,xMin=0,yMin=0,zMin=0,xMax=6,yMax=6,zMax=6]}
{
\psset{linecolor=lightgray,linewidth=.8pt,opacity=.8}
\pstThreeDLine*(0,0,0)(5,6,2)(10,8,6)(5,2,4)(0,0,0)
\pstThreeDLine*(0,0,0)(5,6,2)(7,10,9)(2,4,7)(0,0,0)
\pstThreeDLine*(0,0,0)(5,2,4)(7,6,11)(2,4,7)(0,0,0)
}
{
\psset{linecolor=black,linewidth=.8pt,arrows=->,arrowscale=1.2}
\pstThreeDLine(0,0,0)(5,6,2)
\pstThreeDLine(0,0,0)(5,2,4)
\pstThreeDLine(0,0,0)(2,4,7)
}
{
\newgray{mygray}{.8}
\psset{linecolor=mygray,linewidth=2.2pt}
\pstThreeDLine(10.6,9.2,8.1)(11,10,9.5)\pstThreeDLine(8,10.4,9.8)(11,11.6,12.2)
}
{
\psset{linecolor=black,linewidth=.4pt,linestyle=dashed}
\pstThreeDLine(5,6,2)(10,8,6)\pstThreeDLine(5,2,4)(10,8,6)
\pstThreeDLine(5,6,2)(7,10,9)\pstThreeDLine(2,4,7)(7,10,9)
\pstThreeDLine(5,2,4)(7,6,11)\pstThreeDLine(2,4,7)(7,6,11)
\pstThreeDLine(10,8,6)(12,12,13)\pstThreeDLine(7,10,9)(12,12,13)\pstThreeDLine(7,6,11)(12,12,13)
}
\pstThreeDPut(5,4,3){$H_1$}\pstThreeDPut(3.5,5,4.5){$H_2$}\pstThreeDPut(3.5,3,5.5){$H_3$}\pstThreeDPut(6,6,6.5){$H$}
\uput{6pt}[65](1.5,4){$w_1$}\uput{6pt}[160](-2,1){$w_2$}\uput{6pt}[-70](.82,-2.15){$w_3$}
\end{pspicture}
\psset{Alpha=45}
\caption{A fundamental parallelepiped spanned by three primitive vectors in $\Zplus^3$. The sets $H,H_1,H_2,H_3$ denote the intersections of the respective fundamental parallelepipeds with $\Z^3$.}
\label{fundpar3D}
\end{figure}
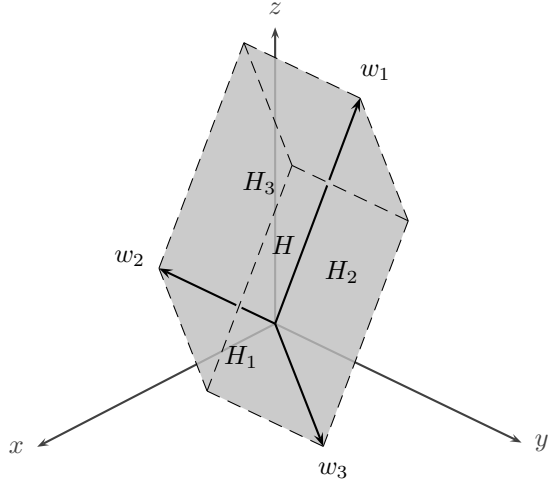
%
%

Throughout this section we consider the matrix
\begin{equation*}
M=
\begin{pmatrix}
a_1&b_1&c_1\\
a_2&b_2&c_2\\
a_3&b_3&c_3
\end{pmatrix}
\in\Zplus^{3\times 3}
\end{equation*}
and its minors
\begin{align*}
M_{11}&=
\begin{pmatrix}
b_2&c_2\\
b_3&c_3
\end{pmatrix},
&
M_{12}&=
\begin{pmatrix}
a_2&c_2\\
a_3&c_3
\end{pmatrix},
&
M_{13}&=
\begin{pmatrix}
a_2&b_2\\
a_3&b_3
\end{pmatrix},\\
M_{21}&=
\begin{pmatrix}
b_1&c_1\\
b_3&c_3
\end{pmatrix},
&
M_{22}&=
\begin{pmatrix}
a_1&c_1\\
a_3&c_3
\end{pmatrix},
&
M_{23}&=
\begin{pmatrix}
a_1&b_1\\
a_3&b_3
\end{pmatrix},\\
M_{31}&=
\begin{pmatrix}
b_1&c_1\\
b_2&c_2
\end{pmatrix},
&
M_{32}&=
\begin{pmatrix}
a_1&c_1\\
a_2&c_2
\end{pmatrix},
&
M_{33}&=
\begin{pmatrix}
a_1&b_1\\
a_2&b_2
\end{pmatrix}.
\end{align*}
Let us denote $d=\det M$ and $d_{ij}=\det M_{ij}$; $i,j=1,2,3$. The matrix
\begin{equation*}
\adj M=
\begin{pmatrix}
d_{11}&-d_{21}&d_{31}\\
-d_{12}&d_{22}&-d_{32}\\
d_{13}&-d_{23}&d_{33}
\end{pmatrix}
=\left((-1)^{i+j}d_{ij}\right)_{ij}^{\mathrm{T}}
\end{equation*}
is called the adjugate matrix of $M$ and has the important property that
\begin{equation*}
(\adj M)M=M(\adj M)=dI,
\end{equation*}
with $I$ the $(3\times 3)$-identity matrix. According to Proposition~\ref{multipliciteit}, we have
\begin{alignat*}{2}
\mu&=\#H&&=\abs{d},\\
\mu_1&=\#H_1&&=\gcd(\abs{d_{11}},\abs{d_{12}},\abs{d_{13}}),\\
\mu_2&=\#H_2&&=\gcd(\abs{d_{21}},\abs{d_{22}},\abs{d_{23}}),\\
\mu_3&=\#H_3&&=\gcd(\abs{d_{31}},\abs{d_{32}},\abs{d_{33}}).
\end{alignat*}

Note that every $h\in H$ can be written in a unique way as
\begin{equation*}
h=h_1w_1+h_2w_2+h_3w_3
\end{equation*}
with $h_j\in[0,1)$; $j=1,2,3$. We shall always denote the coordinates of a point $h\in H$ with respect to the basis $(w_1,w_2,w_3)$ of $\R^3$ over \R, by $(h_1,h_2,h_3)$.

\begin{notation}\label{notatiemodulo}
Let $a\in\R$. We denote by $\lfloor a\rfloor$ the largest integer not greater than $a$ (integer part of $a$) and by $\lceil a\rceil$ the smallest integer not less than $a$. The fractional part of $a$ will be denoted by $\{a\}=a-\lfloor a\rfloor\in[0,1)$. By generalization, we shall denote for any $b\in\Rplusnul$ by $\{a\}_b$ the unique element $\{a\}_b\in[0,b)$ such that $a-\{a\}_b\in b\Z$. Note that
\begin{equation*}
\{a\}_b=b\left\{\frac{a}{b}\right\}.
\end{equation*}
\end{notation}

The aim of this section is to prove the following theorem.

\begin{theorem}\label{algfp}
\begin{enumerate}
\item\label{punteen} The multiplicities $\mu_1,\mu_2,\mu_3$ all divide $\mu$;

\item\label{punttwee} we have even more: for all distinct $i,j\in\{1,2,3\}$ it holds that $\mu_i\mu_j\mid\mu$.

\item\label{puntdrie} For every $h\in H_1$ the coordinates $h_2,h_3$ of $h$ belong to the set
\begin{equation*}
\left\{0,\frac{1}{\mu_1},\frac{2}{\mu_1},\ldots,\frac{\mu_1-1}{\mu_1}\right\},
\end{equation*}
and every element of the above set is the $w_2$-coordinate ($w_3$-coordinate) of exactly one point $h\in H_1$; i.e.,
\begin{equation*}
\{h_2\mid h\in H_1\}=\{h_3\mid h\in H_1\}=\left\{0,\frac{1}{\mu_1},\frac{2}{\mu_1},\ldots,\frac{\mu_1-1}{\mu_1}\right\}.
\end{equation*}

Moreover, there exists a unique $\xi_1\in\vereen$ with $\xi_1+\mu_1\Z$ a generator of the additive group $\Z/\mu_1\Z$ (i.e., with $\gcd(\xi_1,\mu_1)=1$), such that all $\mu_1$ points of $H_1$ are given by
\begin{equation*}
\frac{i}{\mu_1}w_2+\left\{\frac{i\xi_1}{\mu_1}\right\}w_3;\qquad i=0,\ldots,\mu_1-1.
\end{equation*}

Of course, we have analogous results for $H_2$ and $H_3$.

\item\label{puntvier} For every $h\in H$ the coordinate $h_1$ of $h$ belongs to the set
\begin{equation*}
\left\{0,\frac{\mu_1}{\mu},\frac{2\mu_1}{\mu},\ldots,\frac{\mu-\mu_1}{\mu}\right\}.
\end{equation*}
Moreover, every possible coordinate $l\mu_1/\mu$, $l\in\{0,\ldots,\mu/\mu_1-1\}$, occurs precisely $\mu_1$ times. (The set $H$ indeed contains $\mu_1(\mu/\mu_1)=\mu$ points.)

We have of course analogous results for the coordinates $h_2$ and $h_3$ of the points $h\in H$.

\item\label{puntvijf} By (ii) we can write $\mu=\mu_1\mu_2\phi_3$ with $\phi_3\in\Zplusnul$. It then holds that
\begin{equation*}
\gcd(\mu_1,\mu_2)\mid\mu_3\mid\gcd(\mu_1,\mu_2)\phi_3.
\end{equation*}
As a consequence we have that $\gcd(\mu_1,\mu_2,\mu_3)=\gcd(\mu_1,\mu_2)$. (The same result holds, of course, as well for the other two combinations of two out of three multiplicities $\mu_j$.)

\item\label{puntzes} We give an explicit description of the $\mu$ points of $H$.

\item\label{puntzeven} Finally, we explain how the numbers $\xi_1,\xi_2,\xi_3$ (mentioned above), and $\eta,\eta',l_0$ (defined later on) that appear in the several descriptions of points of $H$, can be calculated from the coordinates of $w_1,w_2,w_3$.
\end{enumerate}
\end{theorem}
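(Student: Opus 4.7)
The plan is to identify $H$ with the finite abelian group $G := \Z^3/L$, where $L = \Z w_1 + \Z w_2 + \Z w_3$, by sending each coset of $L$ to its unique representative in $\lozenge$, and then to exploit two tools throughout: the primitivity of $w_1,w_2,w_3$, and the adjugate identity $(\adj M) M = dI$. Linear independence of $w_1,w_2,w_3$ forces $L \cap \mathrm{span}(w_j,w_k) = \Z w_j + \Z w_k =: L_{jk}$, so the rank-two lattice $N_{jk} := \mathrm{span}(w_j,w_k) \cap \Z^3$ maps isomorphically onto a subgroup $H_i \subset G$ of order $\mu_i = [N_{jk} : L_{jk}]$, whence (i). Since $w_j$ is primitive in $\Z^3$ and hence in $N_{jk}$, it extends to a $\Z$-basis $(w_j,u)$ of $N_{jk}$; writing $w_k = aw_j + bu$ gives $|b| = \mu_i$ and $\gcd(a,b) = 1$ by primitivity of $w_k$. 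The multiples of $u$ modulo $L_{jk}$, reduced into $\lozenge(w_j,w_k)$, provide the cyclic parameterization of $H_i$ demanded by (iii), with $\xi_i \equiv -a^{-1} \pmod{\mu_i}$ after swapping the roles of the two coordinates (legitimate because $\gcd(a,\mu_i) = 1$). For (ii), linear independence forces $H_i \cap H_j$ to sit on the ray $\R w_k$, and primitivity of $w_k$ leaves only $0$ there, so $|H_i + H_j| = \mu_i \mu_j$ divides $|G| = \mu$.

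Part (iv) uses $(h_1,h_2,h_3) = h \cdot M^{-1} = (1/d)\, h \cdot \adj M$: the coordinate $d h_i$ is a $\Z$-linear combination of $\pm d_{i1}, \pm d_{i2}, \pm d_{i3}$ and therefore lies in $\mu_i \Z$, so $h_i \in (\mu_i/\mu)\Z \cap [0,1)$ yields the stated set of possible values. The map $q_i : H \to (\mu_i/\mu)\Z/\Z$, $h \mapsto h_i$, is a group homomorphism with kernel $H_i$ (by linear independence), and counting $|H|/|\ker q_i| = \mu/\mu_i$ forces $q_i$ to be surjective and each fibre to have exactly $\mu_i$ elements. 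Part (v) splits into two halves. The divisibility $\mu_3 \mid \gcd(\mu_1,\mu_2)\phi_3$ is immediate from $\mu_1 \mu_3 \mid \mu$ and $\mu_2 \mu_3 \mid \mu$, both supplied by (ii). For the reverse divisibility $\gcd(\mu_1,\mu_2) \mid \mu_3$, I use the congruences coming from (iii): the two generators yield $w_2 + \xi_1 w_3 \in \mu_1 \Z^3$ and $w_1 + \xi_2 w_3 \in \mu_2 \Z^3$ with $\gcd(\xi_i,\mu_i) = 1$. Writing $g = \gcd(\mu_1,\mu_2)$ and reducing both modulo $g$, the invertibility of $\xi_1,\xi_2$ modulo $g$ lets me eliminate $w_3$ to produce a congruence $w_1 + \xi w_2 \equiv 0 \pmod{g\Z^3}$ for a suitable $\xi \in \{0,\ldots,g-1\}$. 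The point $(1/g)w_1 + (\xi/g)w_2$ then lies in $H_3$; when $g \geq 2$ its $w_1$-coordinate equals $1/g \in (0,1)$, which by (iii) applied to $H_3$ must belong to $\{0,1/\mu_3,\ldots,(\mu_3-1)/\mu_3\}$, forcing $g \mid \mu_3$ (the case $g = 1$ is vacuous).

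For parts (vi) and (vii), the group $H$ is an extension of $H/H_1 \cong \Z/(\mu/\mu_1)\Z$ (cyclic by (iv)) by the cyclic $H_1 \cong \Z/\mu_1\Z$ (from (iii)), and is therefore two-generated. I choose a lift $h^\star \in H$ whose first coordinate equals $\mu_1/\mu$ (such a lift exists by (iv)); every point of $H$ is then $l h^\star + i g_1$, reduced modulo $L$ into $\lozenge$, for $l \in \{0,\ldots,\mu/\mu_1 - 1\}$ and $i \in \{0,\ldots,\mu_1 - 1\}$, where $g_1$ is the generator of $H_1$ from (iii). The parameters $\xi_i$ of (iii) are recovered by solving the linear congruence $w_k + \xi_i w_{k'} \equiv 0 \pmod{\mu_i}$, which is a direct computation from the coordinates of the $w_j$; the additional integers $\eta,\eta',l_0$ appearing in (vi) come from the single torsion relation expressing $(\mu/\mu_1) h^\star$ as an element of $H_1$ and are read off from an analogous congruence determined by $h^\star$. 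The main obstacle is (vi): although the two-generator picture is clean, reducing the integer combinations $l h^\star + i g_1$ into the half-open parallelepiped $\lozenge$ introduces floor and fractional-part corrections in each of the three $w_j$-coordinates, and organizing these into an explicit closed form compatible with the numerical parameters from (iii) is where the bulk of the combinatorial bookkeeping (and the parameters $\eta,\eta',l_0$) lives.
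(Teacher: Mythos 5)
Your arguments for parts (i)--(v) are correct, and they differ from the paper's in a few places worth noting. For (iii) the paper deduces that the coordinates of $h\in H_1$ lie in $(1/\mu_1)\Z$ from the group identity $\{\mu_1 h\}=0$ and then exhibits the cyclic generator directly; your route via a $\Z$-basis $(w_j,u)$ of the pure sublattice $N_{jk}$ is a clean alternative and yields $\xi_i\equiv-a^{-1}\pmod{\mu_i}$ in one stroke. For (iv) you bypass the paper's intermediate step (``$h_\rho\in(1/\mu)\Z$ because $\{\mu h\}=0$'') and read $\mu_i\mid d\,h_i$ straight off $(h_1,h_2,h_3)=(1/d)\,h\adj M$; the finish via the homomorphism $q_i$ with kernel $H_i$ then matches the paper's coset argument. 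The most genuinely different step is your proof of $\gcd(\mu_1,\mu_2)\mid\mu_3$ in (v): the paper pushes a point of $H$ with $w_3$-coordinate $\mu_3/\mu$ through $H/(H_1+H_2)$ (of order $\phi_3$) to deduce a $\Z$-linear relation expressing $\mu_3$ in terms of $\mu_1$ and $\mu_2$, whereas you eliminate $w_3$ from $w_2+\xi_1w_3\in\mu_1\Z^3$ and $w_1+\xi_2w_3\in\mu_2\Z^3$ modulo $g=\gcd(\mu_1,\mu_2)$ (using only that $\xi_1$ is a unit mod $g$) to exhibit an explicit element of $H_3$ with $w_1$-coordinate $1/g$, and then invoke (iii) for $H_3$. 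This is shorter and more constructive; the $\xi\equiv-\xi_1^{-1}\xi_2\pmod g$ you produce is essentially the paper's $\xi_\gamma$.

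Parts (vi) and (vii) are where you stop short. Your two-generator decomposition $H=H_1+\langle h^\star\rangle$ with $h^\star_1=\mu_1/\mu$ is legitimate, and it is equivalent to the paper's $H=H_1+H_2+\{h(0,0,k)\}_k$ once the pair $(j,k)$ with $j\in\{0,\dots,\mu_2-1\}$, $k\in\{0,\dots,\phi_3-1\}$ is collapsed into a single index $l=j\phi_3+k$. You also correctly locate the difficulty, namely the floor and fractional-part corrections that produce $\eta,\eta',l_0$. But you do not actually produce the closed form, nor do you carry out the Generalized Chinese Remainder Theorem verification that the linear-congruence systems determining $\xi_1$, $\eta$, $\eta'$ are solvable and have unique solutions in the required ranges --- the paper spends all of Subsections 1.6--1.7 on exactly this bookkeeping, and the resulting three-index parameterization of $H$ via $H_1+H_2$ (rather than your two-index one) is the form on which the residue computations in Sections 2--6 rely. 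So (i)--(v) constitute a valid and in places tidier reproof, while for (vi)--(vii) you have the right architecture but not yet a proof.
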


\subsection{A group structure on $H$}
\begin{notation}
For any $h\in\R^3$ we denote by $\{h\}$ its reduction modulo $\Z w_1+\Z w_2+\Z w_3$; i.e., $\{h\}$ denotes the unique element $\{h\}\in\lozenge(w_1,w_2,w_3)$ such that $h-\{h\}\in\Z w_1+\Z w_2+\Z w_3$. If we write $h$ as $h=h_1w_1+h_2w_2+h_3w_3$ with $h_1,h_2,h_3\in\R$, we have that
\begin{equation*}
\{h\}=\{h_1\}w_1+\{h_2\}w_2+\{h_3\}w_3.
\end{equation*}
\end{notation}

We can make $H$ into a group by considering addition modulo $\Z w_1+\Z w_2+\Z w_3$ as a group law:
\begin{equation*}
\{\cdot +\cdot\}:H\times H\to H:(h,h')\mapsto \{h+h'\}.
\end{equation*}
The operation $\{\cdot +\cdot\}$ makes $H$ into a finite abelian group of order $\mu$. It is easy to verify that the subsets $H_1,H_2,H_3$ of $H$ are in fact subgroups.

Consider the abelian group $\Z^3,+$ and its subgroups
\begin{align*}
\Lambda&=\Z w_1+\Z w_2+\Z w_3,&\Lambda_1&=\Z w_2+\Z w_3,\\
\Lambda_2&=\Z w_1+\Z w_3,&\Lambda_3&=\Z w_1+\Z w_2,
\end{align*}
generated by $\{w_1,w_2,w_3\},\{w_2,w_3\},\{w_1,w_3\},$ and $\{w_1,w_2\}$, respectively. It then holds that
\begin{equation}\label{isomorfismen}
\begin{alignedat}{2}
H&\cong\frac{\Z^3}{\Lambda},&H_1&\cong\frac{\Z^3\cap\left(\R w_2+\R w_3\right)}{\Lambda_1},\\
H_2&\cong\frac{\Z^3\cap\left(\R w_1+\R w_3\right)}{\Lambda_2},&\qquad H_3&\cong\frac{\Z^3\cap\left(\R w_1+\R w_2\right)}{\Lambda_3}.
\end{alignedat}
\end{equation}

\subsection{Divisibility among the multiplicities $\mu,\mu_1,\mu_2,\mu_3$}
Since $H_1,H_2,H_3$ form subgroups of $H$, their orders divide the order of $H$: $\mu_1,\mu_2,\mu_3\mid\mu$ (Theorem~\ref{algfp}(i)).

Consider the subgroups $H_1,H_2$ of $H$. The subgroup $H_1\cap H_2$ precisely contains the integral points in the fundamental parallelepiped
\begin{equation*}
\lozenge(w_3)=\{h_3w_3\mid h_3\in[0,1)\}.
\end{equation*}
Hence since $w_3$ is primitive, $H_1\cap H_2$ is the trivial group (this can also be seen from the isomorphisms \eqref{isomorfismen}). It follows that $H_1+H_2\cong H_1\oplus H_2$ and thus
\begin{equation*}
\abs{H_1+H_2}=\abs{H_1\oplus H_2}=\abs{H_1}\abs{H_2}=\mu_1\mu_2.
\end{equation*}
The fact that $H_1+H_2$ is a subgroup of $H$ now easily implies that $\mu_1\mu_2\mid\mu$. Analogously, we find that $\mu_1\mu_3,\mu_2\mu_3\mid\mu$. This proves Theorem~\ref{algfp}(ii).

From now on, we shall write
\begin{equation*}
\mu=\mu_1\mu_2\phi_3=\mu_1\mu_3\phi_2=\mu_2\mu_3\phi_1
\end{equation*}
with $\phi_1,\phi_2,\phi_3\in\Zplusnul$.

\subsection{On the $\mu_1$ points of $H_1$}
Let $h\in H_1=\Z^3\cap\lozenge(w_2,w_3)$ and write
\begin{equation*}
h=h_2w_2+h_3w_3
\end{equation*}
with $h_2,h_3\in[0,1)$. Because $\abs{H_1}=\mu_1$, the $\mu_1$-th multiple of $h$ in $H$ must equal the identity element:
\begin{equation*}
\{\mu_1h\}=\{\mu_1h_2\}w_2+\{\mu_1h_3\}w_3=(0,0,0);
\end{equation*}
i.e., $\{\mu_1h_2\}=\{\mu_1h_3\}=0$, and thus $h_2,h_3\in(1/\mu_1)\Z$.

Since $h_2,h_3\in(1/\mu_1)\Z$ and $0\leqslant h_2,h_3<1$, the only possible values for $h_2,h_3$ are
\begin{equation*}
0,\frac{1}{\mu_1},\frac{2}{\mu_1},\ldots,\frac{\mu_1-1}{\mu_1}.
\end{equation*}
Moreover, since $w_2,w_3$ are primitive, every $i/\mu_1$, $i\in\vereen$, is the $w_2$-coordinate ($w_3$-coordinate) of at most, and therefore exactly, one point $h\in H_1$:
\begin{equation*}
\{h_2\mid h\in H_1\}=\{h_3\mid h\in H_1\}=\left\{0,\frac{1}{\mu_1},\frac{2}{\mu_1},\ldots,\frac{\mu_1-1}{\mu_1}\right\}.
\end{equation*}

So there exists a unique $\xi_1\in\vereen$ such that
\begin{equation}\label{puntenvanHeen}
h^{\ast}=\frac{1}{\mu_1}w_2+\frac{\xi_1}{\mu_1}w_3\in H_1.
\end{equation}
Consider the cyclic subgroup $\langle h^{\ast}\rangle\subset H_1$ generated by $h^{\ast}$. This subgroup contains the $\mu_1$ distinct elements
\begin{equation*}
\{ih^{\ast}\}=\frac{i}{\mu_1}w_2+\left\{\frac{i\xi_1}{\mu_1}\right\}w_3;\qquad i=0,\ldots,\mu_1-1;
\end{equation*}
of $H_1$, and therefore equals $H_1$. Figure~\ref{fundpar2D} illustrates the situation. This gives us a complete\footnote{In Paragraph~\ref{pardetxieencnul} we explain how to obtain $\xi_1$ from the coordinates of $w_2$ and $w_3$.} description of the points of $H_1$. Besides, since $h_3=\{i\xi_1/\mu_1\}$ runs through $\{0,1/\mu_1,\ldots,(\mu_1-1)/\mu_1\}$ when $i$ runs through $\{0,\ldots,\mu_1-1\}$, we have that $\xi_1+\mu_1\Z$ generates $\Z/\mu_1\Z,+$ and therefore $\gcd(\xi_1,\mu_1)=1$. Obviously, analogous results hold for $H_2$ and $H_3$, concluding Theorem~\ref{algfp}(iii).

%
%
\begin{figure}
\psset{unit=.110864745\textwidth}
\centering
\begin{pspicture}(-2.52,-.92)(6.5,2.92)
{\lightgray\pstThreeDCoor[linecolor=lightgray,linewidth=.7pt,Alpha=135,xMin=0,yMin=0,zMin=0,xMax=2.2,yMax=2.6,zMax=2.97,spotX=0]}
{
\psset{linecolor=black,linewidth=.3pt,linestyle=dashed}
\psline(-2.5,-.91)(-2.5,2.91)\psline(-2.5,2.91)(6.5,2.91)\psline(6.5,2.91)(6.5,-.91)\psline(6.5,-.91)(-2.5,-.91)
}
{
\psset{linecolor=darkgray,linewidth=.3pt,linestyle=dashed}
\psline(.12,.36)(1.8,.6)\psline(.24,.72)(.8,.8)\psline(.36,1.08)(2.6,1.4)\psline(.48,1.44)(1.6,1.6)
\psline(.56,.08)(.8,.8)\psline(1.12,.16)(1.6,1.6)\psline(1.68,.24)(1.8,.6)\psline(2.24,.32)(2.6,1.4)
}
\psdots[dotsize=4pt,dotstyle=o,linecolor=black](-1.8,-.6)(-.8,-.8)(-2,.4)(-1,.2)(0,0)(1,-.2)(2,-.4)(3,-.6)(4,-.8)(-2.2,1.4)(-1.2,1.2)(-.2,1)(.8,.8)(1.8,.6)(2.8,.4)(3.8,.2)(4.8,0)(5.8,-.2)(-2.4,2.4)(-1.4,2.2)(-.4,2)(.6,1.8)(1.6,1.6)(2.6,1.4)(3.6,1.2)(4.6,1)(5.6,.8)(.4,2.8)(1.4,2.6)(2.4,2.4)(3.4,2.2)(4.4,2)(5.4,1.8)(6.4,1.6)(5.2,2.8)(6.2,2.6)
\psdots[dotsize=4pt,linecolor=black](0,0)(.8,.8)(1.8,.6)(1.6,1.6)(2.6,1.4)
{
\psset{linecolor=black,linewidth=1pt,linestyle=dashed}
\psline(.6,1.8)(3.4,2.2)\psline(2.8,.4)(3.4,2.2)
}
{
\psset{linecolor=black,linewidth=1pt,arrows=->,arrowscale=1}
\psline(0,0)(.6,1.8)\psline(0,0)(2.8,.4)
}
\uput[-90](0,0){$0$}\uput[-90](.56,.08){$\frac15$}\uput[-90](1.12,.16){$\frac25$}\uput[-90](1.68,.24){$\frac35$}\uput[-90](2.24,.32){$\frac45$}\uput[180](0,0){$0$}\uput[180](.12,.36){\scriptsize$1/5$}\uput[180](.24,.72){\scriptsize$2/5$}\uput[180](.36,1.08){\scriptsize$3/5$}\uput[180](.48,1.44){\scriptsize$4/5$}\uput{6pt}[-45](2.8,.4){$w_2$}\uput{6pt}[135](.6,1.8){$w_3$}\uput{6pt}[45](3.4,2.2){$w_2+w_3$}\uput{6pt}[135](6.5,-.91){$\R w_2+\R w_3$}\rput{8.4}(2.68,1.864){$\lozenge(w_2,w_3)$}
\end{pspicture}
\caption{Example of a fundamental parallelepiped $\lozenge(w_2,w_3)$ spanned by two primitive vectors $w_2$ and $w_3$ in $\Zplus^3$. The dots represent the integral points in the plane $\R w_2+\R w_3$; the solid dots are the integral points inside the fundamental parallelepiped and make up $H_1$. Observe the coordinates $(h_2,h_3)$ of the points $h\in H_1$ with respect to the basis $(w_2,w_3)$. In this example the multiplicity $\mu_1$ of $\lozenge(w_2,w_3)$ equals $5$ and $\xi_1=2$.}
\label{fundpar2D}
\end{figure}
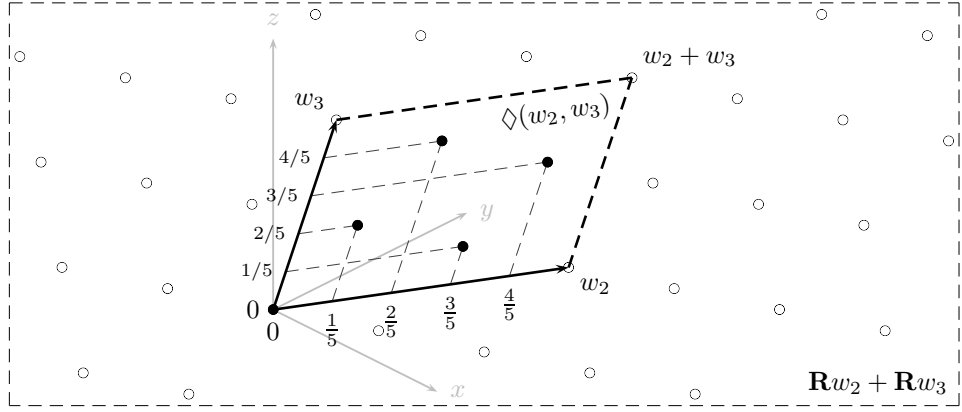
%
%

\subsection{On the $w_1$-coordinates of the points of $H$}
Let $h\in H$ and write
\begin{equation}\label{hgelijkaan}
h=h_1w_1+h_2w_2+h_3w_3
\end{equation}
with $h_1,h_2,h_3\in[0,1)$. Because $\abs{H}=\mu$, it holds that
\begin{equation*}
\{\mu h\}=\{\mu h_1\}w_1+\{\mu h_2\}w_2+\{\mu h_3\}w_3=(0,0,0),
\end{equation*}
and therefore
\begin{equation*}
h_1,h_2,h_3\in\left\{0,\frac{1}{\mu},\frac{2}{\mu},\ldots,\frac{\mu-1}{\mu}\right\}.
\end{equation*}

Let us study the $w_1$-coordinates of the $\mu$ points of $H$ in more detail. Note that the $\mu/\mu_1$ cosets of the subgroup $H_1$ of $H$ form the equivalence classes of the equivalence relation $\sim$ on $H$ defined by
\begin{equation*}
h\sim h'\qquad\text{if and only if}\qquad h_1=h_1';
\end{equation*}
i.e., $H/H_1=H/\sim$ as sets. This implies that there are $\mu/\mu_1$ possible values for the $w_1$-coordinate of a point of $H$, and since every coset of $H_1$ contains $\mu_1$ elements, every possible $w_1$-coordinate occurs precisely $\mu_1$ times.

Moreover, the classes modulo \Z\ of the possible $w_1$-coordinates form a subgroup of $(1/\mu)\Z/\Z$, isomorphic to $H/H_1$. The possible values for the coordinates $h_1$ of the points $h\in H$ are therefore
\begin{equation*}
0,\frac{\mu_1}{\mu},\frac{2\mu_1}{\mu},\ldots,\frac{\mu-\mu_1}{\mu},
\end{equation*}
and every $l\mu_1/\mu$, $l\in\{0,\ldots,\mu/\mu_1-1\}$, is the $w_1$-coordinate of exactly $\mu_1$ points of $H$. Again, there are similar results for the other two coordinates $h_2$ and $h_3$. We conclude Theorem~\ref{algfp}(iv).

\subsection{More divisibility relations}
\begin{notation}\label{notatiegammalambda}
For the remaining of this section, we will use the following notations:
\begin{equation*}
\gamma=\gcd(\mu_1,\mu_2)\qquad\text{and}\qquad\lambda=\lcm(\mu_1,\mu_2)=\frac{\mu_1\mu_2}{\gamma}.
\end{equation*}
We will denote as well $\mu_1'=\mu_1/\gamma$ and $\mu_2'=\mu_2/\gamma$.
\end{notation}

Recall that
\begin{equation*}
\mu=\mu_1\mu_2\phi_3=\mu_1\mu_3\phi_2=\mu_2\mu_3\phi_1.
\end{equation*}
It follows that $\mu_1\mu_3,\mu_2\mu_3\mid\mu_1\mu_2\phi_3$. Hence $\mu_3\mid\mu_1\phi_3,\mu_2\phi_3$ and thus $\mu_3\mid\gamma\phi_3$.

We already know that the subgroup $H_1+H_2$ of $H$ is isomorphic to the direct sum $H_1\oplus H_2$ of $H_1$ and $H_2$ and therefore contains $\mu_1\mu_2$ elements. We can write down the $\mu_1\mu_2$ points of $H_1+H_2$ explicitly.

The $\mu_1$ points of $H_1$ are given by
\begin{equation*}
\left\{\frac{i\xi_1'}{\mu_1}\right\}w_2+\frac{i}{\mu_1}w_3;\qquad i=0,\ldots,\mu_1-1;
\end{equation*}
for some $\xi_1'\in\vereen$ with $\gcd(\xi_1',\mu_1)=1$. We prefer this representation (with $\xi_1'$ instead of $\xi_1$) of the points of $H_1$ to the one on p.~\pageref{puntenvanHeen} (Eq.~\eqref{puntenvanHeen}), because this one is more convenient for what follows.

In the same way, we can list the points of $H_2$ as
\begin{equation*}
\left\{\frac{j\xi_2'}{\mu_2}\right\}w_1+\frac{j}{\mu_2}w_3;\qquad j=0,\ldots,\mu_2-1;
\end{equation*}
for some uniquely determined $\xi_2'\in\vertwee$ relatively prime to $\mu_2$. Consequently, $H_1+H_2$ consists of the following $\mu_1\mu_2$ points:
\begin{multline}\label{puntenvanHeenplusHtwee}
\left\{\frac{j\xi_2'}{\mu_2}\right\}w_1+\left\{\frac{i\xi_1'}{\mu_1}\right\}w_2+\left\{\frac{j\mu_1+i\mu_2}{\mu_1\mu_2}\right\}w_3;\\i=0,\ldots,\mu_1-1;\quad j=0,\ldots,\mu_2-1.
\end{multline}

Let us take a look at the $w_3$-coordinates of the above points. We see that for each $h\in H_1+H_2$, the $w_3$-coordinate $h_3$ is a multiple of $\gamma/\mu_1\mu_2$. Indeed, for all $i,j$ it holds that $\gamma=\gcd(\mu_1,\mu_2)\mid j\mu_1+i\mu_2$. Moreover, $\gamma/\mu_1\mu_2$ and all of its multiples in $[0,1)$ are the $w_3$-coordinate of some point in $H_1+H_2$. Indeed, if we write $\gamma$ as $\gamma=\alpha_1\mu_1+\alpha_2\mu_2$ with $\alpha_1,\alpha_2\in\Z$, we have that
\begin{equation*}
\frac{\gamma}{\mu_1\mu_2}=\left\{\frac{j\mu_1+i\mu_2}{\mu_1\mu_2}\right\}
\end{equation*}
for $j=\{\alpha_1\}_{\mu_2}$ and $i=\{\alpha_2\}_{\mu_1}$. It follows that
\begin{align*}
&\{h_3\mid h\in H_1+H_2\}\\
&\qquad\qquad=\left\{\left\{\frac{j\mu_1+i\mu_2}{\mu_1\mu_2}\right\}\;\middle\vert\;i\in\vereen,\ j\in\vertwee\right\}\\
&\qquad\qquad=\left\{0,\frac{\gamma}{\mu_1\mu_2},\frac{2\gamma}{\mu_1\mu_2},\ldots,\frac{\mu_1\mu_2-\gamma}{\mu_1\mu_2}\right\}\\
&\qquad\qquad=\left\{0,\frac{1}{\lambda},\frac{2}{\lambda},\ldots,\frac{\lambda-1}{\lambda}\right\}.
\end{align*}

As we know, every multiple of $\mu_3/\mu$ in $[0,1)$ is the $w_3$-coordinate of some point in $H$. Choose $h^{\ast}\in H$ with $h^{\ast}_3=\mu_3/\mu$, and consider the coset $h^{\ast}+(H_1+H_2)$ of $H_1+H_2$ in the quotient group $H/(H_1+H_2)$. Since
\begin{equation*}
\left\lvert\frac{H}{H_1+H_2}\right\rvert=\frac{\abs{H}}{\abs{H_1+H_2}}=\frac{\mu}{\mu_1\mu_2}=\phi_3,
\end{equation*}
it holds that
\begin{equation*}
\bigl\{\phi_3\bigl(h^{\ast}+(H_1+H_2)\bigr)\bigr\}=\{\phi_3h^{\ast}\}+(H_1+H_2)=H_1+H_2,
\end{equation*}
and thus $\{\phi_3h^{\ast}\}\in H_1+H_2$.

The $w_3$-coordinate
\begin{equation*}
\{\phi_3h^{\ast}_3\}=\left\{\frac{\phi_3\mu_3}{\mu}\right\}=\left\{\frac{\mu_3}{\mu_1\mu_2}\right\}
\end{equation*}
of $\{\phi_3h^{\ast}\}$ therefore must equal
\begin{equation*}
\left\{\frac{\mu_3}{\mu_1\mu_2}\right\}=\left\{\frac{j\mu_1+i\mu_2}{\mu_1\mu_2}\right\}
\end{equation*}
for some $i\in\vereen$ and some $j\in\vertwee$. It follows that $\mu_3$ is a \Z-linear combination of $\mu_1$ and $\mu_2$; hence $\gamma\mid\mu_3$.

Next, we will count the number of points in $(H_1+H_2)\cap H_3$. Based on the explicit description \eqref{puntenvanHeenplusHtwee} of the $\mu_1\mu_2$ points of $H_1+H_2$, we have to examine for which $(i,j)\in\vereen\times\vertwee$ it holds that
\begin{equation*}
h_3=\left\{\frac{j\mu_1+i\mu_2}{\mu_1\mu_2}\right\}=0.
\end{equation*}

Since
\begin{equation*}
0\leqslant\frac{j\mu_1+i\mu_2}{\mu_1\mu_2}<2,
\end{equation*}
we have that $h_3=0$ if and only if $i=j=0$ or
\begin{equation}\label{eqvoorwaarde}
j\mu_1+i\mu_2=\mu_1\mu_2.
\end{equation}
For this last equality to hold, it is necessary that $\mu_2\mid j\mu_1$ and $\mu_1\mid i\mu_2$, which is equivalent to\footnote{Cfr.\ Notation~\ref{notatiegammalambda}.} $\lambda\mid j\mu_1,i\mu_2$, and even to $\mu_2'\mid j$ and $\mu_1'\mid i$. In other words, Equality~\ref{eqvoorwaarde} implies that
\begin{align*}
&j=\frac{g\mu_2}{\gamma}\qquad\text{and}\qquad i=\frac{g'\mu_1}{\gamma}\\
\intertext{for certain $g,g'\in\{0,\ldots,\gamma-1\}$ and is therefore equivalent to}
&j=\frac{g\mu_2}{\gamma}\qquad\text{and}\qquad i=\frac{(\gamma-g)\mu_1}{\gamma}
\end{align*}
for some $g\in\{1,\ldots,\gamma-1\}$.

We can conclude that $(H_1+H_2)\cap H_3$ contains precisely $\gamma=\gcd(\mu_1,\mu_2)$ points, and they are given by
\begin{equation*}
h=\left\{\frac{g\xi_2'}{\gamma}\right\}w_1+\left\{\frac{(\gamma-g)\xi_1'}{\gamma}\right\}w_2;\qquad g=0,\ldots,\gamma-1.
\end{equation*}

Because $\xi_2'+\mu_2\Z$ and $\xi_1'+\mu_1\Z$ generate $\Z/\mu_2\Z$ and $\Z/\mu_1\Z$, respectively, and $\gamma\mid\mu_1,\mu_2$, it follows that $\xi_1'+\gamma\Z$ and $\xi_2'+\gamma\Z$ are both generators of $\Z/\gamma\Z$. (Moreover, the map $g\mapsto\{\gamma-g\}_{\gamma}$ is a permutation of $\{0,\ldots,\gamma-1\}$.) The coordinates
\begin{equation*}
h_1(g)=\left\{\frac{g\xi_2'}{\gamma}\right\}\qquad\text{and}\qquad h_2(g)=\left\{\frac{(\gamma-g)\xi_1'}{\gamma}\right\}
\end{equation*}
therefore both run through all the elements of
\begin{equation*}
\left\{0,\frac{1}{\gamma},\frac{2}{\gamma},\ldots,\frac{\gamma-1}{\gamma}\right\}
\end{equation*}
when $g$ runs through $\{0,\ldots,\gamma-1\}$. (Hence the maps $g\mapsto h_1(g)$ and $g\mapsto h_2(g)$ from $\{0,\ldots,\gamma-1\}$ to $\{0,1/\gamma,\ldots,(\gamma-1)/\gamma\}$ are both bijections.)

This leads to the existence of a unique $\xi_{\gamma}\in\{0,\ldots,\gamma-1\}$, coprime to $\gamma$, such that the elements of $(H_1+H_2)\cap H_3$ can be represented as
\begin{equation}\label{xigamma}
\frac{g}{\gamma}w_1+\left\{\frac{g\xi_{\gamma}}{\gamma}\right\}w_2;\qquad g=0,\ldots,\gamma-1.
\end{equation}
Hence $(H_1+H_2)\cap H_3$ is the cyclic subgroup of $H$ generated by $(1/\gamma)w_1+(\xi_{\gamma}/\gamma)w_2$.

\begin{remark}
The number $\xi_{\gamma}$ appearing in \eqref{xigamma} is determined by the equality
\begin{equation*}
\xi_{\gamma}+\gamma\Z=-(\xi_2'+\gamma\Z)^{-1}(\xi_1'+\gamma\Z)
\end{equation*}
in the ring $\Z/\gamma\Z,+,\cdot$. Furthermore, we have that $\xi_{\gamma}=\{\xi_3\}_{\gamma}$, with $\xi_3$ the unique element of \verdrie\ such that
\begin{equation*}
\frac{1}{\mu_3}w_1+\frac{\xi_3}{\mu_3}w_2\in H_3.
\end{equation*}
\end{remark}

\begin{remark}
From $\gamma=\gcd(\mu_1,\mu_2)\mid\mu_3$, we see that in fact
\begin{equation*}
\gamma=\gcd(\mu_1,\mu_2,\mu_3)=\gcd(\abs{d_{ij}})_{i,j=1,2,3},
\end{equation*}
and by symmetry, $\gamma=\gcd(\mu_1,\mu_2)=\gcd(\mu_1,\mu_3)=\gcd(\mu_2,\mu_3)$.
\end{remark}

The quotientgroup
\begin{equation}\label{quotientgroep}
\frac{H_1+H_2}{(H_1+H_2)\cap H_3}
\end{equation}
of order $\mu_1\mu_2/\gamma=\lambda$ partitions $H_1+H_2$ based on the $w_3$-coordinates of its points, and
\begin{equation*}
\{h_3+\Z\mid h\in H_1+H_2\},+
\end{equation*}
therefore is the unique subgroup of $\R/\Z,+$ of order $\lambda$. The set of $w_3$-coordinates of points of $H_1+H_2$ is thus $\{0,1/\lambda,\ldots,(\lambda-1)/\lambda\}$ (we already knew that), and since every coset of $(H_1+H_2)\cap H_3$ counts $\gamma$ points, every $l/\lambda$, $l\in\{0,\ldots,\lambda-1\},$ is the $w_3$-coordinate of precisely $\gamma$ points in $H_1+H_2$. This ends the proof of Theorem~\ref{algfp}(v).

\subsection{Explicit description of the points of $H$}
We shall write the $\mu_1$ points of $H_1$ and the $\mu_2$ points of $H_2$ as
\begin{alignat*}{3}
h(i,0,0)&=\frac{i}{\mu_1}w_2&&+\left\{\frac{i\xi_1}{\mu_1}\right\}w_3;&\qquad&i=0,\ldots,\mu_1-1;\\\intertext{and}
h(0,j,0)&=\frac{j}{\mu_2}w_1&&+\left\{\frac{j\xi_2}{\mu_2}\right\}w_3;&&j=0,\ldots,\mu_2-1;
\end{alignat*}
respectively. The $\mu_1\mu_2$ points of $H_1+H_2$ are then given by
\begin{multline*}
h(i,j,0)=\frac{j}{\mu_2}w_1+\frac{i}{\mu_1}w_2+\left\{\frac{i\xi_1\mu_2+j\xi_2\mu_1}{\mu_1\mu_2}\right\}w_3;\\i=0,\ldots,\mu_1-1;\quad j=0,\ldots,\mu_2-1.
\end{multline*}
The $w_3$-coordinate $h_3(i,j,0)$ of $h(i,j,0)$ can also be written as
\begin{equation*}
h_3(i,j,0)=\left\{\frac{i\xi_1\mu_2+j\xi_2\mu_1}{\mu_1\mu_2}\right\}=\frac{l(i,j,0)\gamma}{\mu_1\mu_2}=\frac{l(i,j,0)}{\lambda},
\end{equation*}
with
\begin{multline*}
l(i,j,0)=\frac{\{i\xi_1\mu_2+j\xi_2\mu_1\}_{\mu_1\mu_2}}{\gamma}=\left\{\frac{i\xi_1\mu_2+j\xi_2\mu_1}{\gamma}\right\}_{\lambda}\\
=\{i\xi_1\mu_2'+j\xi_2\mu_1'\}_{\lambda}\in\{0,\ldots,\lambda-1\}
\end{multline*}
for all $i,j$. This results in
\begin{equation*}
h(i,j,0)=\frac{j}{\mu_2}w_1+\frac{i}{\mu_1}w_2+\frac{l(i,j,0)}{\lambda}w_3;\qquad i=0,\ldots,\mu_1-1;\quad j=0,\ldots,\mu_2-1;
\end{equation*}
whereby $l(i,j,0)$ runs exactly $\gamma$ times through all the elements of $\{0,\ldots,\lambda-1\}$ when $i$ and $j$ run through \vereen\ and \vertwee, respectively.

Because $H$ is the disjoint union of the $\phi_3$ cosets of $H_1+H_2$ in $H$, we can describe all elements of $H$ by choosing representatives $h(0,0,k)$; $k=0,\ldots,\phi_3-1$; one for each coset, and then view $H$ as the set
\begin{equation*}
H=H_1+H_2+\{h(0,0,1),\ldots,h(0,0,\phi_3-1)\}.
\end{equation*}

We know that every $h\in H$ has a $w_1$-coordinate $h_1$ of the form $h_1=t\mu_1/\mu$ for some $t\in\{0,\ldots,\mu/\mu_1-1\}$, i.e., of the form
\begin{equation*}
h_1=\frac{j\phi_3+k}{\mu_2\phi_3}
\end{equation*}
for some $j\in\vertwee$ and some $k\in\{0,\ldots,\phi_3-1\}$,
and that every such number $(j\phi_3+k)/\mu_2\phi_3$ is the $w_1$-coordinate of precisely $\mu_1$ points of $H$. In this way, we can associate to each $h\in H$ a number $k=\{\mu_2\phi_3h_1\}_{\phi_3}\in\{0,\ldots,\phi_3-1\}$, and we see that $H/(H_1+H_2)$ is the partition of $H$ based on these values of $k$. The analogous result holds for the $w_2$-coordinates of the points of $H$.

The $w_3$-coordinate $h_3$ of every point $h\in H$ has the form $h_3=t\mu_3/\mu$ for some $t\in\{0,\ldots,\mu/\mu_3-1\}$, and every $t\mu_3/\mu$ is the $w_3$-coordinate of precisely $\mu_3$ points of $H$. Since $\gamma\mid\mu_3\mid\gamma\phi_3$ and we actually have $\gamma=\gcd(\mu_1,\mu_2,\mu_3)$, we can put $\mu_3=\gamma\mu_3'$ and $\phi_3=\mu_3'\phi_3'$ with $\mu_3',\phi_3'\in\Zplusnul$.

We can now write every $w_3$-coordinate $h_3$ as
\begin{equation*}
h_3=\frac{t\mu_3}{\mu}=\frac{t\gamma\mu_3'}{\mu_1\mu_2\phi_3}=\frac{t}{\lambda\phi_3'}
\end{equation*}
for some $t\in\{0,\ldots,\lambda\phi_3'-1\}$, and thus as
\begin{equation*}
h_3=\frac{t}{\lambda\phi_3'}=\frac{l\phi_3'+k'}{\lambda\phi_3'}
\end{equation*}
for some $l\in\{0,\ldots,\lambda-1\}$ and some $k'\in\{0,\ldots,\phi_3'-1\}$.

The $k'=\{\lambda\phi_3'h_3\}_{\phi_3'}$, associated in this way to every $h\in H$, is constant on the cosets of $H_1+H_2$, but points in different cosets may have the same value for $k'$. In fact each of the $\phi_3'$ possible values for $k'$ is adopted in precisely $\mu_3'$ cosets of $H_1+H_2$. (This agrees with the fact that every $(l\phi_3'+k')/\lambda\phi_3'$ appears precisely $\mu_3=\gamma\mu_3'$ times as the $w_3$-coordinate of a point of $H$, considered that each $l/\lambda$ is the $w_3$-coordinate of precisely $\gamma$ points in $H_1+H_2$.)

We can now choose representatives for the elements of $H/(H_1+H_2)$. First, choose a point $h^{\ast}\in H$ with $w_1$-coordinate $h^{\ast}_1=1/\mu_2\phi_3$. The $w_2$-coordinate of this point equals
\begin{equation*}
h^{\ast}_2=\frac{i_0\phi_3+\eta}{\mu_1\phi_3}
\end{equation*}
for some $i_0\in\vereen$ and some $\eta\in\{0,\ldots,\phi_3-1\}$. All $\mu_1$ points of $H$ with $w_1$-coordinate $1/\mu_2\phi_3$ are given by $\{h^{\ast}+h\}$, $h\in H_1$, and their $w_2$-coordinates by
\begin{equation*}
\left\{\frac{(i_0+i)\phi_3+\eta}{\mu_1\phi_3}\right\};\qquad i=0,\ldots,\mu_1-1;
\end{equation*}
this is, after reordering, by
\begin{equation*}
\frac{i\phi_3+\eta}{\mu_1\phi_3};\qquad i=0,\ldots,\mu_1-1.
\end{equation*}
It follows that there exists a unique point $h(0,0,1)\in H$ of the form
\begin{equation*}
h(0,0,1)=\frac{1}{\mu_2\phi_3}w_1+\frac{\eta}{\mu_1\phi_3}w_2+\frac{l_0\phi_3'+\eta'}{\lambda\phi_3'}w_3
\end{equation*}
with $\eta\in\{0,\ldots,\phi_3-1\}$, $l_0\in\{0,\ldots,\lambda-1\}$, and $\eta'\in\{0,\ldots,\phi_3'-1\}$. We will choose this point $h(0,0,1)$ as the representative of its coset $h(0,0,1)+(H_1+H_2)$.

The $\phi_3$ multiples
\begin{multline*}
\{kh(0,0,1)\}=\frac{k}{\mu_2\phi_3}w_1+\left\{\frac{k\eta}{\mu_1\phi_3}\right\}w_2+\left\{\frac{kl_0\phi_3'+k\eta'}{\lambda\phi_3'}\right\}w_3;\\
k=0,\ldots,\phi_3-1;
\end{multline*}
of $h(0,0,1)$ run through all cosets of $H_1+H_2$, and therefore, $h(0,0,1)+(H_1+H_2)$ is a generator of the cyclic group $H/(H_1+H_2)$.

We can choose the elements $\{kh(0,0,1)\}$; $k=0,\ldots,\phi_3-1$; as representatives of their respective cosets, but we can also choose, for each $k$, as a representative for $\{kh(0,0,1)\}+(H_1+H_2)$, the unique element $h(0,0,k)\in\{kh(0,0,1)\}+H_1$ for which $h_2(0,0,k)<1/\mu_1$. We take the last option. This is,
\begin{equation*}
h(0,0,k)=\left\{kh(0,0,1)-\frac{i(k)}{\mu_1}w_2-\frac{i(k)\xi_1}{\mu_1}w_3\right\},
\end{equation*}
with
\begin{equation}\label{defik}
i(k)=\left\{\frac{k\eta-\{k\eta\}_{\phi_3}}{\phi_3}\right\}_{\mu_1}=\left\{\left\lfloor\frac{k\eta}{\phi_3}\right\rfloor\right\}_{\mu_1}\in\vereen
\end{equation}
for $k=0,\ldots,\phi_3-1$, resulting in the following set of representatives for the elements of $H/(H_1+H_2)$:
\begin{equation*}
h(0,0,k)=\frac{k}{\mu_2\phi_3}w_1+\frac{\{k\eta\}_{\phi_3}}{\mu_1\phi_3}w_2+\frac{l(k)\phi_3'+\{k\eta'\}_{\phi_3'}}{\lambda\phi_3'}w_3;\qquad k=0,\ldots,\phi_3-1;
\end{equation*}
with for every $k$,
\begin{equation}\label{deflk}
l(k)=\left\{kl_0-i(k)\xi_1\mu_2'+\left\lfloor\frac{k\eta'}{\phi_3'}\right\rfloor\right\}_{\lambda}\in\{0,\ldots,\lambda-1\},
\end{equation}
$\mu_2'=\mu_2/\gamma$, and $i(k)$ as in \eqref{defik}.

When $k$ runs through $\{0,\ldots,\phi_3-1\}$, the coset $h(0,0,k)+(H_1+H_2)$ runs through all elements of $H/(H_1+H_2)$. This means that $\{k\eta\}_{\phi_3}$ runs through $\{0,\ldots,\phi_3-1\}$ once, while $\{k\eta'\}_{\phi_3'}$ runs through $\{0,\ldots,\phi_3'-1\}$ precisely $\mu_3'$ times. It follows that $\eta+\phi_3\Z$ and $\eta'+\phi_3'\Z$ are generators of $\Z/\phi_3\Z$ and $\Z/\phi_3'\Z$, respectively, and therefore $\gcd(\eta,\phi_3)=\gcd(\eta',\phi_3')=1$.

We can now list all the points of $H$. We start with an overview. The $\mu_1$ points of $H_1$ are
\begin{alignat*}{3}
h(i,0,0)&=\frac{i}{\mu_1}w_2&&+\left\{\frac{i\xi_1}{\mu_1}\right\}w_3;&\qquad&i=0,\ldots,\mu_1-1;\\\intertext{while the $\mu_2$ points of $H_2$ are given by}
h(0,j,0)&=\frac{j}{\mu_2}w_1&&+\left\{\frac{j\xi_2}{\mu_2}\right\}w_3;&&j=0,\ldots,\mu_2-1.
\end{alignat*}
This gives the following $\mu_1\mu_2$ points for $H_1+H_2$:
\begin{multline*}
h(i,j,0)=\{h(i,0,0)+h(0,j,0)\}=\frac{j}{\mu_2}w_1+\frac{i}{\mu_1}w_2+\frac{l(i,j)}{\lambda}w_3;\\i=0,\ldots,\mu_1-1;\quad j=0,\ldots,\mu_2-1;
\end{multline*}
with for all $i,j$,
\begin{equation*}
l(i,j)=\{i\xi_1\mu_2'+j\xi_2\mu_1'\}_{\lambda}\in\{0,\ldots,\lambda-1\}.
\end{equation*}
As representatives for the $\phi_3$ cosets of $H_1+H_2$, we chose
\begin{equation*}
h(0,0,k)=\frac{k}{\mu_2\phi_3}w_1+\frac{\{k\eta\}_{\phi_3}}{\mu_1\phi_3}w_2+\frac{l(k)\phi_3'+\{k\eta'\}_{\phi_3'}}{\lambda\phi_3'}w_3;\qquad k=0,\ldots,\phi_3-1;
\end{equation*}
with $l(k)$ as in \eqref{deflk}.

Consequently, the $\mu=\mu_1\mu_2\phi_3$ points of $H$ are given by
\begin{multline*}
\begin{aligned}
h(i,j,k)&=\{h(i,0,0)+h(0,j,0)+h(0,0,k)\}\\
&=\frac{j\phi_3+k}{\mu_2\phi_3}w_1+\frac{i\phi_3+\{k\eta\}_{\phi_3}}{\mu_1\phi_3}w_2+\frac{l(i,j,k)\phi_3'+\{k\eta'\}_{\phi_3'}}{\lambda\phi_3'}w_3;
\end{aligned}\\
i=0,\ldots,\mu_1-1;\quad j=0,\ldots,\mu_2-1;\quad k=0,\ldots,\phi_3-1;
\end{multline*}
with for all $i,j,k$,
\begin{align*}
l(i,j,k)&=\{l(i,j)+l(k)\}_{\lambda}\\
&=\left\{(i-i(k))\xi_1\mu_2'+j\xi_2\mu_1'+kl_0+\left\lfloor\frac{k\eta'}{\phi_3'}\right\rfloor\right\}_{\lambda}\in\{0,\ldots,\lambda-1\}\qquad\text{and}\\
i(k)&=\left\{\left\lfloor\frac{k\eta}{\phi_3}\right\rfloor\right\}_{\mu_1}\in\vereen,
\end{align*}
and where
\begin{align*}
\xi_1&\in\vereen,&\eta&\in\{0,\ldots,\phi_3-1\},&l_0&\in\{0,\ldots,\lambda-1\}\\
\xi_2&\in\vertwee,&\eta'&\in\{0,\ldots,\phi_3'-1\},&&
\end{align*}
are uniquely determined by
\begin{equation*}
\frac{1}{\mu_1}w_2+\frac{\xi_1}{\mu_1}w_3,\quad\frac{1}{\mu_2}w_1+\frac{\xi_2}{\mu_2}w_3,\quad
\frac{1}{\mu_2\phi_3}w_1+\frac{\eta}{\mu_1\phi_3}w_2+\frac{l_0\phi_3'+\eta'}{\lambda\phi_3'}w_3\quad\in H.
\end{equation*}

We repeat that when $k$ runs through $\{0,\ldots,\phi_3-1\}$, the numbers $\{k\eta\}_{\phi_3}$ and $\{k\eta'\}_{\phi_3'}$ run through $\{0,\ldots,\phi_3-1\}$ and $\{0,\ldots,\phi_3'-1\}$ once and $\mu_3'$ times, respectively, while for fixed $k$, we have that $l(i,j,k)$ runs $\gamma$ times through $\{0,\ldots,\lambda-1\}$ when $i$ and $j$ run through \vereen\ and \vertwee, respectively. This concludes Theorem~\ref{algfp}(vi).

\subsection{Determination of the numbers $\xi_1,\xi_2,\xi_3,\eta,\eta',l_0$ from the coordinates of $w_1,w_2,w_3$}
\subsubsection{The numbers $\xi_1,\xi_2,\xi_3$}\label{pardetxieencnul}
We will give the explanation for $\xi_1$. Recall that we introduced $\xi_1$ as the unique element of \vereen\ for which the $\mu_1$ points of $H_1=\Z^3\cap\lozenge(w_2,w_3)$ are given by
\begin{equation*}
\frac{i}{\mu_1}w_2+\left\{\frac{i\xi_1}{\mu_1}\right\}w_3;\qquad i=0,\ldots,\mu_1-1.
\end{equation*}
How can we find $\xi_1$ from the coordinates of $w_2(a_2,b_2,c_2)$ and $w_3(a_3,b_3,c_3)$?

Consider the vector
\begin{align*}
-a_3w_2+a_2w_3&=(-a_3a_2+a_2a_3,-a_3b_2+a_2b_3,-a_3c_2+a_2c_3)\\
&=(0,d_{13},d_{12})\in\Z^3.
\end{align*}
Since $\mu_1=\gcd(\abs{d_{11}},\abs{d_{12}},\abs{d_{13}})$ divides every coordinate\footnote{with respect to the standard basis of $\R^3$} of $-a_3w_2+a_2w_3$, it holds that
\begin{equation*}
\frac{1}{\mu_1}(-a_3w_2+a_2w_3)=\frac{-a_3}{\mu_1}w_2+\frac{a_2}{\mu_1}w_3\in\Z^3.
\end{equation*}
On the other hand, we have
\begin{equation*}
\frac{1}{\mu_1}w_2+\frac{\xi_1}{\mu_1}w_3\in\Z^3.
\end{equation*}
It follows that also
\begin{equation*}
\frac{-a_3}{\mu_1}w_2+\frac{-a_3\xi_1}{\mu_1}w_3\in\Z^3\qquad\text{and}\qquad\frac{a_2+a_3\xi_1}{\mu_1}w_3\in\Z^3.
\end{equation*}
Since $w_3$ is primitive, we obtain
\begin{equation*}
a_3\xi_1\equiv-a_2\mod\mu_1.
\end{equation*}

Analogously, we find that
\begin{equation*}
b_3\xi_1\equiv-b_2\mod\mu_1\qquad\text{and}\qquad c_3\xi_1\equiv-c_2\mod\mu_1.
\end{equation*}
Consequently, $\xi_1$ is a solution of the following system of linear congruences:
\begin{equation}\label{stelseleenfpld}
\left\{
\begin{alignedat}{2}
a_3x&\equiv-a_2&&\mod\mu_1,\\
b_3x&\equiv-b_2&&\mod\mu_1,\\
c_3x&\equiv-c_2&&\mod\mu_1.
\end{alignedat}
\right.
\end{equation}

The first linear congruence has a solution if and only if $\gcd(a_3,\mu_1)\mid a_2$. We show that this is indeed the case. Put
\begin{equation*}
\gamma_a=\gcd(a_3,\mu_1)=\gcd(a_3,\abs{d_{11}},\abs{d_{12}},\abs{d_{13}}).
\end{equation*}
It then follows from $\gamma_a\mid a_3,d_{12},d_{13}$ that $\gamma_a\mid a_2a_3,a_2b_3,a_2c_3$. Hence
\begin{equation*}
\gamma_a\mid a_2\gcd(a_3,b_3,c_3)=a_2.
\end{equation*}
Analogously we have $\gcd(a_2,\mu_1)\mid a_3$, and thus we can write
\begin{equation*}
\gamma_a=\gcd(a_2,\mu_1)=\gcd(a_3,\mu_1).
\end{equation*}

In the same way the other two congruences have solutions, and we may put
\begin{alignat*}{2}
\gamma_b&=\gcd(b_2,\mu_1)&&=\gcd(b_3,\mu_1)\qquad\text{and}\\
\gamma_c&=\gcd(c_2,\mu_1)&&=\gcd(c_3,\mu_1).
\end{alignat*}
The system \eqref{stelseleenfpld} is then equivalent to
\begin{equation}\label{stelseltweefpld}
\left\{
\begin{alignedat}{3}
x&\equiv-a_2'&&\{a_3'\}_{\mu_1^{(a)}}^{-1}&&\mod\mu_1^{(a)},\\
x&\equiv-b_2'&&\{b_3'\}_{\mu_1^{(b)}}^{-1}&&\mod\mu_1^{(b)},\\
x&\equiv-c_2'&&\{c_3'\}_{\mu_1^{(c)}}^{-1}&&\mod\mu_1^{(c)},
\end{alignedat}
\right.
\end{equation}
with
\begin{equation*}
a_2'=a_2/\gamma_a,\qquad a_3'=a_3/\gamma_a,\qquad \mu_1^{(a)}=\mu_1/\gamma_a,
\end{equation*}
and where $\{a_3'\}_{\mu_1^{(a)}}^{-1}$ denotes the unique element of $\{0,\ldots,\mu_1^{(a)}-1\}$ such that
\begin{equation*}
a_3'\{a_3'\}_{\mu_1^{(a)}}^{-1}\equiv1\mod\mu_1^{(a)}.
\end{equation*}
(Analogously for the numbers appearing in the other two congruences.)

Since the moduli $\mu_1^{(a)},\mu_1^{(b)},\mu_1^{(c)}$ are generally not pairwise coprime, according to the Generalized Chinese Remainder Theorem, the system \eqref{stelseltweefpld} has a solvability condition in the form of
\begin{equation}\label{solvcondsteltwee}
a_2'\{a_3'\}_{\mu_1^{(a)}}^{-1}\equiv b_2'\{b_3'\}_{\mu_1^{(b)}}^{-1}\mod\gcd(\mu_1^{(a)},\mu_1^{(b)}),
\end{equation}
together with the analogous conditions for the other two combinations of two out of three congruences. Of course we know that the system is solvable since $\xi_1$ is a solution, but for the sake of completeness, let us verify Condition~\eqref{solvcondsteltwee} in a direct way.

Because $a_3'$ and $b_3'$ are units modulo $\mu_1^{(a)}$ and $\mu_1^{(b)}$, respectively, they are both units modulo $\gcd(\mu_1^{(a)},\mu_1^{(b)})$. Furthermore, we have that
\begin{equation}\label{viersterrekesfp}
a_3'\{a_3'\}_{\mu_1^{(a)}}^{-1}\equiv b_3'\{b_3'\}_{\mu_1^{(b)}}^{-1}\equiv1\mod\gcd\bigl(\mu_1^{(a)},\mu_1^{(b)}\bigr).
\end{equation}
If we multiply both sides of \eqref{solvcondsteltwee} with the unit $a_3'b_3'$ and apply \eqref{viersterrekesfp}, we find that Condition~\eqref{solvcondsteltwee} is equivalent to
\begin{equation*}
a_2'b_3'\equiv a_3'b_2'\mod\gcd(\mu_1^{(a)},\mu_1^{(b)}),
\end{equation*}
and---after multiplying both sides and the modulus with $\gamma_a\gamma_b$---even to
\begin{multline*}
\gcd(a_2,a_3,b_2,b_3)\mu_1=\gcd(\gamma_a,\gamma_b)\mu_1\\
=\gamma_a\gamma_b\gcd(\mu_1^{(a)},\mu_1^{(b)})\mid\gamma_a\gamma_b(a_2'b_3'-a_3'b_2')=d_{13}.
\end{multline*}

Of course we have that $\mu_1\mid d_{13}$. It is therefore sufficient to show that for every prime $p$ with $p\mid\gcd(a_2,a_3,b_2,b_3)$, it holds that
\begin{equation*}
\ord_pd_{13}\geqslant\ord_p\gcd(a_2,a_3,b_2,b_3)+\ord_p\mu_1.
\end{equation*}
Let $p$ be such a prime. Since $p\mid a_2,b_2$ and $w_2$ is primitive, it certainly holds that $p\nmid c_2$. It now follows from $a_2d_{11}-b_2d_{12}+c_2d_{13}=0$ that
\begin{align*}
\ord_pd_{13}&=\ord_pc_2d_{13}\\
&=\ord_p(-a_2d_{11}+b_2d_{12})\\
&\geqslant\min\{\ord_pa_2+\ord_pd_{11},\ord_pb_2+\ord_pd_{12}\}\\
&\geqslant\min\{\ord_pa_2,\ord_pa_3,\ord_pb_2,\ord_pb_3\}\\
&\qquad\qquad\qquad\qquad\qquad\quad+\min\{\ord_pd_{11},\ord_pd_{12},\ord_pd_{13}\}\\
&=\ord_p\gcd(a_2,a_3,b_2,b_3)+\ord_p\mu_1.
\end{align*}

The system is thus indeed solvable and the Generalized Chinese Remainder Theorem asserts that its solution is unique modulo
\begin{equation*}
\lcm(\mu_1^{(a)},\mu_1^{(b)},\mu_1^{(c)})=\mu_1.
\end{equation*}
We can thus find $\xi_1$ as the unique solution in \vereen\ of the system \eqref{stelseleenfpld}.

\subsubsection{Determination of $\eta,\eta',$ and $l_0$}
Recall that we introduced the numbers $\eta,\eta',l_0$ as the unique $\eta\in\{0,\ldots,\phi_3-1\}$, $l_0\in\{0,\ldots,\lambda-1\}$, and $\eta'\in\{0,\ldots,\phi_3'-1\}$ such that
\begin{equation}\label{tweedriehoekjes}
\frac{1}{\mu_2\phi_3}w_1+\frac{\eta}{\mu_1\phi_3}w_2+\frac{l_0\phi_3'+\eta'}{\lambda\phi_3'}w_3\in H.
\end{equation}
Recall as well that $(\adj M)M=dI$, with $d=\det M$; i.e.,
\begin{equation}\label{adjgelijkheidfpls}
\begin{pmatrix}
d_{11}&-d_{21}&d_{31}\\
-d_{12}&d_{22}&-d_{32}\\
d_{13}&-d_{23}&d_{33}
\end{pmatrix}
\begin{pmatrix}
a_1&b_1&c_1\\
a_2&b_2&c_2\\
a_3&b_3&c_3
\end{pmatrix}=
\begin{pmatrix}
d&0&0\\
0&d&0\\
0&0&d
\end{pmatrix}.
\end{equation}
Let $j\in\{1,2,3\}$. Since $\mu=\abs{d}$ divides $d$, it follows from \eqref{adjgelijkheidfpls} that
\begin{equation*}
h(j)=\frac{d_{1j}}{\mu}w_1-\frac{d_{2j}}{\mu}w_2+\frac{d_{3j}}{\mu}w_3\in\Z^3.
\end{equation*}
Recall also that $\mu=\mu_1\mu_2\phi_3$ and that
\begin{equation*}
\mu_i=\gcd(\abs{d_{i1}},\abs{d_{i2}},\abs{d_{i3}});\qquad i=1,2,3.
\end{equation*}
If we now put $d_{ij}'=d_{ij}/\mu_i$; $i=1,2,3$; we obtain
\begin{align*}
h(j)&=\frac{d_{1j}'}{\mu_2\phi_3}w_1-\frac{d_{2j}'}{\mu_1\phi_3}w_2+\frac{d_{3j}'\mu_3}{\mu_1\mu_2\phi_3}w_3\\
&=\frac{d_{1j}'}{\mu_2\phi_3}w_1-\frac{d_{2j}'}{\mu_1\phi_3}w_2+\frac{d_{3j}'}{\lambda\phi_3'}w_3\in\Z^3.
\end{align*}

On the other hand, we also know the point
\begin{equation*}
h'(j)=\frac{d_{1j}'}{\mu_2\phi_3}w_1+\frac{d_{1j}'\eta}{\mu_1\phi_3}w_2+\frac{d_{1j}'(l_0\phi_3'+\eta')}{\lambda\phi_3'}w_3\in\Z^3
\end{equation*}
with the same $w_1$-coordinate as $h(j)$. After reduction of its coordinates\footnote{with respect to the basis $(w_1,w_2,w_3)$} modulo one, $h(j)-h'(j)$ thus belongs to $H_1$, and since the coordinates\footnotemark[\value{footnote}] of the elements of $H_1$ belong to $(1/\mu_1)\Z$, we have that
\begin{alignat*}{2}
\frac{d_{1j}'\eta}{\mu_1\phi_3}&\equiv-\frac{d_{2j}'}{\mu_1\phi_3}&&\mod\frac{1}{\mu_1}\qquad\text{and}\\
\frac{d_{1j}'(l_0\phi_3'+\eta')}{\lambda\phi_3'}&\equiv\frac{d_{3j}'}{\lambda\phi_3'}&&\mod\frac{1}{\mu_1},
\end{alignat*}
or, equivalently, that
\begin{alignat*}{2}
d_{1j}'\eta&\equiv -d_{2j}'&&\mod\phi_3\qquad\text{and}\\
d_{1j}'\eta'&\equiv d_{3j}'&&\mod\mu_2'\phi_3'.\\\intertext{(Recall that $\lambda=\lcm(\mu_1,\mu_2)=\mu_1\mu_2/\gamma=\mu_1\mu_2'$.) A fortiori, it thus holds that}
d_{1j}'\eta'&\equiv d_{3j}'&&\mod\phi_3'.
\end{alignat*}

We have just showed that $\eta$ and $\eta'$ are solutions of the respective systems of linear congruences
\begin{equation}\label{tweestelseltjes}
\left\{
\begin{alignedat}{2}
d_{11}'x&\equiv -d_{21}'&&\mod\phi_3,\\
d_{12}'x&\equiv -d_{22}'&&\mod\phi_3,\\
d_{13}'x&\equiv -d_{23}'&&\mod\phi_3;
\end{alignedat}
\right.
\qquad\text{and}\qquad
\left\{
\begin{alignedat}{2}
d_{11}'x&\equiv d_{31}'&&\mod\phi_3',\\
d_{12}'x&\equiv d_{32}'&&\mod\phi_3',\\
d_{13}'x&\equiv d_{33}'&&\mod\phi_3'.
\end{alignedat}
\right.
\end{equation}
Moreover, it turns out that $\eta$ and $\eta'$ are the unique solutions of these systems in $\{0,\ldots,\phi_3-1\}$ and $\{0,\ldots,\phi_3'-1\}$, respectively. This gives us a method to determine $\eta$ and $\eta'$ from the coordinates of $w_1,w_2,w_3$. We will study the first system of \eqref{tweestelseltjes} in more detail, for the second system analogous conclusions will be true.

Let us verify the solvability conditions of the first system. The first linear congruence has solutions if and only if $\gcd(d_{11}',\phi_3)\mid d_{21}'$, i.e., if and only if
\begin{equation*}
\gcd(\mu_2d_{11},\mu)\mid\mu_1d_{21}.
\end{equation*}
Put $\Gamma_1=\gcd(\mu_2d_{11},\mu)$. Then we have $\Gamma_1\mid\mu$ and $\Gamma_1\mid d_{11}d_{2j}$ for every $j\in\{1,2,3\}$, and it is sufficient to prove that $\Gamma_1\mid d_{1j}d_{21}$ for every $j$.

We already know that $\Gamma_1\mid d_{11}d_{21}$. Furthermore, from $\adj(\adj M)=dM$, it follows that
\begin{alignat}{3}
d_{11}d_{22}&-d_{12}d_{21}&&=\bigl(\adj(\adj M)\bigr)_{33}&&=dc_3\qquad\text{and}\notag\\
d_{11}d_{23}&-d_{13}d_{21}&&=\bigl(\adj(\adj M)\bigr)_{32}&&=db_3.\label{vgltweeergfpls}
\end{alignat}
We find thus that $\Gamma_1\mid\mu\mid dc_3=d_{11}d_{22}-d_{12}d_{21}$, and together with $\Gamma_1\mid d_{11}d_{22}$, this implies that $\Gamma_1\mid d_{12}d_{21}$. Analogously, it follows from \eqref{vgltweeergfpls} that $\Gamma_1\mid d_{13}d_{21}$. The first linear congruence therefore has solutions, and the same thing holds for the other two congruences.

The first system of \eqref{tweestelseltjes} is now solvable if and only if for all $j_1,j_2\in\{1,2,3\}$, it holds that
\begin{equation}\label{allemaaljekes}
d_{1j_1}'d_{2j_2}'\equiv d_{1j_2}'d_{2j_1}'\mod\gcd\left(\frac{\phi_3}{\gamma_{j_1}},\frac{\phi_3}{\gamma_{j_2}}\right),
\end{equation}
with $\gamma_j=\gcd(d_{1j}',\phi_3)$ for all $j$. Let us verify this for $j_1=1$ and $j_2=2$. For these values of $j_1$ and $j_2$, Condition~\eqref{allemaaljekes} is equivalent to
\begin{equation*}
d_{11}d_{22}\equiv d_{12}d_{21}\mod\gcd\left(\frac{\mu}{\gamma_1},\frac{\mu}{\gamma_2}\right),
\end{equation*}
which follows from $\mu\mid dc_3=d_{11}d_{22}-d_{12}d_{21}$.

The (Generalized) Chinese Remainder Theorem now states that the system has a unique solution modulo
\begin{equation*}
\lcm\left(\frac{\phi_3}{\gamma_1},\frac{\phi_3}{\gamma_2},\frac{\phi_3}{\gamma_3}\right)=\frac{\phi_3}{\gcd(d_{11}',d_{12}',d_{13}',\phi_3)}=\phi_3.
\end{equation*}

An alternative way to find $\eta$ and $\eta'$, and a way to find $l_0$ is as follows. We know that
\begin{equation*}
h(j)=\frac{d_{1j}'}{\mu_2\phi_3}w_1-\frac{d_{2j}'}{\mu_1\phi_3}w_2+\frac{d_{3j}'}{\lambda\phi_3'}w_3\in\Z^3;\qquad j=1,2,3;
\end{equation*}
and that $\gcd(d_{11}',d_{12}',d_{13}')=1$. Find $\lambda_j\in\Z$; $j=1,2,3$; such that $\sum_j\lambda_jd_{1j}'=1$, and consider the point
\begin{equation*}
\left\{\sum\nolimits_j\lambda_jh(j)\right\}=\frac{1}{\mu_2\phi_3}w_1+\left\{\frac{-\sum_j\lambda_jd_{2j}'}{\mu_1\phi_3}\right\}w_2+\left\{\frac{\sum_j\lambda_jd_{3j}'}{\lambda\phi_3'}\right\}w_3\in H.
\end{equation*}
Substract from $\bigl\{\sum_j\lambda_jh(j)\bigr\}$ the point
\begin{equation*}
\frac{i}{\mu_1}w_2+\left\{\frac{i\xi_1}{\mu_1}\right\}w_3\in H_1,
\end{equation*}
with
\begin{equation*}
i=\left\{\left\lfloor\frac{-\sum_j\lambda_jd_{2j}'}{\phi_3}\right\rfloor\right\}_{\mu_1},
\end{equation*}
and find the point
\begin{equation*}
\frac{1}{\mu_2\phi_3}w_1+\frac{\bigl\{-\sum_j\lambda_jd_{2j}'\bigr\}_{\phi_3}}{\mu_1\phi_3}w_2+\left\{\frac{\sum_j\lambda_jd_{3j}'-i\xi_1\mu_2'\phi_3'}{\lambda\phi_3'}\right\}w_3\in H.
\end{equation*}
Because of the uniqueness in $H$ of a point of the form \eqref{tweedriehoekjes}, we find that
\begin{align*}
\eta&=\left\{-\sum\nolimits_j\lambda_jd_{2j}'\right\}_{\phi_3},\\
\eta'&=\left\{\sum\nolimits_j\lambda_jd_{3j}'\right\}_{\phi_3'},\qquad\text{and}\\
l_0&=\left\{\left\lfloor\frac{\sum_j\lambda_jd_{3j}'}{\phi_3'}\right\rfloor-i\xi_1\mu_2'\right\}_{\lambda}.
\end{align*}

\section{Case~I: exactly one facet contributes to $s_0$ and this facet is a $B_1$-simplex}\label{secgeval1art3}
\subsection{Figure and notations}
Without loss of generality, we may assume that the $B_1$-simplex $\tau_0$ contributing to $s_0$ is as drawn in Figure~\ref{figcase1}.

%
%
\begin{figure}
\psset{unit=.03462099125\textwidth}
\centering
\subfigure[$B_1$-simplex $\tau_0$, its subfaces, and its neighbor facets $\tau_1,\tau_2,$ and $\tau_3$]{
\begin{pspicture}(-7.58,-4.95)(6.14,7)
{\footnotesize
\pstThreeDCoor[xMin=0,yMin=0,zMin=0,xMax=10,yMax=8,zMax=7.24,linecolor=black,linewidth=.7pt]
{
\psset{linecolor=black,linewidth=.3pt,linestyle=dashed,subticks=1}
\pstThreeDPlaneGrid[planeGrid=xy](0,0)(2,3)
\pstThreeDPlaneGrid[planeGrid=xy](0,0)(8,4)
\pstThreeDPlaneGrid[planeGrid=xy](0,0)(3,6)
\pstThreeDPlaneGrid[planeGrid=xz](0,0)(2,1)
\pstThreeDPlaneGrid[planeGrid=yz](0,0)(3,1)
\pstThreeDPlaneGrid[planeGrid=xy,planeGridOffset=1](0,0)(2,3)
\pstThreeDPlaneGrid[planeGrid=xz,planeGridOffset=3](0,0)(2,1)
\pstThreeDPlaneGrid[planeGrid=yz,planeGridOffset=2](0,0)(3,1)
}
\pstThreeDPut[pOrigin=c](4.33,4.33,0.33){\psframebox*[framesep=0pt,framearc=0.3]{\phantom{$\tau_0$}}}
{
\psset{dotstyle=none,dotscale=1,drawCoor=false}
\psset{linecolor=black,linewidth=1pt,linejoin=1}
\psset{fillcolor=lightgray,opacity=.6,fillstyle=solid}
\pstThreeDTriangle(8,4,0)(3,6,0)(2,3,1)
}
\pstThreeDPut[pOrigin=t](8,4,0){$A(x_A,y_A,0)$}
\pstThreeDPut[pOrigin=lt](3,6,0){$B(x_B,y_B,0)$}
\pstThreeDPut[pOrigin=lb](2,3,1){\psframebox*[framesep=-.3pt,framearc=1]{$C(x_C,y_C,1)$}}
\pstThreeDPut[pOrigin=c](4.33,4.33,0.33){$\tau_0$}
\pstThreeDPut[pOrigin=lt](5.8,5.7,0){$\tau_3$}
\pstThreeDPut[pOrigin=rb](5,3.2,0.7){$\tau_2$}
\pstThreeDPut[pOrigin=lb](2.3,4.75,0.6){\psframebox*[framesep=0pt,framearc=0.3]{$\tau_1$}}
}
\end{pspicture}
}\hfill\subfigure[Relevant cones associated to relevant faces of~\Gf]{
\psset{unit=.03125\textwidth}
\begin{pspicture}(-7.6,-3.8)(7.6,9.6)
{\footnotesize
\pstThreeDCoor[xMin=0,yMin=0,zMin=0,xMax=10,yMax=10,zMax=10,nameZ={},linecolor=gray,linewidth=.7pt]
{
\psset{linecolor=gray,linewidth=.3pt,linejoin=1,linestyle=dashed,fillcolor=lightgray,fillstyle=none}
\pstThreeDLine(10,0,0)(0,10,0)\pstThreeDLine(0,10,0)(0,0,10)\pstThreeDLine(0,0,10)(10,0,0)
}
{
\psset{linecolor=black,linewidth=.7pt,linejoin=1,fillcolor=lightgray,fillstyle=none}
\pstThreeDLine(0,0,0)(1.13,2.15,6.72)
\pstThreeDLine(0,0,0)(4.67,.888,4.44)
\pstThreeDLine(0,0,0)(.267,4.97,4.78)
\pstThreeDLine(0,0,0)(0,0,10)
}
{
\psset{linecolor=darkgray,linewidth=.8pt,linejoin=1,arrows=->,arrowscale=1,fillcolor=lightgray,fillstyle=none}
\pstThreeDLine(0,0,0)(.678,1.29,4.03)
\pstThreeDLine(0,0,0)(3.74,.710,3.55)
\pstThreeDLine(0,0,0)(.134,2.48,2.39)
\pstThreeDLine(0,0,0)(0,0,2)
}
{
\psset{linecolor=white,linewidth=1.7pt,linejoin=1,fillcolor=lightgray,fillstyle=none}
\pstThreeDLine(2.900,1.5190,5.580)(1.484,2.0238,6.492)
\pstThreeDLine(2.9088,2.5208,4.576)(2.0282,3.3372,4.644)
}
{
\psset{linecolor=black,linewidth=.7pt,linejoin=1,fillcolor=lightgray,fillstyle=none}
\pstThreeDLine(0,0,10)(1.13,2.15,6.72)
\pstThreeDLine(.267,4.97,4.78)(1.13,2.15,6.72)(4.67,.888,4.44)
}
{
\psset{linecolor=black,linewidth=.7pt,linejoin=1,linestyle=dashed,fillcolor=lightgray,fillstyle=none}
\pstThreeDLine(0,0,10)(4.67,.888,4.44)
\pstThreeDLine(0,0,10)(.267,4.97,4.78)
\pstThreeDLine(4.67,.888,4.44)(.267,4.97,4.78)
}
{
\psset{labelsep=2pt}
\uput[-12](.2,1.3){\darkgray\scriptsize$v_0$}
\uput[-50](.8,.55){\darkgray\scriptsize$v_2$}
}
{
\psset{labelsep=1.5pt}
\uput[230](-1.05,.75){\darkgray\scriptsize$v_1$}
\uput[180](0,.8){\darkgray\scriptsize$v_3$}
}
{
\psset{labelsep=4pt}
\uput[0](3.3,2.3){\psframebox*[framesep=0.3pt,framearc=0]{$\Delta_{\tau_2}$}}
}
{
\psset{labelsep=2.6pt}
\uput[180](-2.63,1.9){\psframebox*[framesep=0.3pt,framearc=0]{$\Delta_{\tau_1}$}}
\uput[90](0,8.65){$z,\Delta_{\tau_3}$}
}
\rput(1.35,4.95){$\delta_A$}
\rput(-.34,4.95){\psframebox*[framesep=0.3pt,framearc=0]{$\delta_B$}}
\rput(.575,3.13){\psframebox*[framesep=0.3pt,framearc=0]{$\delta_C$}}
\pstThreeDNode(2.90,1.52,5.60){BC}
\pstThreeDNode(1.13,2.15,6.72){dt0}
\pstThreeDNode(.565,1.08,8.35){AB}
\pstThreeDNode(.700,3.56,5.75){AC}
\rput[Bl](-7.95,9.075){\rnode{BClabel}{$\Delta_{[BC]}$}}
\rput[Bl](-4.11,9.075){\rnode{dt0label}{$\Delta_{\tau_0}$}}
\rput[Bl](2.43,9.075){\rnode{ABlabel}{$\Delta_{[AB]}$}}
\rput[Br](7.98,9.075){\rnode{AClabel}{$\Delta_{[AC]}$}}
\ncline[linewidth=.15pt,nodesepB=2pt,nodesepA=1pt]{->}{BClabel}{BC}
\ncline[linewidth=.15pt,nodesepB=2.5pt,nodesepA=2pt]{->}{dt0label}{dt0}
\ncline[linewidth=.15pt,nodesepB=2pt,nodesepA=1pt]{->}{ABlabel}{AB}
\ncline[linewidth=.15pt,nodesepB=2pt,nodesepA=2pt]{->}{AClabel}{AC}
}
\end{pspicture}
}
\caption{Case I: the only facet contributing to $s_0$ is the $B_1$-simplex $\tau_0$}
\label{figcase1}
\end{figure}
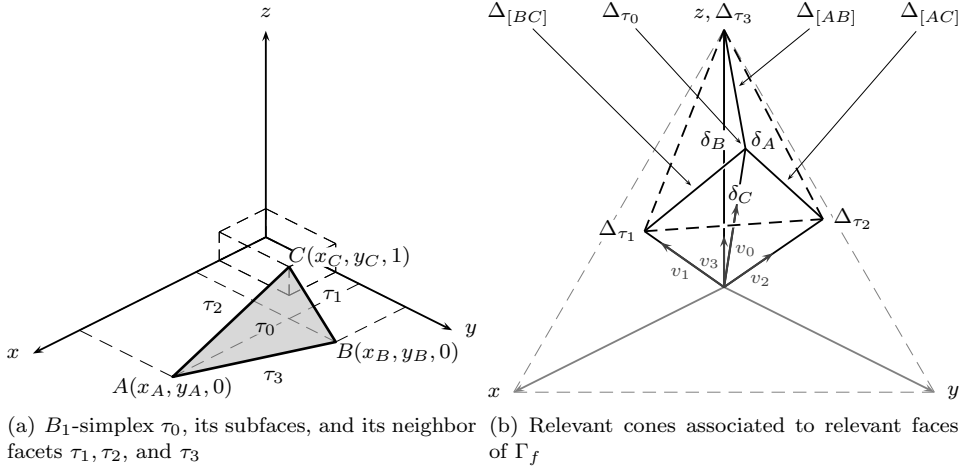
%
%

Let us fix notations. We shall denote the vertices of $\tau_0$ and their co\-or\-di\-nates by
\begin{equation*}
A(x_A,y_A,0),\quad B(x_B,y_B,0),\quad\text{and}\quad C(x_C,y_C,1).
\end{equation*}
The neighbor facets of $\tau_0$ will be denoted $\tau_1,\tau_2,\tau_3$, as indicated in Figure~\ref{figcase1}, and the unique primitive vectors perpendicular to them will be denoted by
\begin{equation*}
v_0(a_0,b_0,c_0),\quad v_1(a_1,b_1,c_1),\quad v_2(a_2,b_2,c_2),\quad v_3(0,0,1),
\end{equation*}
respectively. Consequently, the affine supports of the considered facets should have equations of the form
\begin{alignat*}{4}
\aff(\tau_0)&\leftrightarrow a_0x&&+b_0y&&+c_0&&z=m_0,\\
\aff(\tau_1)&\leftrightarrow a_1x&&+b_1y&&+c_1&&z=m_1,\\
\aff(\tau_2)&\leftrightarrow a_2x&&+b_2y&&+c_2&&z=m_2,\\
\aff(\tau_3)&\leftrightarrow &&&&&&z=0,
\end{alignat*}
and we associate to them the following numerical data:
\begin{align*}
m_0=m(v_0)&=a_0x_C+b_0y_C+c_0,&\sigma_0=\sigma(v_0)&=a_0+b_0+c_0,\\
m_1=m(v_1)&=a_1x_C+b_1y_C+c_1,&\sigma_1=\sigma(v_1)&=a_1+b_1+c_1,\\
m_2=m(v_2)&=a_2x_C+b_2y_C+c_2,&\sigma_2=\sigma(v_2)&=a_2+b_2+c_2,\\
m_3=m(v_3)&=0,&\sigma_3=\sigma(v_3)&=1.
\end{align*}
We assume that $\tau_0$ (and only $\tau_0$) contributes to the candidate pole $s_0$. With the notations above this is, we assume that $p^{\sigma_0+m_0s_0}=1$, or equivalently, that
\begin{equation*}
\Re(s_0)=-\frac{\sigma_0}{m_0}=-\frac{a_0+b_0+c_0}{a_0x_C+b_0y_C+c_0}\qquad\text{and}\qquad\Im(s_0)=\frac{2n\pi}{m_0\log p}
\end{equation*}
for some $n\in\Z$.

In this section we will consider the following simplicial cones:
\begin{align*}
\dA&=\cone(v_0,v_2,v_3),&\DAB&=\cone(v_0,v_3),&\Dtnul&=\cone(v_0).\\
\dB&=\cone(v_0,v_1,v_3),&\DAC&=\cone(v_0,v_2),&&\\
\dC&=\cone(v_0,v_1,v_2),&\DBC&=\cone(v_0,v_1),&&
\end{align*}
The $\Delta_{\tau}$ are the simplicial cones associated to the faces $\tau$ in the usual way. The cones $\Delta_A,\Delta_B,\Delta_C$, associated to the vertices of $\tau_0$, are generally not simplicial. Later in this section we will consider simplicial subdivisions (without creating new rays) of $\Delta_A,\Delta_B,\Delta_C$ that contain the respective simplicial cones $\dA,\dB,\dC$.

Lastly, we fix notations for the vectors along the edges of $\tau_0$:
\begin{alignat*}{8}
&\overrightarrow{AC}&&(x_C&&-x_A&&,y_C&&-y_A&&,1&&)&&=(\aA,\bA,1),\\
&\overrightarrow{BC}&&(x_C&&-x_B&&,y_C&&-y_B&&,1&&)&&=(\aB,\bB,1),\\
&\overrightarrow{AB}&&(x_B&&-x_A&&,y_B&&-y_A&&,0&&)&&=(\aA-\aB,\bA-\bB,0).
\end{alignat*}
The first two vectors are primitive; the last one is generally not. We put
\begin{equation*}
\fAB=\gcd(x_B-x_A,y_B-y_A)=\gcd(\aA-\aB,\bA-\bB).
\end{equation*}

\subsection{Some relations between the variables}\label{srelbettvarcaseen}
Expressing that $\overrightarrow{AC}\perp v_0,v_2$ and $\overrightarrow{BC}\perp v_0,v_1$, we obtain
\begin{alignat*}{4}
\begin{pmatrix}
c_0\\c_2
\end{pmatrix}
&=-\aA&&
\begin{pmatrix}
a_0\\a_2
\end{pmatrix}
&&-\bA&&
\begin{pmatrix}
b_0\\b_2
\end{pmatrix}\qquad\text{and}\\
\begin{pmatrix}
c_0\\c_1
\end{pmatrix}
&=-\aB&&
\begin{pmatrix}
a_0\\a_1
\end{pmatrix}
&&-\bB&&
\begin{pmatrix}
b_0\\b_1
\end{pmatrix}.
\end{alignat*}
These relations imply that
\begin{equation*}
\gcd(a_i,b_i,c_i)=\gcd(a_i,b_i)=1;\qquad i=0,\ldots,2.
\end{equation*}
Another consequence is that
\begin{equation*}
\begin{vmatrix}
a_0&c_0\\a_2&c_2
\end{vmatrix}=
\begin{vmatrix}
a_0&-\aA a_0-\bA b_0\\a_2&-\aA a_2-\bA b_2
\end{vmatrix}
=-\bA
\begin{vmatrix}
a_0&b_0\\a_2&b_2
\end{vmatrix},
\end{equation*}
and analogously,
\begin{equation*}
\begin{vmatrix}
b_0&c_0\\b_2&c_2
\end{vmatrix}
=\aA
\begin{vmatrix}
a_0&b_0\\a_2&b_2
\end{vmatrix},\quad
\begin{vmatrix}
a_0&c_0\\a_1&c_1
\end{vmatrix}
=-\bB
\begin{vmatrix}
a_0&b_0\\a_1&b_1
\end{vmatrix},\quad
\begin{vmatrix}
b_0&c_0\\b_1&c_1
\end{vmatrix}
=\aB
\begin{vmatrix}
a_0&b_0\\a_1&b_1
\end{vmatrix}.
\end{equation*}

It will turn out to be convenient (and sometimes necessary) to know the signs of certain determinants. Considering the orientations of the corresponding coordinate systems, one can show that
\begin{equation*}
\begin{vmatrix}
a_0&b_0\\a_2&b_2
\end{vmatrix}>0,\quad
\begin{vmatrix}
a_0&b_0\\a_1&b_1
\end{vmatrix}<0,\quad\Psi=
\begin{vmatrix}
a_1&b_1\\a_2&b_2
\end{vmatrix}>0,\quad\text{ and }\quad
\begin{vmatrix}
a_0&b_0&c_0\\a_1&b_1&c_1\\a_2&b_2&c_2
\end{vmatrix}>0.
\end{equation*}

\subsection{Igusa's local zeta function}
As $f$ is non-degenerated over \Fp\ with respect to the compact faces of its Newton polyhedron \Gf, by Theorem~\ref{formdenhoor} the local Igusa zeta function \Zof\ of $f$ is the meromorphic complex function
\begin{equation}\label{deflvilzfvmdcas1}
\Zof=\sum_{\substack{\tau\mathrm{\ compact}\\\mathrm{face\ of\ }\Gf}}L_{\tau}S(\Dtu),
\end{equation}
with
\begin{gather*}
L_{\tau}:s\mapsto L_{\tau}(s)=\left(\frac{p-1}{p}\right)^3-\frac{N_{\tau}}{p^2}\frac{p^s-1}{p^{s+1}-1},\\
N_{\tau}=\#\left\{(x,y,z)\in(\Fpcross)^3\;\middle\vert\;\fbart(x,y,z)=0\right\},
\end{gather*}
and
\begin{align}
S(\Dtu):s\mapsto S(\Dtu)(s)&=\sum_{k\in\Z^3\cap\Delta_{\tau}}p^{-\sigma(k)-m(k)s}\notag\\
&=\sum_{i\in I}\frac{\Sigma(\delta_i)(s)}{\prod_{j\in J_i}(p^{\sigma(w_j)+m(w_j)s}-1)}.\label{deflvilzfvmdbiscas1}
\end{align}
Here $\{\delta_i\}_{i\in I}$ denotes a simplicial decomposition without introducing new rays of the cone $\Delta_{\tau}$ associated to $\tau$. The simplicial cone $\delta_i$ is supposed to be strictly positively spanned by the linearly independent primitive vectors $w_j$, $j\in J_i$, in $\Zplusn\setminus\{0\}$, and $\Sigma(\delta_i)$ is the function
\begin{equation*}
\Sigma(\delta_i):s\mapsto \Sigma(\delta_i)(s)=\sum_hp^{\sigma(h)+m(h)s},
\end{equation*}
where $h$ runs through the elements of the set
\begin{equation*}
H(w_j)_{j\in J_i}=\Z^3\cap\lozenge(w_j)_{j\in J_i},
\end{equation*}
with
\begin{equation*}
\lozenge(w_j)_{j\in J_i}=\left\{\sum\nolimits_{j\in J_i}h_jw_j\;\middle\vert\;h_j\in[0,1)\text{ for all }j\in J_i\right\}
\end{equation*}
the fundamental parallelepiped spanned by the vectors $w_j$, $j\in J_i$.

\subsection{The candidate pole $s_0$ and its residue}
We want to prove that $s_0$ is not a pole of \Zof. Since $s_0$ is a candidate pole of expected order one (and therefore is either no pole or a pole of order one), it is enough to prove that the coefficient $a_{-1}$ in the Laurent series
\begin{equation*}
\Zof(s)=\sum_{k=-1}^{\infty}a_k(s-s_0)^k
\end{equation*}
of \Zof\ centered at $s_0$, equals zero. This coefficient, also called the residue of \Zof\ in $s_0$, is given by
\begin{equation*}
a_{-1}=\Res(\Zof,s_0)=\lim_{s\to s_0}(s-s_0)\Zof(s).
\end{equation*}

Equivalently, we will prove in the rest of this section that $R_1=0$, with
\begin{equation*}
R_1=\lim_{s\to s_0}\left(p^{\sigma_0+m_0s}-1\right)\Zof(s)=(\log p)m_0a_{-1}.
\end{equation*}

\subsection{Terms contributing to $R_1$}
We will next calculate $R_1$ based on Formula~\eqref{deflvilzfvmdcas1} for \Zof.

The only (compact) faces of \Gf\ that contribute to the candidate pole $s_0$ are the subfaces $A,B,C,[AB],[AC],[BC],\tau_0$ of the single facet having $s_0$ as an associated candidate pole. It are only the terms of \eqref{deflvilzfvmdcas1} corresponding to these faces that should be taken into account in the calculation of $R_1$:
\begin{equation*}
R_1=\lim_{s\to s_0}\left(p^{\sigma_0+m_0s}-1\right)\sum_{\substack{\tau=\tau_0,A,B,C,\\ [AB],[AC],[BC]}}L_{\tau}(s)S(\Dtu)(s).
\end{equation*}

A second simplification is the following. First, note that vertex $A$ is contained in facets $\tau_0,\tau_2,\tau_3$, but can still be contained in other facets. Hence $\Delta_A$ is---in general---not simplicial and the same thing holds for the other vertices $B,C$ and their associated cones. Consequently, to handle $S_A,S_B,$ and $S_C$, we need to consider simplicial decompositions of $\Delta_A,\Delta_B,$ and $\Delta_C$, and we will choose ones that contain the simplicial cones $\delta_A,\delta_B,$ and $\delta_C$, respectively. Terms of \eqref{deflvilzfvmdbiscas1} associated to cones, other than $\delta_A,\delta_B,\delta_C,$ in these decompositions, do not have a pole in $s_0$ and hence do not contribute to $R_1$.

Let us write down the seven contributions to the \lq residue\rq\ $R_1$ explicitly. We obtain
\begin{multline*}
R_1=L_A(s_0)\frac{\Sigma(\delta_A)(s_0)}{\Ftwee(p-1)}+L_B(s_0)\frac{\Sigma(\delta_B)(s_0)}{\Feen(p-1)}\\
+L_C(s_0)\frac{\Sigma(\delta_C)(s_0)}{\Feen\Ftwee}+L_{[AB]}(s_0)\frac{\Sigma(\Delta_{[AB]})(s_0)}{p-1}\\
+L_{[AC]}(s_0)\frac{\Sigma(\Delta_{[AC]})(s_0)}{p^{\sigma_2+m_2s_0}-1}+L_{[BC]}(s_0)\frac{\Sigma(\Delta_{[BC]})(s_0)}{p^{\sigma_1+m_1s_0}-1}+L_{\tau_0}(s_0)\Sigma(\Delta_{\tau_0})(s_0).
\end{multline*}

\subsection{The numbers $N_{\tau}$}
Let us fix notations for the coefficients of $f$. We put
\begin{equation*}
f(x,y,z)=\sum_{\omega=(\omega_1,\omega_2,\omega_3)\in\N^3}a_{\omega}x^{\omega_1}y^{\omega_2}z^{\omega_3}\in\Zp[x,y,z].
\end{equation*}
For $a\in\Zp$, we denote by $\overline{a}=a+p\Zp\in\Fp$ its reduction modulo $p\Zp$. Recall that for every face $\tau$ of \Gf, we have
\begin{equation*}
\ft(x,y,z)=\sum_{\omega\in\Z^3\cap\tau}a_{\omega}x^{\omega_1}y^{\omega_2}z^{\omega_3}\quad\text{ and }\quad\fbart(x,y,z)=\sum_{\omega\in\Z^3\cap\tau}\overline{a_{\omega}}x^{\omega_1}y^{\omega_2}z^{\omega_3}.
\end{equation*}
Because the polynomial $f$ is non-degenerated over \Fp\ with respect to all the compact faces of its Newton polyhedron (and thus especially with respect to the vertices $A,B,C$), we have that none of the numbers $\overline{a_A},\overline{a_B},\overline{a_C}$ equals zero.

Hence the numbers $N_{\tau}$ in the formula for \Zof\ are as follows. For the vertices of $\tau_0$ we find
\begin{equation*}
N_A=\#\left\{(x,y,z)\in(\Fpcross)^3\;\middle\vert\;\overline{a_A}x^{x_A}y^{y_A}=0\right\}=0,
\end{equation*}
and analogously, $N_B=N_C=0$. About the number $N_{[AB]}$ we don't know so much, except that
\begin{align*}
N_{[AB]}&=\#\bigl\{(x,y,z)\in(\Fpcross)^3\;\big\vert\;\overline{f_{[AB]}}(x,y)=\overline{a_A}x^{x_A}y^{y_A}+\cdots+\overline{a_B}x^{x_B}y^{y_B}=0\bigr\}\\
&=(p-1)N,
\end{align*}
with
\begin{equation*}
N=\#\left\{(x,y)\in(\Fpcross)^2\;\middle\vert\;\overline{f_{[AB]}}(x,y)=0\right\}.
\end{equation*}
For the other edges we find
\begin{equation*}
N_{[AC]}=\#\bigl\{(x,y,z)\in(\Fpcross)^3\;\big\vert\;\overline{a_A}x^{x_A}y^{y_A}+\overline{a_C}x^{x_C}y^{y_C}z=0\bigr\}=(p-1)^2,
\end{equation*}
and analogously, $N_{[BC]}=(p-1)^2$. Finally, for $\tau_0$ we obtain
\begin{equation*}
N_{\tau_0}=\#\bigl\{(x,y,z)\in(\Fpcross)^3\;\big\vert\;\overline{f_{[AB]}}(x,y)+\overline{a_C}x^{x_C}y^{y_C}z=0\bigr\}=(p-1)^2-N.
\end{equation*}

\subsection{The factors $L_{\tau}(s_0)$}
The above formulas for the $N_{\tau}$ give rise to the following expressions for the $L_{\tau}(s_0)$:
\begin{gather*}
L_A(s_0)=L_B(s_0)=L_C(s_0)=\left(\frac{p-1}{p}\right)^3,\\
L_{[AB]}(s_0)=\left(\frac{p-1}{p}\right)^3-\frac{(p-1)N}{p^2}\frac{p^{s_0}-1}{p^{s_0+1}-1},\\
L_{[AC]}(s_0)=L_{[BC]}(s_0)=\left(\frac{p-1}{p}\right)^3-\left(\frac{p-1}{p}\right)^2\frac{p^{s_0}-1}{p^{s_0+1}-1},\\
\text{and}\qquad L_{\tau_0}(s_0)=\left(\frac{p-1}{p}\right)^3-\frac{(p-1)^2-N}{p^2}\frac{p^{s_0}-1}{p^{s_0+1}-1}.
\end{gather*}

\subsection{Multiplicities of the relevant simplicial cones}
We use Proposition~\ref{multipliciteit} to calculate the multiplicities of the relevant simplicial cones (and their corresponding fundamental parallelepipeds), thereby exploiting the relations we obtained in Subsection~\ref{srelbettvarcaseen}. That way we find\footnote{Cfr.\ Notation~\ref{notatiematrices}.}
\begin{alignat*}{4}
\mu_A&=\mult\delta_A&&=\#H(v_0,v_2,v_3)&&=
\begin{Vmatrix}
a_0&b_0&c_0\\a_2&b_2&c_2\\0&0&1
\end{Vmatrix}=&&
\begin{vmatrix}
a_0&b_0\\a_2&b_2
\end{vmatrix}>0,\\
\mu_B&=\mult\delta_B&&=\#H(v_0,v_1,v_3)&&=
\begin{Vmatrix}
a_0&b_0&c_0\\a_1&b_1&c_1\\0&0&1
\end{Vmatrix}=-&&
\begin{vmatrix}
a_0&b_0\\a_1&b_1
\end{vmatrix}>0,\\
\mu_C&=\mult\delta_C&&=\#H(v_0,v_1,v_2)&&=
\begin{Vmatrix}
a_0&b_0&c_0\\a_1&b_1&c_1\\a_2&b_2&c_2
\end{Vmatrix}=&&
\begin{vmatrix}
a_0&b_0&c_0\\a_1&b_1&c_1\\a_2&b_2&c_2
\end{vmatrix}>0
\end{alignat*}
for the maximal dimensional simplicial cones, while for the two-dimensional cones we obtain
\begin{alignat*}{2}
\mult\Delta_{[AB]}&=\#H(v_0,v_3)&&=\gcd\left(
\begin{Vmatrix}
a_0&b_0\\0&0
\end{Vmatrix},
\begin{Vmatrix}
a_0&c_0\\0&1
\end{Vmatrix},
\begin{Vmatrix}
b_0&c_0\\0&1
\end{Vmatrix}
\right)\\
&&&=\gcd(0,a_0,b_0)=1,\\
\mult\Delta_{[AC]}&=\#H(v_0,v_2)&&=\gcd\left(
\begin{Vmatrix}
a_0&b_0\\a_2&b_2
\end{Vmatrix},
\begin{Vmatrix}
a_0&c_0\\a_2&c_2
\end{Vmatrix},
\begin{Vmatrix}
b_0&c_0\\b_2&b_2
\end{Vmatrix}
\right)\\
&&&=\gcd\left(
\begin{vmatrix}
a_0&b_0\\a_2&b_2
\end{vmatrix},\abs{\bA}
\begin{vmatrix}
a_0&b_0\\a_2&b_2
\end{vmatrix},\abs{\aA}
\begin{vmatrix}
a_0&b_0\\a_2&b_2
\end{vmatrix}
\right)\\&&&=
\begin{vmatrix}
a_0&b_0\\a_2&b_2
\end{vmatrix}=\mu_A,\qquad\text{and}\\
\mult\Delta_{[BC]}&=\#H(v_0,v_1)&&=\gcd\left(
\begin{Vmatrix}
a_0&b_0\\a_1&b_1
\end{Vmatrix},
\begin{Vmatrix}
a_0&c_0\\a_1&c_1
\end{Vmatrix},
\begin{Vmatrix}
b_0&c_0\\b_1&b_1
\end{Vmatrix}
\right)\\
&&&=\gcd\left(-
\begin{vmatrix}
a_0&b_0\\a_1&b_1
\end{vmatrix},-\abs{\bB}
\begin{vmatrix}
a_0&b_0\\a_1&b_1
\end{vmatrix},-\abs{\aB}
\begin{vmatrix}
a_0&b_0\\a_1&b_1
\end{vmatrix}
\right)\\&&&=-
\begin{vmatrix}
a_0&b_0\\a_1&b_1
\end{vmatrix}=\mu_B.
\end{alignat*}
For the one-dimensional cone $\delta_{\tau_0}$, finally, we have of course that
\begin{equation*}
\mult\Delta_{\tau_0}=\#H(v_0)=\gcd(a_0,b_0,c_0)=1.
\end{equation*}

\subsection{The sums $\Sigma(\cdot)(s_0)$}
We found above that the multiplicities of $\Delta_{[AB]}$ and $\Delta_{\tau_0}$ both equal one; i.e., their corresponding fundamental parallelepipeds contain only one integral point which must be the origin: $H(v_0,v_3)=H(v_0)=\{(0,0,0)\}$. Hence
\begin{equation*}
\Sigma(\Delta_{[AB]})(s_0)=\Sigma(\Delta_{\tau_0})(s_0)=\sum_{h\in\{(0,0,0)\}}p^{\sigma(h)+m(h)s_0}=1.
\end{equation*}

Furthermore we saw that the multiplicities of $\Delta_A$ and $\Delta_{[AC]}$ are equal:
\begin{equation*}
\mu_A=\#H(v_0,v_2,v_3)=\#H(v_0,v_2).
\end{equation*}
The inclusion $H(v_0,v_2,v_3)\supseteq H(v_0,v_2)$ thus implies equality:
\begin{equation*}
H_A=H(v_0,v_2,v_3)=H(v_0,v_2),
\end{equation*}
and therefore,
\begin{equation*}
\Sigma_A=\Sigma(\delta_A)(s_0)=\Sigma(\Delta_{[AC]})(s_0)=\sum_{h\in H_A}p^{\sigma(h)+m(h)s_0}.
\end{equation*}
Analogously we have
\begin{gather*}
H_B=H(v_0,v_1,v_3)=H(v_0,v_1)\qquad\text{and}\\
\Sigma_B=\Sigma(\delta_B)(s_0)=\Sigma(\Delta_{[BC]})(s_0)=\sum_{h\in H_B}p^{\sigma(h)+m(h)s_0}.
\end{gather*}
Consistently, we shall also denote
\begin{gather*}
H_C=H(v_0,v_1,v_2)\qquad\text{and}\\
\Sigma_C=\Sigma(\delta_C)(s_0)=\sum_{h\in H_C}p^{\sigma(h)+m(h)s_0}.
\end{gather*}

Note that, since $\overline{\Delta_{[AC]}},\overline{\Delta_{[BC]}},\overline{\delta_C}\subseteq\overline{\Delta_C}$, we have that\footnote{In this text, by the dot product $w_1\cdot w_2$ of two complex vectors $w_1(a_1,b_1,c_1),\allowbreak w_2(a_2,b_2,c_2)\in\C^3$, we mean $w_1\cdot w_2=a_1a_2+b_1b_2+c_1c_2$.}
\begin{equation*}
m(h)=C\cdot h\qquad\text{for all}\qquad h\in H_A\cup H_B\cup H_C\subseteq\overline{\Delta_C}.
\end{equation*}
Hence, if we denote by $w$ the vector
\begin{equation*}
w=(1,1,1)+s_0(x_C,y_C,1)\in\C^3,
\end{equation*}
it holds that
\begin{equation*}
\Sigma_V=\sum_{h\in H_V}p^{w\cdot h};\qquad V=A,B,C.
\end{equation*}

\subsection{A new formula for $R_1$}
If we denote
\begin{equation*}
F_1=p^{w\cdot v_1}-1=p^{\sigma_1+m_1s_0}-1\qquad\text{and}\qquad F_2=p^{w\cdot v_2}-1=p^{\sigma_2+m_2s_0}-1,
\end{equation*}
the results above on the numbers $N_{\tau}$ and the multiplicities of the cones lead to
\begin{equation*}
\begin{multlined}[.98\textwidth]
R_1=\left(\frac{p-1}{p}\right)^3\left[\frac{\Sigma_A}{F_2(p-1)}+\frac{\Sigma_B}{F_1(p-1)}+\frac{\Sigma_C}{F_1F_2}+\frac{1}{p-1}+\frac{\Sigma_A}{F_2}+\frac{\Sigma_B}{F_1}+1\right]\\
-\left(\frac{p-1}{p}\right)^2\frac{p^{s_0}-1}{p^{s_0+1}-1}\left[\frac{N}{(p-1)^2}+\frac{\Sigma_A}{F_2}+\frac{\Sigma_B}{F_1}+\frac{(p-1)^2-N}{(p-1)^2}\right].
\end{multlined}
\end{equation*}
If we put $R_1'=(p/(p-1))^3R_1$, this formula can be simplified to
\begin{equation}\label{formreenaccentfincaseen}
R_1'=\frac{1}{1-p^{-s_0-1}}\left(\frac{\Sigma_A}{F_2}+\frac{\Sigma_B}{F_1}+1\right)+\frac{\Sigma_C}{F_1F_2}.
\end{equation}
Note that the number $N$ disappears from the equation. In what follows we shall prove that $R_1'=0$.

\subsection{Formulas for $\Sigma_A$ and $\Sigma_B$}
As in Section~\ref{fundpar}, we will consider the set
\begin{equation*}
H_C=H(v_0,v_1,v_2)=\Z^3\cap\lozenge(v_0,v_1,v_2)
\end{equation*}
as an additive group, endowed with addition modulo the lattice
\begin{equation*}
\Lambda(v_0,v_1,v_2)=\Z v_0+\Z v_1+\Z v_2.
\end{equation*}
In this way, $H_A=\Z^3\cap\lozenge(v_0,v_2)$ and $H_B=\Z^3\cap\lozenge(v_0,v_1)$ become subgroups of $H_C$ that correspond to the subgroups $H_1$ and $H_2$ of $H$ in Section~\ref{fundpar}.

From the description of the elements of these groups there, we know that there exist numbers $\xi_A\in\verA$ and $\xi_B\in\verB$ with $\gcd(\xi_A,\mu_A)=\gcd(\xi_B,\mu_B)=1$, such that the $\mu_A$ points of $H_A$ are precisely
\begin{alignat}{2}
\left\{\frac{i\xi_A}{\mu_A}\right\}v_0&+\frac{i}{\mu_A}v_2;&\qquad&i=0,\ldots,\mu_A-1;\label{stereenmuA}\\\intertext{while the $\mu_B$ points of $H_B$ are given by}
\left\{\frac{j\xi_B}{\mu_B}\right\}v_0&+\frac{j}{\mu_B}v_1;&&j=0,\ldots,\mu_B-1.\label{stertweemuB}
\end{alignat}
Recall that $\xi_A$ and $\xi_B$ are, as elements of \verA\ and \verB, respectively, uniquely determined by
\begin{equation}\label{defxiaxibcaseeen}
\xi_Av_0+v_2\in\mu_A\Z^3\qquad\text{and}\qquad\xi_Bv_0+v_1\in\mu_B\Z^3.
\end{equation}

These descriptions allow us to find \lq closed\rq\ formulas for $\Sigma_A$ and $\Sigma_B$. We know that
\begin{equation*}
\Sigma_A=\Sigma(\delta_A)(s_0)=\Sigma(\Delta_{[AC]})(s_0)=\sum_{h\in H_A}p^{\sigma(h)+m(h)s_0}=\sum_{h\in H_A}p^{w\cdot h},
\end{equation*}
with $w=(1,1,1)+s_0(x_C,y_C,1)$. Note that since $s_0$ is a candidate pole associated to $\tau_0$, we have that $p^{w\cdot v_0}=p^{\sigma_0+m_0s_0}=1$. Hence $p^{a(w\cdot v_0)}=p^{\{a\}(w\cdot v_0)}$ for every real number $a$. So if we write $h$ as $h=h_0v_0+h_2v_2$, we obtain
\begin{equation}\label{formsigmaAfincaseen}
\begin{multlined}[.8\textwidth]
\Sigma_A=\sum_{h\in H_A}p^{h_0(w\cdot v_0)+h_2(w\cdot v_2)}
=\sum_{i=0}^{\mu_A-1}\Bigl(p^{\frac{\xi_A(w\cdot v_0)+w\cdot v_2}{\mu_A}}\Bigr)^i\\
=\frac{p^{w\cdot v_2}-1}{p^{\frac{\xi_A(w\cdot v_0)+w\cdot v_2}{\mu_A}}-1}
=\frac{F_2}{p^{\frac{\xi_A(w\cdot v_0)+w\cdot v_2}{\mu_A}}-1}.
\end{multlined}
\end{equation}

Completely analogously we find
\begin{equation}\label{formsigmaBfincaseen}
\Sigma_B=\frac{F_1}{p^{\frac{\xi_B(w\cdot v_0)+w\cdot v_1}{\mu_B}}-1}.
\end{equation}

\subsection{A formula for $\mu_C=\mult\delta_C$}\label{formulaformuCcaseeen}
We know from Section~\ref{fundpar} that $\mu_A\mu_B\mid\mu_C$. We will give a useful interpretation of the quotient $\mu_C/\mu_A\mu_B$. We have the following:
\begin{align*}
\mu_C&=
\begin{vmatrix}
a_0&b_0&c_0\\a_1&b_1&c_1\\a_2&b_2&c_2
\end{vmatrix}\\
&=-a_1
\begin{vmatrix}
b_0&c_0\\b_2&c_2
\end{vmatrix}
+b_1
\begin{vmatrix}
a_0&c_0\\a_2&c_2
\end{vmatrix}
-c_1
\begin{vmatrix}
a_0&b_0\\a_2&b_2
\end{vmatrix}.\\\intertext{Using the relations from Subsection~\ref{srelbettvarcaseen}, we continue:}
\mu_C&=-a_1\aA
\begin{vmatrix}
a_0&b_0\\a_2&b_2
\end{vmatrix}
-b_1\bA
\begin{vmatrix}
a_0&b_0\\a_2&b_2
\end{vmatrix}
-c_1
\begin{vmatrix}
a_0&b_0\\a_2&b_2
\end{vmatrix}\\
&=-\mu_A(a_1\aA+b_1\bA+c_1)\\
&=-\mu_A\left(v_1\cdot\overrightarrow{AC}\right),\\\intertext{and since $v_1\perp\overrightarrow{BC}$, we obtain}
\mu_C&=-\mu_A\left(v_1\cdot\overrightarrow{AC}-v_1\cdot\overrightarrow{BC}\right)\\
&=-\mu_A\left(v_1\cdot\overrightarrow{AB}\right).
\end{align*}

Because the vector $\overrightarrow{AB}$ lies in the $xy$-plane and is perpendicular to $v_0$ and its coordinates have greatest common divisor $\fAB$ and we assume that $x_A>x_B$, it must hold that
\begin{equation*}
\overrightarrow{AB}=\fAB(-b_0,a_0,0).
\end{equation*}
Hence
\begin{equation*}
\mu_C=-\mu_A\fAB(a_0b_1-a_1b_0)=\mu_A\mu_B\fAB.
\end{equation*}

Next, we will use this formula in describing the points of $H_C$.

\subsection{Description of the points of $H_C$}\label{descpointsHCgevaleen}
It follows from \eqref{stereenmuA} and \eqref{stertweemuB} that the $\mu_A\mu_B$ points of the subgroup $H_A+H_B\cong H_A\oplus H_B$ of $H_C$ are
\begin{equation*}
\left\{\frac{i\xi_A\mu_B+j\xi_B\mu_A}{\mu_A\mu_B}\right\}v_0+\frac{j}{\mu_B}v_1+\frac{i}{\mu_A}v_2;\quad\ \ \ i=0,\ldots,\mu_A-1;\quad j=0,\ldots,\mu_B-1.
\end{equation*}

We know that the $v_2$-coordinates $h_2$ of the points $h\in H_C$ belong to the set
\begin{equation*}
\left\{0,\frac{1}{\mu_A\fAB},\frac{2}{\mu_A\fAB},\ldots,\frac{\mu_A\fAB-1}{\mu_A\fAB}\right\},
\end{equation*}
and that every $l/\mu_A\fAB$ in this set occurs $\mu_B$ times as the $v_2$-coordinate of a point in $H_C$, while every $h\in H_A+H_B$ has a $v_2$-coordinate of the form $i/\mu_A$ with $i\in\verA$, and every such $i/\mu_A$ is the $v_2$-coordinate of exactly $\mu_B$ points in $H_A+H_B$. (Analogously for the $v_1$-coordinates.)

In order to describe all the points of $H_C$ in a way as we did in Section~\ref{fundpar} for the points of $H$, we need to find a set of representatives for the elements of $H_C/(H_A+H_B)$. The $\fAB$ cosets of $H_A+H_B$ are characterised by constant $\{h_1\}_{1/\mu_B}$ and constant $\{h_2\}_{1/\mu_A}$, which can each take indeed \fAB\ possible values.

From the discussion in Section~\ref{fundpar}, we know there exists a unique point $h^{\ast}\in H_C$ with $v_2$-coordinate $h_2^{\ast}=1/\mu_A\fAB$ and $v_1$-coordinate $h_1^{\ast}=\eta/\mu_B\fAB<1/\mu_B$, and that the $\fAB$ multiples $\{kh^{\ast}\}$; $k=0,\ldots,\fAB-1$; of $h^{\ast}$ in $H_C$ make good representatives for the cosets of $H_A+H_B$. We will now try to find $h^{\ast}$.

If we denote by $M$ the matrix
\begin{equation*}
M=
\begin{pmatrix}
a_0&b_0&c_0\\a_1&b_1&c_1\\a_2&b_2&c_2
\end{pmatrix}
\end{equation*}
with $\det M=\mu_C$, it follows from $(\adj M)M=(\det M)I=\mu_CI$ that
\begin{equation*}
\begin{vmatrix}
a_1&b_1\\a_2&b_2
\end{vmatrix}
v_0-
\begin{vmatrix}
a_0&b_0\\a_2&b_2
\end{vmatrix}
v_1+
\begin{vmatrix}
a_0&b_0\\a_1&b_1
\end{vmatrix}
v_2=\Psi v_0-\mu_A v_1-\mu_B v_2=(0,0,\mu_C),
\end{equation*}
and hence
\begin{equation*}
h^{\ast}=\left\{\frac{-\Psi}{\mu_C}\right\}v_0+\frac{1}{\mu_B\fAB}v_1+\frac{1}{\mu_A\fAB}v_2\in H_C
\end{equation*}
is the point we are looking for.

So, considering all possible sums (in the group $H_C$) of one of the $\mu_A\mu_B$ points
\begin{equation*}
\left\{\frac{i\xi_A\mu_B+j\xi_B\mu_A}{\mu_A\mu_B}\right\}v_0+\frac{j}{\mu_B}v_1+\frac{i}{\mu_A}v_2;\quad\ \ \ i=0,\ldots,\mu_A-1;\quad j=0,\ldots,\mu_B-1;
\end{equation*}
of $H_A+H_B$ and one of the \fAB\ chosen representatives
\begin{equation*}
\{kh^{\ast}\}=\left\{\frac{-k\Psi}{\mu_C}\right\}v_0+\frac{k}{\mu_B\fAB}v_1+\frac{k}{\mu_A\fAB}v_2;\qquad k=0,\ldots,\fAB-1;
\end{equation*}
for the cosets of $H_A+H_B$ in $H_C$, we find the $\mu_C=\mu_A\mu_B\fAB$ points of $H_C$ as
\begin{multline*}
\left\{\frac{i\xi_A\mu_B\fAB+j\xi_B\mu_A\fAB-k\Psi}{\mu_C}\right\}v_0+\frac{j\fAB+k}{\mu_B\fAB}v_1+\frac{i\fAB+k}{\mu_A\fAB}v_2;\\
i=0,\ldots,\mu_A-1;\quad j=0,\ldots,\mu_B-1;\quad k=0,\ldots,\fAB-1.
\end{multline*}

Using the above description of the points of $H_C$, we will next derive a formula for $\Sigma_C$.

\subsection{A formula for $\Sigma_C$}
Recall that
\begin{equation*}
\Sigma_C=\Sigma(\delta_C)(s_0)=\sum_{h\in H_C}p^{\sigma(h)+m(h)s_0}=\sum_{h\in H_C}p^{w\cdot h},
\end{equation*}
with $w=(1,1,1)+s_0(x_C,y_C,1)$. If we write $h=h_0v_0+h_1v_1+h_2v_2$ and remember that $p^{w\cdot v_0}=1$ and $\mu_C=\mu_A\mu_B\fAB$, we find
\begin{align*}
\Sigma_C&=\sum_{h\in H_C}p^{h_0(w\cdot v_0)+h_1(w\cdot v_1)+h_2(w\cdot v_2)}\\
&=\sum_{i=0}^{\mu_A-1}\sum_{j=0}^{\mu_B-1}\sum_{k=0}^{\fAB-1}p^{\frac{i\xi_A\mu_B\fAB+j\xi_B\mu_A\fAB-k\Psi}{\mu_C}(w\cdot v_0)+\frac{j\fAB+k}{\mu_B\fAB}(w\cdot v_1)+\frac{i\fAB+k}{\mu_A\fAB}(w\cdot v_2)}\\
&=\sum_i\Bigl(p^{\frac{\xi_A(w\cdot v_0)+w\cdot v_2}{\mu_A}}\Bigr)^i\sum_j\Bigl(p^{\frac{\xi_B(w\cdot v_0)+w\cdot v_1}{\mu_B}}\Bigr)^j\sum_k\Bigl(p^{\frac{-\Psi(w\cdot v_0)+\mu_A(w\cdot v_1)+\mu_B(w\cdot v_2)}{\mu_C}}\Bigr)^k\\
&=\frac{F_2}{p^{\frac{\xi_A(w\cdot v_0)+w\cdot v_2}{\mu_A}}-1}\;\frac{F_1}{p^{\frac{\xi_B(w\cdot v_0)+w\cdot v_1}{\mu_B}}-1}\;\frac{p^{\frac{-\Psi(w\cdot v_0)+\mu_A(w\cdot v_1)+\mu_B(w\cdot v_2)}{\mu_A\mu_B}}-1}{p^{\frac{-\Psi(w\cdot v_0)+\mu_A(w\cdot v_1)+\mu_B(w\cdot v_2)}{\mu_C}}-1}.
\end{align*}

We already observed in Subsection~\ref{descpointsHCgevaleen} that if we put
\begin{equation*}
M=
\begin{pmatrix}
a_0&b_0&c_0\\a_1&b_1&c_1\\a_2&b_2&c_2
\end{pmatrix},
\end{equation*}
the identity $(\adj M)M=(\det M)I=\mu_CI$ implies that
\begin{equation}\label{identiteitreccaseeen}
\begin{vmatrix}
a_1&b_1\\a_2&b_2
\end{vmatrix}
v_0-
\begin{vmatrix}
a_0&b_0\\a_2&b_2
\end{vmatrix}
v_1+
\begin{vmatrix}
a_0&b_0\\a_1&b_1
\end{vmatrix}
v_2=\Psi v_0-\mu_A v_1-\mu_B v_2=(0,0,\mu_C).
\end{equation}
Making the dot product with $w=(1,1,1)+s_0(x_C,y_C,1)$ on all sides of the equation yields
\begin{equation*}
-\Psi(w\cdot v_0)+\mu_A(w\cdot v_1)+\mu_B(w\cdot v_2)=\mu_C(-s_0-1).
\end{equation*}

Hence we find
\begin{equation}\label{formsigmaCfincaseen}
\Sigma_C=\frac{F_1F_2}{p^{-s_0-1}-1}\frac{p^{\frac{-\Psi(w\cdot v_0)+\mu_A(w\cdot v_1)+\mu_B(w\cdot v_2)}{\mu_A\mu_B}}-1}{\Bigl(p^{\frac{\xi_A(w\cdot v_0)+w\cdot v_2}{\mu_A}}-1\Bigr)\Bigl(p^{\frac{\xi_B(w\cdot v_0)+w\cdot v_1}{\mu_B}}-1\Bigr)}.
\end{equation}

\subsection{Proof of $R_1'=0$}
Bringing together Equations (\ref{formreenaccentfincaseen}, \ref{formsigmaAfincaseen}, \ref{formsigmaBfincaseen}, \ref{formsigmaCfincaseen}) for $R_1',\allowbreak\Sigma_A,\allowbreak\Sigma_B,$ and $\Sigma_C$, we obtain that
\begin{align*}
R_1'&=\frac{1}{1-p^{-s_0-1}}\left(\frac{\Sigma_A}{F_2}+\frac{\Sigma_B}{F_1}+1\right)+\frac{\Sigma_C}{F_1F_2}\\
&=\frac{1}{1-p^{-s_0-1}}\left(\frac{1}{p^{\frac{\xi_A(w\cdot v_0)+w\cdot v_2}{\mu_A}}-1}+\frac{1}{p^{\frac{\xi_B(w\cdot v_0)+w\cdot v_1}{\mu_B}}-1}+1\right)\\*
&\qquad\qquad\qquad\qquad+\frac{1}{p^{-s_0-1}-1}\frac{p^{\frac{-\Psi(w\cdot v_0)+\mu_A(w\cdot v_1)+\mu_B(w\cdot v_2)}{\mu_A\mu_B}}-1}{\Bigl(p^{\frac{\xi_A(w\cdot v_0)+w\cdot v_2}{\mu_A}}-1\Bigr)\Bigl(p^{\frac{\xi_B(w\cdot v_0)+w\cdot v_1}{\mu_B}}-1\Bigr)}\\
&=\frac{1}{p^{-s_0-1}-1}\frac{p^{\frac{(\xi_A\mu_B+\xi_B\mu_A)(w\cdot v_0)+\mu_A(w\cdot v_1)+\mu_B(w\cdot v_2)}{\mu_A\mu_B}}-1}{\Bigl(p^{\frac{\xi_A(w\cdot v_0)+w\cdot v_2}{\mu_A}}-1\Bigr)\Bigl(p^{\frac{\xi_B(w\cdot v_0)+w\cdot v_1}{\mu_B}}-1\Bigr)}\\*
&\qquad\qquad\qquad\qquad-\frac{1}{1-p^{-s_0-1}}\frac{p^{\frac{-\Psi(w\cdot v_0)+\mu_A(w\cdot v_1)+\mu_B(w\cdot v_2)}{\mu_A\mu_B}}-1}{\Bigl(p^{\frac{\xi_A(w\cdot v_0)+w\cdot v_2}{\mu_A}}-1\Bigr)\Bigl(p^{\frac{\xi_B(w\cdot v_0)+w\cdot v_1}{\mu_B}}-1\Bigr)}.
\end{align*}
Hence it is sufficient to prove that
\begin{equation*}
p^{\frac{(\xi_A\mu_B+\xi_B\mu_A)(w\cdot v_0)}{\mu_A\mu_B}}=p^{-\frac{\Psi(w\cdot v_0)}{\mu_A\mu_B}},
\end{equation*}
or, as $p^{w\cdot v_0}=1$, equivalently, that
\begin{equation}\label{eqprovecaseeen}
\frac{\xi_A\mu_B+\xi_B\mu_A+\Psi}{\mu_A\mu_B}\in\Z.
\end{equation}

Well, it follows from \eqref{defxiaxibcaseeen} and \eqref{identiteitreccaseeen} that
\begin{multline*}
(\xi_A\mu_B+\xi_B\mu_A+\Psi)v_0\\=\mu_B(\xi_Av_0+v_2)+\mu_A(\xi_Bv_0+v_1)+(\Psi v_0-\mu_A v_1-\mu_B v_2)\in\mu_A\mu_B\Z^3.
\end{multline*}
The primitivity of $v_0$ now implies \eqref{eqprovecaseeen}, concluding Case~I.

\section{Case~II: exactly one facet contributes to $s_0$ and this facet is a non-compact $B_1$-facet}
\subsection{Figure and notations}
We shall assume that the one facet $\tau_0$ contributing to $s_0$ is non-compact for the variable $x$, and $B_1$ with respect to the variable $z$. We denote by $A(x_A,y_A,0)$ the vertex of $\tau_0$ in the $xy$-plane and by $B(x_B,y_B,1)$ the vertex in the plane $\{z=1\}$. The situation is sketched in Figure~\ref{figcase2}.

%
%
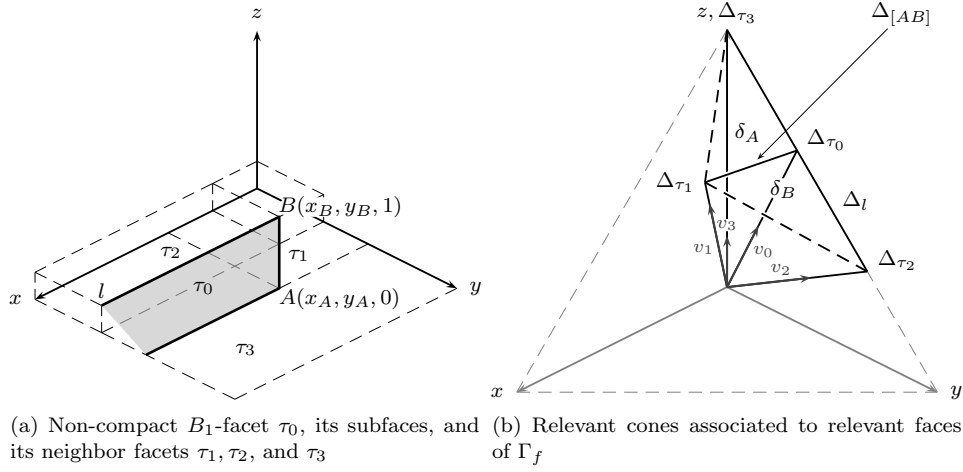
\begin{figure}
\psset{unit=.03298611111\textwidth}
\centering
\subfigure[Non-compact $B_1$-facet $\tau_0$, its subfaces, and its neighbor facets $\tau_1,\tau_2,$ and $\tau_3$]{
\begin{pspicture}(-7.58,-6.75)(6.82,5.8)
{\footnotesize
\pstThreeDCoor[xMin=0,yMin=0,zMin=0,xMax=10,yMax=9,zMax=5.8,linecolor=black,linewidth=.7pt]
{
\psset{linecolor=black,linewidth=.3pt,linestyle=dashed,subticks=1}
\pstThreeDLine(2,0,0)(2,3,0)\pstThreeDLine(2,3,0)(0,3,0)
\pstThreeDLine(10,0,0)(10,3,0)\pstThreeDLine(10,3,0)(2,3,0)
\pstThreeDLine(4,0,0)(4,5,0)\pstThreeDLine(4,5,0)(0,5,0)
\pstThreeDLine(10,3,0)(10,9,0)\pstThreeDLine(10,9,0)(0,9,0)
\pstThreeDLine(2,0,1)(2,3,1)\pstThreeDLine(2,3,1)(0,3,1)
\pstThreeDLine(10,0,1)(10,3,1)\pstThreeDLine(0,0,1)(2,0,1)
\pstThreeDLine(2,0,1)(10,0,1)\pstThreeDLine(0,0,1)(0,3,1)
\pstThreeDLine(2,0,0)(2,0,1)\pstThreeDLine(10,0,0)(10,0,1)
\pstThreeDLine(0,3,0)(0,3,1)\pstThreeDLine(10,3,0)(10,3,1)
}
\pstThreeDPut[pOrigin=c](6.3,3.9,0.5){\psframebox*[framesep=0.8pt,framearc=0.3]{\phantom{$\tau_0$}}}
{
\psset{dotstyle=none,dotscale=1,drawCoor=false}
\psset{linecolor=black,linewidth=1pt,linejoin=1}
\psset{fillcolor=lightgray,opacity=.6,fillstyle=solid}
\pstThreeDLine(10,5,0)(4,5,0)(2,3,1)(10,3,1)
}
\pstThreeDPut[pOrigin=tl](4,5,0){$A(x_A,y_A,0)$}
\pstThreeDPut[pOrigin=bl](2,3,1){\psframebox*[framesep=-.3pt,framearc=1]{$B(x_B,y_B,1)$}}
\pstThreeDPut[pOrigin=c](6.3,3.9,0.5){$\tau_0$}
\pstThreeDPut[pOrigin=c](2.62,4.51,0.45){$\tau_1$}
\pstThreeDPut[pOrigin=c](7.5,7,0){$\tau_3$}
\pstThreeDPut[pOrigin=rb](6.2,2.8,1.1){$\tau_2$}
\pstThreeDPut[pOrigin=b](10,3,1.27){$l$}
}
\end{pspicture}
}\hfill\subfigure[Relevant cones associated to relevant faces of~\Gf]{
\psset{unit=.03125\textwidth}
\begin{pspicture}(-7.6,-3.8)(7.6,9.6)
{\footnotesize
\pstThreeDCoor[xMin=0,yMin=0,zMin=0,xMax=10,yMax=10,zMax=10,nameZ={},linecolor=gray,linewidth=.7pt]
{
\psset{linecolor=gray,linewidth=.3pt,linejoin=1,linestyle=dashed,fillcolor=lightgray,fillstyle=none}
\pstThreeDLine(10,0,0)(0,10,0)\pstThreeDLine(0,10,0)(0,0,10)\pstThreeDLine(0,0,10)(10,0,0)
}
{
\psset{linecolor=black,linewidth=.7pt,linejoin=1,fillcolor=lightgray,fillstyle=none}
\pstThreeDLine(0,0,0)(0,0,10)
}
{
\psset{labelsep=2pt}
\uput[90](0,1.7){\psframebox*[framesep=0.3pt,framearc=1]{\darkgray\scriptsize$v_3$}}
}
{
\psset{linecolor=black,linewidth=.7pt,linejoin=1,fillcolor=lightgray,fillstyle=none}
\pstThreeDLine(0,0,0)(0,3.33,6.67)
\pstThreeDLine(0,0,0)(2.63,1.58,5.79)
\pstThreeDLine(0,0,0)(0,6.67,3.33)
}
{
\psset{linecolor=darkgray,linewidth=.8pt,linejoin=1,arrows=->,arrowscale=1,fillcolor=lightgray,fillstyle=none}
\pstThreeDLine(0,0,0)(0,1.5,3)
\pstThreeDLine(0,0,0)(2.10,1.27,4.62)
\pstThreeDLine(0,0,0)(0,4,2)
\pstThreeDLine(0,0,0)(0,0,2)
}
{
\psset{linecolor=white,linewidth=2pt,linejoin=1,fillcolor=lightgray,fillstyle=none}
\pstThreeDLine(2.24,1.84,5.92)(1.32,2.45,6.24)
\pstThreeDLine(2.37,2.09,5.54)(1.05,4.63,4.32)
}
{
\psset{linecolor=black,linewidth=.7pt,linejoin=1,fillcolor=lightgray,fillstyle=none}
\pstThreeDLine(0,0,10)(0,3.33,6.67)
\pstThreeDLine(0,6.67,3.33)(0,3.33,6.67)(2.63,1.58,5.79)
}
{
\psset{linecolor=black,linewidth=.7pt,linejoin=1,linestyle=dashed,fillcolor=lightgray,fillstyle=none}
\pstThreeDLine(0,0,10)(2.63,1.58,5.79)
\pstThreeDLine(2.63,1.58,5.79)(0,6.67,3.33)
}
{
\psset{labelsep=2pt}
\uput[-30](.7,1.4){\darkgray\scriptsize$v_0$}
\uput[105](1.96,.224){\darkgray\scriptsize$v_2$}
}
{
\psset{labelsep=1.5pt}
\uput[202](-.3,1.4){\darkgray\scriptsize$v_1$}
}
{
\psset{labelsep=3.8pt}
\uput[30](2.35,4.58){$\Delta_{\tau_0}$}
\uput[180](-.73,3.53){$\Delta_{\tau_1}$}
\uput[30](3.54,2.55){$\Delta_l$}
\uput[30](4.73,.52){$\Delta_{\tau_2}$}
}
\rput(.7,5.2){\psframebox*[framesep=0.3pt,framearc=1]{\footnotesize$\delta_A$}}
\rput(1.9,3.3){\psframebox*[framesep=0.7pt,framearc=1]{\footnotesize$\delta_B$}}
\pstThreeDNode(1.32,2.45,6.24){AB}
\rput[B](0,9.075){$z,\Delta_{\tau_3}$}
\rput[Br](6.91,9.075){\rnode{ABlabel}{$\Delta_{[AB]}$}}
\ncline[linewidth=.3pt,nodesepB=3.5pt,nodesepA=1pt]{->}{ABlabel}{AB}
}
\end{pspicture}
}
\caption{Case II: the only facet contributing to $s_0$ is the non-compact $B_1$-facet~$\tau_0$}
\label{figcase2}
\end{figure}
%
%

If we denote by $\overrightarrow{AB}(x_B-x_A,y_B-y_A,1)=(\alpha,\beta,1)$ the vector along the edge $[AB]$, then the unique primitive vector $v_0\in\Zplus^3$ perpendicular to $\tau_0$ equals $v_0(0,1,-\beta)$, and an equation for the affine hull of $\tau_0$ is given by
\begin{equation*}
\aff(\tau_0)\leftrightarrow y-\beta z=y_A.
\end{equation*}
Note that since $\tau_0$ is $B_1$, we must have $\beta<0$ and hence $y_B<y_A$. The numerical data associated to $\tau_0$ are therefore $(m(v_0),\sigma(v_0))=(y_A,1-\beta)$, and thus we assume
\begin{equation*}
s_0=\frac{\beta-1}{y_A}+\frac{2n\pi i}{y_A\log p}\qquad\text{for some $n\in\Z$.}
\end{equation*}

We denote by $\tau_1$ the facet of \Gf\ that has the edge $[AB]$ in common with $\tau_0$, by $\tau_2$ the non-compact facet of \Gf\ sharing with $\tau_0$ a half-line with endpoint $B$, and finally, by $\tau_3$ the facet lying in the $xy$-plane. Primitive vectors in $\Zplus^3$ perpendicular to $\tau_1,\tau_2,\tau_3$ will be denoted by
\begin{equation*}
v_1(a_1,b_1,c_1),\quad v_2(0,b_2,c_2),\quad v_3(0,0,1),
\end{equation*}
respectively, and equations for the affine supports of these facets are denoted
\begin{alignat*}{3}
\aff(\tau_1)&\leftrightarrow a_1&x+b_1y&+c_1&z&=m_1,\\
\aff(\tau_2)&\leftrightarrow    &  b_2y&+c_2&z&=m_2,\\
\aff(\tau_3)&\leftrightarrow    &      &    &z&=0,
\end{alignat*}
for certain $m_1,m_2\in\Zplus$. If we put $\sigma_1=a_1+b_1+c_1$ and $\sigma_2=b_2+c_2$, then the numerical data for $\tau_1,\tau_2,\tau_3$ are $(m_1,\sigma_1),(m_2,\sigma_2),$ and $(0,1)$, respectively.

\subsection{The candidate pole $s_0$ and the contributions to its residue}
The aim of this section is to prove that $s_0$ is not a pole of \Zof; i.e., we want to demonstrate that
\begin{equation*}
R_1=\lim_{s\to s_0}\left(p^{1-\beta+y_As}-1\right)\Zof(s)=0.
\end{equation*}
Since we work with the local version of Igusa's $p$-adic zeta function, we only consider the compact faces of \Gf\ in the formula for $\Zof(s)$ for non-degenerated $f$. Of course, in order to find an expression for $R_1$, we only need to account those compact faces that contribute to $s_0$, i.e., the compact subfaces $A,B,$ and $[AB]$ of $\tau_0$:
\begin{equation*}
R_1=\lim_{s\to s_0}\left(p^{1-\beta+y_As}-1\right)\sum_{\tau=A,B,[AB]}L_{\tau}(s)S(\Dtu)(s).
\end{equation*}

As in Case~I, we note that vertices $A$ and $B$ may be contained in facets other than $\tau_i$; $i=0,\ldots,3$; and subsequently their associated cones $\Delta_A$ and $\Delta_B$ may be not simplicial. Therefore, instead of $\Delta_A$ and $\Delta_B$, we shall consider the simplicial cones
\begin{equation*}
\dA=\cone(v_0,v_1,v_3)\qquad\text{and}\qquad\dB=\cone(v_0,v_1,v_2)
\end{equation*}
as members of simplicial decompositions of $\Delta_A$ and $\Delta_B$, respectively. It follows as before that of all cones in these decompositions, only $\dA$ and $\dB$ are relevant in the calculation of $R_1$:
\begin{multline*}
R_1=L_A(s_0)\frac{\Sigma(\delta_A)(s_0)}{\Feen(p-1)}\\
+L_B(s_0)\frac{\Sigma(\delta_B)(s_0)}{\Feen\Ftwee}+L_{[AB]}(s_0)\frac{\Sigma(\Delta_{[AB]})(s_0)}{p^{\sigma_1+m_1s_0}-1}.
\end{multline*}

\subsection{The factors $L_{\tau}(s_0)$, the sums $\Sigma(\cdot)(s_0)$ and a new formula for $R_1$}
As in Case~I we find easily that $N_A=N_B=0$ and $N_{[AB]}=(p-1)^2$. Hence the factors $L_{\tau}(s_0)$ are as follows:
\begin{gather*}
L_A(s_0)=L_B(s_0)=\left(\frac{p-1}{p}\right)^3\qquad\text{and}\\
L_{[AB]}(s_0)=\left(\frac{p-1}{p}\right)^3-\left(\frac{p-1}{p}\right)^2\frac{p^{s_0}-1}{p^{s_0+1}-1}.
\end{gather*}

Let us look at the multiplicities of $\dA,\dB,$ and $\Delta_{[AB]}$. For $\mult\delta_A$ we find
\begin{equation*}
\mu_A=\mult\delta_A=\#H(v_0,v_1,v_3)=
\begin{Vmatrix}
0&1&-\beta\\a_1&b_1&c_1\\0&0&1
\end{Vmatrix}=a_1>0.
\end{equation*}
Although this non-compact edge does not appear in the formula for $R_1$, we also mention the multiplicity $\mu_l$ of the cone $\Delta_l$ associated to the half-line $l=\tau_0\cap\tau_2$:
\begin{equation*}
\mu_l=\mult\Delta_l=\#H(v_0(0,1,-\beta),v_2(0,b_2,c_2))=
\begin{Vmatrix}
1&-\beta\\b_2&c_2
\end{Vmatrix}.
\end{equation*}
Since the coordinate system $(v_0,v_2)$ for the $yz$-plane has the opposite o\-ri\-en\-ta\-tion of the coordinate system $(e_y(0,1,0),e_z(0,0,1))$ we work in, we have that
\begin{equation*}
\mu_l=
\begin{Vmatrix}
1&-\beta\\b_2&c_2
\end{Vmatrix}=-
\begin{vmatrix}
1&-\beta\\b_2&c_2
\end{vmatrix}=-\beta b_2-c_2>0.
\end{equation*}
We see now that
\begin{multline*}
\mu_B=\mult\delta_B=\#H(v_0,v_1,v_2)\\=
\begin{Vmatrix}
0&1&-\beta\\a_1&b_1&c_1\\0&b_2&c_2
\end{Vmatrix}=a_1
\begin{Vmatrix}
1&-\beta\\b_2&c_2
\end{Vmatrix}=a_1(-\beta b_2-c_2)=\mu_A\mu_l.
\end{multline*}
Finally, for $\mult\Delta_{[AB]}$ we obtain
\begin{multline*}
\mult\Delta_{[AB]}=\#H(v_0,v_1)=\gcd\left(
\begin{Vmatrix}
0&1\\a_1&b_1
\end{Vmatrix},
\begin{Vmatrix}
0&-\beta\\a_1&c_1
\end{Vmatrix},
\begin{Vmatrix}
1&-\beta\\b_1&c_1
\end{Vmatrix}
\right)\\
=\gcd(a_1,-\beta a_1,\abs{\beta b_1+c_1})=\gcd(a_1,-\beta a_1,\abs{\alpha}a_1)=a_1=\mu_A.
\end{multline*}
In the third to last equality we used that $\beta b_1+c_1=-\alpha a_1$, which follows from the fact that
$\overrightarrow{AB}(\alpha,\beta,1)\perp v_1(a_1,b_1,c_1)$.

Since $H(v_0,v_1,v_3)\supseteq H(v_0,v_1)$ and $\mu_A=\#H(v_0,v_1,v_3)=\#H(v_0,v_1)$, we have that
\begin{equation*}
H_A=H(v_0,v_1,v_3)=H(v_0,v_1),
\end{equation*}
and therefore,
\begin{equation*}
\Sigma_A=\Sigma(\delta_A)(s_0)=\Sigma(\Delta_{[AB]})(s_0)=\sum_{h\in H_A}p^{\sigma(h)+m(h)s_0}=\sum_{h\in H_A}p^{w\cdot h},
\end{equation*}
with $w=(1,1,1)+s_0(x_B,y_B,1)\in\C^3$. Furthermore we denote
\begin{gather*}
H_l=H(v_0,v_2),\qquad H_B=H(v_0,v_1,v_2),\\
\Sigma_B=\Sigma(\delta_B)(s_0)=\sum_{h\in H_B}p^{\sigma(h)+m(h)s_0}=\sum_{h\in H_B}p^{w\cdot h},\\
F_1=p^{w\cdot v_1}-1=p^{\sigma_1+m_1s_0}-1,\qquad\text{and}\qquad F_2=p^{w\cdot v_2}-1=p^{\sigma_2+m_2s_0}-1.
\end{gather*}

The considerations above result in the following concrete formula for $R_1$:
\begin{equation*}
R_1=\left(\frac{p-1}{p}\right)^3\left[\frac{\Sigma_A}{F_1(p-1)}+\frac{\Sigma_B}{F_1F_2}+\frac{\Sigma_A}{F_1}\right]
-\left(\frac{p-1}{p}\right)^2\frac{p^{s_0}-1}{p^{s_0+1}-1}\frac{\Sigma_A}{F_1}.
\end{equation*}
With $R_1'=(p/(p-1))^3R_1$, this can be simplified to
\begin{equation}\label{formreenaccentcasetwee}
R_1'=\frac{1}{1-p^{-s_0-1}}\frac{\Sigma_A}{F_1}+\frac{\Sigma_B}{F_1F_2}.
\end{equation}

Next, we will prove that $R_1'=0$.

\subsection{Proof of $R_1'=0$}
First, note that
\begin{equation*}
-b_2v_0+v_2=-b_2(0,1,-\beta)+(0,b_2,c_2)=-(0,0,-\beta b_2-c_2)=-(0,0,\mu_l)
\end{equation*}
yields
\begin{equation}\label{vectidcasetwee}
\frac{-b_2}{\mu_l}v_0+\frac{1}{\mu_l}v_2=(0,0,-1)\in\Z^3\quad\text{ and }\quad
p^{\frac{-b_2(w\cdot v_0)+w\cdot v_2}{\mu_l}}=p^{-s_0-1},
\end{equation}
with $w=(1,1,1)+s_0(x_B,y_B,1)$.

Let us, as before, consider
\begin{equation*}
H_B=H(v_0,v_1,v_2)=\Z^3\cap\lozenge(v_0,v_1,v_2)
\end{equation*}
as a group, endowed with addition modulo $\Z v_0+\Z v_1+\Z v_2$. Then, by \eqref{vectidcasetwee} and Theorem~\ref{algfp}, there exists a $\xi_A\in\verA$ such that the elements of the subgroups $H_A=H(v_0,v_1)$ and $H_l=H(v_0,v_2)$ of $H_B$ are given by
\begin{align*}
\left\{\frac{i\xi_A}{\mu_A}\right\}v_0+\frac{i}{\mu_A}v_1;&\qquad i=0,\ldots,\mu_A-1;\\\shortintertext{and}
\left\{\frac{-jb_2}{\mu_l}\right\}v_0+\frac{j}{\mu_l}v_2;&\qquad j=0,\ldots,\mu_l-1;
\end{align*}
respectively.

Furthermore, we found above that in this special case
\begin{equation*}
\#H_B=\mu_B=\mu_A\mu_l=\#H_A\#H_l.
\end{equation*}
Hence $H_A\cap H_l=\{(0,0,0)\}$ implies that $H_B=H_A+H_l\cong H_A\oplus H_l$ and its elements are the following:
\begin{equation*}
\left\{\frac{i\xi_A\mu_l-jb_2\mu_A}{\mu_A\mu_l}\right\}v_0+\frac{i}{\mu_A}v_1+\frac{j}{\mu_l}v_2;\quad\ \ \ i=0,\ldots,\mu_A-1;\quad j=0,\ldots,\mu_l-1.
\end{equation*}

We can now easily calculate $\Sigma_A$ and $\Sigma_B$. If, for $h\in H_B$, we denote by $(h_0,h_1,h_2)$ the coordinates of $h$ with respect to the basis $(v_0,v_1,v_2)$ and keep in mind that $p^{w\cdot v_0}=1$, we obtain
\begin{equation}\label{formsigmaacasetwee}
\begin{aligned}
\Sigma_A&=\sum_{h\in H_A}p^{w\cdot h}=\sum_hp^{h_0(w\cdot v_0)+h_1(w\cdot v_1)}\\
&=\sum_{i=0}^{\mu_A-1}\Bigl(p^{\frac{\xi_A(w\cdot v_0)+w\cdot v_1}{\mu_A}}\Bigr)^i
=\frac{p^{w\cdot v_1}-1}{p^{\frac{\xi_A(w\cdot v_0)+w\cdot v_1}{\mu_A}}-1}
=\frac{F_1}{p^{\frac{\xi_A(w\cdot v_0)+w\cdot v_1}{\mu_A}}-1},
\end{aligned}
\end{equation}
while $\Sigma_B$ is given by
\begin{align}
\Sigma_B&=\sum_{h\in H_B}p^{w\cdot h}\notag\\
&=\sum_hp^{h_0(w\cdot v_0)+h_1(w\cdot v_1)+h_2(w\cdot v_2)}\notag\\
&=\sum_{i=0}^{\mu_A-1}\sum_{j=0}^{\mu_l-1}p^{\frac{i\xi_A\mu_l-jb_2\mu_A}{\mu_A\mu_l}(w\cdot v_0)+\frac{i}{\mu_A}(w\cdot v_1)+\frac{j}{\mu_l}(w\cdot v_2)}\notag\\
&=\sum_i\Bigl(p^{\frac{\xi_A(w\cdot v_0)+w\cdot v_1}{\mu_A}}\Bigr)^i\sum_j\Bigl(p^{\frac{-b_2(w\cdot v_0)+w\cdot v_2}{\mu_l}}\Bigr)^j\notag\\
&=\frac{F_1}{p^{\frac{\xi_A(w\cdot v_0)+w\cdot v_1}{\mu_A}}-1}\;\frac{F_2}{p^{\frac{-b_2(w\cdot v_0)+w\cdot v_2}{\mu_l}}-1}\notag\\
&=\frac{1}{p^{-s_0-1}-1}\;\frac{F_1F_2}{p^{\frac{\xi_A(w\cdot v_0)+w\cdot v_1}{\mu_A}}-1},\label{formsigmabcasetwee}
\end{align}
where we used \eqref{vectidcasetwee} in the last step.

By Equations~(\ref{formreenaccentcasetwee}, \ref{formsigmaacasetwee}, \ref{formsigmabcasetwee}) we have $R_1'=0$. This concludes Case~II.

%
%
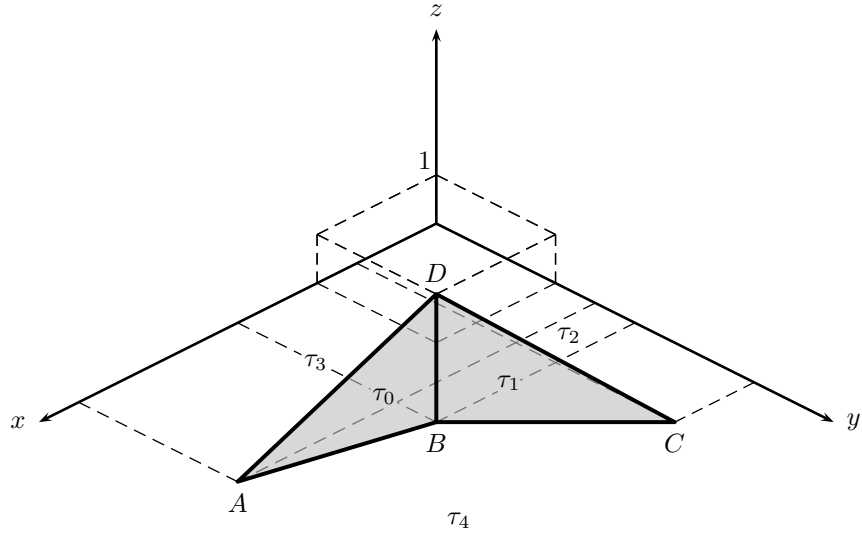
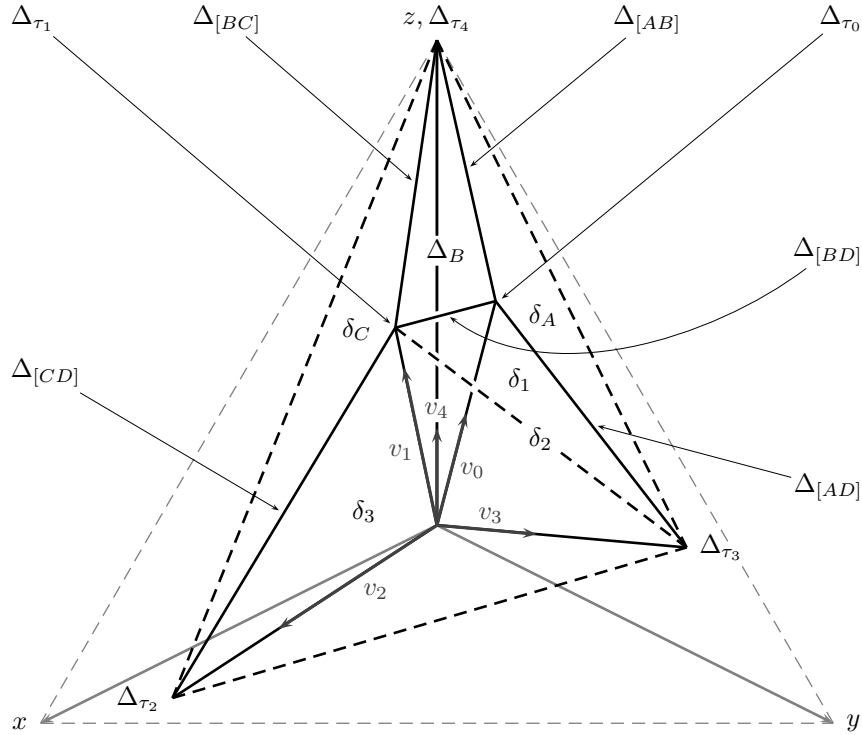
\begin{figure}
\centering
\psset{unit=.05905773059\textwidth}
\subfigure[$B_1$-simplices $\tau_0$ and $\tau_1$, their subfaces and neighbor facets $\tau_2,\tau_3,$ and $\tau_4$]{
\begin{pspicture}(-7.54,-5.7)(7.53,4.16)
\pstThreeDCoor[xMin=0,yMin=0,zMin=0,xMax=10,yMax=10,zMax=4,labelsep=5pt,linecolor=black,linewidth=1.0pt]
{
\psset{linecolor=black,linewidth=.5pt,linestyle=dashed,subticks=1}
\pstThreeDPlaneGrid[planeGrid=xy](0,0)(9,4)
\pstThreeDPlaneGrid[planeGrid=xy](0,0)(5,5)
\pstThreeDPlaneGrid[planeGrid=xy](0,0)(2,8)
\pstThreeDPlaneGrid[planeGrid=xz](0,0)(3,1)
\pstThreeDPlaneGrid[planeGrid=yz](0,0)(3,1)
\pstThreeDPlaneGrid[planeGrid=xy,planeGridOffset=1](0,0)(3,3)
\pstThreeDPlaneGrid[planeGrid=xz,planeGridOffset=3](0,0)(3,1)
\pstThreeDPlaneGrid[planeGrid=yz,planeGridOffset=3](0,0)(3,1)
}
\pstThreeDPut[pOrigin=c](5.01,3.69,0){\psframebox*[framesep=.6pt,framearc=0]{\phantom{$\tau_0$}}}
\pstThreeDPut[pOrigin=c](3.54,5.33,0.33){\psframebox*[framesep=.7pt,framearc=1]{\phantom{$\tau_1$}}}
{
\psset{dotstyle=none,dotscale=1,drawCoor=false}
\psset{linecolor=black,linewidth=1.5pt,linejoin=1}
\psset{fillcolor=lightgray,opacity=.6,fillstyle=solid}
\pstThreeDLine(3,3,1)(9,4,0)(5,5,0)(3,3,1)(2,8,0)(5,5,0)
}
\pstThreeDPut[pOrigin=t](9,4,-0.24){$A$}
\pstThreeDPut[pOrigin=t](5,5,-0.25){$B$}
\pstThreeDPut[pOrigin=t](2,8,-0.24){$C$}
\pstThreeDPut[pOrigin=b](3,3,1.25){$D$}
\pstThreeDPut[pOrigin=c](5,3.69,0){$\tau_0$}
\pstThreeDPut[pOrigin=c](5,1.97,0){\psframebox*[framesep=1pt,framearc=0]{$\tau_3$}}
\pstThreeDPut[pOrigin=c](3.5,5.33,0.33){$\tau_1$}
\pstThreeDPut[pOrigin=c](1.55,4.89,0.33){$\tau_2$}
\pstThreeDPut[pOrigin=l](7.45,7.5,0){$\ \tau_4$}
\pstThreeDPut[pOrigin=br](0.06,-0.06,1.12){$1$}
\end{pspicture}
}
\\[+3.8ex]
\subfigure[Relevant cones associated to relevant faces of~\Gf]{
\psset{unit=.05890138981\textwidth}
\begin{pspicture}(-7.57,-3.85)(7.54,9.33)
\pstThreeDCoor[xMin=0,yMin=0,zMin=0,xMax=10,yMax=10,zMax=10,nameZ={},labelsep=5pt,linecolor=gray,linewidth=1.0pt]
{
\psset{linecolor=gray,linewidth=.5pt,linejoin=1,linestyle=dashed,fillcolor=lightgray,fillstyle=none}
\pstThreeDLine(10,0,0)(0,10,0)\pstThreeDLine(0,10,0)(0,0,10)\pstThreeDLine(0,0,10)(10,0,0)
}
{
\psset{linecolor=black,linewidth=1.0pt,linejoin=1,fillcolor=lightgray,fillstyle=none}
\pstThreeDLine(0,0,0)(0,0,10)
}
{
\psset{labelsep=3pt}
\uput[90](0,1.7){\psframebox*[framesep=1.5pt,framearc=0]{\darkgray$v_4$}}
}
{
\psset{linecolor=black,linewidth=1.0pt,linejoin=1,fillcolor=lightgray,fillstyle=none}
\pstThreeDLine(0,0,0)(1.17,2.65,6.18)
\pstThreeDLine(0,0,0)(2.63,1.58,5.79)
\pstThreeDLine(0,0,0)(8.15,1.48,.370)
\pstThreeDLine(0,0,0)(.573,6.87,2.58)
}
{
\psset{linecolor=darkgray,linewidth=1.2pt,linejoin=1,arrows=->,arrowscale=1,fillcolor=lightgray,fillstyle=none}
\pstThreeDLine(0,0,0)(.585,1.32,3.09)
\pstThreeDLine(0,0,0)(2.10,1.27,4.62)
\pstThreeDLine(0,0,0)(4.89,.888,.222)
\pstThreeDLine(0,0,0)(.229,2.75,1.03)
\pstThreeDLine(0,0,0)(0,0,2)
}
{
\psset{linecolor=white,linewidth=4pt,linejoin=1,fillcolor=lightgray,fillstyle=none}
\pstThreeDLine(2.19,1.90,5.90)(1.75,2.22,6.03)
\pstThreeDLine(2.43,2.11,5.47)(1.19,5.28,3.55)
}
{
\psset{linecolor=black,linewidth=1.0pt,linejoin=1,fillcolor=lightgray,fillstyle=none}
\pstThreeDLine(0,0,10)(1.17,2.65,6.18)
\pstThreeDLine(0,0,10)(2.63,1.58,5.79)
\pstThreeDLine(.573,6.87,2.58)(1.17,2.65,6.18)(2.63,1.58,5.79)(8.15,1.48,.370)
}
{
\psset{linecolor=black,linewidth=1.0pt,linejoin=1,linestyle=dashed,fillcolor=lightgray,fillstyle=none}
\pstThreeDLine(0,0,10)(8.15,1.48,.370)
\pstThreeDLine(0,0,10)(.573,6.87,2.58)
\pstThreeDLine(8.15,1.48,.370)(.573,6.87,2.58)
\pstThreeDLine(2.63,1.58,5.79)(.573,6.87,2.58)
}
{
\psset{labelsep=3pt}
\uput[-22](.275,1.1){\darkgray$v_0$}
\uput[-60](-1.3,-.9){\darkgray$v_2$}
\uput[80](.9,-0.1){\darkgray$v_3$}
}
{
\psset{labelsep=2.5pt}
\uput[202](-.3,1.4){\darkgray$v_1$}
}
{
\psset{labelsep=5pt}
\psdots[dotsize=10pt,linecolor=white](5.3,-.54)
\uput[0](4.45,-.4){$\Delta_{\tau_3}$}
}
{
\psset{labelsep=5pt}
\uput[180](-4.7,-3.1){$\Delta_{\tau_2}$}
}
\rput(1.9,3.75){$\delta_A$}
\rput(.17,4.9){\psframebox*[framesep=1.5pt,framearc=0]{$\Delta_B$}}
\rput(-1.43,3.5){$\delta_C$}
\rput(1.48,2.6){$\delta_1$}
\rput(1.85,1.55){\psframebox*[framesep=1.3pt,framearc=0]{$\delta_2$}}
\rput(-1.3,.25){$\delta_3$}
\pstThreeDNode(1.17,2.7,6.18){dt0}
\pstThreeDNode(2.63,1.58,5.79){dt1}
\pstThreeDNode(.585,1.32,8.10){AB}
\pstThreeDNode(.870,4.76,4.38){AD}
\pstThreeDNode(1.32,.790,7.90){BC}
\pstThreeDNode(1.90,2.12,6.00){BD}
\pstThreeDNode(5.40,1.53,3.08){CD}
\rput[Bl](-7.6,2.65){\rnode{CDlabel}{$\Delta_{[CD]}$}}
\rput[Bl](-7.6,8.94){\rnode{dt1label}{$\Delta_{\tau_1}$}}
\rput[Bl](-4.36,8.94){\rnode{BClabel}{$\Delta_{[BC]}$}}
\rput[B](0,8.94){$z,\Delta_{\tau_4}$}
\rput[Br](4.36,8.94){\rnode{ABlabel}{$\Delta_{[AB]}$}}
\rput[Br](7.6,8.94){\rnode{dt0label}{$\Delta_{\tau_0}$}}
\rput[Br](7.6,4.8){\rnode{BDlabel}{$\Delta_{[BD]}$}}
\rput[Br](7.6,.6){\rnode{ADlabel}{$\Delta_{[AD]}$}}
\ncline[linewidth=.3pt,nodesepB=2pt,nodesepA=1pt]{->}{dt0label}{dt0}
\ncline[linewidth=.3pt,nodesepB=2.5pt,nodesepA=2pt]{->}{dt1label}{dt1}
\ncline[linewidth=.3pt,nodesepB=2pt,nodesepA=1.5pt]{->}{ABlabel}{AB}
\ncline[linewidth=.3pt,nodesepB=2.5pt,nodesepA=1pt]{->}{ADlabel}{AD}
\ncline[linewidth=.3pt,nodesepB=2pt,nodesepA=1pt]{->}{BClabel}{BC}
\nccurve[linewidth=.3pt,nodesepB=2pt,nodesepA=1pt,angleA=217,angleB=-37]{->}{BDlabel}{BD}
\ncline[linewidth=.3pt,nodesepB=2pt,nodesepA=2.5pt]{->}{CDlabel}{CD}
\end{pspicture}
}
\caption{Case III: the only facets contributing to $s_0$ are the $B_1$-simplices $\tau_0$ and $\tau_1$}
\label{figcase3}
\end{figure}
%
%

\section{Case~III: exactly two facets of \Gf\ contribute to $s_0$, and these two facets are both $B_1$-simplices with respect to a same variable and have an edge in common}
\subsection{Figure and notations}
Without loss of generality, we may assume that the $B_1$-simplices $\tau_0$ and $\tau_1$ contributing to $s_0$ are as drawn in Figure~\ref{figcase3}.

Let us fix notations. We denote, as indicated in Figure~\ref{figcase3}, the vertices of $\tau_0$ and $\tau_1$ and their coordinates by
\begin{equation*}
A(x_A,y_A,0),\quad B(x_B,y_B,0),\quad C(x_C,y_C,0),\quad\text{and}\quad D(x_D,y_D,1).
\end{equation*}
We denote the neighbor facets of $\tau_0$ and $\tau_1$ by $\tau_2,\tau_3,\tau_4$. The unique primitive vectors perpendicular to $\tau_i$; $i=0,\ldots,4$; will be denoted by
\begin{equation*}
v_0(a_0,b_0,c_0),\quad v_1(a_1,b_1,c_1),\quad v_2(a_2,b_2,c_2),\quad v_3(a_3,b_3,c_3),\quad v_4(0,0,1),
\end{equation*}
respectively. In this way the affine supports of these facets have equations of the form
\begin{alignat*}{2}
\aff(\tau_i)&\leftrightarrow a_ix+b_iy+c_i&&z=m_i;\qquad i=0,\ldots,3;\\
\aff(\tau_4)&\leftrightarrow &&z=0;
\end{alignat*}
and we associate to them the numerical data
\begin{alignat*}{2}
(m_i,\sigma_i)&=(m(v_i),\sigma(v_i))&&=(a_ix_D+b_iy_D+c_i,a_i+b_i+c_i);\qquad i=0,\ldots,3;\\
(m_4,\sigma_4)&=(m(v_4),\sigma(v_4))&&=(0,1).
\end{alignat*}

We assume that $\tau_0$ and $\tau_1$ both contribute to the candidate pole $s_0$. With the present notations, this is, we assume that $p^{\sigma_0+m_0s_0}=p^{\sigma_1+m_1s_0}=1$, or equivalently,
\begin{align*}
\Re(s_0)&=-\frac{\sigma_0}{m_0}=-\frac{\sigma_1}{m_1}=-\frac{a_0+b_0+c_0}{a_0x_D+b_0y_D+c_0}=-\frac{a_1+b_1+c_1}{a_1x_D+b_1y_D+c_1}\qquad\text{and}\\
\Im(s_0)&=\frac{2n\pi}{\gcd(m_0,m_1)\log p}\qquad\text{for some $n\in\Z$.}
\end{align*}

%
%
\begin{figure}
\centering
\psset{unit=.03458213256\textwidth}
\begin{pspicture}(-8.25,-3.7)(9.1,9.7)
{\footnotesize
{
\psset{linecolor=gray,linewidth=.3pt,linejoin=1,linestyle=dashed,fillcolor=lightgray,fillstyle=none}
\pstThreeDLine(10,0,0)(0,10,0)\pstThreeDLine(0,10,0)(0,0,10)\pstThreeDLine(0,0,10)(10,0,0)
}
{
\psset{linecolor=black,linewidth=.7pt,linejoin=1,fillcolor=lightgray,fillstyle=none}
\pstThreeDLine(0,0,10)(1.17,2.65,6.18)
\pstThreeDLine(0,0,10)(2.63,1.58,5.79)
\pstThreeDLine(.573,6.87,2.58)(1.17,2.65,6.18)(2.63,1.58,5.79)(8.15,1.48,.370)
}
{
\psset{linecolor=black,linewidth=.7pt,linejoin=1,linestyle=dashed,fillcolor=lightgray,fillstyle=none}
\pstThreeDLine(0,0,10)(8.15,1.48,.370)
\pstThreeDLine(0,0,10)(.573,6.87,2.58)
\pstThreeDLine(8.15,1.48,.370)(.573,6.87,2.58)
\pstThreeDLine(2.63,1.58,5.79)(.573,6.87,2.58)
}
{
\psset{labelsep=3.8pt}
\uput[0](4.45,-.4){\psframebox*[framesep=0.4pt,framearc=0]{$\Delta_{\tau_3}$}}
}
{
\psset{labelsep=3.0pt}
\rput(-6.1,-3.0){\psframebox*[framesep=0.6pt,framearc=1]{\phantom{$\Delta$}}}
\uput[180](-4.7,-3.1){$\Delta_{\tau_2}$}
}
\rput(1.94,3.72){\footnotesize$\delta_A$}
\rput(.17,4.9){\psframebox*[framesep=0.3pt,framearc=1]{\footnotesize$\Delta_B$}}
\rput(-1.43,3.5){\footnotesize$\delta_C$}
\rput(1.48,2.6){\footnotesize$\delta_1$}
\rput(1.85,1.55){\psframebox*[framesep=0.3pt,framearc=.3]{\footnotesize$\delta_2$}}
\rput(-.3,.4){\footnotesize$\delta_3$}
\rput[br](6.55,-3.35){\footnotesize\gray$\{x+y+z=1\}\cap\Rplus^3$}
\pstThreeDNode(1.17,2.7,6.18){dt0}
\pstThreeDNode(2.63,1.58,5.79){dt1}
\pstThreeDNode(.585,1.32,8.10){AB}
\pstThreeDNode(.870,4.76,4.38){AD}
\pstThreeDNode(1.32,.790,7.90){BC}
\pstThreeDNode(1.90,2.12,6.00){BD}
\pstThreeDNode(5.40,1.53,3.08){CD}
\rput[Bl](-8.3,3.075){\rnode{CDlabel}{$\Delta_{[CD]}$}}
\rput[Bl](-8.3,9.075){\rnode{dt1label}{$\Delta_{\tau_1}$}}
\rput[Bl](-4.85,9.075){\rnode{BClabel}{$\Delta_{[BC]}$}}
\rput[B](0,9.075){$\Delta_{\tau_4}$}
\rput[Bl](2.85,9.075){\rnode{ABlabel}{$\Delta_{[AB]}$}}
\rput[Bl](7.1,9.075){\rnode{dt0label}{$\Delta_{\tau_0}$}}
\rput[Bl](7.1,6.075){\rnode{BDlabel}{$\Delta_{[BD]}$}}
\rput[Bl](7.1,3.075){\rnode{ADlabel}{$\Delta_{[AD]}$}}
\ncline[linewidth=.3pt,nodesepB=2pt,nodesepA=1pt]{->}{dt0label}{dt0}
\ncline[linewidth=.3pt,nodesepB=2.5pt,nodesepA=2pt]{->}{dt1label}{dt1}
\ncline[linewidth=.3pt,nodesepB=2pt,nodesepA=1.5pt]{->}{ABlabel}{AB}
\ncline[linewidth=.3pt,nodesepB=2.5pt,nodesepA=1pt]{->}{ADlabel}{AD}
\ncline[linewidth=.3pt,nodesepB=2pt,nodesepA=1pt]{->}{BClabel}{BC}
\nccurve[linewidth=.3pt,nodesepB=2pt,nodesepA=1pt,angleA=217,angleB=-37]{->}{BDlabel}{BD}
\ncline[linewidth=.3pt,nodesepB=2pt,nodesepA=2.5pt]{->}{CDlabel}{CD}
}
\end{pspicture}
\caption{Sketch of the intersection of the contributing cones with the plane $\{x+y+z=1\}$}
\label{figcase3detail}
\end{figure}
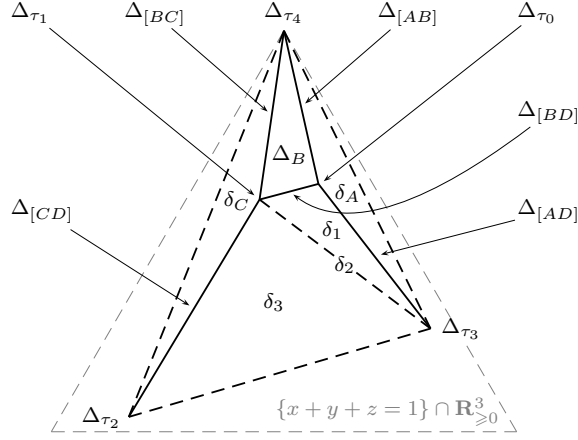
%
%

Throughout this section we will consider the following thirteen simplicial cones:
\begin{align*}
\dA&=\cone(v_0,v_3,v_4),&\DAB&=\cone(v_0,v_4),&\Dtnul&=\cone(v_0),\\
\DB&=\cone(v_0,v_1,v_4),&\DBC&=\cone(v_1,v_4),&\Dteen&=\cone(v_1).\\
\dC&=\cone(v_1,v_2,v_4),&\DAD&=\cone(v_0,v_3),&&\\
\delta_1&=\cone(v_0,v_1,v_3),&\DBD&=\cone(v_0,v_1),&&\\
\delta_2&=\cone(v_1,v_3),&\DCD&=\cone(v_1,v_2),&&\\
\delta_3&=\cone(v_1,v_2,v_3),&&&&
\end{align*}
The $\Delta_{\tau}$ listed above are the simplicial cones associated to the faces $\tau$. The cones $\Delta_A,\Delta_C,\Delta_D$, associated to the respective vertices $A,C,D$, are generally not simplicial. Later in this section we will consider simplicial subdivisions (without creating new rays) of $\Delta_A,\Delta_C,$ and $\Delta_D$ that include $\{\dA\},\{\dC\},$ and $\{\delta_1,\delta_2,\delta_3\}$, respectively (cfr.\ Figure~\ref{figcase3detail}).

Finally, let us fix notations for the vectors along the edges of $\tau_0$ and $\tau_1$:
\begin{alignat*}{8}
&\overrightarrow{AD}&&(x_D&&-x_A&&,y_D&&-y_A&&,1&&)&&=(\aA,\bA,1),\\
&\overrightarrow{BD}&&(x_D&&-x_B&&,y_D&&-y_B&&,1&&)&&=(\aB,\bB,1),\\
&\overrightarrow{CD}&&(x_D&&-x_C&&,y_D&&-y_C&&,1&&)&&=(\aC,\bC,1),\\
&\overrightarrow{AB}&&(x_B&&-x_A&&,y_B&&-y_A&&,0&&)&&=(\aA-\aB,\bA-\bB,0),\\
&\overrightarrow{BC}&&(x_C&&-x_B&&,y_C&&-y_B&&,0&&)&&=(\aB-\aC,\bB-\bC,0).
\end{alignat*}
The first three vectors are primitive; the last two are generally not. We put
\begin{equation*}
\fAB=\gcd(x_B-x_A,y_B-y_A)\qquad\text{and}\qquad\fBC=\gcd(x_C-x_B,y_C-y_B).
\end{equation*}

\subsection{Some relations between the variables}\label{srelbettvarcasdrie}
In the same way as in Case~I we obtain that
\begin{alignat*}{4}
\begin{pmatrix}
c_0\\c_3
\end{pmatrix}
&=-\aA&&
\begin{pmatrix}
a_0\\a_3
\end{pmatrix}
&&-\bA&&
\begin{pmatrix}
b_0\\b_3
\end{pmatrix},\\
\begin{pmatrix}
c_0\\c_1
\end{pmatrix}
&=-\aB&&
\begin{pmatrix}
a_0\\a_1
\end{pmatrix}
&&-\bB&&
\begin{pmatrix}
b_0\\b_1
\end{pmatrix},\\
\begin{pmatrix}
c_1\\c_2
\end{pmatrix}
&=-\aC&&
\begin{pmatrix}
a_1\\a_2
\end{pmatrix}
&&-\bC&&
\begin{pmatrix}
b_1\\b_2
\end{pmatrix}.
\end{alignat*}
A first consequence is that
\begin{equation*}
\gcd(a_i,b_i,c_i)=\gcd(a_i,b_i)=1;\qquad i=0,\ldots,3.
\end{equation*}
As a second consequence, we have
\begin{alignat*}{4}
\begin{vmatrix}
a_0&c_0\\a_3&c_3
\end{vmatrix}
&=-\bA&&
\begin{vmatrix}
a_0&b_0\\a_3&b_3
\end{vmatrix},
&\qquad\qquad\quad
\begin{vmatrix}
b_0&c_0\\b_3&c_3
\end{vmatrix}
&=\aA&&
\begin{vmatrix}
a_0&b_0\\a_3&b_3
\end{vmatrix},
\\
\begin{vmatrix}
a_0&c_0\\a_1&c_1
\end{vmatrix}
&=-\bB&&
\begin{vmatrix}
a_0&b_0\\a_1&b_1
\end{vmatrix},
&
\begin{vmatrix}
b_0&c_0\\b_1&c_1
\end{vmatrix}
&=\aB&&
\begin{vmatrix}
a_0&b_0\\a_1&b_1
\end{vmatrix},
\\
\begin{vmatrix}
a_1&c_1\\a_2&c_2
\end{vmatrix}
&=-\bC&&
\begin{vmatrix}
a_1&b_1\\a_2&b_2
\end{vmatrix},
&
\begin{vmatrix}
b_1&c_1\\b_2&c_2
\end{vmatrix}
&=\aC&&
\begin{vmatrix}
a_1&b_1\\a_2&b_2
\end{vmatrix}.
\end{alignat*}

In the calculations that will follow it is often convenient (or necessary) to know the signs of certain determinants. Coordinate system orientation considerations show that
\begin{align}
\begin{vmatrix}
a_0&b_0\\a_3&b_3
\end{vmatrix}
&>0,&
\begin{vmatrix}
a_0&b_0\\a_1&b_1
\end{vmatrix}
&<0,&
\begin{vmatrix}
a_1&b_1\\a_2&b_2
\end{vmatrix}
&<0,\notag\\
\Psi=
\begin{vmatrix}
a_1&b_1\\a_3&b_3
\end{vmatrix}
&>0,&-\Omega=
\begin{vmatrix}
a_0&b_0\\a_2&b_2
\end{vmatrix}
&<0,&\Theta=
\begin{vmatrix}
a_2&b_2\\a_3&b_3
\end{vmatrix}
&>0,\label{defPOT}\\
\begin{vmatrix}
a_0&b_0&c_0\\a_1&b_1&c_1\\a_3&b_3&c_3
\end{vmatrix}
&>0,&
\begin{vmatrix}
a_0&b_0&c_0\\a_1&b_1&c_1\\a_2&b_2&c_2
\end{vmatrix}
&>0,&
\begin{vmatrix}
a_1&b_1&c_1\\a_2&b_2&c_2\\a_3&b_3&c_3
\end{vmatrix}
&>0.\notag
\end{align}

\subsection{Igusa's local zeta function}
Since $f$ is non-degenerated over \Fp\ with respect to all the compact faces of its Newton polyhedron \Gf, by Theorem~\ref{formdenhoor} the local Igusa zeta function \Zof\ of $f$ is given by
\begin{equation}\label{deflvilzfvmd}
\Zof=\sum_{\substack{\tau\mathrm{\ compact}\\\mathrm{face\ of\ }\Gf}}L_{\tau}S(\Dtu),
\end{equation}
with
\begin{gather*}
L_{\tau}:s\mapsto L_{\tau}(s)=\left(\frac{p-1}{p}\right)^3-\frac{N_{\tau}}{p^2}\frac{p^s-1}{p^{s+1}-1},\\
N_{\tau}=\#\left\{(x,y,z)\in(\Fpcross)^3\;\middle\vert\;\fbart(x,y,z)=0\right\},
\end{gather*}
and
\begin{align}
S(\Dtu):s\mapsto S(\Dtu)(s)&=\sum_{k\in\Z^3\cap\Delta_{\tau}}p^{-\sigma(k)-m(k)s}\notag\\
&=\sum_{i\in I}\frac{\Sigma(\delta_i)(s)}{\prod_{j\in J_i}(p^{\sigma(w_j)+m(w_j)s}-1)}.\label{deflvilzfvmdbis}
\end{align}
Here $\{\delta_i\}_{i\in I}$ denotes a simplicial decomposition without introducing new rays of the cone $\Delta_{\tau}$ associated to $\tau$. The simplicial cone $\delta_i$ is supposed to be strictly positively spanned by the linearly independent primitive vectors $w_j$, $j\in J_i$, in $\Zplusn\setminus\{0\}$, and $\Sigma(\delta_i)$ is the function
\begin{equation*}
\Sigma(\delta_i):s\mapsto \Sigma(\delta_i)(s)=\sum_hp^{\sigma(h)+m(h)s},
\end{equation*}
where $h$ runs through the elements of the set
\begin{equation*}
H(w_j)_{j\in J_i}=\Z^3\cap\lozenge(w_j)_{j\in J_i},
\end{equation*}
with
\begin{equation*}
\lozenge(w_j)_{j\in J_i}=\left\{\sum\nolimits_{j\in J_i}h_jw_j\;\middle\vert\;h_j\in[0,1)\text{ for all }j\in J_i\right\}
\end{equation*}
the fundamental parallelepiped spanned by the vectors $w_j$, $j\in J_i$.

\subsection{The candidate pole $s_0$ and its residues}
We want to prove that $s_0$ is not a pole of \Zof. Since $s_0$ is a candidate pole of expected order two (and therefore is a pole of actual order at most two), it is enough to prove that the coefficients $a_{-2}$ and $a_{-1}$ in the Laurent series
\begin{equation*}
\Zof(s)=\sum_{k=-2}^{\infty}a_k(s-s_0)^k
\end{equation*}
of \Zof\ centered at $s_0$, both equal zero. These coefficients are given by
\begin{align*}
a_{-2}&=\lim_{s\to s_0}(s-s_0)^2\Zof(s)\qquad\text{and}\\
a_{-1}=\Res(\Zof,s_0)&=\lim_{s\to s_0}\frac{d}{ds}\left[(s-s_0)^2\Zof(s)\right].
\end{align*}

Alternatively (and consequently), it is sufficient to show that
\begin{align*}
R_2&=\lim_{s\to s_0}\left(p^{\sigma_0+m_0s}-1\right)\left(p^{\sigma_1+m_1s}-1\right)\Zof(s)\\
&=(\log p)^2m_0m_1a_{-2}\\\shortintertext{and}
R_1&=\lim_{s\to s_0}\frac{d}{ds}\left[\left(p^{\sigma_0+m_0s}-1\right)\left(p^{\sigma_1+m_1s}-1\right)\Zof(s)\right]\\
&=(\log p)^2m_0m_1a_{-1}+\frac{1}{2}(\log p)^3m_0m_1(m_0+m_1)a_{-2}
\end{align*}
both vanish. We will in the rest of this section prove that $R_2=R_1=0$.

\subsection{Terms contributing to $R_2$ and $R_1$}
We intend to calculate $R_2$ and $R_1$ based on Formula~\eqref{deflvilzfvmd} for \Zof.

Precisely $11$ compact faces of \Gf\ contribute to the candidate pole $s_0$. These are the subfaces $A,B,C,D,[AB],[BC],[AD],[BD],[CD],\tau_0,$ and $\tau_1$ of the two compact facets $\tau_0$ and $\tau_1$ that have $s_0$ as an associated candidate pole. It are only the terms of \eqref{deflvilzfvmd} associated to these faces that should be taken into account in the calculation of $R_1$. The other terms do not have $s_0$ as a pole and therefore do not contribute to the limit $R_1$.

Vertex $B$ is only contained in the facets $\tau_0,\tau_1,$ and $\tau_4$; hence its associated cone $\Delta_B$ is simplicial. The cones associated to the other vertices $A,C,$ and $D$ are generally not simplicial. For dealing with $S_A$ and $S_C$, we will, just as in Case~I, consider simplicial decompositions of $\Delta_A$ and $\Delta_C$ that contain $\delta_A$ and $\delta_C$, respectively. Terms of \eqref{deflvilzfvmdbis} associated to other cones than $\delta_A$ and $\delta_C$ in these decompositions do not have a pole in $s_0$, hence do not contribute to $R_1$. Vertex $D$ is contained in at least four facets. We shall consider a decomposition of $\Delta_D$ into simplicial cones among which $\delta_1,\delta_2,$ and $\delta_3$. Only the terms associated to these three cones should be taken into account when calculating $R_1$. This makes a total of $13$ terms contributing to $R_1$ (three coming from $D$ and one for every other contributing face).

The limit $R_2$ counts fewer contributions: the only terms of \eqref{deflvilzfvmd} and \eqref{deflvilzfvmdbis} that need to be considered are the ones that have a double pole in $s_0$. These are the terms associated to $B,[BD],$ and $\delta_1$. All other terms have at most a single pole in $s_0$ and do not contribute to $R_2$.

Let us write down these contributions explicitly. For $R_2$ we obtain
\begin{equation*}
R_2=L_B(s_0)\frac{\Sigma(\Delta_B)(s_0)}{p-1}+L_D(s_0)\frac{\Sigma(\delta_1)(s_0)}{p^{\sigma_3+m_3s_0}-1}+L_{[BD]}(s_0)\Sigma(\Delta_{[BD]})(s_0).
\end{equation*}
The thirteen terms making up $R_1$ are
\begin{multline*}
R_1=\dds{L_A(s)\frac{\Feens\Sigma(\delta_A)(s)}{\Fdries(p-1)}}
+\dds{L_B(s)\frac{\Sigma(\Delta_B)(s)}{p-1}}\\
+\dds{L_C(s)\frac{\Fnuls\Sigma(\delta_C)(s)}{\Ftwees(p-1)}}
+\dds{L_D(s)\frac{\Sigma(\delta_1)(s)}{p^{\sigma_3+m_3s}-1}}\\
+\dds{L_D(s)\frac{\Fnuls\Sigma(\delta_2)(s)}{p^{\sigma_3+m_3s}-1}}\\
+\dds{L_D(s)\frac{\Fnuls\Sigma(\delta_3)(s)}{\Ftwees\Fdries}}\\
+\dds{L_{[AB]}(s)\frac{\Feens\Sigma(\Delta_{[AB]})(s)}{p-1}}\\
+\dds{L_{[BC]}(s)\frac{\Fnuls\Sigma(\Delta_{[BC]})(s)}{p-1}}\\
+\dds{L_{[AD]}(s)\frac{\Feens\Sigma(\Delta_{[AD]})(s)}{p^{\sigma_3+m_3s}-1}}\\
+\dds{L_{[BD]}(s)\Sigma(\Delta_{[BD]})(s)}\\
+\dds{L_{[CD]}(s)\frac{\Fnuls\Sigma(\Delta_{[CD]})(s)}{p^{\sigma_2+m_2s}-1}}\\
+\dds{L_{\tau_0}(s)\Feens\Sigma(\Delta_{\tau_0})(s)}\\
+\dds{L_{\tau_1}(s)\Fnuls\Sigma(\Delta_{\tau_1})(s)}.
\end{multline*}
After simplification, $R_1$ is given by
\begin{multline*}
R_1=L_A(s_0)\frac{m_1(\log p)\Sigma(\delta_A)(s_0)}{\Fdrie(p-1)}
+L_B'(s_0)\frac{\Sigma(\Delta_B)(s_0)}{p-1}
+L_B(s_0)\frac{\Sigma(\Delta_B)'(s_0)}{p-1}\\
+L_C(s_0)\frac{m_0(\log p)\Sigma(\delta_C)(s_0)}{\Ftwee(p-1)}
+L_D'(s_0)\frac{\Sigma(\delta_1)(s_0)}{p^{\sigma_3+m_3s_0}-1}
+L_D(s_0)\frac{\Sigma(\delta_1)'(s_0)}{p^{\sigma_3+m_3s_0}-1}\\
-L_D(s_0)\frac{m_3(\log p)p^{\sigma_3+m_3s_0}\Sigma(\delta_1)(s_0)}{\Fdrie^2}
+L_D(s_0)\frac{m_0(\log p)\Sigma(\delta_2)(s_0)}{p^{\sigma_3+m_3s_0}-1}\\
+L_D(s_0)\frac{m_0(\log p)\Sigma(\delta_3)(s_0)}{\Ftwee\Fdrie}\\
+L_{[AB]}(s_0)\frac{m_1(\log p)\Sigma(\Delta_{[AB]})(s_0)}{p-1}
+L_{[BC]}(s_0)\frac{m_0(\log p)\Sigma(\Delta_{[BC]})(s_0)}{p-1}\\
+L_{[AD]}(s_0)\frac{m_1(\log p)\Sigma(\Delta_{[AD]})(s_0)}{p^{\sigma_3+m_3s_0}-1}
+L_{[BD]}'(s_0)\Sigma(\Delta_{[BD]})(s_0)\\
+L_{[BD]}(s_0)\Sigma(\Delta_{[BD]})'(s_0)
+L_{[CD]}(s_0)\frac{m_0(\log p)\Sigma(\Delta_{[CD]})(s_0)}{p^{\sigma_2+m_2s_0}-1}\\
+L_{\tau_0}(s_0)m_1(\log p)\Sigma(\Delta_{\tau_0})(s_0)
+L_{\tau_1}(s_0)m_0(\log p)\Sigma(\Delta_{\tau_1})(s_0).
\end{multline*}

\subsection{The numbers $N_{\tau}$}
Analogously to Case~I we obtain
\begin{gather*}
N_A=N_B=N_C=N_D=0,\\
N_{[AB]}=(p-1)N_0,\qquad N_{[BC]}=(p-1)N_1,\\
N_{[AD]}=N_{[BD]}=N_{[CD]}=(p-1)^2,\\
N_{\tau_0}=(p-1)^2-N_0,\qquad N_{\tau_1}=(p-1)^2-N_1,
\end{gather*}
with
\begin{align*}
N_0&=\#\left\{(x,y)\in(\Fpcross)^2\;\middle\vert\;\overline{f_{[AB]}}(x,y)=0\right\},\\
N_1&=\#\left\{(x,y)\in(\Fpcross)^2\;\middle\vert\;\overline{f_{[BC]}}(x,y)=0\right\}.
\end{align*}

\subsection{The factors $L_{\tau}(s_0)$ and $L_{\tau}'(s_0)$}
For the $L_{\tau}(s_0)$ we obtain
\begin{gather*}
L_A(s_0)=L_B(s_0)=L_C(s_0)=L_D(s_0)=\left(\frac{p-1}{p}\right)^3,\\
\begin{alignedat}{3}
L_{[AB]}(s_0)&=\left(\frac{p-1}{p}\right)^3&&-\frac{(p-1)N_0}{p^2}&&\frac{p^{s_0}-1}{p^{s_0+1}-1},\\
L_{[BC]}(s_0)&=\left(\frac{p-1}{p}\right)^3&&-\frac{(p-1)N_1}{p^2}&&\frac{p^{s_0}-1}{p^{s_0+1}-1},
\end{alignedat}\\
L_{[AD]}(s_0)=L_{[BD]}(s_0)=L_{[CD]}(s_0)=\left(\frac{p-1}{p}\right)^3-\left(\frac{p-1}{p}\right)^2\frac{p^{s_0}-1}{p^{s_0+1}-1},\\
\begin{alignedat}{3}
L_{\tau_0}(s_0)&=\left(\frac{p-1}{p}\right)^3&&-\frac{(p-1)^2-N_0}{p^2}&&\frac{p^{s_0}-1}{p^{s_0+1}-1},\\
L_{\tau_1}(s_0)&=\left(\frac{p-1}{p}\right)^3&&-\frac{(p-1)^2-N_1}{p^2}&&\frac{p^{s_0}-1}{p^{s_0+1}-1},
\end{alignedat}
\end{gather*}
while the $L_{\tau}'(s_0)$ are given by
\begin{gather*}
L_B'(s_0)=L_D'(s_0)=0,\\
L_{[BD]}'(s_0)=-(\log p)\left(\frac{p-1}{p}\right)^3\frac{p^{s_0+1}}{\bigl(p^{s_0+1}-1\bigr)^2}.
\end{gather*}

\subsection{Multiplicities of the relevant simplicial cones}
Based on Proposition~\ref{multipliciteit} and the relations obtained in Subsection~\ref{srelbettvarcasdrie}, we have, analogously to Case~I, that
\begin{gather*}
\mult\Delta_{[AB]}=\mult\Delta_{[BC]}=\mult\Delta_{\tau_0}=\mult\Delta_{\tau_1}=1,\\
\begin{alignedat}{6}
\mu_A=\mult\delta_A&=\#H(v_0,v_3,v_4)&&=\mult\Delta_{[AD]}&&=\#H(v_0,v_3)&&=&&
\begin{vmatrix}
a_0&b_0\\a_3&b_3
\end{vmatrix}
&&>0,\\
\mu_B=\mult\Delta_B&=\#H(v_0,v_1,v_4)&&=\mult\Delta_{[BD]}&&=\#H(v_0,v_1)&&=-&&
\begin{vmatrix}
a_0&b_0\\a_1&b_1
\end{vmatrix}
&&>0,\\
\mu_C=\mult\delta_C&=\#H(v_1,v_2,v_4)&&=\mult\Delta_{[CD]}&&=\#H(v_1,v_2)&&=-&&
\begin{vmatrix}
a_1&b_1\\a_2&b_2
\end{vmatrix}
&&>0,
\end{alignedat}\\
\mu_2=\mult\delta_2=\#H(v_1,v_3)=\gcd\left(\Psi,
\begin{Vmatrix}
a_1&c_1\\a_3&c_3
\end{Vmatrix},
\begin{Vmatrix}
b_1&c_1\\b_3&c_3
\end{Vmatrix}
\right)>0,
\end{gather*}
with $\Psi>0$ as in \eqref{defPOT}.

Although we did not choose $\delta_1'=\cone(v_0,v_1,v_2)$ to be part of a simplicial decomposition of $\Delta_D$, we will consider its multiplicity as well. As in Case~I, we then find that
\begin{alignat*}{5}
\mu_1&=\mult\delta_1&&=\#H(v_0,v_1,v_3)&&=
\begin{vmatrix}
a_0&b_0&c_0\\a_1&b_1&c_1\\a_3&b_3&c_3
\end{vmatrix}
&&=\mu_A\mu_B\fAB&&>0\qquad\text{and}\\
\mu_1'&=\mult\delta_1'&&=\#H(v_0,v_1,v_2)&&=
\begin{vmatrix}
a_0&b_0&c_0\\a_1&b_1&c_1\\a_2&b_2&c_2
\end{vmatrix}
&&=\mu_B\mu_C\fBC&&>0.
\end{alignat*}
Finally, we will derive a more useful formula for
\begin{equation*}
\mu_3=\mult\delta_3=\#H(v_1,v_2,v_3)=
\begin{vmatrix}
a_1&b_1&c_1\\a_2&b_2&c_2\\a_3&b_3&c_3
\end{vmatrix}>0,
\end{equation*}
similar to the ones for $\mu_1$ and $\mu_1'$, in Subsection~\ref{multmudrieformule}.

\subsection{The sums $\Sigma(\cdot)(s_0)$ and $\Sigma(\cdot)'(s_0)$}
Since the corresponding multiplicities equal one, we find that
\begin{equation*}
\Sigma(\Delta_{[AB]})(s_0)=\Sigma(\Delta_{[BC]})(s_0)=\Sigma(\Delta_{\tau_0})(s_0)=\Sigma(\Delta_{\tau_1})(s_0)=1.
\end{equation*}
From the overview of the multiplicities, it is also clear\footnote{See Case~I for more details.} that we may put
\begin{gather*}
\begin{alignedat}{2}
H_A&=H(v_0,v_3,v_4)&&=H(v_0,v_3),\\
H_B&=H(v_0,v_1,v_4)&&=H(v_0,v_1),\\
H_C&=H(v_1,v_2,v_4)&&=H(v_1,v_2),
\end{alignedat}\\
H_1=H(v_0,v_1,v_3),\qquad H_2=H(v_1,v_3),\qquad H_3=H(v_1,v_2,v_3).
\end{gather*}
It follows that
\begin{gather*}
\begin{alignedat}{3}
\Sigma_A&=\Sigma(\delta_A)(s_0)&&=\Sigma(\Delta_{[AD]})(s_0)&&=\sum\nolimits_{h\in H_A}p^{\sigma(h)+m(h)s_0};\\
\Sigma_B&=\Sigma(\Delta_B)(s_0)&&=\Sigma(\Delta_{[BD]})(s_0)&&=\sum\nolimits_{h\in H_B}p^{\sigma(h)+m(h)s_0};\\
\Sigma_C&=\Sigma(\delta_C)(s_0)&&=\Sigma(\Delta_{[CD]})(s_0)&&=\sum\nolimits_{h\in H_C}p^{\sigma(h)+m(h)s_0};
\end{alignedat}\\
\Sigma_i=\Sigma(\delta_i)(s_0)=\sum\nolimits_{h\in H_i}p^{\sigma(h)+m(h)s_0};\qquad i=1,2,3;\\
\Sigma_B'=\Sigma(\Delta_B)'(s_0)=\Sigma(\Delta_{[BD]})'(s_0)=\dds{\sum\nolimits_{h\in H_B}p^{\sigma(h)+m(h)s}};\\
\Sigma_1'=\Sigma(\delta_1)'(s_0)=\dds{\sum\nolimits_{h\in H_1}p^{\sigma(h)+m(h)s}}.
\end{gather*}

Let us for the rest of this section denote by $w$ the vector
\begin{equation*}
w=(1,1,1)+s_0(x_D,y_D,1)\in\C^3.
\end{equation*}
Then since $\overline{\Delta_D}$ contains all points of $H_V$; $V=A,B,C,1,2,3$; we have moreover that
\begin{alignat*}{2}
\Sigma_V&=\sum\nolimits_{h\in H_V}p^{w\cdot h};&\qquad V&=A,B,C,1,2,3;\\
\text{and}\qquad\Sigma_W'&=(\log p)\sum\nolimits_{h\in H_W}m(h)p^{w\cdot h};&W&=B,1.
\end{alignat*}

\subsection{Simplified formulas for $R_2$ and $R_1$}
Let us put
\begin{equation*}
F_2=p^{w\cdot v_2}-1=p^{\sigma_2+m_2s_0}-1\qquad\text{and}\qquad F_3=p^{w\cdot v_3}-1=p^{\sigma_3+m_3s_0}-1.
\end{equation*}
Then, exploiting the information above on the numbers $N_{\tau}$ and the multiplicities of the cones, we obtain the following new formulas for $R_2$ and $R_1$:
\begin{gather}
R_2=\left(\frac{p-1}{p}\right)^3\left(\frac{\Sigma_B}{1-p^{-s_0-1}}+\frac{\Sigma_1}{F_3}\right),\label{followingnewformulaRtweecasedrie}\\
\begin{multlined}[.85\textwidth]
R_1=(\log p)\left(\frac{p-1}{p}\right)^3\cdot\\
\Biggl[\frac{1}{1-p^{-s_0-1}}\left(\frac{m_1\Sigma_A}{F_3}+\frac{\Sigma_B'}{\log p}-\frac{\Sigma_B}{p^{s_0+1}-1}+\frac{m_0\Sigma_C}{F_2}+m_0+m_1\right)\\
+\frac{\Sigma_1'}{(\log p)F_3}-\frac{m_3(F_3+1)\Sigma_1}{F_3^2}+\frac{m_0\Sigma_2}{F_3}+\frac{m_0\Sigma_3}{F_2F_3}\Biggr].
\end{multlined}\label{followingnewformulaReencasedrie}
\end{gather}
Note that the \lq unknown\rq\ numbers $N_0$ and $N_1$ disappear from the equation.

\subsection{Vector identities}
We will quite often use the following identities:
\begin{alignat}{6}
\begin{vmatrix}
a_1&b_1\\a_3&b_3
\end{vmatrix}
v_0&-
\begin{vmatrix}
a_0&b_0\\a_3&b_3
\end{vmatrix}
v_1&&+
\begin{vmatrix}
a_0&b_0\\a_1&b_1
\end{vmatrix}
v_3&&=&\;\Psi v_0&\;-\;&\mu_A v_1&\;-\;&\mu_B v_3&=(0,0,\mu_1),\label{vi1c3}\\
\begin{vmatrix}
a_1&b_1\\a_2&b_2
\end{vmatrix}
v_0&-
\begin{vmatrix}
a_0&b_0\\a_2&b_2
\end{vmatrix}
v_1&&+
\begin{vmatrix}
a_0&b_0\\a_1&b_1
\end{vmatrix}
v_2&&=&\;-\mu_C v_0&\;+\;&\Omega v_1&\;-\;&\mu_B v_2&=(0,0,\mu_1'),\label{vi2c3}\\
\begin{vmatrix}
a_2&b_2\\a_3&b_3
\end{vmatrix}
v_1&-
\begin{vmatrix}
a_1&b_1\\a_3&b_3
\end{vmatrix}
v_2&&+
\begin{vmatrix}
a_1&b_1\\a_2&b_2
\end{vmatrix}
v_3&&=&\;\Theta v_1&\;-\;&\Psi v_2&\;-\;&\mu_C v_3&=(0,0,\mu_3).\label{vi3c3}
\end{alignat}
Hereby $\Psi,\Omega,\Theta>0$ are as introduced in \eqref{defPOT}. As also mentioned in Case~I, these equations simply express the equalities of the last rows of the identical matrices $(\adj M)M$ and $(\det M)I$ for $M$ the respective matrices
\begin{equation*}
\begin{pmatrix}
a_0&b_0&c_0\\a_1&b_1&c_1\\a_3&b_3&c_3
\end{pmatrix},\qquad
\begin{pmatrix}
a_0&b_0&c_0\\a_1&b_1&c_1\\a_2&b_2&c_2
\end{pmatrix},\qquad\text{and}\qquad
\begin{pmatrix}
a_1&b_1&c_1\\a_2&b_2&c_2\\a_3&b_3&c_3
\end{pmatrix}
\end{equation*}
with respective determinants $\mu_1,\mu_1',$ and $\mu_3$.

Useful consequences of (\ref{vi1c3}--\ref{vi3c3}) arise from making the dot product with $w=(1,1,1)+s_0(x_D,y_D,1)$ on all sides of the equations:
\begin{alignat}{3}
-\Psi(w\cdot v_0)&\;+\;&\mu_A(w\cdot v_1)&\;+\;&\mu_B(w\cdot v_3)&=\mu_1(-s_0-1),\label{dpi1c3}\\
\mu_C(w\cdot v_0)&\;-\;&\Omega(w\cdot v_1)&\;+\;&\mu_B(w\cdot v_2)&=\mu_1'(-s_0-1),\label{dpi2c3}\\
-\Theta(w\cdot v_1)&\;+\;&\Psi(w\cdot v_2)&\;+\;&\mu_C(w\cdot v_3)&=\mu_3(-s_0-1).\label{dpi3c3}
\end{alignat}

\subsection{Points of $H_A,H_B,H_C,H_2,H_1$ and additional relations}\label{pointsofandaddrel}
Based on the discussion on integral points in fundamental parallelepipeds in Section~\ref{fundpar}, we can state that the points of $H_A,H_B,H_C,$ and $H_2$ are given by
\begin{alignat}{2}
\left\{\frac{i\xi_A}{\mu_A}\right\}v_0&+\frac{i}{\mu_A}v_3;&\qquad&i=0,\ldots,\mu_A-1;\label{pofAcasedrie}\\\shortintertext{by}
\left\{\frac{j\xi_B}{\mu_B}\right\}v_0&+\frac{j}{\mu_B}v_1;&&j=0,\ldots,\mu_B-1;\label{pofBcasedrie}\\\shortintertext{by}
\left\{\frac{i\xi_C}{\mu_C}\right\}v_1&+\frac{i}{\mu_C}v_2;&&i=0,\ldots,\mu_C-1;\label{pofCcasedrie}\\\shortintertext{and by}
\left\{\frac{j\xi_2}{\mu_2}\right\}v_1&+\frac{j}{\mu_2}v_3;&&j=0,\ldots,\mu_2-1;\label{pof2casedrie}
\end{alignat}
respectively. Here $\xi_A$ denotes the unique element $\xi_A\in\verA$ such that $\xi_Av_0+v_3$ belongs to $\mu_A\Z^3$. It follows that $\xi_A$ is coprime to $\mu_A$. (Analogously for $\xi_B,\xi_C,$ and $\xi_2$.)

In exactly the same way as we did in Case~I for the points of $H_C$ (cfr.\ Sub\-sec\-tion~\ref{descpointsHCgevaleen}), we obtain that the $\mu_1=\mu_A\mu_B\fAB$ points of $H_1=H(v_0,v_1,v_3)$ are precisely
\begin{multline}\label{pof1casedrie}
\left\{\frac{i\xi_A\mu_B\fAB+j\xi_B\mu_A\fAB-k\Psi}{\mu_1}\right\}v_0+\frac{j\fAB+k}{\mu_B\fAB}v_1+\frac{i\fAB+k}{\mu_A\fAB}v_3;\\
i=0,\ldots,\mu_A-1;\quad j=0,\ldots,\mu_B-1;\quad k=0,\ldots,\fAB-1.
\end{multline}

On the other hand, we also know from Section~\ref{fundpar} that $\mu_2\mid\mu_1$ and that when $h$ runs through the elements of $H_1$, its $v_0$-coordinate $h_0$ runs precisely $\mu_2$ times through the numbers
\begin{equation*}
\frac{l\mu_2}{\mu_1};\qquad l=0,\ldots,\frac{\mu_1}{\mu_2}-1.
\end{equation*}
This implies that\footnote{First, note that in the left-hand side of the equation, the inner curly brackets denote the reduction of the argument modulo $\mu_1$ (cfr.\ Notation~\ref{notatiemodulo}), while the outer curly brackets serve as set delimiters. Secondly, recall that the maps $i\mapsto\{i\xi_A\}_{\mu_A}$ and $j\mapsto\{j\xi_B\}_{\mu_B}$ are permutations of \verA\ and \verB, respectively, so that after reordering the elements of the set, we can indeed omit the $\xi_A$ and $\xi_B$ from the equation.}
\begin{equation*}
\bigl\{\{i\mu_B\fAB+j\mu_A\fAB-k\Psi\}_{\mu_1}\bigr\}_{i,j,k=0}^{\mu_A-1,\mu_B-1,\fAB-1}=\bigl\{l\mu_2\bigr\}_{l=0}^{\mu_1/\mu_2-1}.
\end{equation*}
From this equality of sets, we easily conclude that
\begin{equation*}
\mu_2\mid\mu_A\fAB,\mu_B\fAB,\Psi,
\end{equation*}
what we already knew\footnote{The fact that $\mu_2\mid\mu_A\fAB,\mu_B\fAB$ was shown in several ways in the proof of Theorem~\ref{algfp}(v), while it follows from Proposition~\ref{multipliciteit} that $\mu_2=\gcd\left(\Psi,\bigl\lVert\begin{smallmatrix}a_1&c_1\\a_3&c_3\end{smallmatrix}\bigr\rVert,\bigl\lVert\begin{smallmatrix}b_1&c_1\\b_3&c_3\end{smallmatrix}\bigr\rVert\right)$.}, but also that $\mu_2$ can be written as a linear combination with integer coefficients of $\mu_A\fAB,\mu_B\fAB,$ and $\Psi$. Hence
\begin{equation*}
\mu_2=\gcd(\mu_A\fAB,\mu_B\fAB,\Psi).
\end{equation*}

Recall from \eqref{vi1c3} that
\begin{equation*}
\Psi v_0-\mu_A v_1-\mu_B v_3=(0,0,\mu_1).
\end{equation*}
If we put $\gamma=\gcd(\mu_A\fAB,\Psi)$, we have $\gamma\mid\mu_1$, and therefore it follows that
\begin{equation*}
\mu_B\fAB v_3=\Psi\fAB v_0-\mu_A\fAB v_1-(0,0,\fAB\mu_1)\in\gamma\Z^3.
\end{equation*}
The primitivity of $v_3$ now implies that $\gamma\mid\mu_B\fAB$, and thus we obtain that
\begin{align*}
\mu_2&=\gcd(\mu_A\fAB,\mu_B\fAB,\Psi)\\
&=\gcd(\mu_A\fAB,\Psi)\\
&=\gcd(\mu_B\fAB,\Psi),
\end{align*}
the last equality due to the symmetry of the argument above.

Finally, let us denote
\begin{equation}\label{consnotc3}
\begin{gathered}
\frac{\mu_A\fAB}{\mu_2}=\frac{\mu_1}{\mu_B\mu_2}=\fBtwee\in\Zplusnul,\qquad
\frac{\mu_B\fAB}{\mu_2}=\frac{\mu_1}{\mu_A\mu_2}=\fAtwee\in\Zplusnul,\\
\text{and}\qquad\frac{\Psi}{\mu_2}=\psi\in\Zplusnul,
\end{gathered}
\end{equation}
resulting in
\begin{equation*}
\mu_1=\mu_A\mu_B\fAB=\mu_A\mu_2\fAtwee=\mu_B\mu_2\fBtwee.
\end{equation*}

\subsection{Investigation of the $\Sigma_{\bullet}$ and the $\Sigma_{\bullet}'$, except for $\Sigma_1'$, $\Sigma_3$}
\subsubsection{The sum $\Sigma_B$}\label{sssSigmaBc3}
Because in this case $\tau_0$ and $\tau_1$ both contribute to $s_0$, and therefore $p^{w\cdot v_0}=p^{w\cdot v_1}=1$, the term $\Sigma_B$ plays a special role. By \eqref{pofBcasedrie} and the fact that $p^{a(w\cdot v_0)}=p^{\{a\}(w\cdot v_0)}$ for every $a\in\R$, we have
\begin{equation*}
\Sigma_B=\sum_{h\in H_B}p^{w\cdot h}=\sum_{j=0}^{\mu_B-1}\Bigl(p^{\frac{\xi_B(w\cdot v_0)+w\cdot v_1}{\mu_B}}\Bigr)^j=\sum_{j=0}^{\mu_B-1}\Bigl(p^{w\cdot h^{\ast}}\Bigr)^j,
\end{equation*}
with
\begin{equation*}
h^{\ast}=\frac{\xi_B}{\mu_B}v_0+\frac{1}{\mu_B}v_1\in\Z^3,
\end{equation*}
a generating element of $H_B$ (if $\mu_B>1$).

Unlike, e.g., $p^{(\xi_A(w\cdot v_0)+w\cdot v_3)/\mu_A}$, appearing in Formula~\eqref{formSigAalgc3} for $\Sigma_A$ below, the $\mu_B$th root of unity $p^{w\cdot h^{\ast}}=p^{(\xi_B(w\cdot v_0)+w\cdot v_1)/\mu_B}$ may equal one, but may as well differ from one. We need to distinguish between these two cases. As
\begin{equation*}
s_0=-\frac{\sigma_0}{m_0}+\frac{2n\pi i}{\gcd(m_0,m_1)\log p}=-\frac{\sigma_1}{m_1}+\frac{2n\pi i}{\gcd(m_0,m_1)\log p}
\end{equation*}
for some $n\in\Z$ and hence
\begin{equation*}
p^{w\cdot h^{\ast}}=p^{\sigma(h^{\ast})+m(h^{\ast})s_0}=\exp\frac{2nm(h^{\ast})\pi i}{\gcd(m_0,m_1)},
\end{equation*}
we see that $p^{w\cdot h^{\ast}}=1$ if and only if
\begin{equation*}
n^{\ast}=\frac{\gcd(m_0,m_1)}{\gcd(m_0,m_1,m(h^{\ast}))}\mid n.
\end{equation*}

In this way we find
\begin{equation}\label{formSigBc3}
\Sigma_B=\sum_{j=0}^{\mu_B-1}\Bigl(p^{w\cdot h^{\ast}}\Bigr)^j=
\begin{cases}
{\displaystyle\sum\nolimits_j1=\mu_B,}&\text{if $n^{\ast}\mid n$;}\\[+2ex]
{\displaystyle\frac{\left(p^{w\cdot h^{\ast}}\right)^{\mu_B}-1}{p^{w\cdot h^{\ast}}-1}=0,}&\text{otherwise.}
\end{cases}
\end{equation}

Let us next look at $\Sigma_B'$.

\subsubsection{The sum $\Sigma_B'$}\label{sssSigmaBaccentc3}
As we know, the $\mu_B$ points of $H_B$ are given by
\begin{equation*}
\left\{\frac{j\xi_B}{\mu_B}\right\}v_0+\frac{j}{\mu_B}v_1;\qquad j=0,\ldots,\mu_B-1;
\end{equation*}
but if $\xi_B'$ denotes the unique element $\xi_B'\in\verB$ such that $\xi_B\xi_B'\equiv1\mod\mu_B$, they are as well given by
\begin{equation*}
\frac{j}{\mu_B}v_0+\left\{\frac{j\xi_B'}{\mu_B}\right\}v_1;\qquad j=0,\ldots,\mu_B-1.
\end{equation*}

Recall that we introduced $\Sigma_B'$ as
\begin{align*}
\Sigma_B'&=\Sigma(\DB)'(s_0)=\Sigma(\DBD)'(s_0)\\
&=\dds{\sum_{h\in H_B}p^{\sigma(h)+m(h)s}}\\
&=(\log p)\sum_{h\in H_B}m(h)p^{\sigma(h)+m(h)s_0}\\
&=(\log p)\sum_hm(h)p^{w\cdot h}.
\end{align*}
Hence if we write $h=h_0v_0+h_1v_1$ for $h\in H_B=H(v_0,v_1)$, we find
\begin{align*}
\frac{\Sigma_B'}{\log p}&=m_0\sum_{h\in H_B}h_0p^{h_0(w\cdot v_0)+h_1(w\cdot v_1)}
+m_1\sum_{h\in H_B}h_1p^{h_0(w\cdot v_0)+h_1(w\cdot v_1)}\\
&=\frac{m_0}{\mu_B}\sum_{j=0}^{\mu_B-1}j\Bigl(p^{\frac{w\cdot v_0+\xi_B'(w\cdot v_1)}{\mu_B}}\Bigr)^j
+\frac{m_1}{\mu_B}\sum_{j=0}^{\mu_B-1}j\Bigl(p^{\frac{\xi_B(w\cdot v_0)+w\cdot v_1}{\mu_B}}\Bigr)^j.
\end{align*}

As
\begin{equation}\label{teentottander}
p^{\frac{w\cdot v_0+\xi_B'(w\cdot v_1)}{\mu_B}}=\Bigl(p^{\frac{\xi_B(w\cdot v_0)+w\cdot v_1}{\mu_B}}\Bigr)^{\xi_B'}\quad\text{and}\quad\ p^{\frac{\xi_B(w\cdot v_0)+w\cdot v_1}{\mu_B}}=\Bigl(p^{\frac{w\cdot v_0+\xi_B'(w\cdot v_1)}{\mu_B}}\Bigr)^{\xi_B},
\end{equation}
the numbers $p^{(\xi_B(w\cdot v_0)+w\cdot v_1)/\mu_B}$ and $p^{(w\cdot v_0+\xi_B'(w\cdot v_1))/\mu_B}$ are either both one (if $n^{\ast}\mid n$) or both different from one (if $n^{\ast}\nmid n$). We obtain
\begin{equation*}
\frac{\Sigma_B'}{\log p}=
\begin{cases}
{\displaystyle\frac{m_0+m_1}{\mu_B}\sum_{j=0}^{\mu_B-1}j=\frac{(m_0+m_1)(\mu_B-1)}{2},}&\text{if $n^{\ast}\mid n$;}\\[+2.5ex]
{\displaystyle\frac{m_0}{p^{\frac{w\cdot v_0+\xi_B'(w\cdot v_1)}{\mu_B}}-1}+
\frac{m_1}{p^{\frac{\xi_B(w\cdot v_0)+w\cdot v_1}{\mu_B}}-1},}&\text{otherwise.}
\end{cases}
\end{equation*}

\subsubsection{The sums $\Sigma_A$, $\Sigma_C$, and $\Sigma_2$}\label{sssSASCS2c3}
From (\ref{pofAcasedrie}, \ref{pofCcasedrie}, \ref{pof2casedrie}) and the fact that $p^{w\cdot v_0}=p^{w\cdot v_1}=1$, we obtain in the same way as in Case~I that
\begin{alignat}{4}
\Sigma_A&=\sum_{h\in H_A}p^{w\cdot h}
&&=\sum_{i=0}^{\mu_A-1}&&\Bigl(p^{\frac{\xi_A(w\cdot v_0)+w\cdot v_3}{\mu_A}}\Bigr)^i
&&=\frac{F_3}{p^{\frac{\xi_A(w\cdot v_0)+w\cdot v_3}{\mu_A}}-1},\label{formSigAalgc3}\\
\Sigma_C&=\sum_{h\in H_C}p^{w\cdot h}
&&=\sum_{i=0}^{\mu_C-1}&&\Bigl(p^{\frac{\xi_C(w\cdot v_1)+w\cdot v_2}{\mu_C}}\Bigr)^i
&&=\frac{F_2}{p^{\frac{\xi_C(w\cdot v_1)+w\cdot v_2}{\mu_C}}-1},\qquad\text{and}\notag\\
\Sigma_2&=\sum_{h\in H_2}p^{w\cdot h}
&&=\sum_{j=0}^{\mu_2-1}&&\Bigl(p^{\frac{\xi_2(w\cdot v_1)+w\cdot v_3}{\mu_2}}\Bigr)^j
&&=\frac{F_3}{p^{\frac{\xi_2(w\cdot v_1)+w\cdot v_3}{\mu_2}}-1}.\notag
\end{alignat}

In Case~I we also observed that $\xi_Av_0+v_3\in\mu_A\Z^3$, $\xi_Bv_0+v_1\in\mu_B\Z^3$, and \eqref{vi1c3} give rise to $(\xi_A\mu_B+\xi_B\mu_A+\Psi)v_0\in\mu_A\mu_B\Z^3$ and hence to
\begin{equation}\label{inZetidABc3}
\frac{\xi_A\mu_B+\xi_B\mu_A+\Psi}{\mu_A\mu_B}\in\Z.
\end{equation}
Using \eqref{dpi1c3} it follows that
\begin{multline}\label{sigasigbidcasedrie}
p^{\frac{\xi_A(w\cdot v_0)+w\cdot v_3}{\mu_A}}p^{\frac{\xi_B(w\cdot v_0)+w\cdot v_1}{\mu_B}}
=p^{\frac{(\xi_A\mu_B+\xi_B\mu_A)(w\cdot v_0)+\mu_A(w\cdot v_1)+\mu_B(w\cdot v_3)}{\mu_A\mu_B}}\\
=p^{\frac{-\Psi(w\cdot v_0)+\mu_A(w\cdot v_1)+\mu_B(w\cdot v_3)}{\mu_A\mu_B}}=p^{\fAB(-s_0-1)}.
\end{multline}
Analogously, $v_0+\xi_B'v_1\in\mu_B\Z^3$, $\xi_Cv_1+v_2\in\mu_C\Z^3$, \eqref{vi2c3}, and \eqref{dpi2c3} yield
\begin{multline}\label{sigcsigbidcasedrie}
p^{\frac{w\cdot v_0+\xi_B'(w\cdot v_1)}{\mu_B}}p^{\frac{\xi_C(w\cdot v_1)+w\cdot v_2}{\mu_C}}
=p^{\frac{\mu_C(w\cdot v_0)+(\xi_B'\mu_C+\xi_C\mu_B)(w\cdot v_1)+\mu_B(w\cdot v_2)}{\mu_B\mu_C}}\\
=p^{\frac{\mu_C(w\cdot v_0)-\Omega(w\cdot v_1)+\mu_B(w\cdot v_2)}{\mu_B\mu_C}}=p^{\fBC(-s_0-1)},
\end{multline}
while $v_0+\xi_B'v_1\in\mu_B\Z^3$, $\xi_2v_1+v_3\in\mu_2\Z^3$, \eqref{vi1c3}, \eqref{dpi1c3}, and \eqref{consnotc3} lead to
\begin{multline}\label{sig2sigbidcasedrie}
\Bigl(p^{\frac{w\cdot v_0+\xi_B'(w\cdot v_1)}{\mu_B}}\Bigr)^{-\psi}p^{\frac{\xi_2(w\cdot v_1)+w\cdot v_3}{\mu_2}}
=p^{\frac{-\Psi(w\cdot v_0)+(-\xi_B'\Psi+\xi_2\mu_B)(w\cdot v_1)+\mu_B(w\cdot v_3)}{\mu_B\mu_2}}\\
=p^{\frac{-\Psi(w\cdot v_0)+\mu_A(w\cdot v_1)+\mu_B(w\cdot v_3)}{\mu_B\mu_2}}=p^{\fBtwee(-s_0-1)}.
\end{multline}

Consequently, if $n^{\ast}\mid n$ and hence $p^{(\xi_B(w\cdot v_0)+w\cdot v_1)/\mu_B}=p^{(w\cdot v_0+\xi_B'(w\cdot v_1))/\mu_B}=1$, one has that
\begin{equation}\label{fSASCS2ifnsdnc3}
\begin{gathered}
\Sigma_A=\frac{F_3}{p^{\fAB(-s_0-1)}-1},\qquad\quad\ \qquad
\Sigma_C=\frac{F_2}{p^{\fBC(-s_0-1)}-1},\\
\text{and}\qquad\Sigma_2=\frac{F_3}{p^{\fBtwee(-s_0-1)}-1}.\qquad\text{\phantom{and}}
\end{gathered}
\end{equation}

\subsubsection{The sum $\Sigma_1$}\label{parberSig1c3}
If for $h\in H_1=H(v_0,v_1,v_3)$, we denote by $(h_0,h_1,h_3)$ the coordinates of $h$ with respect to the basis $(v_0,v_1,v_3)$, then by \eqref{pof1casedrie} and $p^{w\cdot v_0}=1$ we have that
\begin{align*}
\Sigma_1&=\sum_{h\in H_1}p^{w\cdot h}\\
&=\sum_hp^{h_0(w\cdot v_0)+h_1(w\cdot v_1)+h_3(w\cdot v_3)}\\
&=\sum_{i=0}^{\mu_A-1}\sum_{j=0}^{\mu_B-1}\sum_{k=0}^{\fAB-1}p^{\frac{i\xi_A\mu_B\fAB+j\xi_B\mu_A\fAB-k\Psi}{\mu_1}(w\cdot v_0)+\frac{j\fAB+k}{\mu_B\fAB}(w\cdot v_1)+\frac{i\fAB+k}{\mu_A\fAB}(w\cdot v_3)}\\
&=\sum_i\Bigl(p^{\frac{\xi_A(w\cdot v_0)+w\cdot v_3}{\mu_A}}\Bigr)^i\sum_j\Bigl(p^{\frac{\xi_B(w\cdot v_0)+w\cdot v_1}{\mu_B}}\Bigr)^j\sum_k\Bigl(p^{\frac{-\Psi(w\cdot v_0)+\mu_A(w\cdot v_1)+\mu_B(w\cdot v_3)}{\mu_1}}\Bigr)^k\\
&=\Sigma_A\Sigma_B\,\frac{p^{\fAB(-s_0-1)}-1}{p^{-s_0-1}-1},
\end{align*}
where in the last step we again used \eqref{dpi1c3}. It now follows from \eqref{formSigBc3} and \eqref{fSASCS2ifnsdnc3} that
\begin{equation*}
\Sigma_1=
\begin{cases}
{\displaystyle\frac{\mu_BF_3}{p^{-s_0-1}-1},}&\text{if $n^{\ast}\mid n$;}\\[+2ex]
{\displaystyle0,}&\text{otherwise.}
\end{cases}
\end{equation*}

\subsection{Proof of $R_2=0$ and a new formula for $R_1$}
If we fill in the formulas for $\Sigma_B$ and $\Sigma_1$ in Formula~\eqref{followingnewformulaRtweecasedrie} for $R_2$, we obtain
\begin{align*}
R_2&=\left(\frac{p-1}{p}\right)^3\left(\frac{\Sigma_B}{1-p^{-s_0-1}}+\frac{\Sigma_1}{F_3}\right)\\
&=\left(\frac{p-1}{p}\right)^3\left(\frac{\mu_B}{1-p^{-s_0-1}}+\frac{\mu_B}{p^{-s_0-1}-1}\right)\\
&=0
\end{align*}
in the case that $n^{\ast}\mid n$ and clearly the same result in the other case as well.

Let us check how much progress we made on $R_1$. First of all, denote by $R_1'$ the third factor in Formula~\eqref{followingnewformulaReencasedrie} for $R_1$; i.e., put
\begin{equation}
R_1=(\log p)\left(\frac{p-1}{p}\right)^3R_1'.
\end{equation}
Obviously, we want to prove that $R_1'=0$. Secondly, let us from now on denote $p^{-s_0-1}$ by $q$.

If we then fill in the formulas for $\Sigma_A,\Sigma_B,\Sigma_C,\Sigma_1,\Sigma_2,$ and $\Sigma_B'$ obtained above in the formula for $R_1'$, we find that in the case $n^{\ast}\mid n$, the \lq residue\rq\ $R_1'$ equals
\begin{multline}\label{lastformulaforReenaccent}
R_1'=\frac{1}{1-q}\left(\frac{m_0}{1-q^{-\fBC}}+\frac{m_1}{1-q^{-\fAB}}+\frac{\mu_B}{1-q^{-1}}+\frac{m_3\mu_B(F_3+1)}{F_3}\right.\\+\left.\frac{(m_0+m_1)(\mu_B-1)}{2}\right)+\frac{\Sigma_1'}{(\log p)F_3}+\frac{m_0}{q^{\fBtwee}-1}+\frac{m_0\Sigma_3}{F_2F_3}.
\end{multline}
In the complementary case we have
\begin{align}
R_1'&=\frac{m_1}{1-q}\Biggl(\frac{1}{p^{\frac{\xi_A(w\cdot v_0)+w\cdot v_3}{\mu_A}}-1}+\frac{1}{p^{\frac{\xi_B(w\cdot v_0)+w\cdot v_1}{\mu_B}}-1}+1\Biggr)\notag\\
&\qquad\qquad\quad\ \ +\frac{m_0}{1-q}\Biggl(\frac{1}{p^{\frac{w\cdot v_0+\xi_B'(w\cdot v_1)}{\mu_B}}-1}+\frac{1}{p^{\frac{\xi_C(w\cdot v_1)+w\cdot v_2}{\mu_C}}-1}+1\Biggr)\notag\\
&\qquad\qquad\qquad\qquad\qquad\quad\,+\frac{\Sigma_1'}{(\log p)F_3}+\frac{m_0}{p^{\frac{\xi_2(w\cdot v_1)+w\cdot v_3}{\mu_2}}-1}+\frac{m_0\Sigma_3}{F_2F_3},\notag\\
\shortintertext{and by (\ref{sigasigbidcasedrie}--\ref{sigcsigbidcasedrie}) this is,}
R_1'&=\frac{m_1}{1-q}\frac{q^{\fAB}-1}{\Bigl(p^{\frac{\xi_A(w\cdot v_0)+w\cdot v_3}{\mu_A}}-1\Bigr)\Bigl(p^{\frac{\xi_B(w\cdot v_0)+w\cdot v_1}{\mu_B}}-1\Bigr)}\notag\\
&\qquad\qquad\quad\ \ +\frac{m_0}{1-q}\frac{q^{\fBC}-1}{\Bigl(p^{\frac{w\cdot v_0+\xi_B'(w\cdot v_1)}{\mu_B}}-1\Bigr)\Bigl(p^{\frac{\xi_C(w\cdot v_1)+w\cdot v_2}{\mu_C}}-1\Bigr)}\label{lastformulaforReenaccentdn}\\
&\qquad\qquad\qquad\qquad\qquad\quad\,+\frac{\Sigma_1'}{(\log p)F_3}+\frac{m_0}{p^{\frac{\xi_2(w\cdot v_1)+w\cdot v_3}{\mu_2}}-1}+\frac{m_0\Sigma_3}{F_2F_3}.\notag
\end{align}

\subsection{Study of $\Sigma_1'$}\label{studysigmaeenacc3}
The term $\Sigma_1'$ was defined as
\begin{align*}
\Sigma_1'&=\Sigma(\delta_1)'(s_0)\\
&=\dds{\sum_{h\in H_1}p^{\sigma(h)+m(h)s}}\\
&=(\log p)\sum_hm(h)p^{\sigma(h)+m(h)s_0}\\
&=(\log p)\sum_hm(h)p^{w\cdot h}.
\end{align*}
Writing $h=h_0v_0+h_1v_1+h_3v_3$ for $h\in H_1=H(v_0,v_1,v_3)$, we have that
\begin{align}
\frac{\Sigma_1'}{\log p}&=\sum_{h\in H_1}(h_0m_0+h_1m_1+h_3m_3)p^{w\cdot h}\notag\\
&=m_0\Sigma_1^{(0)}+m_1\Sigma_1^{(1)}+m_3\Sigma_1^{(3)},\label{formuleSigmaeenaccent}\\\shortintertext{with}
\Sigma_1^{(i)}&=\sum_{h\in H_1}h_ip^{w\cdot h};\qquad i=0,1,3.\notag
\end{align}

We will now calculate $\Sigma_1^{(1)},\Sigma_1^{(3)},$ and $\Sigma_1^{(0)}$.

\subsubsection{The sum $\Sigma_1^{(1)}$}
With the notation $q=p^{-s_0-1}$, Identity~\eqref{dpi1c3} yields
\begin{equation*}
p^{\frac{-\Psi(w\cdot v_0)+\mu_A(w\cdot v_1)+\mu_B(w\cdot v_3)}{\mu_1}}=p^{-s_0-1}=q.
\end{equation*}
Hence, based on the description \eqref{pof1casedrie} of the points of $H_1$ and proceeding as in Paragraph~\ref{parberSig1c3}, we obtain
\begin{align}
\Sigma_1^{(1)}&=\sum_{h\in H_1}h_1p^{w\cdot h}\notag\\
&=\sum_{i=0}^{\mu_A-1}\sum_{j=0}^{\mu_B-1}\sum_{k=0}^{\fAB-1}\frac{j\fAB+k}{\mu_B\fAB}\Bigl(p^{\frac{\xi_A(w\cdot v_0)+w\cdot v_3}{\mu_A}}\Bigr)^i\Bigl(p^{\frac{\xi_B(w\cdot v_0)+w\cdot v_1}{\mu_B}}\Bigr)^jq^k\notag\\
&=\frac{\Sigma_A}{\mu_B}\Biggl[\sum_jj\Bigl(p^{\frac{\xi_B(w\cdot v_0)+w\cdot v_1}{\mu_B}}\Bigr)^j\sum_kq^k+\frac{1}{\fAB}\sum_j\Bigl(p^{\frac{\xi_B(w\cdot v_0)+w\cdot v_1}{\mu_B}}\Bigr)^j\sum_kkq^k\Biggr].\label{termofS11c3}
\end{align}

For $n^{\ast}\mid n$, by Formula~\eqref{fSASCS2ifnsdnc3} for $\Sigma_A$, we then find
\begin{align}
\Sigma_1^{(1)}&=\frac{F_3}{\mu_B(q^{\fAB}-1)}\biggl(\frac{\mu_B(\mu_B-1)}{2}\frac{q^{\fAB}-1}{q-1}\notag\\
&\qquad\qquad\qquad\quad\ \ +\frac{1}{\fAB}\,\mu_B\,\frac{q^{\fAB}(\fAB q-\fAB-q)+q}{(q-1)^2}\biggr)\notag\\
&=\frac{F_3}{1-q}\left(-\frac{\mu_B-1}{2}-\frac{1}{1-q^{-\fAB}}+\frac{1}{\fAB(1-q^{-1})}\right),\label{forSig11deelt}\\\intertext{while in the complementary case, we conclude}
\Sigma_1^{(1)}&=\frac{F_3}{\mu_B\Bigl(p^{\frac{\xi_A(w\cdot v_0)+w\cdot v_3}{\mu_A}}-1\Bigr)}\frac{\mu_B}{p^{\frac{\xi_B(w\cdot v_0)+w\cdot v_1}{\mu_B}}-1}\frac{q^{\fAB}-1}{q-1}\notag\\
&=\frac{F_3}{q-1}\frac{q^{\fAB}-1}{\Bigl(p^{\frac{\xi_A(w\cdot v_0)+w\cdot v_3}{\mu_A}}-1\Bigr)\Bigl(p^{\frac{\xi_B(w\cdot v_0)+w\cdot v_1}{\mu_B}}-1\Bigr)}.\label{forSig11deeltnt}
\end{align}
Note that if $n^{\ast}\nmid n$, the second term of \eqref{termofS11c3} vanishes, as the sum over $j$ equals zero in this case.

\subsubsection{The sum $\Sigma_1^{(3)}$}
Similarly, $\Sigma_1^{(3)}$ is given by
\begin{align*}
\Sigma_1^{(3)}&=\sum_{h\in H_1}h_3p^{w\cdot h}\\
&=\sum_{i=0}^{\mu_A-1}\sum_{j=0}^{\mu_B-1}\sum_{k=0}^{\fAB-1}\frac{i\fAB+k}{\mu_A\fAB}\Bigl(p^{\frac{\xi_A(w\cdot v_0)+w\cdot v_3}{\mu_A}}\Bigr)^i\Bigl(p^{\frac{\xi_B(w\cdot v_0)+w\cdot v_1}{\mu_B}}\Bigr)^jq^k.
\end{align*}

If $n^{\ast}\nmid n$, the sum over $j$ vanishes and $\Sigma_1^{(3)}=0$. In the other case, one has
\begin{equation*}
p^{\frac{\xi_B(w\cdot v_0)+w\cdot v_1}{\mu_B}}=1\quad\ \ \text{and subsequently}\quad\ \ p^{\frac{\xi_A(w\cdot v_0)+w\cdot v_3}{\mu_A}}=p^{\fAB(-s_0-1)}=q^{\fAB},
\end{equation*}
leading to
\begin{align}
\Sigma_1^{(3)}&=\frac{\mu_B}{\mu_A\fAB}\sum_{i=0}^{\mu_A-1}\sum_{k=0}^{\fAB-1}(i\fAB+k)q^{i\fAB+k}\notag\\
&=\frac{\mu_B}{\mu_A\fAB}\sum_{l=0}^{\mu_A\fAB-1}lq^l\notag\\
&=\frac{\mu_B}{\mu_A\fAB}\frac{q^{\mu_A\fAB}(\mu_A\fAB q-\mu_A\fAB-q)+q}{(q-1)^2}\notag\\
&=\frac{\mu_B}{1-q}\left(-(F_3+1)+\frac{F_3}{\mu_A\fAB(1-q^{-1})}\right).\label{formSig13deelt}
\end{align}
Note that $q^{\mu_A\fAB}=p^{\xi_A(w\cdot v_0)+w\cdot v_3}=p^{w\cdot v_3}=F_3+1$ in this case.

Let us now look at $\Sigma_1^{(0)}$.

\subsubsection{The sum $\Sigma_1^{(0)}$}
Still based on \eqref{pof1casedrie}, this time we ought to consider the following sum:
\begin{align*}
\Sigma_1^{(0)}&=\sum_{h\in H_1}h_0p^{w\cdot h}\\*
&=\sum_{i=0}^{\mu_A-1}\sum_{j=0}^{\mu_B-1}\sum_{k=0}^{\fAB-1}\left\{\frac{i\xi_A\mu_B\fAB+j\xi_B\mu_A\fAB-k\Psi}{\mu_1}\right\}\cdot\\*
&\quad\qquad\qquad\qquad\qquad\qquad\qquad\Bigl(p^{\frac{\xi_A(w\cdot v_0)+w\cdot v_3}{\mu_A}}\Bigr)^i
\Bigl(p^{\frac{\xi_B(w\cdot v_0)+w\cdot v_1}{\mu_B}}\Bigr)^jq^k.
\end{align*}
If we put
\begin{equation}\label{defjnulc3}
j_0=\left\lfloor\frac{i\xi_A\mu_B\fAB-k\Psi}{\mu_A\fAB}\right\rfloor,
\end{equation}
we can write this sum as
\begin{align}
\Sigma_1^{(0)}&=\frac{1}{\mu_B}\sum_{i=0}^{\mu_A-1}\sum_{k=0}^{\fAB-1}\Bigl(p^{\frac{\xi_A(w\cdot v_0)+w\cdot v_3}{\mu_A}}\Bigr)^iq^kS(i,k),\qquad\text{with}\label{sumoverjref}\\
S(i,k)&=\sum_{j=0}^{\mu_B-1}\left(\left\{\frac{i\xi_A\mu_B\fAB-k\Psi}{\mu_A\fAB}\right\}+\{j_0+j\xi_B\}_{\mu_B}\right)\Bigl(p^{\frac{\xi_B(w\cdot v_0)+w\cdot v_1}{\mu_B}}\Bigr)^j.\notag
\end{align}

Since $\gcd(\xi_B,\mu_B)=1$, the map
\begin{align*}
&\verB\to\verB:j\mapsto\{j_0+j\xi_B\}_{\mu_B}\\
\intertext{is a permutation, and with $\xi_B'$ as before the unique element $\xi_B'\in\verB$ such that $\xi_B\xi_B'\equiv1\bmod\mu_B$, the inverse permutation is given by}
&\verB\to\verB:j\mapsto\{(j-j_0)\xi_B'\}_{\mu_B}.
\end{align*}
Therefore, after reordering the terms, the sum $S(i,k)$ can be written as
\begin{align}
S(i,k)&=\sum_{j=0}^{\mu_B-1}\left(\left\{\frac{i\xi_A\mu_B\fAB-k\Psi}{\mu_A\fAB}\right\}+j\right)\Bigl(p^{\frac{\xi_B(w\cdot v_0)+w\cdot v_1}{\mu_B}}\Bigr)^{\{(j-j_0)\xi_B'\}_{\mu_B}}\label{sumoverjex1}\\
&=\Bigl(p^{\frac{w\cdot v_0+\xi_B'(w\cdot v_1)}{\mu_B}}\Bigr)^{-j_0}\sum_j\left(\left\{\frac{i\xi_A\mu_B\fAB-k\Psi}{\mu_A\fAB}\right\}+j\right)\Bigl(p^{\frac{w\cdot v_0+\xi_B'(w\cdot v_1)}{\mu_B}}\Bigr)^j.\label{sumoverjex2}
\end{align}
Indeed, as $p^{(\xi_B(w\cdot v_0)+w\cdot v_1)/\mu_B}$ is a $\mu_B$th root of unity, one may omit the curly brackets $\{\cdot\}_{\mu_B}$ in the exponent in \eqref{sumoverjex1}. Expression~\eqref{sumoverjex2} is then obtained by \eqref{teentottander} and the fact that $j_0$ is independent of $j$.

It is now a good time to make a case distinction between $n^{\ast}\mid n$ and $n^{\ast}\nmid n$.

\paragraph{Case $n^{\ast}\mid n$}
Since in this case
\begin{equation*}
p^{\frac{\xi_A(w\cdot v_0)+w\cdot v_3}{\mu_A}}=p^{\fAB(-s_0-1)}=q^{\fAB}\qquad\text{and}\qquad p^{\frac{w\cdot v_0+\xi_B'(w\cdot v_1)}{\mu_B}}=1,
\end{equation*}
Equations~\eqref{sumoverjref} and \eqref{sumoverjex2} give rise to
\begin{align}
\Sigma_1^{(0)}&=\frac{1}{\mu_B}\sum_{i=0}^{\mu_A-1}\sum_{k=0}^{\fAB-1}q^{i\fAB+k}
\sum_{j=0}^{\mu_B-1}\left(\left\{\frac{i\xi_A\mu_B\fAB-k\Psi}{\mu_A\fAB}\right\}+j\right)\notag\\
&=\frac{\mu_B-1}{2}\sum_{l=0}^{\mu_A\fAB-1}q^l+\sum_i\sum_k\left\{\frac{i\xi_A\mu_B\fAB-k\Psi}{\mu_A\fAB}\right\}q^{i\fAB+k}\label{defTtwee}\\
&=-\frac{\mu_B-1}{2}\frac{F_3}{1-q}+T_2.\label{formuleSigmaeennul}
\end{align}

The second term of \eqref{defTtwee}, which we temporarily denote by $T_2$, can be further simplified as follows. Either directly from $\xi_Av_0+v_3\in\mu_A\Z^3$ and \eqref{vi1c3}, or as a corollary of \eqref{inZetidABc3}, we have that
\begin{equation*}
\frac{\xi_A\mu_B+\Psi}{\mu_A}\in\Z.
\end{equation*}
This makes that we can replace $\xi_A\mu_B$ by $-\Psi$ in the second term $T_2$ of \eqref{defTtwee}:
\begin{align*}
T_2&=\sum_{i=0}^{\mu_A-1}\sum_{k=0}^{\fAB-1}\left\{\frac{i\xi_A\mu_B\fAB-k\Psi}{\mu_A\fAB}\right\}q^{i\fAB+k}\\
&=\sum_i\sum_k\left\{\frac{i(-\Psi)\fAB-k\Psi}{\mu_A\fAB}\right\}q^{i\fAB+k}\\
&=\sum_i\sum_k\left\{\frac{-(i\fAB+k)\Psi}{\mu_A\fAB}\right\}q^{i\fAB+k}\\
&=\sum_{l=0}^{\mu_A\fAB-1}\left\{\frac{-l\Psi}{\mu_A\fAB}\right\}q^l.
\end{align*}

In Subsection~\ref{pointsofandaddrel} we showed that $\gcd(\mu_A\fAB,\Psi)=\mu_2$. Therefore, if we recall that $\Psi=\psi\mu_2$ and $\mu_A\fAB=\mu_2\fBtwee$, we can write the fraction $-\Psi/\mu_A\fAB$ in lowest terms and continue:
\begin{align*}
T_2&=\sum_{l=0}^{\mu_A\fAB-1}\left\{\frac{-l\psi}{\fBtwee}\right\}q^l\\
&=\sum_{\iota=0}^{\mu_2-1}\sum_{\kappa=0}^{\fBtwee-1}\left\{\frac{-(\iota\fBtwee+\kappa)\psi}{\fBtwee}\right\}q^{\iota\fBtwee+\kappa}\\
&=\sum_{\iota}\left(q^{\fBtwee}\right)^{\iota}\sum_{\kappa}\left\{\frac{-\kappa\psi}{\fBtwee}\right\}q^{\kappa}\\
&=\frac{F_3}{q^{\fBtwee}-1}\sum_{\kappa}\left\{\frac{-\kappa\barpsi}{\fBtwee}\right\}q^{\kappa},
\end{align*}
where $\barpsi=\{\psi\}_{\fBtwee}$ denotes the reduction of $\psi$ modulo $\fBtwee$. Obviously, we have $\barpsi\in\{0,\ldots,\fBtwee-1\}$ and still $\gcd(\barpsi,\fBtwee)=1$. Note also that $\barpsi=0$ if and only if $\fBtwee=1$, and that if $\fBtwee=1$, then $T_2=0$. In what follows, we study $T_2$ under the assumption that $\fBtwee>1$.

For any real number $a$, one has that $\{-a\}=0$ if $a\in\Z$, and $\{-a\}=1-\{a\}$ otherwise. Since \barpsi\ and \fBtwee\ are coprime, the only $\kappa\in\{0,\ldots,\fBtwee-1\}$ for which
\begin{equation*}
\frac{\kappa\barpsi}{\fBtwee}\in\Z,
\end{equation*}
is $\kappa=0$. Consequently,
\begin{equation*}
T_2=\frac{F_3}{q^{\fBtwee}-1}\left(\sum_{\kappa=0}^{\fBtwee-1}\left(1-\left\{\frac{\kappa\barpsi}{\fBtwee}\right\}\right)q^{\kappa}-1\right),
\end{equation*}
and since $\{a\}=a-\lfloor a\rfloor$ for any $a\in\R$, we obtain that $T_2$ equals
\begin{align*}
&\,\frac{F_3}{q^{\fBtwee}-1}\left(\sum_{\kappa}q^{\kappa}-\frac{\barpsi}{\fBtwee}\sum_{\kappa}\kappa q^{\kappa}+\sum_{\kappa}\left\lfloor\frac{\kappa\barpsi}{\fBtwee}\right\rfloor q^{\kappa}-1\right)\\
&=\frac{F_3}{q^{\fBtwee}-1}\left(\frac{q^{\fBtwee}-1}{q-1}-\frac{\barpsi}{\fBtwee}\frac{q^{\fBtwee}(\fBtwee(q-1)-q)+q}{(q-1)^2}+\sum_{\kappa}\left\lfloor\frac{\kappa\barpsi}{\fBtwee}\right\rfloor q^{\kappa}-1\right)\\
&=\frac{F_3}{1-q}\left(\frac{\barpsi}{1-q^{-\fBtwee}}-\frac{\barpsi}{\fBtwee(1-q^{-1})}-1\right)+\frac{F_3}{q^{\fBtwee}-1}\left(\sum_{\kappa}\left\lfloor\frac{\kappa\barpsi}{\fBtwee}\right\rfloor q^{\kappa}-1\right).
\end{align*}

As we assume that $\fBtwee>1$, we have $\barpsi\in\{1,\ldots,\fBtwee-1\}$. Hence the finite sequence
\begin{equation}\label{finseqkappa}
\left(\left\lfloor\frac{\kappa\barpsi}{\fBtwee}\right\rfloor\right)_{\kappa=0}^{\fBtwee-1}
\end{equation}
of non-negative integers ascends from $0$ to $\barpsi-1$ with steps of zero or one. If we denote
\begin{equation*}
\kappa_{\rho}=\min\left\{\kappa\in\Zplus\;\middle\vert\;\left\lfloor\frac{\kappa\barpsi}{\fBtwee}\right\rfloor=\rho\right\}=\left\lceil\frac{\rho\fBtwee}{\barpsi}\right\rceil;\qquad\rho=0,\ldots,\barpsi;
\end{equation*}
then
\begin{equation*}
0=\kappa_0<\kappa_1<\cdots<\kappa_{\barpsi-1}<\kappa_{\barpsi}=\fBtwee,
\end{equation*}
and $\kappa_1,\ldots,\kappa_{\barpsi-1}$ are the indices where a \lq jump\rq\ in the sequence \eqref{finseqkappa} takes place.

Let us express
\begin{equation*}
\sum_{\kappa}\left\lfloor\frac{\kappa\barpsi}{\fBtwee}\right\rfloor q^{\kappa}
\end{equation*}
in terms of these numbers. We have
\begin{align*}
\left(\sum_{\kappa=0}^{\fBtwee-1}\left\lfloor\frac{\kappa\barpsi}{\fBtwee}\right\rfloor q^{\kappa}\right)(1-q)&=\sum_{\kappa=1}^{\fBtwee-1}\left(\left\lfloor\frac{\kappa\barpsi}{\fBtwee}\right\rfloor-\left\lfloor\frac{(\kappa-1)\barpsi}{\fBtwee}\right\rfloor\right)q^{\kappa}-(\barpsi-1)q^{\fBtwee}\\
&=\sum_{\rho=1}^{\barpsi-1}q^{\kappa_{\rho}}-(\barpsi-1)q^{\fBtwee}\\
&=\sum_{\rho=1}^{\barpsi}q^{\kappa_{\rho}}-\barpsi q^{\fBtwee},
\end{align*}
and therefore,
\begin{equation}\label{eindformuleTtwee}
T_2=\frac{F_3}{1-q}\Biggl(\frac{1}{q^{\fBtwee}-1}\sum_{\rho=1}^{\barpsi}q^{\kappa_{\rho}}-\frac{\barpsi}{\fBtwee(1-q^{-1})}-1\Biggr)-\frac{F_3}{q^{\fBtwee}-1}.
\end{equation}
If we agree that an empty sum equals zero, then the above formula stays valid for $\fBtwee=1$.

\paragraph{Case $n^{\ast}\nmid n$}
With Equations~\eqref{sumoverjref} and \eqref{sumoverjex2} as a starting point, we now calculate $\Sigma_1^{(0)}$ in the complementary case. First of all, as $p^{(w\cdot v_0+\xi_B'(w\cdot v_1))/\mu_B}$ is now a $\mu_B$th root of unity different from one, one has that
\begin{equation*}
\sum_{j=0}^{\mu_B-1}\Bigl(p^{\frac{w\cdot v_0+\xi_B'(w\cdot v_1)}{\mu_B}}\Bigr)^j=0,
\end{equation*}
and Expression~\eqref{sumoverjex2} simplifies to
\begin{align*}
S(i,k)&=\Bigl(p^{\frac{w\cdot v_0+\xi_B'(w\cdot v_1)}{\mu_B}}\Bigr)^{-j_0}\sum_{j=0}^{\mu_B-1}j\Bigl(p^{\frac{w\cdot v_0+\xi_B'(w\cdot v_1)}{\mu_B}}\Bigr)^j\\
&=\frac{\mu_B}{p^{\frac{w\cdot v_0+\xi_B'(w\cdot v_1)}{\mu_B}}-1}\Bigl(p^{\frac{w\cdot v_0+\xi_B'(w\cdot v_1)}{\mu_B}}\Bigr)^{-j_0}.
\end{align*}

Next, let us recall from \eqref{defjnulc3} and \eqref{inZetidABc3} that
\begin{equation*}
j_0=\left\lfloor\frac{i\xi_A\mu_B\fAB-k\Psi}{\mu_A\fAB}\right\rfloor\qquad\text{and}\qquad\frac{\xi_A\mu_B+\xi_B\mu_A+\Psi}{\mu_A}\in\mu_B\Z.
\end{equation*}
We observe that
\begin{equation*}
-j_0\equiv-\left\lfloor\frac{i(-\xi_B\mu_A-\Psi)\fAB-k\Psi}{\mu_A\fAB}\right\rfloor=i\xi_B-\left\lfloor\frac{-(i\fAB+k)\Psi}{\mu_A\fAB}\right\rfloor\mod\mu_B,
\end{equation*}
which gives rise to
\begin{equation}\label{sumoverjex3}
S(i,k)=\frac{\mu_B}{p^{\frac{w\cdot v_0+\xi_B'(w\cdot v_1)}{\mu_B}}-1}\Bigl(p^{\frac{\xi_B(w\cdot v_0)+w\cdot v_1}{\mu_B}}\Bigr)^i\Bigl(p^{\frac{w\cdot v_0+\xi_B'(w\cdot v_1)}{\mu_B}}\Bigr)^{-\left\lfloor\frac{-(i\fAB+k)\Psi}{\mu_A\fAB}\right\rfloor}.
\end{equation}

Finally, in Paragraph~\ref{sssSASCS2c3} we obtained
\begin{equation*}
p^{\frac{\xi_A(w\cdot v_0)+w\cdot v_3}{\mu_A}}p^{\frac{\xi_B(w\cdot v_0)+w\cdot v_1}{\mu_B}}=p^{\fAB(-s_0-1)}=q^{\fAB};
\end{equation*}
using this identity, Formulas~\eqref{sumoverjref} and \eqref{sumoverjex3} for $\Sigma_1^{(0)}$ and $S(i,k)$ eventually yield
\begin{equation*}
\Sigma_1^{(0)}=\frac{1}{p^{\frac{w\cdot v_0+\xi_B'(w\cdot v_1)}{\mu_B}}-1}\sum_{i=0}^{\mu_A-1}\sum_{k=0}^{\fAB-1}q^{i\fAB+k}\Bigl(p^{\frac{w\cdot v_0+\xi_B'(w\cdot v_1)}{\mu_B}}\Bigr)^{-\left\lfloor\frac{-(i\fAB+k)\Psi}{\mu_A\fAB}\right\rfloor}.
\end{equation*}

Proceeding as in Case $n^{\ast}\mid n$, we write the double sum $DS$ in the expression above as
\begin{align*}
DS&=\sum_{l=0}^{\mu_A\fAB-1}q^l\Bigl(p^{\frac{w\cdot v_0+\xi_B'(w\cdot v_1)}{\mu_B}}\Bigr)^{-\left\lfloor\frac{-l\psi}{\fBtwee}\right\rfloor}\\
&=\sum_{\iota=0}^{\mu_2-1}\sum_{\kappa=0}^{\fBtwee-1}q^{\iota\fBtwee+\kappa}\Bigl(p^{\frac{w\cdot v_0+\xi_B'(w\cdot v_1)}{\mu_B}}\Bigr)^{-\left\lfloor\frac{-(\iota\fBtwee+\kappa)\psi}{\fBtwee}\right\rfloor}\\
&=\sum_{\iota}\left[q^{\fBtwee}\Bigl(p^{\frac{w\cdot v_0+\xi_B'(w\cdot v_1)}{\mu_B}}\Bigr)^{\psi}\right]^{\iota}\sum_{\kappa}q^{\kappa}\Bigl(p^{\frac{w\cdot v_0+\xi_B'(w\cdot v_1)}{\mu_B}}\Bigr)^{-\left\lfloor\frac{-\kappa\psi}{\fBtwee}\right\rfloor}.\\
\intertext{By \eqref{sig2sigbidcasedrie} and the fact that $\kappa\psi/\fBtwee\notin\Z$ for $\kappa\in\{1,\ldots,\fBtwee-1\}$,\footnotemark\ we then have}
DS&=\sum_{\iota}\Bigl(p^{\frac{\xi_2(w\cdot v_1)+w\cdot v_3}{\mu_2}}\Bigr)^{\iota}
\Biggl[\sum_{\kappa}q^{\kappa}\Bigl(p^{\frac{w\cdot v_0+\xi_B'(w\cdot v_1)}{\mu_B}}\Bigr)^{\left\lfloor\frac{\kappa\psi}{\fBtwee}\right\rfloor+1}-p^{\frac{w\cdot v_0+\xi_B'(w\cdot v_1)}{\mu_B}}+1\Biggr],
\end{align*}
\footnotetext{Recall that $\psi$ and $\fBtwee$ are coprime.}
and hence we conclude
\begin{align}
\Sigma_1^{(0)}&=\frac{p^{\frac{w\cdot v_0+\xi_B'(w\cdot v_1)}{\mu_B}}\,F_3}{\Bigl(p^{\frac{w\cdot v_0+\xi_B'(w\cdot v_1)}{\mu_B}}-1\Bigr)\Bigl(p^{\frac{\xi_2(w\cdot v_1)+w\cdot v_3}{\mu_2}}-1\Bigr)}\sum_{\kappa=0}^{\fBtwee-1}q^{\kappa}\Bigl(p^{\frac{w\cdot v_0+\xi_B'(w\cdot v_1)}{\mu_B}}\Bigr)^{\left\lfloor\frac{\kappa\psi}{\fBtwee}\right\rfloor}\notag\\
&\quad\,-\frac{F_3}{p^{\frac{\xi_2(w\cdot v_1)+w\cdot v_3}{\mu_2}}-1}.\label{formSig10dnc3}
\end{align}

\subsubsection{A formula for $\Sigma_1'$}
Bringing together Equations (\ref{formuleSigmaeenaccent}, \ref{forSig11deelt}, \ref{formSig13deelt}, \ref{formuleSigmaeennul}, and \ref{eindformuleTtwee}) for $\Sigma_1'$, $\Sigma_1^{(0)}$, $\Sigma_1^{(1)}$, $\Sigma_1^{(3)}$, and $T_2$, we find the following formula in case that $n^{\ast}\mid n$:
\begin{align}
\frac{\Sigma_1'}{(\log p)F_3}&=\frac{m_0\Sigma_1^{(0)}+m_1\Sigma_1^{(1)}+m_3\Sigma_1^{(3)}}{F_3}\notag\\
&=\frac{m_0}{1-q}\Biggl(-\frac{\mu_B-1}{2}+\frac{1}{q^{\fBtwee}-1}\sum_{\rho=1}^{\barpsi}q^{\kappa_{\rho}}-\frac{\barpsi}{\fBtwee(1-q^{-1})}-1\Biggr)\notag\\
&\quad-\frac{m_0}{q^{\fBtwee}-1}+\frac{m_1}{1-q}\left(-\frac{\mu_B-1}{2}-\frac{1}{1-q^{-\fAB}}+\frac{1}{\fAB(1-q^{-1})}\right)\notag\\
&\quad+\frac{m_3\mu_B}{1-q}\left(-\frac{F_3+1}{F_3}+\frac{1}{\mu_A\fAB(1-q^{-1})}\right)\notag\\
&=\frac{1}{1-q}\Biggl(\frac{m_0}{q^{\fBtwee}-1}\sum_{\rho=1}^{\barpsi}q^{\kappa_{\rho}}-\frac{(m_0+m_1)(\mu_B-1)}{2}-\frac{m_1}{1-q^{-\fAB}}\label{eindformulesigmaeenaccent}\\*
&\quad\,+\frac{m_1\mu_A+m_3\mu_B-m_0\barpsi\mu_2}{\mu_A\fAB(1-q^{-1})}-\frac{m_3\mu_B(F_3+1)}{F_3}-m_0\Biggr)-\frac{m_0}{q^{\fBtwee}-1};\notag
\end{align}
if on the contrary $n^{\ast}\nmid n$, then Equations (\ref{formuleSigmaeenaccent}, \ref{forSig11deeltnt}, and \ref{formSig10dnc3}) yield
\begin{gather}
\frac{\Sigma_1'}{(\log p)F_3}=\frac{m_0\Sigma_1^{(0)}+m_1\Sigma_1^{(1)}+m_3\Sigma_1^{(3)}}{F_3}\notag\\
\begin{aligned}
&=\frac{m_0p^{\frac{w\cdot v_0+\xi_B'(w\cdot v_1)}{\mu_B}}}{\Bigl(p^{\frac{w\cdot v_0+\xi_B'(w\cdot v_1)}{\mu_B}}-1\Bigr)\Bigl(p^{\frac{\xi_2(w\cdot v_1)+w\cdot v_3}{\mu_2}}-1\Bigr)}\sum_{\kappa=0}^{\fBtwee-1}q^{\kappa}\Bigl(p^{\frac{w\cdot v_0+\xi_B'(w\cdot v_1)}{\mu_B}}\Bigr)^{\left\lfloor\frac{\kappa\psi}{\fBtwee}\right\rfloor}\\
&\quad\,-\frac{m_0}{p^{\frac{\xi_2(w\cdot v_1)+w\cdot v_3}{\mu_2}}-1}+\frac{m_1}{q-1}\frac{q^{\fAB}-1}{\Bigl(p^{\frac{\xi_A(w\cdot v_0)+w\cdot v_3}{\mu_A}}-1\Bigr)\Bigl(p^{\frac{\xi_B(w\cdot v_0)+w\cdot v_1}{\mu_B}}-1\Bigr)}.
\end{aligned}\label{eindformulesigmaeenaccentdn}
\end{gather}

\subsection{An easier formula for the residue $R_1$}
\subsubsection{Case $n^{\ast}\mid n$}
If we fill in Formula~\eqref{eindformulesigmaeenaccent} in Equation~\eqref{lastformulaforReenaccent} for $R_1'$, the latter simplifies to
\begin{equation*}
\frac{1}{1-q}\Biggl(\frac{m_0}{q^{\fBtwee}-1}\sum_{\rho=1}^{\barpsi}q^{\kappa_{\rho}}+\frac{m_0}{q^{\fBC}-1}+\frac{m_1\mu_A+m_3\mu_B+\mu_1-m_0\barpsi\mu_2}{\mu_A\fAB(1-q^{-1})}\Biggr)+\frac{m_0\Sigma_3}{F_2F_3}.
\end{equation*}

There is a very convenient interpretation of $m_1\mu_A+m_3\mu_B+\mu_1$ appearing in the equation above. Recall from \eqref{vi1c3} that
\begin{equation*}
\Psi v_0-\mu_A v_1-\mu_B v_3=(0,0,\mu_1).
\end{equation*}
Making the dot product with $D=(x_D,y_D,1)$ on both sides yields
\begin{equation*}
m_0\Psi-m_1\mu_A-m_3\mu_B=\Psi(D\cdot v_0)-\mu_A(D\cdot v_1)-\mu_B(D\cdot v_3)=D\cdot(0,0,\mu_1)=\mu_1,
\end{equation*}
and hence
\begin{equation*}
m_1\mu_A+m_3\mu_B+\mu_1=m_0\Psi=m_0\psi\mu_2.
\end{equation*}
It follows that
\begin{equation*}
\frac{m_1\mu_A+m_3\mu_B+\mu_1-m_0\barpsi\mu_2}{\mu_A\fAB(1-q^{-1})}=\frac{m_0(\psi-\barpsi)}{\fBtwee(1-q^{-1})}=\frac{m_0t}{1-q^{-1}},
\end{equation*}
with
\begin{equation*}
t=\frac{\psi-\barpsi}{\fBtwee}=\frac{\psi-\{\psi\}_{\fBtwee}}{\fBtwee}=\left\lfloor\frac{\psi}{\fBtwee}\right\rfloor
\end{equation*}
the quotient of Euclidean division of $\psi$ by \fBtwee. Note that if $\fBtwee=1$, then $t=\psi$.

If we now put $R_1''=R_1'/m_0$, it remains to prove that
\begin{equation}\label{laatsteformulevrReen}
R_1''=\frac{\Sigma_3}{F_2F_3}+\frac{1}{1-q}\Biggl(\frac{1}{q^{\fBtwee}-1}\sum_{\rho=1}^{\barpsi}q^{\kappa_{\rho}}+\frac{1}{q^{\fBC}-1}+\frac{t}{1-q^{-1}}\Biggr)
\end{equation}
vanishes.

\subsubsection{Case $n^{\ast}\nmid n$}
In this case, according to \eqref{lastformulaforReenaccentdn} and \eqref{eindformulesigmaeenaccentdn}, it now comes to proving that
\begin{multline}\label{laatsteformulevrReendn}
R_1''=\frac{R_1'}{m_0}=\frac{\Sigma_3}{F_2F_3}-\frac{q^{\fBC}-1}{(q-1)\Bigl(p^{\frac{w\cdot v_0+\xi_B'(w\cdot v_1)}{\mu_B}}-1\Bigr)\Bigl(p^{\frac{\xi_C(w\cdot v_1)+w\cdot v_2}{\mu_C}}-1\Bigr)}\\
+\frac{p^{\frac{w\cdot v_0+\xi_B'(w\cdot v_1)}{\mu_B}}}{\Bigl(p^{\frac{w\cdot v_0+\xi_B'(w\cdot v_1)}{\mu_B}}-1\Bigr)\Bigl(p^{\frac{\xi_2(w\cdot v_1)+w\cdot v_3}{\mu_2}}-1\Bigr)}\sum_{\kappa=0}^{\fBtwee-1}q^{\kappa}\Bigl(p^{\frac{w\cdot v_0+\xi_B'(w\cdot v_1)}{\mu_B}}\Bigr)^{\left\lfloor\frac{\kappa\psi}{\fBtwee}\right\rfloor}
\end{multline}
equals zero.

\subsection{Investigation of $\Sigma_3$}\label{multmudrieformule}
First we try to find a useful formula for $\mu_3$.

\subsubsection{Multiplicity $\mu_3$ of $\delta_3$}
From our study in Section~\ref{fundpar}, we remember that
\begin{equation*}
\mu_C\mu_2=\#H(v_1,v_2)\#H(v_1,v_3)\mid\mu_3=\#H(v_1,v_2,v_3).
\end{equation*}
We look for more information on the factor $\fCtwee=\mu_3/\mu_C\mu_2$. Let us proceed in the same way as when interpreting $\mu_C/\mu_A\mu_B=\fAB$ in Case~I (cfr.\ Subsection~\ref{formulaformuCcaseeen}).

One has
\begin{align*}
\mu_3&=
\begin{vmatrix}
a_1&b_1&c_1\\a_2&b_2&c_2\\a_3&b_3&c_3
\end{vmatrix}\\
&=a_3
\begin{vmatrix}
b_1&c_1\\b_2&c_2
\end{vmatrix}
-b_3
\begin{vmatrix}
a_1&c_1\\a_2&c_2
\end{vmatrix}
+c_3
\begin{vmatrix}
a_1&b_1\\a_2&b_2
\end{vmatrix}\\
&=
\begin{vmatrix}
a_1&b_1\\a_2&b_2
\end{vmatrix}
(a_3\alpha_C+b_3\beta_C+c_3)\\
&=-\mu_C\left(v_3\cdot\overrightarrow{CD}\right),\\\intertext{and since $v_3$ and $\overrightarrow{AD}$ are perpendicular, we can continue:}
\mu_3&=\mu_C\left(v_3\cdot\overrightarrow{AB}+v_3\cdot\overrightarrow{BC}\right).
\end{align*}
The fact that $\overrightarrow{AB}\perp v_0$ and $\overrightarrow{BC}\perp v_1$ implies that
\begin{equation*}
\overrightarrow{AB}=\fAB(-b_0,a_0,0)\qquad\text{and}\qquad\overrightarrow{BC}=\fBC(-b_1,a_1,0).
\end{equation*}
Hence
\begin{align*}
\mu_3&=\mu_C\bigl(\fAB(a_0b_3-a_3b_0)+\fBC(a_1b_3-a_3b_1)\bigr)\\
&=\mu_C(\mu_A\fAB+\fBC\Psi)\\
&=\mu_C\mu_2\fCtwee,
\end{align*}
with
\begin{equation}\label{deffCtwee}
\fCtwee=\fBtwee+\fBC\psi.
\end{equation}
Note that \eqref{deffCtwee} and the coprimality of $\psi$ and \fBtwee\ imply $\psi\in\{0,\ldots,\fCtwee-1\}$ and $\gcd(\psi,\fCtwee)=1$.

Next, we try to list all the $\mu_3=\mu_C\mu_2\fCtwee$ points of $H_3$.

\subsubsection{Points of $H_3=H(v_1,v_2,v_3)$}\label{ssspoH3c3}
We proceed in the same way as in Case~I for the points of $H_C$. As we know, the points of $H_C=H(v_1,v_2)$ and $H_2=H(v_1,v_3)$ can be presented as
\begin{alignat*}{3}
h(i,0,0)&=\left\{\frac{i\xi_C}{\mu_C}\right\}v_1&&+\frac{i}{\mu_C}v_2;&\qquad&i=0,\ldots,\mu_C-1;\\\intertext{and}
h(0,j,0)&=\left\{\frac{j\xi_2}{\mu_2}\right\}v_1&&+\frac{j}{\mu_2}v_3;&&j=0,\ldots,\mu_2-1;
\end{alignat*}
respectively.

To generate a complete list of points of $H_3$, it is now sufficient to find a set of representatives for the \fCtwee\ cosets of the subgroup $H_C+H_2$ of $H_3$. Recall that the cosets of $H_C+H_2$ can be described as\footnote{Here $h_3$ denotes the $v_3$-coordinate of $h$. We can as well, and completely similarly, describe these cosets in terms of the $v_2$-coordinate, but the choice for $h_3$ is more convenient in this case.}
\begin{equation*}
\mathcal{C}_k=\left\{h\in H_3\;\middle\vert\;\{h_3\}_{\frac{1}{\mu_2}}=\frac{k\mu_C}{\mu_3}=\frac{k}{\mu_2\fCtwee}\right\};\qquad k=0,\ldots,\fCtwee-1.
\end{equation*}
We will follow the approach of Section~\ref{fundpar} and select for each coset $\mathcal{C}_k$, as a representative, the unique element $h(0,0,k)\in\mathcal{C}_k$ with $v_3$-coordinate $h_3(0,0,k)=k/\mu_2\fCtwee$ and $v_2$-coordinate $h_2(0,0,k)<1/\mu_C$. We find $h(0,0,1)$ as follows.

Recall from \eqref{vi3c3} that
\begin{equation*}
\Theta v_1-\Psi v_2-\mu_C v_3=(0,0,\mu_3),
\end{equation*}
with $\Theta=\begin{vsmallmatrix}a_2&b_2\\a_3&b_3\end{vsmallmatrix}>0$. It follows that
\begin{equation*}
h(0,0,1)=\left\{\frac{-\Theta}{\mu_3}\right\}v_1+\frac{\psi}{\mu_C\fCtwee}v_2+\frac{1}{\mu_2\fCtwee}v_3=\{(0,0,-1)\}\in\mathcal{C}_1
\end{equation*}
is the representative for $\mathcal{C}_1$ we are looking for. Indeed, it follows from Equation~\eqref{deffCtwee} that $h_2(0,0,1)=\psi/\mu_C\fCtwee$ is not only reduced modulo $1$, it is also already reduced modulo $1/\mu_C$.

It is now natural to find all representatives $h(0,0,k)$ by considering the \fCtwee\ multiples $\{kh(0,0,1)\}$; $k=0,\ldots,\fCtwee-1$; of $h(0,0,1)$ in the group $H_3$, and adding to each multiple $\{kh(0,0,1)\}$ the unique element of $H_C$ such that the $v_2$-coordinate $h_2(0,0,k)$ of the sum $h(0,0,k)$ is reduced modulo $1/\mu_C$:
\begin{multline*}
h(0,0,k)=\left\{\frac{-k\Theta-\lfloor k\psi/\fCtwee\rfloor\xi_C\mu_2\fCtwee}{\mu_3}\right\}v_1+\frac{\{k\psi\}_{\fCtwee}}{\mu_C\fCtwee}v_2+\frac{k}{\mu_2\fCtwee}v_3\in\mathcal{C}_k;\\k=0,\ldots,\fCtwee-1.
\end{multline*}
Note that since $\psi$ and \fCtwee\ are coprime, $\{k\psi\}_{\fCtwee}$ runs, as expected, through the numbers $0,\ldots,\fCtwee-1$ when $k$ does so.

All this leads to the following member list of $H_3$:
\begin{multline*}
\begin{aligned}
h(i,j,k)&=\{h(i,0,0)+h(0,j,0)+h(0,0,k)\}\\
&=\left\{\frac{(i-\lfloor k\psi/\fCtwee\rfloor)\xi_C\mu_2\fCtwee+j\xi_2\mu_C\fCtwee-k\Theta}{\mu_3}\right\}v_1\\
&\qquad\qquad\qquad\qquad\qquad\qquad\,+\frac{i\fCtwee+\{k\psi\}_{\fCtwee}}{\mu_C\fCtwee}v_2+\frac{j\fCtwee+k}{\mu_2\fCtwee}v_3;
\end{aligned}\\
i=0,\ldots,\mu_C-1;\quad j=0,\ldots,\mu_2-1;\quad k=0,\ldots,\fCtwee-1.
\end{multline*}

Finally, we will try to calculate $\Sigma_3$ based on the above description of $H_3$'s points.

\subsubsection{Calculation of $\Sigma_3$}\label{ssscalcsigmadriecasedrie}
Writing $h$ as $h=h_1v_1+h_2v_2+h_3v_3$ for $h\in H_3=H(v_1,v_2,v_3)$ and noting that $p^{w\cdot v_1}=1$, we find
\begin{align}
\Sigma_3&=\sum_{h\in H_3}p^{w\cdot h}\notag\\
&=\sum_{i=0}^{\mu_C-1}\sum_{j=0}^{\mu_2-1}\sum_{k=0}^{\fCtwee-1}\notag\\*
&\qquad\ p^{\frac{(i-\lfloor k\psi/\fCtwee\rfloor)\xi_C\mu_2\fCtwee+j\xi_2\mu_C\fCtwee-k\Theta}{\mu_3}(w\cdot v_1)+\frac{i\fCtwee+\{k\psi\}_{\fCtwee}}{\mu_C\fCtwee}(w\cdot v_2)+\frac{j\fCtwee+k}{\mu_2\fCtwee}(w\cdot v_3)}\notag\\
&=\sum_i\Bigl(p^{\frac{\xi_C(w\cdot v_1)+w\cdot v_2}{\mu_C}}\Bigr)^i\sum_j\Bigl(p^{\frac{\xi_2(w\cdot v_1)+w\cdot v_3}{\mu_2}}\Bigr)^j\notag\\*
&\qquad\qquad\qquad\qquad\;\;\,\sum_k\Bigl(p^{\frac{-\Theta(w\cdot v_1)+\Psi(w\cdot v_2)+\mu_C(w\cdot v_3)}{\mu_3}}\Bigr)^k\Bigl(p^{\frac{\xi_C(w\cdot v_1)+w\cdot v_2}{\mu_C}}\Bigr)^{-\left\lfloor\frac{k\psi}{\fCtwee}\right\rfloor}\notag\\
&=\frac{F_2F_3}{\Bigl(p^{\frac{\xi_C(w\cdot v_1)+w\cdot v_2}{\mu_C}}-1\Bigr)\Bigl(p^{\frac{\xi_2(w\cdot v_1)+w\cdot v_3}{\mu_2}}-1\Bigr)}\sum_{k=0}^{\fCtwee-1}q^k\Bigl(p^{\frac{\xi_C(w\cdot v_1)+w\cdot v_2}{\mu_C}}\Bigr)^{-\left\lfloor\frac{k\psi}{\fCtwee}\right\rfloor},\label{formuleSigmadriedn}
\end{align}
where in the last step we used Identity~\eqref{dpi3c3}.

In the special case that $n^{\ast}\mid n$, based on (\ref{sigcsigbidcasedrie}--\ref{fSASCS2ifnsdnc3}), we obtain the slightly simpler formula
\begin{equation}\label{formuleSigmadrie}
\Sigma_3=\frac{F_2F_3}{(q^{\fBC}-1)(q^{\fBtwee}-1)}\sum_{k=0}^{\fCtwee-1}q^{k-\fBC\left\lfloor\frac{k\psi}{\fCtwee}\right\rfloor}.
\end{equation}

\subsection{Proof that the residue $R_1$ equals zero}\label{ssfinalsscasedrie}
\subsubsection{Case $n^{\ast}\mid n$}
According to Formula~\eqref{laatsteformulevrReen} for $R_1''$ and Formula~\eqref{formuleSigmadrie} for $\Sigma_3$, it now suffices to prove that
\begin{equation}\label{finalcheckcas3}
\sum_{k=0}^{\fCtwee-1}q^{k-\fBC\left\lfloor\frac{k\psi}{\fCtwee}\right\rfloor}=\frac{q^{\fBC}-1}{q-1}\sum_{\rho=1}^{\barpsi}q^{\kappa_{\rho}}+\frac{q^{\fBtwee}-1}{q-1}\left(tq\frac{q^{\fBC}-1}{q-1}+1\right).
\end{equation}
Let us do this now.

First of all, if $\fBtwee=1$, then by \eqref{deffCtwee} we have $\fCtwee=\fBC\psi+1$, and
\begin{align*}
\sum_{k=0}^{\fCtwee-1}q^{k-\fBC\left\lfloor\frac{k\psi}{\fCtwee}\right\rfloor}&=1+\sum_{k=1}^{\psi\fBC}q^{k-\fBC\left\lfloor\frac{k\psi}{\fCtwee}\right\rfloor}\\
&=1+\sum_{r=0}^{\psi-1}\sum_{l=1}^{\fBC}q^{(r\fBC+l)-\fBC\left\lfloor\frac{(r\fBC+l)\psi}{\fBC\psi+1}\right\rfloor}\\
&=1+\sum_r\sum_lq^l\\
&=\psi q\frac{q^{\fBC}-1}{q-1}+1,
\end{align*}
which agrees with \eqref{finalcheckcas3} for $\fBtwee=1$.

In what follows, we shall assume that $\fBtwee>1$ and thus that $\barpsi>0$. Since $\psi\in\{1,\ldots,\fCtwee-1\}$, the finite sequence
\begin{equation}\label{finseqkkk}
\left(\left\lfloor\frac{k\psi}{\fCtwee}\right\rfloor\right)_{k=0}^{\fCtwee-1}
\end{equation}
of non-negative integers ascends from $0$ to $\psi-1$ with steps of zero or one. Let us denote
\begin{equation*}
k_r=\min\left\{k\in\Zplus\;\middle\vert\;\left\lfloor\frac{k\psi}{\fCtwee}\right\rfloor=r\right\}=\left\lceil\frac{r\fCtwee}{\psi}\right\rceil;\qquad r=0,\ldots,\psi.
\end{equation*}
Then
\begin{equation*}
0=k_0<k_1<\cdots<k_{\psi-1}<k_{\psi}=\fCtwee,
\end{equation*}
and obviously,
\begin{equation*}
\sum_{k=0}^{\fCtwee-1}q^{k-\fBC\left\lfloor\frac{k\psi}{\fCtwee}\right\rfloor}=\sum_{r=0}^{\psi-1}\sum_{k=k_r}^{k_{r+1}-1}q^{k-\fBC r}.
\end{equation*}

We recall that
\begin{equation}\label{efferecallen}
\fCtwee=\fBtwee+\fBC\psi\qquad\text{and}\qquad\psi=t\fBtwee+\barpsi,
\end{equation}
with $t\in\Zplus$ and $\barpsi=\{\psi\}_{\fBtwee}\in\{1,\ldots,\fBtwee-1\}$. Remember also that for $\rho\in\{0,\ldots,\barpsi\}$, the number $\kappa_{\rho}$ denotes the smallest integer satisfying $\kappa_{\rho}\barpsi\geqslant\rho\fBtwee$.

Let us first verify \eqref{finalcheckcas3} for $t=0$. In this case we have that $\psi=\barpsi$, and hence
\begin{equation*}
k_{\rho}=\left\lceil\frac{\rho\fCtwee}{\barpsi}\right\rceil=\left\lceil\frac{\rho(\fBtwee+\fBC\barpsi)}{\barpsi}\right\rceil=\fBC\rho+\left\lceil\frac{\rho\fBtwee}{\barpsi}\right\rceil=\fBC\rho+\kappa_{\rho},
\end{equation*}
for all $\rho\in\{0,\ldots,\barpsi\}$. It follows that
\begin{align*}
\sum_{k=0}^{\fCtwee-1}q^{k-\fBC\left\lfloor\frac{k\psi}{\fCtwee}\right\rfloor}&=\sum_{\rho=0}^{\barpsi-1}\sum_{k=k_{\rho}}^{k_{\rho+1}-1}q^{k-\fBC\rho}\\
&=\sum_{\rho=0}^{\barpsi-1}\sum_{k=\fBC\rho+\kappa_{\rho}}^{\fBC(\rho+1)+\kappa_{\rho+1}-1}q^{k-\fBC\rho}\\
&=\sum_{\rho=0}^{\barpsi-1}\sum_{\kappa=\kappa_{\rho}}^{\fBC+\kappa_{\rho+1}-1}q^{\kappa}\\
&=\sum_{\rho=0}^{\barpsi-1}\Biggl(\sum_{\kappa=\kappa_{\rho}}^{\kappa_{\rho+1}-1}q^{\kappa}+q^{\kappa_{\rho+1}}\sum_{l=0}^{\fBC-1}q^l\Biggr)\\
&=\sum_{\kappa=0}^{\fBtwee-1}q^{\kappa}+\frac{q^{\fBC}-1}{q-1}\sum_{\rho=0}^{\barpsi-1}q^{\kappa_{\rho+1}}\\
&=\frac{q^{\fBC}-1}{q-1}\sum_{\rho=1}^{\barpsi}q^{\kappa_{\rho}}+\frac{q^{\fBtwee}-1}{q-1},
\end{align*}
which agrees with \eqref{finalcheckcas3} for $t=0$.

Let us from now on assume that $t>0$. In the lemma below we express $k_r$ and $k_{r+1}-1$ explicitly as a function of $r$ after writing $r$ in a special form, but first we introduce the following notation.

\begin{notation}[Iverson's convention]\label{iverson} Cfr.\ \cite{knuth92}.
For any proposition $P$ we shall denote by
\begin{equation*}
[P]=
\begin{cases}
1,&\text{if $P$ is true;}\\
0,&\text{otherwise;}
\end{cases}
\end{equation*}
the truth value of $P$.
\end{notation}

\begin{lemma}\label{lemmaspecialevorm}
Assume that $t>0$. Then the map
\begin{equation}\label{mapspecialevorm}
\begin{multlined}[.89\textwidth]
\bigl\{(\rho,\kappa,\lambda)\in\Zplus^3\mid\\
0\leqslant\rho\leqslant\barpsi-1,\ \,\kappa_{\rho}\leqslant\kappa\leqslant\kappa_{\rho+1}-1,\ \,0\leqslant\lambda\leqslant t-[\kappa<\kappa_{\rho+1}-1]\bigr\}\\
\to\{0,\ldots,\psi-1\}:(\rho,\kappa,\lambda)\mapsto r=\kappa t+\rho+\lambda
\end{multlined}
\end{equation}
is bijective, and for $r=\kappa t+\rho+\lambda\in\{0,\ldots,\psi-1\}$ written in this way, we have that
\begin{align*}
k_r&=\fBC r+\kappa+[\lambda>0]\qquad\text{and}\\
k_{r+1}-1&=\fBC(r+1)+\kappa.
\end{align*}
\end{lemma}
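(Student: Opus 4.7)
The plan is to reduce both asserted identities to a single claim about the ceiling $\lceil r\fBtwee/\psi\rceil$. Because $\fCtwee=\fBtwee+\fBC\psi$, one has $k_r=\lceil r\fCtwee/\psi\rceil=\fBC r+\lceil r\fBtwee/\psi\rceil$, so the two desired formulas for $k_r$ and $k_{r+1}-1$ are equivalent to
\begin{equation*}
\lceil r\fBtwee/\psi\rceil=\kappa+[\lambda>0]\qquad\text{and}\qquad\lceil(r+1)\fBtwee/\psi\rceil=\kappa+1.
\end{equation*}

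Next, using $\psi=t\fBtwee+\barpsi$, I would write, for $r=\kappa t+\rho+\lambda$,
\begin{equation*}
r\fBtwee-\kappa\psi=(\rho+\lambda)\fBtwee-\kappa\barpsi,
\end{equation*}
thereby reducing the two identities to the bracketings
\begin{equation*}
(\rho+\lambda)\fBtwee-\kappa\barpsi\in(-\psi,0]\text{ if }\lambda=0,\ \ (0,\psi]\text{ if }\lambda>0,\quad(\rho+\lambda+1)\fBtwee-\kappa\barpsi\in(0,\psi]
\end{equation*}
inside a half-open interval of length $\psi$ around the origin. I would deduce these from the two inequalities immediately available from $\kappa_\rho=\lceil\rho\fBtwee/\barpsi\rceil$, namely $\kappa\barpsi\geq\kappa_\rho\barpsi\geq\rho\fBtwee$ (since $\kappa\geq\kappa_\rho$) and $\kappa\barpsi\leq(\kappa_{\rho+1}-1)\barpsi<(\rho+1)\fBtwee$ (since $\kappa\leq\kappa_{\rho+1}-1$), combined with the bound $\lambda\leq t$ and the identity $t\fBtwee=\psi-\barpsi$. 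A brief case distinction on $\lambda=0$ versus $\lambda>0$ then covers the first identity, and one more case distinction on whether $\lambda+1\leq t$ or $\lambda+1=t+1$ covers the second.

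For bijectivity, I would order the triples lexicographically by $\rho$, then by $\kappa$, then by $\lambda$, and check that successive admissible triples map to consecutive integers $r=0,1,2,\ldots$: inside a block $(\rho,\kappa)$ with $\kappa<\kappa_{\rho+1}-1$, incrementing $\lambda$ by $1$ increments $r$ by $1$; at the end of such a block, passing to $(\rho,\kappa+1,0)$ also adds $1$ to $r$; and at the end of the block $(\rho,\kappa_{\rho+1}-1)$, with $\lambda=t$, passing to $(\rho+1,\kappa_{\rho+1},0)$ once more adds $1$. Combined with the cardinality count $\sum_{\rho=0}^{\barpsi-1}\bigl(t(\kappa_{\rho+1}-\kappa_\rho)+1\bigr)=t\fBtwee+\barpsi=\psi$, this forces the map to be a bijection onto $\{0,\ldots,\psi-1\}$.

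The main obstacle is the sharp boundary case $\kappa=\kappa_{\rho+1}-1$, $\lambda=t$: there the required upper bound $(\rho+\lambda+1)\fBtwee-\kappa\barpsi\leq\psi$ collapses exactly to $(\rho+1)\fBtwee\leq\kappa_{\rho+1}\barpsi$, which is the defining property of $\kappa_{\rho+1}$ and has no slack at all, in contrast to every other case where the inequalities are strict by at least $\barpsi$ or $\fBtwee$. This tightness is precisely what forces the upper bound on $\lambda$ to be enlarged from $t-1$ to $t$ for this one value of $\kappa$, which is why the Iverson bracket $[\kappa<\kappa_{\rho+1}-1]$ appears in the statement; ensuring the bookkeeping is clean near this corner is where most of the effort will go.
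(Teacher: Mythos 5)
Your plan is correct and, at bottom, follows the same approach as the paper, with a somewhat cleaner organization. The reduction $k_r=\lceil r\fCtwee/\psi\rceil=\fBC r+\lceil r\fBtwee/\psi\rceil$ followed by the substitution $r\fBtwee-\kappa\psi=(\rho+\lambda)\fBtwee-\kappa\barpsi$ is exactly the paper's core manipulation, written in ceiling rather than two-sided-inequality form; the paper derives the equivalent chain $(\kappa-[\lambda=0])\psi<r\fBtwee\leq(\kappa+[\lambda>0])\psi$ from the same two facts $\kappa_\rho\barpsi\geq\rho\fBtwee$ and $(\kappa_{\rho+1}-1)\barpsi<(\rho+1)\fBtwee$ and the bound $\lambda\leq t$. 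Your identification of the corner case $\kappa=\kappa_{\rho+1}-1$, $\lambda=t$ as the only place where the upper bound $(\rho+\lambda+1)\fBtwee-\kappa\barpsi\leq\psi$ is tight is accurate; this is also what forces the paper to split off $r=\psi-1$ separately, which you avoid. The two genuine departures are small: you prove the $k_{r+1}-1$ formula directly from the bracketing rather than by invoking the already-established $k_r$ formula at $r+1$ and doing a sub-case analysis, and you establish bijectivity by checking that lexicographically consecutive triples map to consecutive integers, whereas the paper constructs the preimage of an arbitrary $r$ and then appeals to the cardinality count for injectivity. Both of your variants are valid and arguably slightly tidier; neither introduces a gap.
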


We will prove this lemma shortly. If we accept it now, we obtain\footnote{Note that $r=\kappa t+\rho+\lambda$ in the second line.}
\begin{align*}
\sum_{k=0}^{\fCtwee-1}q^{k-\fBC\left\lfloor\frac{k\psi}{\fCtwee}\right\rfloor}&=\sum_{r=0}^{\psi-1}\sum_{k=k_r}^{k_{r+1}-1}q^{k-\fBC r}\\
&=\sum_{\rho=0}^{\barpsi-1}\sum_{\kappa=\kappa_{\rho}}^{\kappa_{\rho+1}-1}\sum_{\lambda=0}^{t-[\kappa<\kappa_{\rho+1}-1]}\sum_{k=\fBC r+\kappa+[\lambda>0]}^{\fBC(r+1)+\kappa}q^{k-\fBC r}\\
&=\sum_{\rho=0}^{\barpsi-1}\sum_{\kappa=\kappa_{\rho}}^{\kappa_{\rho+1}-1}q^{\kappa}\sum_{\lambda=0}^{t-[\kappa<\kappa_{\rho+1}-1]}\sum_{l=[\lambda>0]}^{\fBC}q^l\\
&=\sum_{\rho=0}^{\barpsi-1}\sum_{\kappa=\kappa_{\rho}}^{\kappa_{\rho+1}-1}q^{\kappa}\sum_{\lambda=0}^{t-[\kappa<\kappa_{\rho+1}-1]}\left(q\frac{q^{\fBC}-1}{q-1}+[\lambda=0]\right)\\
&=\sum_{\rho=0}^{\barpsi-1}\sum_{\kappa=\kappa_{\rho}}^{\kappa_{\rho+1}-1}q^{\kappa}\left(\bigl(t+[\kappa=\kappa_{\rho+1}-1]\bigr)q\frac{q^{\fBC}-1}{q-1}+1\right)\\
&=\sum_{\rho=0}^{\barpsi-1}\Biggl[\left(tq\frac{q^{\fBC}-1}{q-1}+1\right)\sum_{\kappa=\kappa_{\rho}}^{\kappa_{\rho+1}-1}q^{\kappa}+q^{\kappa_{\rho+1}}\frac{q^{\fBC}-1}{q-1}\Biggr]\\
&=\left(tq\frac{q^{\fBC}-1}{q-1}+1\right)\sum_{\kappa=0}^{\fBtwee-1}q^{\kappa}+\frac{q^{\fBC}-1}{q-1}\sum_{\rho=0}^{\barpsi-1}q^{\kappa_{\rho+1}}\\
&=\frac{q^{\fBC}-1}{q-1}\sum_{\rho=1}^{\barpsi}q^{\kappa_{\rho}}+\frac{q^{\fBtwee}-1}{q-1}\left(tq\frac{q^{\fBC}-1}{q-1}+1\right),
\end{align*}
which agrees with \eqref{finalcheckcas3}.

We conclude the proof of $R_1=0$ in Case $n^{\ast}\mid n$ by verifying Lemma~\ref{lemmaspecialevorm}. Since for all $(\rho,\kappa,\lambda)$ in the domain, we have that
\begin{equation*}
0\leqslant\kappa t+\rho+\lambda\leqslant(\kappa_{\barpsi}-1)t+(\barpsi-1)+t=t\fBtwee+\barpsi-1=\psi-1,
\end{equation*}
the map \eqref{mapspecialevorm} is well-defined.

We check that the map is onto. Let $r\in\{0,\ldots,\psi-1\}$. Because the finite sequence
\begin{equation*}
\bigl(\kappa_{\rho}t+\rho\bigr)_{\rho=0}^{\barpsi}
\end{equation*}
of non-negative integers strictly ascends from $\kappa_0t+0=0$ to $\kappa_{\barpsi}t+\barpsi=\psi$, there exists a (unique) $\rho\in\{0,\ldots,\barpsi-1\}$ such that
\begin{equation*}
\kappa_{\rho}t+\rho\leqslant r<\kappa_{\rho+1}t+(\rho+1).
\end{equation*}
If $r=\kappa_{\rho+1}t+\rho$, we can write $r$ as $r=(\kappa_{\rho+1}-1)t+\rho+t$, and $r$ is the image of $(\rho,\kappa_{\rho+1}-1,t)$ under the map \eqref{mapspecialevorm}. Otherwise we have that
\begin{equation*}
\kappa_{\rho}t\leqslant r-\rho<\kappa_{\rho+1}t,
\end{equation*}
and we can write $r-\rho$ (in a unique way) as $r-\rho=\kappa t+\lambda$ with $\kappa,\lambda\in\Z;$ $\kappa_{\rho}\leqslant\kappa\leqslant\kappa_{\rho+1}-1$; and $0\leqslant\lambda\leqslant t-1$. In this case $r=\kappa t+\rho+\lambda$ is the image of $(\rho,\kappa,\lambda)$ under the map \eqref{mapspecialevorm}. This proves surjectivity.

The uniqueness of the representation $r=\kappa t+\rho+\lambda$ can either be checked directly, or by verifying that the cardinality of the domain,
\begin{align*}
\sum_{\rho=0}^{\barpsi-1}\sum_{\kappa=\kappa_{\rho}}^{\kappa_{\rho+1}-1}\sum_{\lambda=0}^{ t-[\kappa<\kappa_{\rho+1}-1]}1
&=\sum_{\rho=0}^{\barpsi-1}\sum_{\kappa=\kappa_{\rho}}^{\kappa_{\rho+1}-1}\bigl(t+[\kappa=\kappa_{\rho+1}-1]\bigr)\\
&=\sum_{\kappa=0}^{\fBtwee-1}t+\sum_{\rho=0}^{\barpsi-1}1\\
&=t\fBtwee+\barpsi\\
&=\psi,
\end{align*}
indeed equals the cardinality of the codomain $\{0,\ldots,\psi-1\}$.

Let $r=\kappa t+\rho+\lambda\in\{0,\ldots,\psi-1\}$, written in the appropriate way. We prove the expression for $k_r$ stated in the lemma. On the one hand, because $\kappa\geqslant\kappa_{\rho}$, it holds that $\kappa\barpsi\geqslant\kappa_{\rho}\barpsi\geqslant\rho\fBtwee$, and since $\lambda\leqslant t$, we have
\begin{equation*}
(\rho+\lambda)\fBtwee\leqslant\kappa\barpsi+[\lambda>0](t\fBtwee+\barpsi).
\end{equation*}
On the other hand, since we assume $t>0$, it follows from $\kappa\leqslant\kappa_{\rho+1}-1$ that
\begin{equation*}
\kappa\barpsi\leqslant(\kappa_{\rho+1}-1)\barpsi<(\rho+1)\fBtwee\leqslant(\rho+\lambda)\fBtwee+[\lambda=0](t\fBtwee+\barpsi).
\end{equation*}

Hence
\begin{equation*}
\kappa\barpsi-[\lambda=0](t\fBtwee+\barpsi)<(\rho+\lambda)\fBtwee\leqslant\kappa\barpsi+[\lambda>0](t\fBtwee+\barpsi).
\end{equation*}
Adding $\kappa t\fBtwee$ in all sides of the equation, we get
\begin{equation*}
(\kappa-[\lambda=0])(t\fBtwee+\barpsi)<(\kappa t+\rho+\lambda)\fBtwee\leqslant(\kappa+[\lambda>0])(t\fBtwee+\barpsi).
\end{equation*}
If we apply \eqref{efferecallen} and the representation of $r$, we obtain
\begin{equation*}
(\kappa-[\lambda=0])\psi<r\fBtwee\leqslant(\kappa+[\lambda>0])\psi,
\end{equation*}
and after adding $\fBC r\psi$, we have
\begin{equation*}
(\fBC r+\kappa-[\lambda=0])\psi<r(\fBtwee+\fBC\psi)\leqslant(\fBC r+\kappa+[\lambda>0])\psi.
\end{equation*}
Using Formula~\eqref{efferecallen} for $\fCtwee$, we eventually obtain
\begin{equation*}
(\fBC r+\kappa+[\lambda>0]-1)\psi<r\fCtwee\leqslant(\fBC r+\kappa+[\lambda>0])\psi,
\end{equation*}
which proves that
\begin{equation}\label{formulekaer}
k_r=\fBC r+\kappa+[\lambda>0].
\end{equation}

Finally, let us verify the expression for $k_{r+1}-1$. If $r=\psi-1$, then
\begin{gather*}
r=(\kappa_{\barpsi}-1)t+(\barpsi-1)+t\qquad\text{and}\\
\begin{multlined}[.95\textwidth]
k_{r+1}-1=k_{\psi}-1=\fCtwee-1=\fBtwee+\fBC\psi-1\\
=\fBC\psi+(\kappa_{\barpsi}-1)=\fBC(r+1)+\kappa.
\end{multlined}
\end{gather*}
Otherwise $r+1\leqslant\psi-1$ and we can use \eqref{formulekaer} to find $k_{r+1}$. First suppose that $\lambda<t-[\kappa<\kappa_{\rho+1}-1]$. Then we have
\begin{gather*}
r+1=\kappa t+\rho+(\lambda+1)\qquad\text{and}\\
k_{r+1}-1=\fBC(r+1)+\kappa+[\lambda+1>0]-1=\fBC(r+1)+\kappa.
\end{gather*}
If on the contrary $\lambda=t-[\kappa<\kappa_{\rho+1}-1]$, we have
\begin{gather*}
r+1=(\kappa+1)t+(\rho+[\kappa=\kappa_{\rho+1}-1])+0\qquad\text{and again}\\
k_{r+1}-1=\fBC(r+1)+(\kappa+1)+[0>0]-1=\fBC(r+1)+\kappa.
\end{gather*}

This ends the proof of the lemma and concludes Case $n^{\ast}\mid n$.

\subsubsection{Case $n^{\ast}\nmid n$}
By Equations~\eqref{laatsteformulevrReendn} and \eqref{formuleSigmadriedn} for $R_1''$ and $\Sigma_3$, proving $R_1''=0$ boils down to verifying that
\begin{multline*}
\Bigl(p^{\frac{w\cdot v_0+\xi_B'(w\cdot v_1)}{\mu_B}}-1\Bigr)\sum_{k=0}^{\fCtwee-1}q^k\Bigl(p^{\frac{\xi_C(w\cdot v_1)+w\cdot v_2}{\mu_C}}\Bigr)^{-\left\lfloor\frac{k\psi}{\fCtwee}\right\rfloor}\\
+p^{\frac{w\cdot v_0+\xi_B'(w\cdot v_1)}{\mu_B}}\Bigl(p^{\frac{\xi_C(w\cdot v_1)+w\cdot v_2}{\mu_C}}-1\Bigr)\sum_{\kappa=0}^{\fBtwee-1}q^{\kappa}\Bigl(p^{\frac{w\cdot v_0+\xi_B'(w\cdot v_1)}{\mu_B}}\Bigr)^{\left\lfloor\frac{\kappa\psi}{\fBtwee}\right\rfloor}\\
=\Bigl(p^{\frac{\xi_2(w\cdot v_1)+w\cdot v_3}{\mu_2}}-1\Bigr)\frac{q^{\fBC}-1}{q-1}.
\end{multline*}
Expressing everything in terms of
\begin{equation*}
q\qquad\text{and}\qquad\beta=p^{\frac{w\cdot v_0+\xi_B'(w\cdot v_1)}{\mu_B}}
\end{equation*}
by means of Identities~\eqref{sigcsigbidcasedrie} and \eqref{sig2sigbidcasedrie}, the above statement is equivalent to
\begin{multline}\label{finalcheckcas3dn}
(q-1)(\beta-1)\sum_{k=0}^{\fCtwee-1}q^{k-\fBC\left\lfloor\frac{k\psi}{\fCtwee}\right\rfloor}\beta^{\left\lfloor\frac{k\psi}{\fCtwee}\right\rfloor}+(q-1)(q^{\fBC}-\beta)\sum_{\kappa=0}^{\fBtwee-1}q^{\kappa}\beta^{\left\lfloor\frac{\kappa\psi}{\fBtwee}\right\rfloor}\\
=(q^{\fBC}-1)(q^{\fBtwee}\beta^{\psi}-1).
\end{multline}
This equality in fact turns out to be a polynomial identity in the variables $q$ and $\beta$, as we will show now.

Both sequences, $\bigl(\lfloor k\psi/\fCtwee\rfloor\bigr)_{k=0}^{\fCtwee}$ and $\bigl(\lfloor\kappa\psi/\fBtwee\rfloor\bigr)_{\kappa=0}^{\fBtwee}$, ascend from $0$ to $\psi$. As $\psi<\fCtwee$ and $\psi$ may be strictly greater than $\fBtwee$, the first sequence adopts all values in $\{0,\ldots,\psi\}$, but the second one may not. We put
\begin{alignat*}{3}
k_r&=\min\left\{k\in\Zplus\;\middle\vert\;\left\lfloor\frac{k\psi}{\fCtwee}\right\rfloor=r\right\}&&=\left\lceil\frac{r\fCtwee}{\psi}\right\rceil&\qquad&\text{and}\\
\kappa_r&=\min\left\{\kappa\in\Zplus\;\middle\vert\;\left\lfloor\frac{\kappa\psi}{\fBtwee}\right\rfloor\geqslant r\right\}&&=\left\lceil\frac{r\fBtwee}{\psi}\right\rceil;&\qquad&r=0,\ldots,\psi.
\end{alignat*}
The numbers $k_r$ are the same as in Case $n^{\ast}\mid n$, while the $\kappa_r$ are defined differently; note that the sequence $(\kappa_r)_r$ is still ascending, but no longer necessarily strictly. We have
\begin{alignat*}{6}
0&=k_0&&<k_1&&<\cdots&&<k_{\psi-1}&&<k_{\psi}&&=\fCtwee\qquad\text{and}\\
0&=\kappa_0&&<\kappa_1&&\leqslant\cdots&&\leqslant\kappa_{\psi-1}&&\leqslant\kappa_{\psi}&&=\fBtwee;
\end{alignat*}
furthermore, there is the following relation between the numbers $k_r$ and $\kappa_r$:
\begin{equation*}
k_r=\left\lceil\frac{r\fCtwee}{\psi}\right\rceil=\left\lceil\frac{r(\fBtwee+\fBC\psi)}{\psi}\right\rceil=\fBC r+\left\lceil\frac{r\fBtwee}{\psi}\right\rceil=\fBC r+\kappa_r,
\end{equation*}
for all $r\in\{0,\ldots,\psi\}$.

Next, we use these data in rewriting both sums appearing in \eqref{finalcheckcas3dn}. If we adopt the convention that empty sums equal zero, then the first sum is given by
\begin{align}
\sum_{k=0}^{\fCtwee-1}q^{k-\fBC\left\lfloor\frac{k\psi}{\fCtwee}\right\rfloor}\beta^{\left\lfloor\frac{k\psi}{\fCtwee}\right\rfloor}
&=\sum_{r=0}^{\psi-1}\sum_{k=k_r}^{k_{r+1}-1}q^{k-\fBC r}\beta^r\notag\\
&=\sum_r\beta^r\sum_{k=\fBC r+\kappa_r}^{\fBC(r+1)+\kappa_{r+1}-1}q^{k-\fBC r}\notag\\
&=\sum_r\beta^r\sum_{\kappa=\kappa_r}^{\fBC+\kappa_{r+1}-1}q^{\kappa}\notag\\
&\overset{(\star)}{=}\sum_r\beta^r\Biggl(\sum_{\kappa=\kappa_r}^{\kappa_{r+1}-1}q^{\kappa}+q^{\kappa_{r+1}}\sum_{l=0}^{\fBC-1}q^l\Biggr)\notag\\
&=\sum_r\beta^r\sum_{\kappa}q^{\kappa}+\frac{q^{\fBC}-1}{q-1}\sum_r\beta^rq^{\kappa_{r+1}}.\label{firstsumc3}
\end{align}
Note that Equality~$(\star)$ holds even if $\kappa_r=\kappa_{r+1}$ for some $r$. With Notation~\ref{iverson}, the second sum can be written as
\begin{equation}\label{secondsumc3}
\sum_{\kappa=0}^{\fBtwee-1}q^{\kappa}\beta^{\left\lfloor\frac{\kappa\psi}{\fBtwee}\right\rfloor}=\sum_{r=0}^{\psi-1}\beta^r\sum_{\kappa}\left[\left\lfloor\frac{\kappa\psi}{\fBtwee}\right\rfloor=r\right]q^{\kappa}=\sum_r\beta^r\sum_{\kappa=\kappa_r}^{\kappa_{r+1}-1}q^{\kappa}.
\end{equation}
Indeed, if there is no $\kappa\in\{0,\ldots,\fBtwee-1\}$ such that $\lfloor\kappa\psi/\fBtwee\rfloor=r$, then $\kappa_r=\kappa_{r+1}$ and $\sum_{\kappa=\kappa_r}^{\kappa_{r+1}-1}q^{\kappa}=0$, otherwise $\kappa_r<\kappa_{r+1}$ and $\kappa_r,\ldots,\kappa_{r+1}-1$ are precisely the indices $\kappa$ satisfying $\lfloor\kappa\psi/\fBtwee\rfloor=r$.

From (\ref{firstsumc3}--\ref{secondsumc3}), it now follows that
\begin{align*}
&(q-1)(\beta-1)\sum_{k=0}^{\fCtwee-1}q^{k-\fBC\left\lfloor\frac{k\psi}{\fCtwee}\right\rfloor}\beta^{\left\lfloor\frac{k\psi}{\fCtwee}\right\rfloor}+(q-1)(q^{\fBC}-\beta)\sum_{\kappa=0}^{\fBtwee-1}q^{\kappa}\beta^{\left\lfloor\frac{\kappa\psi}{\fBtwee}\right\rfloor}\\
&\quad=(q^{\fBC}-1)\left[(q-1)\sum_{r=0}^{\psi-1}\beta^r\sum_{\kappa=\kappa_r}^{\kappa_{r+1}-1}q^{\kappa}+(\beta-1)\sum_{r=0}^{\psi-1}\beta^rq^{\kappa_{r+1}}\right]\\
&\quad=(q^{\fBC}-1)\left[\sum_r\beta^rq^{\kappa_{r+1}}-\sum_r\beta^rq^{\kappa_r}+\sum_r\beta^{r+1}q^{\kappa_{r+1}}-\sum_r\beta^rq^{\kappa_{r+1}}\right]\\
&\quad=(q^{\fBC}-1)(q^{\fBtwee}\beta^{\psi}-1).
\end{align*}
Having verified that the calculations above make sense even if $\kappa_r=\kappa_{r+1}$ for some $r$, we achieve \eqref{finalcheckcas3dn} and therefore conclude Case~$n^{\ast}\nmid n$. This ends the proof of the main theorem in Case~III.

\section{Case~IV: exactly two facets of \Gf\ contribute to $s_0$, and these two facets are both non-compact $B_1$-facets with respect to a same variable and have an edge in common}
\subsection{Figure and notations}
Let us assume that the two facets $\tau_0$ and $\tau_1$ contributing to $s_0$ are both $B_1$-facets with respect to the variable $z$. Note that $\tau_0$ and $\tau_1$ cannot be non-compact for the same variable unless they coincide. Therefore, we may assume that $\tau_0$ is non-compact for $x$, while $\tau_1$ is non-compact for $y$, and that $\tau_0$ and $\tau_1$ share their unique compact edge $[AB]$. Here $A(x_A,y_A,0)$ and $B(x_B,y_B,1)\in\Zplus^3$ denote the common vertices of $\tau_0$ and $\tau_1$ in the $xy$-plane and at \lq height\rq\ one, respectively. The situation is shown in Figure~\ref{figcase4}.

%
%
\begin{figure}
\psset{unit=.02844311377\textwidth}
\centering
\subfigure[Non-compact $B_1$-facets $\tau_0$ and $\tau_1$, their subfaces and neighbor facets $\tau_2,\tau_3,$ and $\tau_4$]{
\begin{pspicture}(-9.1,-7.8)(7.6,6.8)
{\footnotesize
\pstThreeDCoor[xMin=0,yMin=0,zMin=0,xMax=12,yMax=10,zMax=6.75,linecolor=black,linewidth=.7pt]
{
\psset{linecolor=black,linewidth=.3pt,linestyle=dashed,subticks=1}
\pstThreeDLine(4,0,0)(4,3,0)\pstThreeDLine(4,3,0)(0,3,0)
\pstThreeDLine(12,0,0)(12,3,0)\pstThreeDLine(12,3,0)(4,3,0)
\pstThreeDLine(0,10,0)(4,10,0)\pstThreeDLine(4,10,0)(4,3,0)
\pstThreeDLine(7,0,0)(7,5,0)\pstThreeDLine(7,5,0)(0,5,0)
\pstThreeDLine(12,3,0)(12,10,0)\pstThreeDLine(12,10,0)(4,10,0)
\pstThreeDLine(4,0,1)(4,3,1)\pstThreeDLine(4,3,1)(0,3,1)
\pstThreeDLine(12,0,1)(12,3,1)\pstThreeDLine(0,10,1)(4,10,1)
\pstThreeDLine(0,0,1)(4,0,1)\pstThreeDLine(4,0,1)(12,0,1)
\pstThreeDLine(0,0,1)(0,3,1)\pstThreeDLine(0,3,1)(0,10,1)
\pstThreeDLine(4,0,0)(4,0,1)\pstThreeDLine(12,0,0)(12,0,1)
\pstThreeDLine(0,3,0)(0,3,1)\pstThreeDLine(0,10,0)(0,10,1)
\pstThreeDLine(12,3,0)(12,3,1)\pstThreeDLine(4,10,0)(4,10,1)
}
\pstThreeDPut[pOrigin=c](8.69,4,0.5){\psframebox*[framesep=1pt,framearc=0.3]{\phantom{$\tau_0$}}}
{
\psset{dotstyle=none,dotscale=1,drawCoor=false}
\psset{linecolor=black,linewidth=1pt,linejoin=1}
\psset{fillcolor=lightgray,opacity=.6,fillstyle=solid}
\pstThreeDLine(12,5,0)(7,5,0)(4,3,1)(12,3,1)
\pstThreeDLine(7,10,0)(7,5,0)(4,3,1)(4,10,1)
}
\pstThreeDPut[pOrigin=t](7,5,-0.28){$A$}
\pstThreeDPut[pOrigin=b](4,3,1.28){$B$}
\pstThreeDPut[pOrigin=c](8.69,4,0.5){$\tau_0$}
\pstThreeDPut[pOrigin=c](5.5,7,0.5){$\tau_1$}
\pstThreeDPut[pOrigin=c](8,2.7,1.5){\psframebox*[framesep=0.4pt,framearc=0.3]{$\tau_3$}}
\pstThreeDPut[pOrigin=c](3.7,6.85,1.5){\psframebox*[framesep=0.2pt,framearc=0.5]{$\tau_2$}}
\pstThreeDPut[pOrigin=c](10,8,0){$\tau_4$}
\pstThreeDPut[pOrigin=b](12,3,1.28){$l_x$}
\pstThreeDPut[pOrigin=b](4,10,1.25){$l_y$}
\pstThreeDPut[pOrigin=br](0.06,-0.06,1.12){$1$}
}
\end{pspicture}
}\hfill\subfigure[Relevant cones associated to relevant faces of~\Gf]{
\psset{unit=.03125\textwidth}
\begin{pspicture}(-7.6,-3.8)(7.6,9.6)
{\footnotesize
\pstThreeDCoor[xMin=0,yMin=0,zMin=0,xMax=10,yMax=10,zMax=10,nameZ={},linecolor=gray,linewidth=.7pt]
{
\psset{linecolor=gray,linewidth=.3pt,linejoin=1,linestyle=dashed,fillcolor=lightgray,fillstyle=none}
\pstThreeDLine(10,0,0)(0,10,0)\pstThreeDLine(0,10,0)(0,0,10)\pstThreeDLine(0,0,10)(10,0,0)
}
{
\psset{linecolor=black,linewidth=.7pt,linejoin=1,fillcolor=lightgray,fillstyle=none}
\pstThreeDLine(0,0,0)(0,0,10)
}
{
\psset{labelsep=2pt}
\uput[90](0,1.7){\psframebox*[framesep=0.5pt,framearc=1]{\darkgray\scriptsize$v_4$}}
}
{
\psset{linecolor=black,linewidth=.7pt,linejoin=1,fillcolor=lightgray,fillstyle=none}
\pstThreeDLine(0,0,0)(0,3.33,6.67)
\pstThreeDLine(0,0,0)(2.5,0,7.5)
\pstThreeDLine(0,0,0)(9,0,1)
\pstThreeDLine(0,0,0)(0,8,2)
}
{
\psset{linecolor=darkgray,linewidth=.8pt,linejoin=1,arrows=->,arrowscale=1,fillcolor=lightgray,fillstyle=none}
\pstThreeDLine(0,0,0)(0,1.5,3)
\pstThreeDLine(0,0,0)(1.4,0,4.2)
\pstThreeDLine(0,0,0)(6.3,0,.7)
\pstThreeDLine(0,0,0)(0,4,1)
\pstThreeDLine(0,0,0)(0,0,2)
}
{
\psset{linecolor=white,linewidth=2pt,linejoin=1,fillcolor=lightgray,fillstyle=none}
\pstThreeDLine(1.75,.999,7.25)(.75,2.33,6.92)
\pstThreeDLine(2.12,1.20,6.68)(1.00,4.8,4.2)
}
{
\psset{linecolor=black,linewidth=.7pt,linejoin=1,fillcolor=lightgray,fillstyle=none}
\pstThreeDLine(0,0,10)(0,3.33,6.67)
\pstThreeDLine(0,0,10)(2.5,0,7.5)
\pstThreeDLine(0,8,2)(0,3.33,6.67)(2.5,0,7.5)(9,0,1)
}
{
\psset{linecolor=black,linewidth=.7pt,linejoin=1,linestyle=dashed,fillcolor=lightgray,fillstyle=none}
\pstThreeDLine(9,0,1)(0,8,2)
\pstThreeDLine(2.5,0,7.5)(0,8,2)
}
{
\psset{labelsep=2pt}
\uput[-35](.51,.994){\darkgray\scriptsize$v_0$}
\uput[75](1.41,-.275){\darkgray\scriptsize$v_3$}
}
{
\psset{labelsep=2.5pt}
\uput[106](-2.22,-.808){\darkgray\scriptsize$v_2$}
}
{
\psset{labelsep=1.4pt}
\uput[210](-.493,1.57){\darkgray\scriptsize$v_1$}
}
{
\psset{labelsep=3.8pt}
\uput[30](2.35,4.58){$\Delta_{\tau_0}$}
\uput[30](4,1.74){$\Delta_{l_x}$}
\uput[30](5.65,-1.1){$\Delta_{\tau_3}$}
\uput[150](-1.76,5.61){$\Delta_{\tau_1}$}
\uput[150](-4.06,1.65){$\Delta_{l_y}$}
\uput[150](-6.34,-2.31){$\Delta_{\tau_2}$}
}
\rput(0.15,6.3){\psframebox*[framesep=0.3pt,framearc=1]{\footnotesize$\Delta_A$}}
\rput(1.9,3.6){\psframebox*[framesep=0.7pt,framearc=1]{\footnotesize$\delta_1$}}
\rput(1.94,2.25){\psframebox*[framesep=0.3pt,framearc=.3]{\footnotesize$\delta_2$}}
\rput(-1.25,.3){\footnotesize$\delta_3$}
\pstThreeDNode(1.25,1.66,7.09){AB}
\rput[B](0,9.075){$z,\Delta_{\tau_4}$}
\rput[Br](8.01,9.075){\rnode{ABlabel}{$\Delta_{[AB]}$}}
\ncline[linewidth=.3pt,nodesepB=2pt,nodesepA=0.5pt]{->}{ABlabel}{AB}
}
\end{pspicture}
}
\caption{Case IV: the only facets contributing to $s_0$ are the non-compact $B_1$-facets $\tau_0$ and $\tau_1$}
\label{figcase4}
\end{figure}
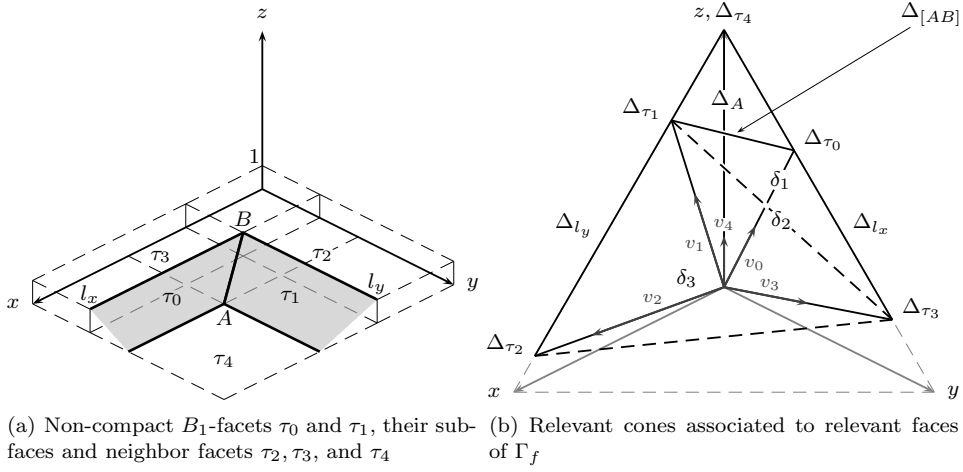
%
%

If we put $\overrightarrow{AB}(x_B-x_A,y_B-y_A,1)=(\alpha,\beta,1)$ as usual, then $v_0(0,1,-\beta)$ and $v_1(1,0,-\alpha)$ are the unique primitive vectors in $\Zplus^3$ perpendicular to $\tau_0$ and $\tau_1$, respectively, while equations for the affine hulls of $\tau_0$ and $\tau_1$ are provided by
\begin{equation*}
\aff(\tau_0)\leftrightarrow y-\beta z=y_A\qquad\text{and}\qquad\aff(\tau_1)\leftrightarrow x-\alpha z=x_A.
\end{equation*}
Necessarily, we have that $\alpha,\beta<0$; i.e., $x_B<x_A$ and $y_B<y_A$. Given that the numerical data associated to $\tau_0$ and $\tau_1$ are $(m(v_0),\sigma(v_0))=(y_A,1-\beta)$ and $(m(v_1),\sigma(v_1))=(x_A,1-\alpha)$, respectively, we assume
\begin{equation*}
\Re(s_0)=\frac{\beta-1}{y_A}=\frac{\alpha-1}{x_A}\quad\ \ \text{and}\ \ \quad\Im(s_0)=\frac{2n\pi}{\gcd(x_A,y_A)\log p}\quad\ \ \text{for some $n\in\Z$.}
\end{equation*}

As indicated in Figure~\ref{figcase4}, we denote by $\tau_2$ and $\tau_3$ the non-compact facets of \Gf\ sharing with $\tau_1$ and $\tau_0$, respectively, a half-line with endpoint $B$, and by $\tau_4$ the facet lying in the $xy$-plane. Primitive vectors in $\Zplus^3$ perpendicular to $\tau_2,\tau_3,\tau_4$ will be denoted
\begin{equation*}
v_2(a_2,0,c_2),\quad v_3(0,b_3,c_3),\quad v_4(0,0,1),
\end{equation*}
respectively, and equations for the affine supports of these facets are given by
\begin{alignat*}{3}
\aff(\tau_2)&\leftrightarrow&\ a_2x&+c_2&z&=m_2,\\
\aff(\tau_3)&\leftrightarrow&\ b_3y&+c_3&z&=m_3,\\
\aff(\tau_4)&\leftrightarrow&\     &    &z&=0
\end{alignat*}
for certain $m_2,m_3\in\Zplus$. Finally, the numerical data for $\tau_2,\tau_3,$ and $\tau_4$ are $(m_2,\sigma_2)$, $(m_3,\sigma_3),$ and $(0,1)$, respectively, with $\sigma_2=a_2+c_2$ and $\sigma_3=b_3+c_3$.

\subsection{The candidate pole $s_0$ and the contributions to its residues}
Again we want to prove that $s_0$ is not a pole of \Zof. Since $s_0$ has expected order two as a candidate pole of \Zof, in order to do this, we will, as in Case~III, show that
\begin{align*}
R_2&=\lim_{s\to s_0}\left(p^{1-\beta+y_As}-1\right)\left(p^{1-\alpha+x_As}-1\right)\Zof(s)\qquad\text{and}\\
R_1&=\lim_{s\to s_0}\frac{d}{ds}\left[\left(p^{1-\beta+y_As}-1\right)\left(p^{1-\alpha+x_As}-1\right)\Zof(s)\right]
\end{align*}
both equal zero.

In this case the (compact) faces contributing to $s_0$ are $A,B,$ and $[AB]$; i.e., we may in the above expressions for $R_2$ and $R_1$ replace $\Zof(s)$ by
\begin{equation*}
\sum_{\tau=A,B,[AB]}L_{\tau}(s)S(\Dtu)(s).
\end{equation*}
Vertex $A$ is exclusively contained in the facets $\tau_0,\tau_1,$ and $\tau_4$; its associated cone $\Delta_A$ is therefore simplicial. Vertex $B$, on the other hand, is contained in at least the facets $\tau_0,\tau_1,\tau_2,$ and $\tau_3$; hence $\Delta_B$ is certainly not simplicial. However, if we consider the cones $\delta_1,\delta_2,\delta_3$ defined below as members of a simplicial subdivision of $\Delta_B$, the relevant contributions to $s_0$ come from the simplicial cones
\begin{align*}
\DA&=\cone(v_0,v_1,v_4),&\delta_1&=\cone(v_0,v_1,v_3),&\DAB&=\cone(v_0,v_1).\\
   &                    &\delta_2&=\cone(v_1,v_3),    &    &                \\
   &                    &\delta_3&=\cone(v_1,v_2,v_3),&    &
\end{align*}
This way we find, similarly to Case~III, that $R_2$ and $R_1$ are explicitly given by
\begin{gather*}
\begin{aligned}
R_2&=L_A(s_0)\frac{\Sigma(\Delta_A)(s_0)}{p-1}
+L_B(s_0)\frac{\Sigma(\delta_1)(s_0)}{p^{\sigma_3+m_3s_0}-1}
+L_{[AB]}(s_0)\Sigma(\Delta_{[AB]})(s_0),\\
R_1&=L_A'(s_0)\frac{\Sigma(\Delta_A)(s_0)}{p-1}
+L_A(s_0)\frac{\Sigma(\Delta_A)'(s_0)}{p-1}
+L_B'(s_0)\frac{\Sigma(\delta_1)(s_0)}{p^{\sigma_3+m_3s_0}-1}
\end{aligned}\\
+L_B(s_0)\frac{\Sigma(\delta_1)'(s_0)}{p^{\sigma_3+m_3s_0}-1}
-L_B(s_0)\frac{m_3(\log p)p^{\sigma_3+m_3s_0}\Sigma(\delta_1)(s_0)}{\Fdrie^2}\\
+L_B(s_0)\frac{y_A(\log p)\Sigma(\delta_2)(s_0)}{p^{\sigma_3+m_3s_0}-1}
+L_B(s_0)\frac{y_A(\log p)\Sigma(\delta_3)(s_0)}{\Ftwee\Fdrie}\\
\hspace{.365\textwidth}+L_{[AB]}'(s_0)\Sigma(\Delta_{[AB]})(s_0)
+L_{[AB]}(s_0)\Sigma(\Delta_{[AB]})'(s_0).
\end{gather*}

\subsection{Towards simplified formulas for $R_2$ and $R_1$}
\subsubsection{The factors $L_{\tau}(s_0)$ and $L_{\tau}'(s_0)$}
Since $N_A=N_B=0$ and $N_{[AB]}=(p-1)^2$, we obtain
\begin{gather*}
L_A(s_0)=L_B(s_0)=\left(\frac{p-1}{p}\right)^3,\qquad L_A'(s_0)=L_B'(s_0)=0,\\
\begin{aligned}
L_{[AB]}(s_0)&=\left(\frac{p-1}{p}\right)^3-\left(\frac{p-1}{p}\right)^2\frac{p^{s_0}-1}{p^{s_0+1}-1},\qquad\text{and}\\
L_{[AB]}'(s_0)&=-(\log p)\left(\frac{p-1}{p}\right)^3\frac{p^{s_0+1}}{\bigl(p^{s_0+1}-1\bigr)^2}.
\end{aligned}
\end{gather*}

\subsubsection{Cone multiplicities}
We calculate the multiplicities of the five contributing simplicial cones, as well as the multiplicities $\mu_x$ and $\mu_y$ of the cones associated to the non-compact edges $l_x=\tau_0\cap\tau_3$ and $l_y=\tau_1\cap\tau_2$ (see Figure~\ref{figcase4}):
\begin{gather*}
\mult\Delta_A=\#H(v_0,v_1,v_4)=
\begin{Vmatrix}
0&1&-\beta\\1&0&-\alpha\\0&0&1
\end{Vmatrix}=1,\\
\begin{alignedat}{6}
\mu_x&=\mult\Delta_{l_x}&&=\#H(v_0,v_3)&&=
\begin{Vmatrix}
1&-\beta\\b_3&c_3
\end{Vmatrix}&&=-
\begin{vmatrix}
1&-\beta\\b_3&c_3
\end{vmatrix}&&=-\beta b_3-c_3&&>0,\\
\mu_y&=\mult\Delta_{l_y}&&=\#H(v_1,v_2)&&=
\begin{Vmatrix}
1&-\alpha\\a_2&c_2
\end{Vmatrix}&&=-
\begin{vmatrix}
1&-\alpha\\a_2&c_2
\end{vmatrix}&&=-\alpha a_2-c_2&&>0,
\end{alignedat}\\
\begin{alignedat}{2}
\mult\delta_1&=\#H(v_0,v_1,v_3)&&=
\begin{Vmatrix}
0&1&-\beta\\1&0&-\alpha\\0&b_3&c_3
\end{Vmatrix}=
\begin{Vmatrix}
1&-\beta\\b_3&c_3
\end{Vmatrix}=\mu_x,\\
\mu_3=\mult\delta_3&=\#H(v_1,v_2,v_3)&&=
\begin{Vmatrix}
1&0&-\alpha\\a_2&0&c_2\\0&b_3&c_3
\end{Vmatrix}=b_3
\begin{Vmatrix}
1&-\alpha\\a_2&c_2
\end{Vmatrix}=b_3\mu_y,
\end{alignedat}\\
\begin{alignedat}{2}
\mult\delta_2&=\#H(v_1,v_3)&&=\gcd(b_3,c_3,-\alpha b_3)=1,\\
\mult\Delta_{[AB]}&=\#H(v_0,v_1)&&=\gcd(1,-\beta,-\alpha)=1.
\end{alignedat}
\end{gather*}

\subsubsection{The sums $\Sigma(\cdot)(s_0)$ and $\Sigma(\cdot)'(s_0)$}
Because the corresponding multiplicities are one, we have that
\begin{gather*}
\Sigma(\Delta_A)(s_0)=\Sigma(\delta_2)(s_0)=\Sigma(\Delta_{[AB]})(s_0)=1\qquad\text{and}\\
\Sigma(\Delta_A)'(s_0)=\Sigma(\Delta_{[AB]})'(s_0)=0.
\end{gather*}

Furthermore, since $H(v_0,v_3)\subseteq H(v_0,v_1,v_3)$ and
\begin{equation*}
\mu_x=\#H(v_0,v_3)=\#H(v_0,v_1,v_3),
\end{equation*}
we may put
\begin{gather*}
H_x=H(v_0,v_3)=H(v_0,v_1,v_3),\\
\Sigma_x=\Sigma(\delta_1)(s_0)=\sum\nolimits_{h\in H_x}p^{\sigma(h)+m(h)s_0}=\sum\nolimits_{h\in H_x}p^{w\cdot h},\qquad\text{and}\\
\Sigma_x'=\Sigma(\delta_1)'(s_0)=\dds{\sum\nolimits_{h\in H_x}p^{\sigma(h)+m(h)s}}=(\log p)\sum\nolimits_{h\in H_x}m(h)p^{w\cdot h},
\end{gather*}
with $w=(1,1,1)+s_0(x_B,y_B,1)\in\C^3$.

Finally we denote
\begin{gather*}
H_y=H(v_1,v_2),\qquad H_3=H(v_1,v_2,v_3),\qquad\text{and}\\
\Sigma_3=\Sigma(\delta_3)(s_0)=\sum\nolimits_{h\in H_3}p^{\sigma(h)+m(h)s_0}=\sum\nolimits_{h\in H_3}p^{w\cdot h}.
\end{gather*}

\subsubsection{New formulas for the residues}
If we put
\begin{gather*}
R_2=\left(\frac{p-1}{p}\right)^3R_2',\qquad R_1=(\log p)\left(\frac{p-1}{p}\right)^3R_1',\\
F_2=p^{\sigma_2+m_2s_0}-1,\qquad F_3=p^{\sigma_3+m_3s_0}-1,\qquad\text{and}\qquad q=p^{-s_0-1},
\end{gather*}
the observations above yield
\begin{gather}
R_2'=\frac{1}{1-q}+\frac{\Sigma_x}{F_3}\qquad\text{and}\label{formR2accasevier}\\
R_1'=-\frac{q}{(1-q)^2}+\frac{\Sigma_x'}{(\log p)F_3}-\frac{m_3(F_3+1)\Sigma_x}{F_3^2}+\frac{y_A}{F_3}+\frac{y_A\Sigma_3}{F_2F_3}.\label{formR1accasevier}
\end{gather}

We shall prove that $R_2'=R_1'=0$.

\subsection{Some vector identities and their consequences}
Given the coordinates of $v_i$; $i=0,\ldots,3$; one easily checks that\footnote{As in the previous cases, the first two identities arise from $(\adj M)M=(\det M)I$ for $M=\begin{psmallmatrix}1&-\beta\\b_3&c_3\end{psmallmatrix},\begin{psmallmatrix}1&-\alpha\\a_2&c_2\end{psmallmatrix}$ with $\det M=\mu_x,\mu_y$, respectively, while the third one follows immediately from the other two.}
\begin{align}
b_3v_0-v_3&=(0,0,\mu_x),\label{id1c4}\\
a_2v_1-v_2&=(0,0,\mu_y),\qquad\text{and}\label{id2c4}\\
-\mu_3v_0+a_2\mu_xv_1-\mu_xv_2+\mu_yv_3&=(0,0,0).\label{id3c4}
\end{align}

Considering dot products with $w=(1,1,1)+s_0(x_B,y_B,1)$, it follows from \eqref{id1c4} and \eqref{id2c4} that
\begin{equation}\label{pwvdriemuxc4}
\frac{-b_3(w\cdot v_0)+w\cdot v_3}{\mu_x}=\frac{-a_2(w\cdot v_1)+w\cdot v_2}{\mu_y}=-s_0-1,
\end{equation}
whereas making the dot product with $B(x_B,y_B,1)$ on both sides of \eqref{id1c4} yields
\begin{equation}\label{interpbdrieyammdriec4}
y_Ab_3-m_3=\mu_x.
\end{equation}

Other consequences of (\ref{id1c4}--\ref{id3c4}) include
\begin{align}
\frac{-b_3}{\mu_x}v_0+\frac{1}{\mu_x}v_3&=(0,0,-1)\in\Z^3,\label{bp1c4}\\
\frac{-a_2}{\mu_y}v_1+\frac{1}{\mu_y}v_2&=(0,0,-1)\in\Z^3,\qquad\text{and}\label{bp2c4}\\
\frac{a_2\mu_x}{\mu_3}v_1+\frac{-\mu_x}{\mu_3}v_2+\frac{1}{b_3}v_3&=v_0\in\Z^3.\label{bp3c4}
\end{align}

\subsection{Points of $H_x,H_y,$ and $H_3$}
It follows from \eqref{bp1c4} that the $\mu_x$ points of $H_x$ are given by
\begin{alignat}{2}
\left\{\frac{-jb_3}{\mu_x}\right\}v_0&+\frac{j}{\mu_x}v_3;&\qquad&j=0,\ldots,\mu_x-1;\label{pointshxc4}\\
\intertext{while it follows from \eqref{bp2c4} that the $\mu_y$ points of $H_y=H(v_1,v_2)$ are}
\left\{\frac{-ia_2}{\mu_y}\right\}v_1&+\frac{i}{\mu_y}v_2;&&i=0,\ldots,\mu_y-1.\notag
\end{alignat}
Note that $b_3$ and $a_2$ are, as expected, coprime to $\mu_x$ and $\mu_y$, respectively.\footnote{This follows from $\mu_x=-\beta b_3-c_3$, $\mu_y=-\alpha a_2-c_2$, and the primitivity of $v_2$ and $v_3$.}

If we consider $H_3=H(v_1,v_2,v_3)$ in the usual way as an additive group with subgroup\footnote{Recall that $H(v_1,v_3)$ is the trivial subgroup of $H_3$.} $H_y=H_y+H(v_1,v_3)$ of index $b_3$, then we see from \eqref{bp2c4} and \eqref{bp3c4} that the points
\begin{equation*}
\left\{\frac{-a_2\{-k\mu_x\}_{b_3}}{\mu_3}\right\}v_1+\frac{\{-k\mu_x\}_{b_3}}{\mu_3}v_2+\frac{k}{b_3}v_3\in H_3;\qquad k=0,\ldots,b_3-1;
\end{equation*}
can serve as representatives for the $b_3$ cosets of $H_y$ in $H_3$. Hence a complete list of the $\mu_3=b_3\mu_y$ points of $H_3$ is provided by
\begin{multline}\label{pointsh3c4}
\left\{\frac{-a_2(ib_3+\{-k\mu_x\}_{b_3})}{\mu_3}\right\}v_1+\frac{ib_3+\{-k\mu_x\}_{b_3}}{\mu_3}v_2+\frac{k}{b_3}v_3;\\
i=0,\ldots,\mu_y-1;\quad k=0,\ldots,b_3-1.
\end{multline}

These descriptions should allow us to find expressions for $\Sigma_x,\Sigma_x',$ and $\Sigma_3$ in the next subsection.

\subsection{Formulas for $\Sigma_x,\Sigma_x',$ and $\Sigma_3$}
If for $h\in H_x=H(v_0,v_3)$, we denote by $(h_0,h_3)$ the coordinates of $h$ with respect to the basis $(v_0,v_3)$, then by \eqref{pwvdriemuxc4}, \eqref{pointshxc4}, and $p^{w\cdot v_0}=1$, we have
\begin{equation}\label{formsigmaxcasevier}
\Sigma_x=\sum_{h\in H_x}p^{w\cdot h}=\sum_{j=0}^{\mu_x-1}\Bigl(p^{\frac{-b_3(w\cdot v_0)+w\cdot v_3}{\mu_x}}\Bigr)^j=\sum_jq^j=\frac{F_3}{q-1},
\end{equation}
whereas
\begin{align}
\frac{\Sigma_x'}{\log p}&=\sum_{h\in H_x}m(h)p^{w\cdot h}\notag\\
&=\sum_h(h_0y_A+h_3m_3)p^{w\cdot h}\notag\\
&=\sum_{j=0}^{\mu_x-1}\left(y_A\left\{\frac{-jb_3}{\mu_x}\right\}
+m_3\frac{j}{\mu_x}\right)\Bigl(p^{\frac{-b_3(w\cdot v_0)+w\cdot v_3}{\mu_x}}\Bigr)^j\notag\\
&=y_A\sum_j\left\{\frac{-jb_3}{\mu_x}\right\}q^j+\frac{m_3}{\mu_x}\sum_jjq^j.\label{formsigmaxaccaseviertemp}
\end{align}

If $\mu_x=1$, then clearly $\Sigma_x'=0$. Let us find an expression for $\Sigma_x'$ in the complementary case. So from now on assume that $\mu_x>1$. Write $b_3$ as $b_3=t\mu_x+\overline{b_3}$ with $t\in\Zplus$ and $\overline{b_3}=\{b_3\}_{\mu_x}\in\{1,\ldots,\mu_x-1\}$; note that by the coprimality of $b_3$ and $\mu_x$, we have $\gcd(\overline{b_3},\mu_x)=1$ and hence $\overline{b_3}\neq0$. Furthermore, put
\begin{equation}\label{defjkbarcase4}
j_{\overline{k}}=\min\left\{j\in\Zplus\;\middle\vert\;\left\lfloor\frac{j\overline{b_3}}{\mu_x}\right\rfloor=\overline{k}\right\}=\left\lceil\frac{\overline{k}\mu_x}{\overline{b_3}}\right\rceil;\qquad \overline{k}=0,\ldots,\overline{b_3};
\end{equation}
yielding
\begin{equation*}
0=j_0<j_1<\cdots<j_{\overline{b_3}-1}<j_{\overline{b_3}}=\mu_x.
\end{equation*}

Then, proceeding as in Case~III (Subsection~\ref{studysigmaeenacc3}), we write \eqref{formsigmaxaccaseviertemp} as
\begin{align*}
&\,\frac{\Sigma_x'}{\log p}\\
&=y_A\sum_{j=0}^{\mu_x-1}\left\{\frac{-j\overline{b_3}}{\mu_x}\right\}q^j+\frac{m_3[q^{\mu_x}(\mu_x q-\mu_x-q)+q]}{\mu_x(q-1)^2}\\
&=\frac{y_A}{1-q}\Biggl(\sum_{\overline{k}=0}^{\overline{b_3}-1}q^{j_{\overline{k}}}-\frac{\overline{b_3}F_3}{\mu_x(1-q^{-1})}\Biggr)-y_A
+\frac{m_3}{1-q}\left(-(F_3+1)+\frac{F_3}{\mu_x(1-q^{-1})}\right)\\
&=\frac{F_3}{1-q}\Biggl(\frac{y_A}{F_3}\sum_{\overline{k}}q^{j_{\overline{k}}}-\frac{y_A\overline{b_3}-m_3}{\mu_x(1-q^{-1})}
-\frac{m_3(F_3+1)}{F_3}\Biggr)-y_A.
\end{align*}
Finally, if we use that $y_Ab_3-m_3=\mu_x$ (cfr.\ \eqref{interpbdrieyammdriec4}) and $b_3=t\mu_x+\overline{b_3}$, we obtain
\begin{equation}\label{formsigmaxaccasevier}
\frac{\Sigma_x'}{(\log p)F_3}=\frac{1}{1-q}\Biggl(\frac{y_A}{F_3}\sum_{\overline{k}=0}^{\overline{b_3}-1}q^{j_{\overline{k}}}+\frac{y_At-1}{1-q^{-1}}
-\frac{m_3(F_3+1)}{F_3}\Biggr)-\frac{y_A}{F_3}\qquad\text{(if $\mu_x>1$).}
\end{equation}

Let us now calculate $\Sigma_3$. Using (\ref{pwvdriemuxc4}, \ref{pointsh3c4}) and $p^{w\cdot v_0}=p^{w\cdot v_1}=1$, we find
\begin{align*}
\Sigma_3&=\sum_{h\in H_3}p^{w\cdot h}\\
&=\sum_hp^{h_1(w\cdot v_1)+h_2(w\cdot v_2)+h_3(w\cdot v_3)}\\
&=\sum_{i=0}^{\mu_y-1}\sum_{k=0}^{b_3-1}p^{\frac{-a_2\left(ib_3+\{-k\mu_x\}_{b_3}\right)}{\mu_3}(w\cdot v_1)+\frac{ib_3+\{-k\mu_x\}_{b_3}}{\mu_3}(w\cdot v_2)+\frac{k}{b_3}(w\cdot v_3)}\\
&=\sum_i\Bigl(p^{\frac{-a_2(w\cdot v_1)+w\cdot v_2}{\mu_y}}\Bigr)^i\sum_kp^{\frac{-a_2(w\cdot v_1)+w\cdot v_2}{\mu_y}\left\{\frac{-k\mu_x}{b_3}\right\}+\frac{-b_3(w\cdot v_0)+w\cdot v_3}{\mu_x}\frac{k\mu_x}{b_3}}\\
&=\sum_iq^i\sum_kp^{(-s_0-1)\left(\left\{\frac{-k\mu_x}{b_3}\right\}+\frac{k\mu_x}{b_3}\right)}.
\end{align*}
Since $\mu_x$ and $b_3$ are coprime, one has $k\mu_x/b_3\notin\Z$ and $\{-k\mu_x/b_3\}=1-\{k\mu_x/b_3\}$ for $k$ not a multiple of $b_3$. Hence
\begin{equation*}
\left\{-\frac{k\mu_x}{b_3}\right\}+\frac{k\mu_x}{b_3}=
\begin{cases}
0,&\text{if $k=0$};\\
1+\left\lfloor\frac{k\mu_x}{b_3}\right\rfloor\in\Z,&\text{if $k\in\{1,\ldots,b_3-1\}$};
\end{cases}
\end{equation*}
and we obtain
\begin{equation}\label{formuleSigmadriecase4}
\Sigma_3=\frac{F_2}{q-1}\Biggl(1+q\sum_{k=1}^{b_3-1}q^{\left\lfloor\frac{k\mu_x}{b_3}\right\rfloor}\Biggr),
\end{equation}
with the understanding that the empty sum equals zero in case $b_3=1$.

\subsection{Proof of $R_2'=R_1'=0$}
As it follows immediately from \eqref{formR2accasevier} and \eqref{formsigmaxcasevier} that $R_2'=0$, we can further focus on $R_1'$. Let us first assume that $\mu_x=1$. In this case, we found that $\Sigma_x'=0$, while it follows from \eqref{formuleSigmadriecase4} that
\begin{equation*}
\Sigma_3=\frac{F_2}{q-1}(1+(b_3-1)q);
\end{equation*}
furthermore, note that $F_3=q-1$ and hence $\Sigma_x=1$, while $y_Ab_3-m_3=1$ by \eqref{interpbdrieyammdriec4}. With these observations, \eqref{formR1accasevier} easily yields $R_1'=0$.

From now on, suppose that $\mu_x>1$ and thus that $\overline{b_3}>0$. If we then fill in (\ref{formsigmaxcasevier}, \ref{formsigmaxaccasevier}, \ref{formuleSigmadriecase4}) in \eqref{formR1accasevier}, one sees that proving $R_1'=0$ eventually boils down to proving that
\begin{equation}\label{finalcheckcasevier}
\sum_{k=1}^{b_3-1}q^{\left\lfloor\frac{k\mu_x}{b_3}\right\rfloor}=\sum_{\overline{k}=1}^{\overline{b_3}-1}q^{j_{\overline{k}}\,-1}+t\frac{F_3}{q-1},
\end{equation}
whereby the sum over $\overline{k}$ is again understood to be zero if $\overline{b_3}=1$. Let us do this now.

Recall that $b_3=t\mu_x+\overline{b_3}$ with $t\in\Zplus$ and $\overline{b_3}\in\{1,\ldots,\mu_x-1\}$. So if $t=0$, we have $b_3=\overline{b_3}$, and by \eqref{defjkbarcase4} and the coprimality of $\overline{b_3}$ and $\mu_x$, we then find
\begin{equation*}
\sum_{k=1}^{b_3-1}q^{\left\lfloor\frac{k\mu_x}{b_3}\right\rfloor}=\sum_{\overline{k}=1}^{\overline{b_3}-1}q^{\left\lfloor\frac{\overline{k}\mu_x}{\overline{b_3}}\right\rfloor}=\sum_{\overline{k}}q^{\left\lceil\frac{\overline{k}\mu_x}{\overline{b_3}}\right\rceil-1}=\sum_{\overline{k}}q^{j_{\overline{k}}\,-1},
\end{equation*}
which agrees with \eqref{finalcheckcasevier} for $t=0$.

In what follows, we assume that $t>0$ and hence that $b_3>\mu_x$. Define the numbers
\begin{equation*}
k_j=\min\left\{k\in\Zplus\;\middle\vert\;\left\lfloor\frac{k\mu_x}{b_3}\right\rfloor=j\right\}=\left\lceil\frac{jb_3}{\mu_x}\right\rceil;\qquad j=0,\ldots,\mu_x;
\end{equation*}
and note that
\begin{equation*}
0=k_0<k_1<\cdots<k_{\mu_x-1}<k_{\mu_x}=b_3.
\end{equation*}
This gives rise to
\begin{equation*}
\sum_{k=1}^{b_3-1}q^{\left\lfloor\frac{k\mu_x}{b_3}\right\rfloor}=\sum_{j=0}^{\mu_x-1}\sum_{k=k_j}^{k_{j+1}-1}q^j-1=\sum_{\overline{k}=0}^{\overline{b_3}-1}\sum_{j=j_{\overline{k}}}^{j_{\overline{k}+1}-1}(k_{j+1}-k_j)q^j-1.
\end{equation*}

Finally, observe that
\begin{equation*}
k_j=\left\lceil\frac{jb_3}{\mu_x}\right\rceil=\left\lceil\frac{j(t\mu_x+\overline{b_3})}{\mu_x}\right\rceil=jt+\left\lceil\frac{j\overline{b_3}}{\mu_x}\right\rceil=jt+\left\lfloor\frac{j\overline{b_3}}{\mu_x}\right\rfloor+1-[j=0]-[j=\mu_x]
\end{equation*}
for $j\in\{0,\ldots,\mu_x\}$; hence for $0\leqslant\overline{k}\leqslant\overline{b_3}-1$ and $j_{\overline{k}}\leqslant j\leqslant j_{\overline{k}+1}-1$, we have
\begin{align*}
&\;k_{j+1}-k_j\\
&=\bigl((j+1)t+(\overline{k}+[j+1=j_{\overline{k}+1}])+1-[j+1=\mu_x]\bigr)-\bigl(jt+\overline{k}+1-[j=0]\bigr)\\
&=t+[j=j_{\overline{k}+1}-1]+[j=0]-[j=\mu_x-1].
\end{align*}
Therefore,
\begin{align*}
\sum_{k=1}^{b_3-1}q^{\left\lfloor\frac{k\mu_x}{b_3}\right\rfloor}&=\sum_{\overline{k}=0}^{\overline{b_3}-1}\sum_{j=j_{\overline{k}}}^{j_{\overline{k}+1}-1}(k_{j+1}-k_j)q^j-1\\
&=\sum_{\overline{k}}\sum_j\bigl(t+[j=j_{\overline{k}+1}-1]+[j=0]-[j=\mu_x-1]\bigr)q^j-1\\
&=t\sum_{j=0}^{\mu_x-1}q^j+\sum_{\overline{k}=0}^{\overline{b_3}-1}q^{j_{\overline{k}+1}-1}+q^0-q^{\mu_x-1}-1\\
&=\sum_{\overline{k}=1}^{\overline{b_3}-1}q^{j_{\overline{k}}\,-1}+t\frac{F_3}{q-1},
\end{align*}
which agrees with \eqref{finalcheckcasevier}. This concludes Case~IV.

\section{Case~V: exactly two facets of \Gf\ contribute to $s_0$; one of them is a non-compact $B_1$-facet, the other one a $B_1$-simplex; these facets are $B_1$ with respect to a same variable and have an edge in common}
\subsection{Figure and notations}
We assume that the two facets $\tau_0$ and $\tau_1$ contributing to $s_0$ are both $B_1$-facets with respect to the variable $z$. Let $\tau_0$ be non-compact, say for the variable $x$, and let $\tau_1$ be a $B_1$-simplex. Facet $\tau_0$ shares its unique compact edge $[AC]$ with $\tau_1$. We denote the vertices of $\tau_0$ and $\tau_1$ and their coordinates by
\begin{equation*}
A(x_A,y_A,0),\quad B(x_B,y_B,0),\quad C(x_C,y_C,1)
\end{equation*}
and the neighbor facets of $\tau_0$ and $\tau_1$ by $\tau_2,\tau_3,\tau_4$ as indicated in Figure~\ref{figcase5}.

%
%
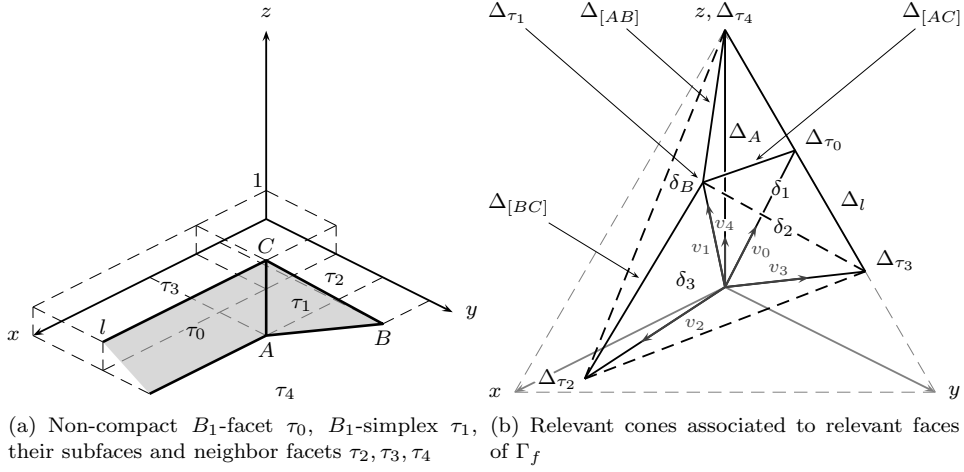
\begin{figure}
\psset{unit=.03464624362\textwidth}
\centering
\subfigure[Non-compact $B_1$-facet $\tau_0$, $B_1$-simplex $\tau_1$, their subfaces and neighbor facets $\tau_2,\tau_3,\tau_4$]{
\begin{pspicture}(-7.56,-5.5)(6.15,6.5)
{\footnotesize
\pstThreeDCoor[xMin=0,yMin=0,zMin=0,xMax=10,yMax=8,zMax=6.6,linecolor=black,linewidth=.7pt]
{
\psset{linecolor=black,linewidth=.3pt,linestyle=dashed,subticks=1}
\pstThreeDLine(3,0,0)(3,3,0)\pstThreeDLine(3,3,0)(0,3,0)
\pstThreeDLine(10,0,0)(10,3,0)\pstThreeDLine(10,3,0)(3,3,0)
\pstThreeDLine(0,7,0)(2,7,0)\pstThreeDLine(2,7,0)(2,0,0)
\pstThreeDLine(5,0,0)(5,5,0)\pstThreeDLine(5,5,0)(0,5,0)
\pstThreeDLine(10,3,0)(10,5,0)\pstThreeDLine(3,0,1)(3,3,1)
\pstThreeDLine(3,3,1)(0,3,1)\pstThreeDLine(10,0,1)(10,3,1)
\pstThreeDLine(0,0,1)(3,0,1)\pstThreeDLine(3,0,1)(10,0,1)
\pstThreeDLine(0,0,1)(0,3,1)\pstThreeDLine(3,0,0)(3,0,1)
\pstThreeDLine(10,0,0)(10,0,1)\pstThreeDLine(0,3,0)(0,3,1)
\pstThreeDLine(10,3,0)(10,3,1)
}
\pstThreeDPut[pOrigin=c](7,4,0.5){\psframebox*[framesep=0.8pt,framearc=0.3]{\phantom{$\tau_0$}}}
\pstThreeDPut[pOrigin=c](3.33,4.85,0.33){\psframebox*[framesep=0.3pt,framearc=0.3]{\phantom{$\tau_1$}}}
{
\psset{dotstyle=none,dotscale=1,drawCoor=false}
\psset{linecolor=black,linewidth=1pt,linejoin=1}
\psset{fillcolor=lightgray,opacity=.6,fillstyle=solid}
\pstThreeDLine(3,3,1)(2,7,0)(5,5,0)\pstThreeDLine(10,5,0)(5,5,0)(3,3,1)(10,3,1)
}
\pstThreeDPut[pOrigin=t](5,5,-0.25){$A$}
\pstThreeDPut[pOrigin=t](2,7,-0.24){$B$}
\pstThreeDPut[pOrigin=b](3,3,1.25){\psframebox*[framesep=.3pt,framearc=1]{$C$}}
\pstThreeDPut[pOrigin=c](7,4,0.5){$\tau_0$}
\pstThreeDPut[pOrigin=c](3.33,4.85,0.33){$\tau_1$}
\pstThreeDPut[pOrigin=lb](2.65,5.1,0.8){\psframebox*[framesep=0.5pt,framearc=0.3]{$\tau_2$}}
\pstThreeDPut[pOrigin=c](6.85,2.75,1.5){\psframebox*[framesep=0.4pt,framearc=0.3]{$\tau_3$}}
\pstThreeDPut[pOrigin=l](7.45,7.5,0){$\ \tau_4$}
\pstThreeDPut[pOrigin=b](10,3,1.27){$l$}
\pstThreeDPut[pOrigin=br](0.06,-0.06,1.12){$1$}
}
\end{pspicture}
}\hfill\subfigure[Relevant cones associated to relevant faces of~\Gf]{
\psset{unit=.03125\textwidth}
\begin{pspicture}(-7.6,-3.8)(7.6,9.6)
{\footnotesize
\pstThreeDCoor[xMin=0,yMin=0,zMin=0,xMax=10,yMax=10,zMax=10,nameZ={},linecolor=gray,linewidth=.7pt]
{
\psset{linecolor=gray,linewidth=.3pt,linejoin=1,linestyle=dashed,fillcolor=lightgray,fillstyle=none}
\pstThreeDLine(10,0,0)(0,10,0)\pstThreeDLine(0,10,0)(0,0,10)\pstThreeDLine(0,0,10)(10,0,0)
}
{
\psset{linecolor=black,linewidth=.7pt,linejoin=1,fillcolor=lightgray,fillstyle=none}
\pstThreeDLine(0,0,0)(0,0,10)
}
{
\psset{labelsep=2pt}
\uput[90](0,1.7){\psframebox*[framesep=0.3pt,framearc=1]{\darkgray\scriptsize$v_4$}}
}
{
\psset{linecolor=black,linewidth=.7pt,linejoin=1,fillcolor=lightgray,fillstyle=none}
\pstThreeDLine(0,0,0)(0,3.33,6.67)
\pstThreeDLine(0,0,0)(2.63,1.58,5.79)
\pstThreeDLine(0,0,0)(8.15,1.48,.370)
\pstThreeDLine(0,0,0)(0,6.67,3.33)
}
{
\psset{linecolor=darkgray,linewidth=.8pt,linejoin=1,arrows=->,arrowscale=1,fillcolor=lightgray,fillstyle=none}
\pstThreeDLine(0,0,0)(0,1.5,3)
\pstThreeDLine(0,0,0)(2.10,1.27,4.62)
\pstThreeDLine(0,0,0)(4.89,.888,.222)
\pstThreeDLine(0,0,0)(0,4,2)
\pstThreeDLine(0,0,0)(0,0,2)
}
{
\psset{linecolor=white,linewidth=2pt,linejoin=1,fillcolor=lightgray,fillstyle=none}
\pstThreeDLine(2.24,1.84,5.92)(1.32,2.45,6.24)
\pstThreeDLine(2.37,2.09,5.54)(1.05,4.63,4.32)
}
{
\psset{linecolor=black,linewidth=.7pt,linejoin=1,fillcolor=lightgray,fillstyle=none}
\pstThreeDLine(0,0,10)(0,3.33,6.67)
\pstThreeDLine(0,0,10)(2.63,1.58,5.79)
\pstThreeDLine(0,6.67,3.33)(0,3.33,6.67)(2.63,1.58,5.79)(8.15,1.48,.370)
}
{
\psset{linecolor=black,linewidth=.7pt,linejoin=1,linestyle=dashed,fillcolor=lightgray,fillstyle=none}
\pstThreeDLine(0,0,10)(8.15,1.48,.370)
\pstThreeDLine(8.15,1.48,.370)(0,6.67,3.33)
\pstThreeDLine(2.63,1.58,5.79)(0,6.67,3.33)
}
{
\psset{labelsep=2pt}
\uput[-30](.7,1.4){\darkgray\scriptsize$v_0$}
\uput[-60](-1.3,-.9){\darkgray\scriptsize$v_2$}
\uput[105](1.96,.224){\darkgray\scriptsize$v_3$}
}
{
\psset{labelsep=1.5pt}
\uput[202](-.3,1.4){\darkgray\scriptsize$v_1$}
}
{
\psset{labelsep=3.8pt}
\uput[30](2.35,4.58){$\Delta_{\tau_0}$}
\uput[30](3.54,2.55){$\Delta_l$}
\uput[30](4.73,.52){$\Delta_{\tau_3}$}
}
{
\psset{labelsep=3.0pt}
\rput(-6.1,-3.0){\psframebox*[framesep=0.6pt,framearc=1]{\phantom{$\Delta$}}}
\uput[180](-4.7,-3.1){$\Delta_{\tau_2}$}
}
\rput(.7,5.2){\psframebox*[framesep=0.3pt,framearc=1]{\footnotesize$\Delta_A$}}
\rput(-1.43,3.5){\footnotesize$\delta_B$}
\rput(1.9,3.3){\psframebox*[framesep=0.7pt,framearc=1]{\footnotesize$\delta_1$}}
\rput(1.98,2.02){\psframebox*[framesep=0.3pt,framearc=.3]{\footnotesize$\delta_2$}}
\rput(-1.3,.25){\footnotesize$\delta_3$}
\pstThreeDNode(2.63,1.58,5.79){dt1}
\pstThreeDNode(1.32,.790,7.90){AB}
\pstThreeDNode(1.32,2.45,6.24){AC}
\pstThreeDNode(5.40,1.53,3.08){BC}
\rput[Bl](-7.967,2.675){\rnode{BClabel}{$\Delta_{[BC]}$}}
\rput[Bl](-7.967,9.075){\rnode{dt1label}{$\Delta_{\tau_1}$}}
\rput[Bl](-4.88,9.075){\rnode{ABlabel}{$\Delta_{[AB]}$}}
\rput[B](0,9.075){$z,\Delta_{\tau_4}$}
\rput[Br](8.01,9.075){\rnode{AClabel}{$\Delta_{[AC]}$}}
\ncline[linewidth=.3pt,nodesepB=2.5pt,nodesepA=2pt]{->}{dt1label}{dt1}
\ncline[linewidth=.3pt,nodesepB=2pt,nodesepA=1pt]{->}{ABlabel}{AB}
\ncline[linewidth=.3pt,nodesepB=3.5pt,nodesepA=1pt]{->}{AClabel}{AC}
\ncline[linewidth=.3pt,nodesepB=2pt,nodesepA=2.5pt]{->}{BClabel}{BC}
}
\end{pspicture}
}
\caption{Case V: the only facets contributing to $s_0$ are the non-compact $B_1$-facet $\tau_0$ and the $B_1$-simplex $\tau_1$}
\label{figcase5}
\end{figure}
%
%

Let us put
\begin{gather*}
\begin{alignedat}{10}
&\overrightarrow{AC}&&(x_C&&-x_A&&,y_C&&-y_A&&,1&&)&&=(\aA&&,\bA&&,1),\\
&\overrightarrow{BC}&&(x_C&&-x_B&&,y_C&&-y_B&&,1&&)&&=(\aB&&,\bB&&,1),
\end{alignedat}\\
\text{and}\qquad\,\fAB=\gcd(x_B-x_A,y_B-y_A)
\end{gather*}
as before. The unique primitive vector $v_0\in\Zplus^3$ perpendicular to $\tau_0$ is given by $v_0(0,1,-\bA)$; such vectors for the other relevant facets $\tau_1,\tau_2,\tau_3,\tau_4$ will be denoted
\begin{equation*}
v_1(a_1,b_1,c_1),\quad v_2(a_2,b_2,c_2),\quad v_3(0,b_3,c_3),\quad v_4(0,0,1),
\end{equation*}
respectively. Equations for the affine supports of $\tau_i$; $i=0,\ldots,4$; are given by
\begin{alignat*}{3}
\aff(\tau_0)&\leftrightarrow&\;        y&\;-\;&\bA z&=y_A,\\
\aff(\tau_1)&\leftrightarrow&\;a_1x+b_1y&\;+\;& c_1z&=m_1,\\
\aff(\tau_2)&\leftrightarrow&\;a_2x+b_2y&\;+\;& c_2z&=m_2,\\
\aff(\tau_3)&\leftrightarrow&\;     b_3y&\;+\;& c_3z&=m_3,\\
\aff(\tau_4)&\leftrightarrow&\;         &     &    z&=0
\end{alignat*}
for certain $m_1,m_2,m_3\in\Zplus$, and to these facets we associate the respective numerical data
\begin{equation*}
(y_A,1-\bA),\quad (m_1,\sigma_1),\quad (m_2,\sigma_2),\quad (m_3,\sigma_3),\quad (0,1),
\end{equation*}
with $\sigma_i=a_i+b_i+c_i$; $i=1,2$; and $\sigma_3=b_3+c_3$.

Since we assume that $\tau_0$ and $\tau_1$ both contribute to the candidate pole $s_0$, we have that $p^{1-\bA+y_As_0}=p^{\sigma_1+m_1s_0}=1$; hence
\begin{equation*}
\Re(s_0)=\frac{\bA-1}{y_A}=-\frac{\sigma_1}{m_1}\quad\ \text{and}\ \quad\Im(s_0)=\frac{2n\pi}{\gcd(y_A,m_1)\log p}\quad\ \text{for some $n\in\Z$.}
\end{equation*}

\subsection{Contributions to the candidate pole $s_0$}
The goal of this section is again to show that both
\begin{align*}
R_2&=\lim_{s\to s_0}\left(p^{1-\bA+y_As}-1\right)\left(p^{\sigma_1+m_1s}-1\right)\Zof(s)\qquad\text{and}\\
R_1&=\lim_{s\to s_0}\frac{d}{ds}\left[\left(p^{1-\bA+y_As}-1\right)\left(p^{\sigma_1+m_1s}-1\right)\Zof(s)\right]
\end{align*}
equal zero. The compact faces of \Gf\ contributing to $s_0$ are again the (seven) compact subfaces $A,B,C,[AB],[AC],[BC],$ and $\tau_1$ of the two contributing facets $\tau_0$ and $\tau_1$. Only three of them also contribute to the \lq residue\rq\ $R_2$: $A,C,$ and $[AC]$.

If we consider the nine simplicial cones
\begin{align*}
\Dteen&=\cone(v_1),        &\delta_1&=\cone(v_0,v_1,v_3),&\DAB&=\cone(v_1,v_4),\\
   \DA&=\cone(v_0,v_1,v_4),&\delta_2&=\cone(v_1,v_3),    &\DAC&=\cone(v_0,v_1),\\
   \dB&=\cone(v_1,v_2,v_4),&\delta_3&=\cone(v_1,v_2,v_3),&\DBC&=\cone(v_1,v_2),
\end{align*}
the same approach as in Cases~III and IV leads to the following expressions for $R_2$ and $R_1$:
\begin{gather*}
\begin{aligned}
R_2&=L_A(s_0)\frac{\Sigma(\Delta_A)(s_0)}{p-1}
+L_C(s_0)\frac{\Sigma(\delta_1)(s_0)}{p^{\sigma_3+m_3s_0}-1}
+L_{[AC]}(s_0)\Sigma(\Delta_{[AC]})(s_0),\\
R_1&=L_A'(s_0)\frac{\Sigma(\Delta_A)(s_0)}{p-1}
+L_A(s_0)\frac{\Sigma(\Delta_A)'(s_0)}{p-1}
+L_B(s_0)\frac{y_A(\log p)\Sigma(\delta_B)(s_0)}{\Ftwee(p-1)}
\end{aligned}\\
+L_C'(s_0)\frac{\Sigma(\delta_1)(s_0)}{p^{\sigma_3+m_3s_0}-1}
+L_C(s_0)\frac{\Sigma(\delta_1)'(s_0)}{p^{\sigma_3+m_3s_0}-1}\\
-L_C(s_0)\frac{m_3(\log p)p^{\sigma_3+m_3s_0}\Sigma(\delta_1)(s_0)}{\Fdrie^2}
+L_C(s_0)\frac{y_A(\log p)\Sigma(\delta_2)(s_0)}{p^{\sigma_3+m_3s_0}-1}\\
+L_C(s_0)\frac{y_A(\log p)\Sigma(\delta_3)(s_0)}{\Ftwee\Fdrie}
+L_{[AB]}(s_0)\frac{y_A(\log p)\Sigma(\Delta_{[AB]})(s_0)}{p-1}\\
+L_{[AC]}'(s_0)\Sigma(\Delta_{[AC]})(s_0)
+L_{[AC]}(s_0)\Sigma(\Delta_{[AC]})'(s_0)\\
+L_{[BC]}(s_0)\frac{y_A(\log p)\Sigma(\Delta_{[BC]})(s_0)}{p^{\sigma_2+m_2s_0}-1}
+L_{\tau_1}(s_0)y_A(\log p)\Sigma(\Delta_{\tau_1})(s_0).
\end{gather*}

\subsection{Towards simplified formulas for $R_2$ and $R_1$}
\subsubsection{The factors $L_{\tau}(s_0)$ and $L_{\tau}'(s_0)$}
In the usual way we obtain
\begin{align*}
L_A(s_0)=L_B(s_0)=L_C(s_0)&=\left(\frac{p-1}{p}\right)^3,\qquad L_A'(s_0)=L_C'(s_0)=0,\\
L_{[AB]}(s_0)&=\left(\frac{p-1}{p}\right)^3-\frac{(p-1)N}{p^2}\frac{p^{s_0}-1}{p^{s_0+1}-1},\\
L_{[AC]}(s_0)=L_{[BC]}(s_0)&=\left(\frac{p-1}{p}\right)^3-\left(\frac{p-1}{p}\right)^2\frac{p^{s_0}-1}{p^{s_0+1}-1},\\
L_{[AC]}'(s_0)&=-(\log p)\left(\frac{p-1}{p}\right)^3\frac{p^{s_0+1}}{\bigl(p^{s_0+1}-1\bigr)^2},\\
\text{and}\qquad L_{\tau_1}(s_0)&=\left(\frac{p-1}{p}\right)^3-\frac{(p-1)^2-N}{p^2}\frac{p^{s_0}-1}{p^{s_0+1}-1},
\end{align*}
with
\begin{equation*}
N=\#\left\{(x,y)\in(\Fpcross)^2\;\middle\vert\;\overline{f_{[AB]}}(x,y)=0\right\}.
\end{equation*}

\subsubsection{Cone multiplicities}
Let us investigate the multiplicities of the nine contributing simplicial cones. As we did before, we shall also consider the multiplicities $\mu_l$ and $\mu_1'$ of the respective simplicial cones $\Delta_l$ and $\delta_1'=\cone(v_0,v_1,v_2)$; the first cone is the cone associated to the half-line $l=\tau_0\cap\tau_3$ (see Figure~\ref{figcase5}), while the second one is a simplicial subcone of $\Delta_C$ that could have been chosen as a member of an alternative subdivision of $\Delta_C$. Proceeding as in the previous cases, we find
\begin{gather*}
\mult\Delta_{[AB]}=\mult\Delta_{\tau_1}=1,\\
\begin{alignedat}{5}
\mu_A&=\mult\Delta_A&&=&\;\#H(v_0,v_1,v_4)&=\mult\Delta_{[AC]}&&=\#H(v_0,v_1)&&=a_1,\\
\mu_B&=\mult\delta_B&&=&\;\#H(v_1,v_2,v_4)&=\mult\Delta_{[BC]}&&=\#H(v_1,v_2)&&=-
\begin{vmatrix}
a_1&b_1\\a_2&b_2
\end{vmatrix},\\
&&&&\;\mu_l&=\mult\Delta_l&&=\#H(v_0,v_3)&&=-
\begin{vmatrix}
1&-\bA\\b_3&c_3
\end{vmatrix},
\end{alignedat}\\
\begin{alignedat}{4}
\mu_1&=\mult\delta_1\ \;&&=\#H(v_0,v_1,v_3)&&=
\begin{vmatrix}
0&1&-\bA\\a_1&b_1&c_1\\0&b_3&c_3
\end{vmatrix}
&&=-a_1
\begin{vmatrix}
1&-\bA\\b_3&c_3
\end{vmatrix}=\mu_A\mu_l,\\
\mu_1'&=\mult\delta_1'&&=\#H(v_0,v_1,v_2)&&=
\begin{vmatrix}
0&1&-\bA\\a_1&b_1&c_1\\a_2&b_2&c_2
\end{vmatrix}
&&=\mu_A\mu_B\fAB.
\end{alignedat}\quad\ \,
\end{gather*}

We observe that
\begin{equation*}
\mu_1=\#H(v_0,v_1,v_3)=\mu_A\mu_l=\#H(v_0,v_1)\#H(v_0,v_3);
\end{equation*}
i.e., the factor $\phi_{Al}=\mu_1/\mu_A\mu_l$ equals one. Theorem~\ref{algfp}(v) now asserts that
\begin{equation*}
\mu_2=\mult\delta_2=\#H(v_1,v_3)=\gcd(\mu_A,\mu_l).
\end{equation*}

For
\begin{equation*}
\mu_3=\mult\delta_3=\#H(v_1,v_2,v_3)=
\begin{vmatrix}
a_1&b_1&c_1\\a_2&b_2&c_2\\0&b_3&c_3
\end{vmatrix}>0,
\end{equation*}
finally, we obtain in a similar way as in Case~III that
\begin{align*}
\mu_3&=-b_3
\begin{vmatrix}
a_1&c_1\\a_2&c_2
\end{vmatrix}
+c_3
\begin{vmatrix}
a_1&b_1\\a_2&b_2
\end{vmatrix}\\
&=
\begin{vmatrix}
a_1&b_1\\a_2&b_2
\end{vmatrix}
(b_3\bB+c_3)\\
&=-\mu_B\left(v_3\cdot\overrightarrow{BC}\right)\\
&=\mu_B\left(v_3\cdot\overrightarrow{AB}-v_3\cdot\overrightarrow{AC}\right)\\
&=\mu_B\bigl(v_3\cdot\fAB(-b_1,a_1,0)-v_3\cdot(\aA,\bA,1)\bigr)\\
&=\mu_B\bigl(\fAB\Psi+\mu_l\bigr)\\
&=\mu_B\mu_2\fBtwee,
\end{align*}
whereby
\begin{gather*}
\Psi=a_1b_3=b_3\mu_A,\qquad\fBtwee=\fAB\psi+\mu_l',\\
\psi=\frac{\Psi}{\mu_2}=b_3\mu_A'\in\Zplusnul,\qquad\mu_A'=\frac{\mu_A}{\mu_2}\in\Zplusnul,\qquad\text{and}\qquad\mu_l'=\frac{\mu_l}{\mu_2}\in\Zplusnul.
\end{gather*}
Note that the coprimality of $b_3$ and $c_3$ implies the coprimality of $b_3$ and $\mu_l$. Hence
\begin{equation*}
\mu_2=\gcd(\mu_A,\mu_l)=\gcd(\Psi,\mu_l)
\end{equation*}
and $\gcd(\psi,\mu_l')=\gcd(\psi,\fBtwee)=1$.

\subsubsection{The sums $\Sigma(\cdot)(s_0)$ and $\Sigma(\cdot)'(s_0)$}
We have of course that $\Sigma(\Delta_{[AB]})(s_0)=\Sigma(\Delta_{\tau_1})(s_0)=1$. As usual, we denote
\begin{alignat*}{3}
H_A&=H(v_0,v_1,v_4)&&=H(v_0,v_1),&\qquad\qquad\quad H_1&=H(v_0,v_1,v_3),\\
H_B&=H(v_1,v_2,v_4)&&=H(v_1,v_2),&                  H_2&=H(v_1,v_3),\\
H_l&=H(v_0,v_3),   &&            &                  H_3&=H(v_1,v_2,v_3),
\end{alignat*}
and $w=(1,1,1)+s_0(x_C,y_C,1)\in\C^3$, yielding
\begin{gather*}
\begin{alignedat}{3}
\Sigma_A &=\Sigma(\Delta_A)(s_0) &&=\Sigma(\Delta_{[AC]})(s_0) &&=\sum\nolimits_{h\in H_A}p^{w\cdot h};\\
\Sigma_B &=\Sigma(\delta_B)(s_0) &&=\Sigma(\Delta_{[BC]})(s_0) &&=\sum\nolimits_{h\in H_B}p^{w\cdot h};\\
\Sigma_A'&=\Sigma(\Delta_A)'(s_0)&&=\Sigma(\Delta_{[AC]})'(s_0)&&=(\log p)\sum\nolimits_{h\in H_A}m(h)p^{w\cdot h};
\end{alignedat}\\
\begin{alignedat}{2}
\Sigma_l &=\Sigma(\Delta_l)(s_0) &&=\sum\nolimits_{h\in H_l}p^{w\cdot h};\\
\Sigma_i &=\Sigma(\delta_i)(s_0) &&=\sum\nolimits_{h\in H_i}p^{w\cdot h};\qquad i=1,2,3;\\
\Sigma_1'&=\Sigma(\delta_1)'(s_0)&&=(\log p)\sum\nolimits_{h\in H_1}m(h)p^{w\cdot h}.
\end{alignedat}
\end{gather*}

\subsubsection{New formulas for the residues}
Let us put
\begin{gather*}
R_2=\left(\frac{p-1}{p}\right)^3R_2',\qquad R_1=(\log p)\left(\frac{p-1}{p}\right)^3R_1',\\
F_2=p^{\sigma_2+m_2s_0}-1,\qquad F_3=p^{\sigma_3+m_3s_0}-1,\qquad\text{and}\qquad q=p^{-s_0-1}.
\end{gather*}
Our findings so far lead to the following expressions for $R_2'$ and $R_1'$:
\begin{gather}
R_2'=\frac{\Sigma_A}{1-q}+\frac{\Sigma_1}{F_3},\label{formR2accasevijf}\\
\begin{multlined}[.8\textwidth]
R_1'=\frac{1}{1-q}\left(\frac{\Sigma_A'}{\log p}-\frac{\Sigma_A}{q^{-1}-1}+\frac{y_A\Sigma_B}{F_2}+y_A\right)\\
+\frac{\Sigma_1'}{(\log p)F_3}-\frac{m_3(F_3+1)\Sigma_1}{F_3^2}+\frac{y_A\Sigma_2}{F_3}+\frac{y_A\Sigma_3}{F_2F_3}.
\end{multlined}\label{formR1accasevijf}
\end{gather}

We prove that $R_2'=R_1'=0$.

\subsection{Investigation of the sums $\Sigma_{\bullet}$ and $\Sigma_{\bullet}'$}
\subsubsection{Vector identities and consequences}
The identities that will be useful to us in this case are
\begin{align}
b_3v_0-v_3&=(0,0,\mu_l),\label{vi1c5}\\
-\mu_Bv_0+a_2v_1-\mu_Av_2&=(0,0,\mu_1'),\qquad\text{and}\label{vi2c5}\\
\Theta v_1-\Psi v_2-\mu_Bv_3&=(0,0,\mu_3),\notag
\end{align}
whereby $\Theta=a_2b_3$ and $\Psi=a_1b_3$. These give rise to
\begin{equation}\label{interpbdrieyammdriec5}
y_Ab_3-m_3=\mu_l
\end{equation}
and to
\begin{align}
-b_3(w\cdot v_0)+w\cdot v_3&=\mu_l(-s_0-1),\label{dpi1c5}\\
\mu_B(w\cdot v_0)-a_2(w\cdot v_1)+\mu_A(w\cdot v_2)&=\mu_1'(-s_0-1),\qquad\text{and}\label{dpi2c5}\\
-\Theta(w\cdot v_1)+\Psi(w\cdot v_2)+\mu_B(w\cdot v_3)&=\mu_3(-s_0-1).\label{dpi3c5}
\end{align}
Moreover, they show that
\begin{equation}\label{bpsc5}
\frac{-b_3}{\mu_l}v_0+\frac{1}{\mu_l}v_3\in\Z^3\quad\text{ and }\quad\frac{-\Theta}{\mu_3}v_1+\frac{\psi}{\mu_B\fBtwee}v_2+\frac{1}{\mu_2\fBtwee}v_3\in\Z^3.
\end{equation}
(Recall that $\mu_3=\mu_B\mu_2\fBtwee$ and $\Psi=\psi\mu_2$.)

\subsubsection{Points of $H_A,H_B,H_l,H_1,H_2,$ and $H_3$}
The $\mu_A$ points of $H_A$ are given by
\begin{alignat}{2}
\left\{\frac{i\xi_A}{\mu_A}\right\}v_0+\frac{i}{\mu_A}v_1;&&\qquad i&=0,\ldots,\mu_A-1;\notag\\
\intertext{or, alternatively, by}
\frac{i}{\mu_A}v_0+\left\{\frac{i\xi_A'}{\mu_A}\right\}v_1;&&\qquad i&=0,\ldots,\mu_A-1;\label{pofAaccentcasevijf}\\
\intertext{for certain $\xi_A,\xi_A'\in\verA$ with $\xi_A\xi_A'\equiv1\bmod\mu_A$. By \eqref{bpsc5}, the $\mu_l$ points of $H_l$ are}
\left\{\frac{-jb_3}{\mu_l}\right\}v_0+\frac{j}{\mu_l}v_3;&&\qquad j&=0,\ldots,\mu_l-1;\label{poflcasevijf}\\
\intertext{while those of $H_B$ and $H_2$ are given by}
\left\{\frac{i\xi_B}{\mu_B}\right\}v_1+\frac{i}{\mu_B}v_2;&&\qquad i&=0,\ldots,\mu_B-1;\label{pofBcasevijf}\\
\shortintertext{and by}
\left\{\frac{j\xi_2}{\mu_2}\right\}v_1+\frac{j}{\mu_2}v_3;&&\qquad j&=0,\ldots,\mu_2-1;\label{pof2casevijf}
\end{alignat}
respectively, for unique $\xi_B\in\verB$ and $\xi_2\in\vertwee$, coprime to $\mu_B$ and $\mu_2$, respectively.

Since $\mu_1=\mu_A\mu_l$, the description of the $\mu_1$ points of $H_1$ is rather easy:
\begin{equation}\label{pof1casevijf}
\left\{\frac{i\xi_A\mu_l-jb_3\mu_A}{\mu_1}\right\}v_0+\frac{i}{\mu_A}v_1+\frac{j}{\mu_l}v_3;\qquad i=0,\ldots,\mu_A-1;\quad j=0,\ldots,\mu_l-1.
\end{equation}

Based on (\ref{pofBcasevijf}--\ref{pof2casevijf}) and \eqref{bpsc5}, we find in exactly the same way as in Case~III (Paragraph~\ref{ssspoH3c3}) a complete list of the $\mu_3=\mu_B\mu_2\fBtwee$ points of $H_3$:
\begin{multline}\label{pof3casevijf}
\left\{\frac{(i-\lfloor k\psi/\fBtwee\rfloor)\xi_B\mu_2\fBtwee+j\xi_2\mu_B\fBtwee-k\Theta}{\mu_3}\right\}v_1\\
\shoveright{+\frac{i\fBtwee+\{k\psi\}_{\fBtwee}}{\mu_B\fBtwee}v_2+\frac{j\fBtwee+k}{\mu_2\fBtwee}v_3;}\\
i=0,\ldots,\mu_B-1;\quad j=0,\ldots,\mu_2-1;\quad k=0,\ldots,\fBtwee-1.
\end{multline}

\subsubsection{Formulas for $\Sigma_A,\Sigma_A',\Sigma_B,\Sigma_l,\Sigma_1,\Sigma_1',\Sigma_2,$ and $\Sigma_3$}As in Case~III, for some of the sums $\Sigma_{\bullet}$ and $\Sigma_{\bullet}'$, we will have to distinguish between two cases. Let us put
\begin{gather*}
n^{\ast}=\frac{\gcd(y_A,m_1)}{\gcd(y_A,m_1,m(h^{\ast}))},\\
\text{with}\qquad m(h^{\ast})=\frac{\xi_Ay_A+m_1}{\mu_A}\in\Zplusnul\qquad\text{and}\qquad h^{\ast}=\frac{\xi_A}{\mu_A}v_0+\frac{1}{\mu_A}v_1\in\Z^3,
\end{gather*}
a generating element of the group $H_A$ (if $\mu_A>1$). Then we have that
\begin{equation*}
p^{\frac{\xi_A(w\cdot v_0)+w\cdot v_1}{\mu_A}}\qquad\qquad\text{and}\qquad\qquad\ p^{\frac{w\cdot v_0+\xi_A'(w\cdot v_1)}{\mu_A}}
\end{equation*}
both equal one if $n^{\ast}\mid n$, while they both differ from one if $n^{\ast}\nmid n$. Proceeding as in Paragraphs~\ref{sssSigmaBc3}--\ref{sssSigmaBaccentc3}, we obtain that
\begin{align}
\Sigma_A&=
\begin{dcases*}
\mu_A,&if $n^{\ast}\mid n$;\\
0,&otherwise;
\end{dcases*}\qquad\qquad\text{and}\label{formSigAc5}\\
\frac{\Sigma_A'}{\log p}&=
\begin{dcases*}
\frac{(y_A+m_1)(\mu_A-1)}{2},&if $n^{\ast}\mid n$;\\[+.4ex]
\frac{y_A}{p^{\frac{w\cdot v_0+\xi_A'(w\cdot v_1)}{\mu_A}}-1}+
\frac{m_1}{p^{\frac{\xi_A(w\cdot v_0)+w\cdot v_1}{\mu_A}}-1},&otherwise.
\end{dcases*}\label{formSigAaccentc5}
\end{align}

We continue as in Paragraph~\ref{sssSASCS2c3}. Based on \eqref{pofBcasevijf} and $p^{w\cdot v_1}=1$, we find that
\begin{equation}\label{formsigmaBcase5}
\Sigma_B=
\begin{dcases*}
\frac{F_2}{p^{\frac{\xi_B(w\cdot v_1)+w\cdot v_2}{\mu_B}}-1},&in any case;\\
\frac{F_2}{q^{\fAB}-1},&if $n^{\ast}\mid n$.
\end{dcases*}
\end{equation}
The special formula for $n^{\ast}\mid n$ arises from
\begin{equation}\label{sigbsigaidcasevijf}
p^{\frac{w\cdot v_0+\xi_A'(w\cdot v_1)}{\mu_A}}p^{\frac{\xi_B(w\cdot v_1)+w\cdot v_2}{\mu_B}}=p^{\fAB(-s_0-1)}=q^{\fAB},
\end{equation}
which in turn follows from $v_0+\xi_A'v_1\in\mu_A\Z^3$, $\xi_Bv_1+v_2\in\mu_B\Z^3$, \eqref{vi2c5}, and \eqref{dpi2c5}. For $\Sigma_l$ we use \eqref{poflcasevijf}, $p^{w\cdot v_0}=1$, and \eqref{dpi1c5} in order to conclude
\begin{equation*}
\Sigma_l=\sum_{h\in H_l}p^{w\cdot h}=\sum_{j=0}^{\mu_l-1}\Bigl(p^{\frac{-b_3(w\cdot v_0)+w\cdot v_3}{\mu_l}}\Bigr)^j=\sum_jq^j=\frac{F_3}{q-1}.
\end{equation*}

By \eqref{pofAaccentcasevijf}, \eqref{pof2casevijf}, and \eqref{vi1c5} we have that
\begin{equation*}
v_0+\xi_A'v_1\in\mu_A\Z^3,\qquad\xi_2v_1+v_3\in\mu_2\Z^3,\qquad\text{and}\qquad b_3v_0-v_3\in\mu_l\Z^3.
\end{equation*}
Since $\mu_2=\gcd(\mu_A,\mu_l)$, we obtain
\begin{equation*}
-b_3(v_0+\xi_A'v_1)+(\xi_2v_1+v_3)+(b_3v_0-v_3)=(-\xi_A'b_3+\xi_2)v_1\in\mu_2\Z^3,
\end{equation*}
and hence
\begin{equation*}
\frac{-\xi_A'b_3+\xi_2}{\mu_2}\in\Z.
\end{equation*}
Using $\psi\mu_2=\Psi=b_3\mu_A$, $p^{w\cdot v_1}=1$, \eqref{dpi1c5}, and $\mu_l/\mu_2=\mu_l'\in\Zplusnul$, it then follows that
\begin{multline}\label{extraidentiteitc5}
\Bigl(p^{\frac{w\cdot v_0+\xi_A'(w\cdot v_1)}{\mu_A}}\Bigr)^{-\psi}p^{\frac{\xi_2(w\cdot v_1)+w\cdot v_3}{\mu_2}}
=p^{\frac{-\Psi(w\cdot v_0)+(-\xi_A'\Psi+\xi_2\mu_A)(w\cdot v_1)+\mu_A(w\cdot v_3)}{\mu_A\mu_2}}\\
=p^{\frac{-b_3(w\cdot v_0)+(-\xi_A'b_3+\xi_2)(w\cdot v_1)+w\cdot v_3}{\mu_2}}
=p^{\frac{-b_3(w\cdot v_0)+w\cdot v_3}{\mu_2}}=p^{\frac{\mu_l(-s_0-1)}{\mu_2}}=q^{\mu_l'}.
\end{multline}
In this way \eqref{pof2casevijf} yields
\begin{equation}\label{formsigma2case5}
\Sigma_2=\sum_{h\in H_2}p^{w\cdot h}=\sum_{j=0}^{\mu_2-1}\Bigl(p^{\frac{\xi_2(w\cdot v_1)+w\cdot v_3}{\mu_2}}\Bigr)^j=
\begin{dcases*}
\frac{F_3}{p^{\frac{\xi_2(w\cdot v_1)+w\cdot v_3}{\mu_2}}-1},&in any case;\\
\frac{F_3}{q^{\mu_l'}-1},&if $n^{\ast}\mid n$.
\end{dcases*}
\end{equation}

Keeping in mind that $p^{w\cdot v_0}=1$, we easily find from the description \eqref{pof1casevijf} of the points of $H_1$ that
\begin{align}
\Sigma_1=\sum_{h\in H_1}p^{w\cdot h}
&=\sum_{i=0}^{\mu_A-1}\sum_{j=0}^{\mu_l-1}p^{\frac{i\xi_A\mu_l-jb_3\mu_A}{\mu_1}(w\cdot v_0)+\frac{i}{\mu_A}(w\cdot v_1)+\frac{j}{\mu_l}(w\cdot v_3)}\notag\\
&=\sum_i\Bigl(p^{\frac{\xi_A(w\cdot v_0)+w\cdot v_1}{\mu_A}}\Bigr)^i\sum_j\Bigl(p^{\frac{-b_3(w\cdot v_0)+w\cdot v_3}{\mu_l}}\Bigr)^j\notag\\
&=\Sigma_A\Sigma_l=\begin{dcases*}
\frac{\mu_AF_3}{q-1},&if $n^{\ast}\mid n$;\\[+.3ex]
0,&otherwise.
\end{dcases*}\label{formsigma1case5}
\end{align}

To calculate $\Sigma_1'$ we follow the same process as in Case~III. Write $\psi$ as $\psi=t\mu_l'+\barpsi$ with $t\in\Zplus$ and $\barpsi=\{\psi\}_{\mu_l'}$. Clearly $\barpsi\in\{0,\ldots,\mu_l'-1\}$, and since $\gcd(\psi,\mu_l')=1$, we also have $\gcd(\barpsi,\mu_l')=1$. Hence $\barpsi=0$ occurs if and only if $\mu_l'=1$. Exclusively in the case that $\mu_l'>1$ we also introduce the numbers
\begin{equation*}
\kappa_{\rho}=\min\left\{\kappa\in\Zplus\;\middle\vert\;\left\lfloor\frac{\kappa\barpsi}{\mu_l'}\right\rfloor=\rho\right\}=\left\lceil\frac{\rho\mu_l'}{\barpsi}\right\rceil;\qquad\rho=0,\ldots,\barpsi.
\end{equation*}
Proceeding as in Subsection~\ref{studysigmaeenacc3} and applying \eqref{interpbdrieyammdriec5} in the end, we eventually obtain that
\begin{equation}\label{formsigma1accentcase5}
\frac{\Sigma_1'}{(\log p)F_3}=
\begin{dcases*}
\begin{multlined}[b][.68\textwidth]
\frac{1}{1-q}\Biggl(\frac{y_A}{q^{\mu_l'}-1}\sum_{\rho=1}^{\barpsi}q^{\kappa_{\rho}}-\frac{(y_A+m_1)(\mu_A-1)}{2}\\
+\frac{y_At-\mu_A}{1-q^{-1}}-\frac{m_3\mu_A(F_3+1)}{F_3}-y_A\Biggr)-\frac{y_A}{q^{\mu_l'}-1},
\end{multlined}
&\!\text{if $n^{\ast}\mid n$;}\\[+.5ex]
\begin{multlined}[b][.68\textwidth]
\frac{y_Ap^{\frac{w\cdot v_0+\xi_A'(w\cdot v_1)}{\mu_A}}}{\Bigl(p^{\frac{w\cdot v_0+\xi_A'(w\cdot v_1)}{\mu_A}}-1\Bigr)\Bigl(p^{\frac{\xi_2(w\cdot v_1)+w\cdot v_3}{\mu_2}}-1\Bigr)}\cdot\\
\shoveright{\sum_{\kappa=0}^{\mu_l'-1}q^{\kappa}\Bigl(p^{\frac{w\cdot v_0+\xi_A'(w\cdot v_1)}{\mu_A}}\Bigr)^{\left\lfloor\frac{\kappa\psi}{\mu_l'}\right\rfloor}}\\
-\frac{y_A}{p^{\frac{\xi_2(w\cdot v_1)+w\cdot v_3}{\mu_2}}-1}+\frac{m_1}{(q-1)\Bigl(p^{\frac{\xi_A(w\cdot v_0)+w\cdot v_1}{\mu_A}}-1\Bigr)},
\end{multlined}
&\!\text{if $n^{\ast}\nmid n$;}
\end{dcases*}
\end{equation}
thereby adopting the convention that the empty sum over $\rho$ equals zero if $\mu_l'=1$.

From the description \eqref{pof3casevijf} of the points of $H_3$, it is reasonable that also the calculation of $\Sigma_3$ is essentially not different from the one in Case~III; proceeding as in Paragraph~\ref{ssscalcsigmadriecasedrie}, thereby using Identity~\eqref{dpi3c5}, we find that
\begin{equation}\label{formuleSigmadriednc5}
\Sigma_3=\frac{F_2F_3}{\Bigl(p^{\frac{\xi_B(w\cdot v_1)+w\cdot v_2}{\mu_B}}-1\Bigr)\Bigl(p^{\frac{\xi_2(w\cdot v_1)+w\cdot v_3}{\mu_2}}-1\Bigr)}\sum_{k=0}^{\fBtwee-1}q^k\Bigl(p^{\frac{\xi_B(w\cdot v_1)+w\cdot v_2}{\mu_B}}\Bigr)^{-\left\lfloor\frac{k\psi}{\fBtwee}\right\rfloor}.
\end{equation}
A simplified version of this formula, valid in the case that $n^{\ast}\mid n$ and justified by Equalities \eqref{sigbsigaidcasevijf} and \eqref{extraidentiteitc5}, is given by
\begin{equation}\label{formuleSigmadriedeeltc5}
\Sigma_3=\frac{F_2F_3}{(q^{\fAB}-1)(q^{\mu_l'}-1)}\sum_{k=0}^{\fBtwee-1}q^{k-\fAB\left\lfloor\frac{k\psi}{\fBtwee}\right\rfloor}.
\end{equation}

\subsection{Proof of $R_2'=R_1'=0$}
On the one hand, it is clear from (\ref{formR2accasevijf}, \ref{formSigAc5}, and \ref{formsigma1case5}) that $R_2'=0$ in any case. On the other hand, if we fill in Formulas (\ref{formSigAc5}--\ref{formsigmaBcase5}, \ref{formsigma2case5}--\ref{formuleSigmadriedeeltc5}) for the $\Sigma(\cdot)(s_0)$ and the $\Sigma(\cdot)'(s_0)$ in Expression~\eqref{formR1accasevijf} for $R_1'$, we see that proving $R_1'=0$ comes down to verifying that
\begin{equation*}
\sum_{k=0}^{\fBtwee-1}q^{k-\fAB\left\lfloor\frac{k\psi}{\fBtwee}\right\rfloor}=\frac{q^{\fAB}-1}{q-1}\sum_{\rho=1}^{\barpsi}q^{\kappa_{\rho}}+\frac{q^{\mu_l'}-1}{q-1}\left(tq\frac{q^{\fAB}-1}{q-1}+1\right),
\end{equation*}
if $n^{\ast}\mid n$, and
\begin{multline*}
\Bigl(p^{\frac{w\cdot v_0+\xi_A'(w\cdot v_1)}{\mu_A}}-1\Bigr)\sum_{k=0}^{\fBtwee-1}q^k\Bigl(p^{\frac{\xi_B(w\cdot v_1)+w\cdot v_2}{\mu_B}}\Bigr)^{-\left\lfloor\frac{k\psi}{\fBtwee}\right\rfloor}\\*
+p^{\frac{w\cdot v_0+\xi_A'(w\cdot v_1)}{\mu_A}}\Bigl(p^{\frac{\xi_B(w\cdot v_1)+w\cdot v_2}{\mu_B}}-1\Bigr)\sum_{\kappa=0}^{\mu_l'-1}q^{\kappa}\Bigl(p^{\frac{w\cdot v_0+\xi_A'(w\cdot v_1)}{\mu_A}}\Bigr)^{\left\lfloor\frac{\kappa\psi}{\mu_l'}\right\rfloor}\\*
=\Bigl(p^{\frac{\xi_2(w\cdot v_1)+w\cdot v_3}{\mu_2}}-1\Bigr)\frac{q^{\fAB}-1}{q-1},
\end{multline*}
otherwise. In order to obtain this last equation, we need to apply Identity~\eqref{sigbsigaidcasevijf} at some point. Since the analogous relations between the variables hold, e.g.,
\begin{equation*}
\fBtwee=\fAB\psi+\mu_l'\qquad\text{and}\qquad\psi=t\mu_l'+\barpsi,
\end{equation*}
these final assertions can be proved in exactly the same way as in Subsection~\ref{ssfinalsscasedrie} of Case~III. Hence we conclude Case~V.

\section{Case~VI: at least three facets of \Gf\ contribute to $s_0$; all of them are $B_1$-facets (compact or not) with respect to a same variable and they are \lq connected to each other by edges\rq}
More precisely, we mean that we can denote the contributing $B_1$-facets by $\tau_0,\tau_1,\ldots,\tau_t$ with $t\geqslant2$ in such a way that facets $\tau_{i-1}$ and $\tau_i$ share an edge for all $i\in\{1,\ldots,t\}$. An example with $t=2$ is shown in Figure~\ref{figcase6}.

%
%
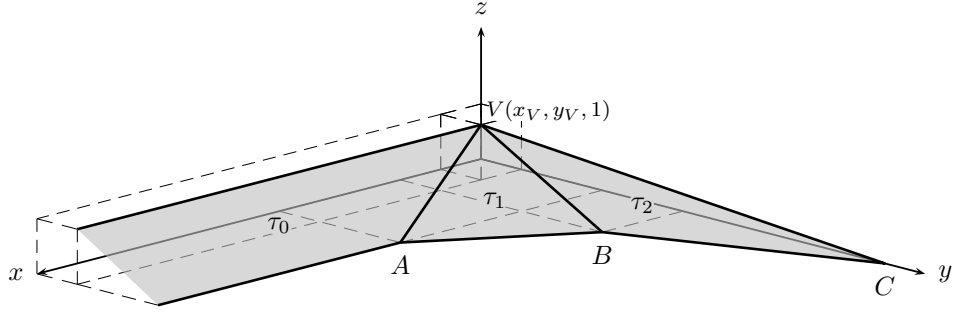
\begin{figure}
\psset{unit=.06\textwidth}
\centering
{
\psset{Beta=15}
\begin{pspicture}(-8.3,-2.9)(8.23,2.95)
\pstThreeDCoor[xMin=0,yMin=0,zMin=0,xMax=11,yMax=11,zMax=2.4,linecolor=black,linewidth=.7pt]
{
\psset{linecolor=black,linewidth=.3pt,linestyle=dashed,subticks=1}
\pstThreeDPlaneGrid[planeGrid=xy](0,0)(5,3)
\pstThreeDPlaneGrid[planeGrid=xy](0,0)(2,5)
\pstThreeDPlaneGrid[planeGrid=xz](0,0)(1,1)
\pstThreeDPlaneGrid[planeGrid=yz](0,0)(1,1)
\pstThreeDPlaneGrid[planeGrid=xy,planeGridOffset=1](0,0)(1,1)
\pstThreeDPlaneGrid[planeGrid=xz,planeGridOffset=1](0,0)(1,1)
\pstThreeDPlaneGrid[planeGrid=yz,planeGridOffset=1](0,0)(1,1)
\pstThreeDLine(1,0,1)(11,0,1)\pstThreeDLine(11,0,1)(11,0,0)\pstThreeDLine(11,0,0)(11,1,0)\pstThreeDLine(11,1,0)(11,1,1)\pstThreeDLine(11,1,1)(11,0,1)\pstThreeDLine(11,1,0)(11,3,0)\pstThreeDLine(11,1,0)(1,1,0)
}
\pstThreeDPut[pOrigin=c](7,2,0.5){\psframebox*[framesep=0.5pt,framearc=1]{\phantom{$\tau_0$}}}
\pstThreeDPut[pOrigin=c](2.66,3,0.33){\psframebox*[framesep=0.5pt,framearc=1]{\phantom{$\tau_1$}}}
\pstThreeDPut[pOrigin=c](1.1,5.05,0.33){\psframebox*[framesep=0.7pt,framearc=1]{\phantom{$\tau_2$}}}
{
\psset{dotstyle=none,dotscale=1,drawCoor=false}
\psset{linecolor=black,linewidth=1pt,linejoin=1}
\psset{fillcolor=lightgray,opacity=.6,fillstyle=solid}
\pstThreeDLine(1,1,1)(0,10,0)(2,5,0)
\pstThreeDLine(1,1,1)(2,5,0)(5,3,0)
\pstThreeDLine(11,3,0)(5,3,0)(1,1,1)(11,1,1)
}
\pstThreeDPut[pOrigin=t](5,3,-0.25){$A$}
\pstThreeDPut[pOrigin=t](2,5,-0.25){$B$}
\pstThreeDPut[pOrigin=t](0,10,-0.24){$C$}
\pstThreeDPut[pOrigin=c](7,2,0.5){$\tau_0$}
\pstThreeDPut[pOrigin=c](2.66,3,0.33){$\tau_1$}
\pstThreeDPut[pOrigin=c](1.1,5.1,0.33){$\tau_2$}
\pstThreeDPut[pOrigin=bl](.925,1.075,1.07){\psframebox*[framesep=-.3pt,framearc=1]{\footnotesize $V(x_V,y_V,1)$}}
\end{pspicture}
\psset{Beta=30}
}
\caption{Case VI: $B_1$-facets $\tau_0,\tau_1,$ and $\tau_2$ all contribute to $s_0$}
\label{figcase6}
\end{figure}
%
%

Let us assume that the contributing facets $\tau_0,\tau_1,\ldots,\tau_t$ are $B_1$ with respect to the variable $z$. Since the $\tau_i$ all contribute to the same candidate pole $s_0$, their affine supports intersect the diagonal of the first octant in the same point $(-1/s_0,-1/s_0,-1/s_0)$. As these affine supports share only one point, the aforementioned intersection point must be the contributing facets' common vertex $V$ at \lq height\rq\ one:
\begin{equation*}
\left(-\frac{1}{s_0},-\frac{1}{s_0},-\frac{1}{s_0}\right)=(x_V,y_V,1).
\end{equation*}
We conclude that $x_V=y_V=1$ and $s_0=-1$. Hence under the conditions of Theorem~\ref{maintheoartdrie}, Case~VI cannot occur.

\section{General case: several groups of $B_1$-facets contribute to $s_0$; every group is separately covered by one of the previous cases, and the groups have pairwise at most one point in common}\label{secgeval7art3}
As the different \lq clusters\rq\ of contributing $B_1$-facets pairwise share not more than one point, we can decompose each cone associated to a vertex of \Gf\ into simplicial cones in such a way that the relevant residues in $s_0$ split up into parts, each part corresponding to one of the preceding cases. In this way the general case follows immediately from the previous ones. Figure~\ref{figcase7} shows two possible configurations of $B_1$-facets that fall under the general case.\label{eindegrbew}

%
%
\begin{figure}
\centering
\subfigure[Non-compact $B_1$-facet $\tau_0$ and $B_1$-simplex $\tau_1$ both contribute to $s_0$. As they have only one point in common, they form two separate clusters.]{
\psset{unit=.07067137809\textwidth}
\psset{Beta=8}\psset{Alpha=60}
\begin{pspicture}(-5.8,-1.8)(8.07,3.25)
\pstThreeDCoor[xMin=0,yMin=0,zMin=0,xMax=11,yMax=9,zMax=3,spotZ=180,linecolor=black,linewidth=.7pt]
{
\psset{linecolor=gray,linewidth=.3pt,linestyle=dashed,subticks=1}
\pstThreeDLine(2,0,0)(2,2,0)\pstThreeDLine(2,2,0)(0,2,0)\pstThreeDLine(3,0,0)(3,5,0)\pstThreeDLine(3,5,0)(0,5,0)\pstThreeDLine(0,0,1)(2,0,1)\pstThreeDLine(2,0,1)(2,0,0)\pstThreeDLine(0,0,1)(0,2,1)\pstThreeDLine(0,2,1)(0,2,0)\pstThreeDLine(2,0,0)(2,2,0)
\pstThreeDPlaneGrid[planeGrid=xy,planeGridOffset=1](0,0)(2,2)
\pstThreeDPlaneGrid[planeGrid=xz,planeGridOffset=2](0,0)(2,1)
\pstThreeDPlaneGrid[planeGrid=yz,planeGridOffset=2](0,0)(2,1)
\pstThreeDLine(2,0,1)(11,0,1)\pstThreeDLine(11,0,1)(11,0,0)\pstThreeDLine(11,0,0)(11,2,0)\pstThreeDLine(11,2,0)(11,2,1)\pstThreeDLine(11,2,1)(11,0,1)\pstThreeDLine(11,2,0)(11,4,0)\pstThreeDLine(11,2,0)(2,2,0)\pstThreeDLine(6,0,0)(6,4,0)
}
{
\psset{linecolor=black,linewidth=.3pt,linestyle=dashed,subticks=1}
\pstThreeDLine(6,4,0)(0,4,0)\pstThreeDLine(8,0,0)(3,5,0)\pstThreeDLine(4,4,0)(0,0,2)\pstThreeDLine(0,0,0)(3,3,3)
}
{
\psset{linecolor=black,dotsize=2.8pt}
\pstThreeDDot(4,4,0)\pstThreeDDot(1.3333,1.3333,1.3333)
}
\pstThreeDPut[pOrigin=c](2,5,0.33){\psframebox*[framesep=0.5pt,framearc=1]{\phantom{$\tau_1$}}}
{
\psset{dotstyle=none,dotscale=1,drawCoor=false}
\psset{linecolor=black,linewidth=1pt,linejoin=1}
\psset{fillcolor=lightgray,opacity=.6,fillstyle=solid}
\pstThreeDTriangle(2,2,1)(0,8,0)(3,5,0)
\pstThreeDLine(11,4,0)(6,4,0)(2,2,1)(11,2,1)
}
{
\psset{linecolor=gray,dotsize=2.8pt}
\pstThreeDDot(2,2,1)
}
\pstThreeDPut[pOrigin=t](6,4,-0.24){$A$}
\pstThreeDPut[pOrigin=t](3,5,-0.24){$B$}
\pstThreeDPut[pOrigin=t](0,8,-0.24){$C$}
\pstThreeDPut[pOrigin=bl](1.925,2.04,1.1){\psframebox*[framesep=-1pt,framearc=1]{$D=\tau_0\cap\tau_1$}}
\pstThreeDPut[pOrigin=c](7.5,3,0.5){$\tau_0$}
\pstThreeDPut[pOrigin=c](2,5,0.33){$\tau_1$}
\pstThreeDPut[pOrigin=br](0.06,-0.05,1.11){\footnotesize$1$}
\pstThreeDPut[pOrigin=bl](2.96,3.03,3.02){$d\leftrightarrow x=y=z$}
\pstThreeDNode(1.3333,1.3333,1.3333){ipunt}
\rput[r](8.25,1.7){\rnode[l]{ipuntlabel}{\footnotesize$\aff\tau_0\cap\aff\tau_1\cap d=(-1/s_0,-1/s_0,-1/s_0)$}}
\ncline[linewidth=.3pt,nodesepB=2.5pt,nodesepA=2pt]{->}{ipuntlabel}{ipunt}
\end{pspicture}
}\\\subfigure[$B_1$-Facets $\tau_0,\tau_1,\tau_2$ all contribute to $s_0$. We distinguish the clusters $\{\tau_0\}$ and $\{\tau_1,\tau_2\}$.]{
\psset{unit=.06426\textwidth}
\psset{Beta=8}\psset{Alpha=53}
\begin{pspicture}(-6.97,-1.84)(8.34,3.25)
\pstThreeDCoor[xMin=0,yMin=0,zMin=0,xMax=11,yMax=10,zMax=3,spotZ=180,linecolor=black,linewidth=.7pt]
{
\psset{linecolor=gray,linewidth=.3pt,linestyle=dashed,subticks=1}
\pstThreeDLine(7,0,0)(7,4,0)\pstThreeDLine(7,4,0)(0,4,0)
\pstThreeDLine(6,0,0)(6,5,0)\pstThreeDLine(6,5,0)(0,5,0)
\pstThreeDLine(4,0,0)(4,8,0)\pstThreeDLine(4,8,0)(0,8,0)
\pstThreeDLine(3,0,0)(3,2,0)\pstThreeDLine(3,2,0)(0,2,0)
\pstThreeDLine(2,0,0)(2,3,0)\pstThreeDLine(2,3,0)(0,3,0)
\pstThreeDLine(3,0,1)(3,2,1)\pstThreeDLine(3,2,1)(0,2,1)
\pstThreeDLine(2,0,1)(2,3,1)\pstThreeDLine(2,3,1)(0,3,1)
\pstThreeDLine(0,0,1)(2,0,1)\pstThreeDLine(2,0,1)(3,0,1)\pstThreeDLine(2,0,1)(2,0,0)\pstThreeDLine(3,0,1)(3,0,0)
\pstThreeDLine(0,0,1)(0,2,1)\pstThreeDLine(0,2,1)(0,3,1)\pstThreeDLine(0,2,1)(0,2,0)\pstThreeDLine(0,3,1)(0,3,0)
\pstThreeDLine(2,3,0)(2,3,1)\pstThreeDLine(3,2,0)(3,2,1)
\pstThreeDLine(3,0,1)(11,0,1)\pstThreeDLine(11,0,1)(11,0,0)\pstThreeDLine(11,0,0)(11,2,0)\pstThreeDLine(11,2,0)(11,2,1)\pstThreeDLine(11,2,1)(11,0,1)\pstThreeDLine(11,2,0)(11,4,0)\pstThreeDLine(11,2,0)(3,2,0)
\pstThreeDLine(0,3,1)(0,10,1)\pstThreeDLine(0,10,1)(0,10,0)\pstThreeDLine(0,10,0)(2,10,0)\pstThreeDLine(2,10,0)(2,10,1)\pstThreeDLine(2,10,1)(0,10,1)\pstThreeDLine(2,10,0)(4,10,0)\pstThreeDLine(2,10,0)(2,3,0)
}
{
\psset{linecolor=black,linewidth=.3pt,linestyle=dashed,subticks=1}
\pstThreeDLine(2,3,1)(.8,0,1.6)\pstThreeDLine(0,0,0)(3,3,3)
}
{
\psset{linecolor=black,dotsize=2.8pt}
\pstThreeDDot(1.3333,1.3333,1.3333)
}
\pstThreeDPut[pOrigin=c](4,5.3,0.33){\psframebox*[framesep=-1pt,framearc=1]{\phantom{$\tau_1$}}}
\pstThreeDPut[pOrigin=c](3,7.72,0.5){\psframebox*[framesep=0.5pt,framearc=1]{\phantom{$\tau_2$}}}
{
\psset{dotstyle=none,dotscale=1,drawCoor=false}
\psset{linecolor=black,linewidth=1pt,linejoin=1}
\psset{fillcolor=lightgray,opacity=.6,fillstyle=solid}
\pstThreeDLine(4,8,0)(6,5,0)(2,3,1)
\pstThreeDLine(4,10,0)(4,8,0)(2,3,1)(2,10,1)
\pstThreeDLine(11,4,0)(7,4,0)(3,2,1)(11,2,1)
}
{
\psset{linecolor=black,linewidth=.3pt,linestyle=dashed,subticks=1}
\pstThreeDLine(8,4,0)(0,.8,1.6)
}
{
\psset{linecolor=gray,dotsize=2.8pt}
\pstThreeDDot(8,4,0)\pstThreeDDot(3,2,1)\pstThreeDDot(4,8,0)\pstThreeDDot(2,3,1)
}
\pstThreeDPut[pOrigin=t](7,4,-0.24){$A$}
\pstThreeDPut[pOrigin=t](6,5,-0.24){$B$}
\pstThreeDPut[pOrigin=t](4,8,-0.24){$C$}
\pstThreeDPut[pOrigin=br](3.075,1.96,1.1){\psframebox*[framesep=-.85pt,framearc=1]{$E$}}
\pstThreeDPut[pOrigin=bl](1.925,3.04,1.1){\psframebox*[framesep=-1pt,framearc=1]{$D$}}
\pstThreeDPut[pOrigin=c](8,3,0.51){$\tau_0$}
\pstThreeDPut[pOrigin=c](4,5.33,0.33){$\tau_1$}
\pstThreeDPut[pOrigin=c](3,7.75,0.5){$\tau_2$}
\pstThreeDPut[pOrigin=br](0.06,-0.05,1.11){\psframebox*[framesep=0.2pt,framearc=0]{\footnotesize$1$}}
\pstThreeDPut[pOrigin=bl](2.96,3.02,3.02){$d\leftrightarrow x=y=z$}
\pstThreeDNode(1.3333,1.3333,1.3333){ipunt}
\rput[l](2.7,1.68){\rnode[l]{ipuntlabel}{\parbox{3.4cm}{\footnotesize$\aff\tau_0\cap\aff\tau_1\cap\aff\tau_2\cap d$\\$=(-1/s_0,-1/s_0,-1/s_0)$}}}
\ncline[linewidth=.3pt,nodesepB=2.5pt,nodesepA=2.5pt]{->}{ipuntlabel}{ipunt}
\end{pspicture}
}
\psset{Beta=30}\psset{Alpha=45}
\caption{General Case: several \lq clusters\rq\ of $B_1$-facets contribute to the candidate-pole $s_0$}
\label{figcase7}
\end{figure}
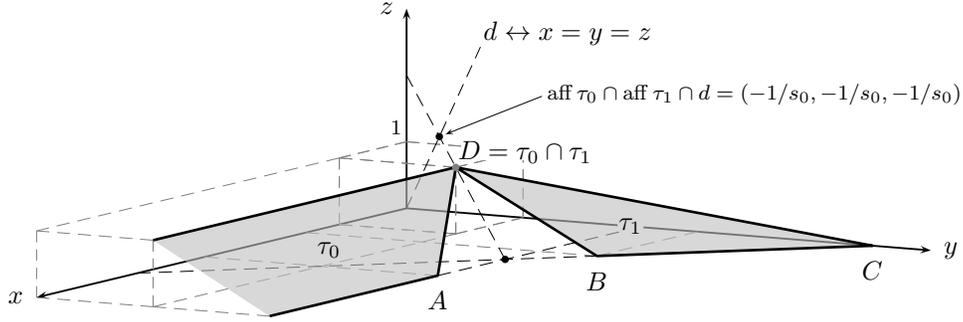
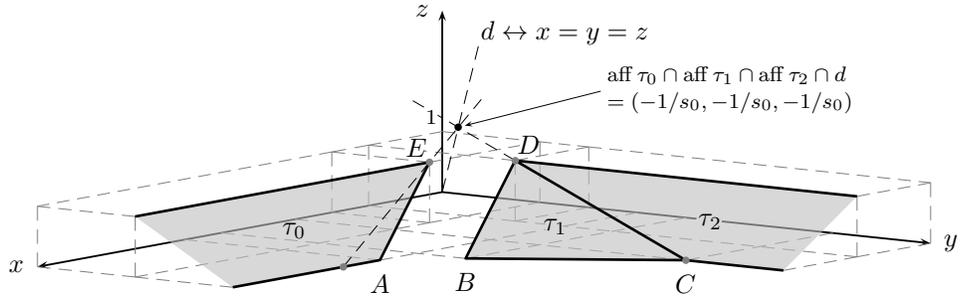
%
%

\section{The main theorem for a non-trivial character of \Zpx}\label{sectkarakter}
In this section we consider Igusa's zeta function of a polynomial $f(x_1,\ldots,x_n)\in\Zp[x_1,\ldots,x_n]$ and a character $\chi:\Zpx\to\Ccross$ of \Zpx, and we prove the analogue of Theorem~\ref{mcigusandss} for a non-trivial character. We start with the definition of this \lq twisted\rq\ $p$-adic zeta function.

Let $p$ be a prime number and $a\in\Qp$. We denote the $p$-adic order of $a$ by $\ord_pa\in\Z\cup\{\infty\}$; we write $\abs{a}=p^{-\ord_pa}$ for the $p$-adic norm of $a$ and $\ac a=\abs{a}a$ for its angular component. As before, we denote by $\abs{dx}=\abs{dx_1\wedge\cdots\wedge dx_n}$ the Haar measure on \Qpn, normalized in such a way that \Zpn\ has measure one.

\begin{definition}[local twisted Igusa zeta function]\label{defIZFkaraktartdrie} Cfr.\ \cite[Def.~1.1]{Hoo01}. Let $p$ be a prime number and $f(x)=f(x_1,\ldots,x_n)$ a polynomial in $\Zp[x_1,\ldots,x_n]$. Let $\chi:\Zpx\to\Ccross$ be a character of \Zpx, i.e., a multiplicative group homomorphism with finite image. We formally put $\chi(0)=0$. To $f$ and $\chi$ we associate the local Igusa zeta function
\begin{equation*}
Z_{f,\chi}^0:\{s\in\C\mid\Re(s)>0\}\to\C:s\mapsto\int_{p\Zpn}\chi(\ac f(x))\abs{f(x)}^s\abs{dx}.
\end{equation*}
\end{definition}

If $\chi$ is the trivial character, we obtain the usual local Igusa zeta function of $f$. In this section we will deal with the non-trivial characters. The rationality result of Igusa \cite{Igu74} and Denef \cite{Den84} holds for the above version of Igusa's zeta function as well. From now on, by $Z_{f,\chi}^0$ we mean the meromorphic continuation to \C\ of the function defined in Definition~\ref{defIZFkaraktartdrie}.

The goal is to verify the following analogue of Theorem~\ref{mcigusandss}.

\begin{theorem}[Monodromy Conjecture for Igusa's zeta function of a non-degenerated surface singularity and a non-trivial character of \Zpx]\label{mcigusandsskarakter}
Let $f(x,y,z)\in\Z[x,y,z]$ be a nonzero polynomial in three variables satisfying $f(0,0,0)=0$, and let $U\subset\C^3$ be a neighborhood of the origin. Suppose that $f$ is non-degenerated over \C\ with respect to all the compact faces of its Newton polyhedron, and let $p$ be a prime number such that $f$ is also non-degenerated over \Fp\ with respect to the same faces.\footnote{By Remark~\ref{verndcndfp}(i) this is the case for almost all prime numbers $p$.} Let $\chi:\Zpx\to\Ccross$ be a non-trivial character of \Zpx, and assume that $\chi$ is trivial on $1+p\Zp$. Suppose that $s_0$ is a pole of the local Igusa zeta function $Z_{f,\chi}^0$ associated to $f$ and $\chi$. Then $e^{2\pi i\Re(s_0)}$ is an eigenvalue of the local monodromy of $f$ at some point of $f^{-1}(0)\cap U$.
\end{theorem}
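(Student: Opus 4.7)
The plan is to mirror the three-step structure used for the trivial character in Theorem~\ref{mcigusandss}. First, using Hoornaert's twisted analogue of Theorem~\ref{formdenhoor} (see \cite{Hoo01}), under the hypothesis that $\chi$ is trivial on $1+p\Zp$ one may write
\begin{equation*}
Z^0_{f,\chi}(s) \;=\; \sum_{\tau\text{ compact face of }\Gf}L_{\tau,\chi}(s)\,S(\Dtu)(s),
\end{equation*}
where the cone sums $S(\Dtu)$ are unchanged (all the fundamental-parallelepiped analysis of Section~\ref{fundpar} still applies) and only the factors $L_{\tau,\chi}$ are modified: they are built from the finite-field character sums $\sum_{x\in(\Fpcross)^n}\chi(\overline{\ft}(x))\cdots$ obtained by factoring $\chi$ through $\Zpx/(1+p\Zp)\cong\Fpx$. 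From this formula one reads off that every candidate pole of $Z^0_{f,\chi}$ is a candidate pole of $Z^0_f$, and every facet of $\Gf$ that contributes (in the twisted sense) to some $s_0$ contributes to $s_0$ in the untwisted sense as well.

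Next I would dispose of the \emph{non-$B_1$} and \emph{mixed-variable-$B_1$-pair} parts exactly as in the trivial case. Theorem~\ref{theoAenL} and Proposition~\ref{propAenL} both ultimately rest on Varchenko's formula for the zeta function of monodromy and only demand the existence of a facet (resp.\ pair of facets) of a certain combinatorial type that contributes to $s_0$; since contribution in the twisted sense is stronger, and since $e^{2\pi i\Re(s_0)}$ only sees $\Re(s_0)$, both results apply verbatim to candidate poles of $Z^0_{f,\chi}$. This leaves precisely the situation in which $s_0$ (with $\Re(s_0)\neq-1$) is exclusively contributed by $B_1$-facets satisfying the pairwise compatibility conditions of Theorem~\ref{maintheoartdrie}, and the goal is then the character analogue: such $s_0$ is not a pole of $Z^0_{f,\chi}$.

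To establish this analogue I would follow the seven-case dissection of Sections~\ref{secgeval1art3}--\ref{secgeval7art3}. The combinatorial ingredients in each case---the multiplicities of the relevant simplicial cones, the enumeration of integral points in the fundamental parallelepipeds $H_A,H_B,H_C,H_1,H_2,H_3$, and the resulting closed-form expressions for $\Sigma(\delta)(s_0)$ and $\Sigma(\delta)'(s_0)$---are independent of $\chi$ and can be imported unchanged. What changes is the value at $s_0$ of each $L_{\tau,\chi}$ (and $L_{\tau,\chi}'$), which now involves a character sum $N_{\tau,\chi}$ of the reduced monomial Laurent polynomial $\overline{\ft}$. For $B_1$-simplices, non-compact $B_1$-facets, and their edges/vertices, these character sums factor as products of one-variable Gauss-type sums whose vanishing is governed by whether $\chi$, pulled back through the monomial support of $\tau$, is trivial on $\Fpx$. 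One then makes the dichotomy explicit: either the pulled-back character on $\tau$ is non-trivial, so $L_{\tau,\chi}(s_0)=0$ and $\tau$ drops out, or it is trivial and $L_{\tau,\chi}(s_0)$ equals the untwisted value up to a constant character factor. In the residue formulas \eqref{formreenaccentfincaseen}, \eqref{followingnewformulaRtweecasedrie}--\eqref{followingnewformulaReencasedrie}, \eqref{formR2accasevier}--\eqref{formR1accasevier}, \eqref{formR2accasevijf}--\eqref{formR1accasevijf}, the \emph{unknown} character sums $N_{\tau,\chi}$ appearing via $L_{\tau,\chi}$ should cancel in the same manner that the numbers $N,N_0,N_1$ cancelled in the trivial case, reducing the vanishing of $R_2$ and $R_1$ to the \emph{same} polynomial identities on $q$ and $\beta$ (notably \eqref{finalcheckcas3}, \eqref{finalcheckcas3dn}, \eqref{finalcheckcasevier}) that were already verified there.

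The main obstacle is the bookkeeping: in each of the seven configurations one must check that, after accounting for the modified $L_{\tau,\chi}(s_0)$ and $L_{\tau,\chi}'(s_0)$, all terms coupling a (potentially non-vanishing) character sum to a contributing cone either drop out because the associated $\Sigma$ sum is zero, or organise themselves into the same cancellation pattern as in the trivial-character proof. A subtle point will be the $L_{\tau,\chi}'(s_0)$ contributions for edges/facets whose derivative previously produced the $p^{s_0+1}/(p^{s_0+1}-1)^2$ factor, since character twists can shift the position of this term relative to the cone factors; handling this cleanly probably requires a uniform reformulation of $L_{\tau,\chi}$ in terms of a single character-dependent constant $c_\tau(\chi)\in\{0,1,\chi(\overline{a_\tau}),\dots\}$ multiplying the trivial-character expression, so that the algebraic manipulations of Sections~\ref{secgeval1art3}--\ref{secgeval7art3} can be reused mechanically. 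Once this reduction is performed, the combinatorial identities proved in Section~\ref{fundpar} and exploited in Sections~\ref{secgeval1art3}--\ref{secgeval7art3} close the argument, yielding the desired monodromy conjecture for $Z^0_{f,\chi}$.
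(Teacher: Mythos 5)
Your high-level architecture is right (Hoornaert's twisted formula, then dispose of non-$B_1$ and mixed-variable contributions via Theorem~\ref{theoAenL} and Proposition~\ref{propAenL}, then prove the character analogue of Theorem~\ref{maintheoartdrie}), but your plan for that last step misses a structural fact that makes the non-trivial-character case dramatically \emph{easier} than Sections~\ref{secgeval1art3}--\ref{secgeval7art3}, rather than a twisted replay of them.

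The crucial point you overlook is that in Theorem~\ref{formdenhoorkarakters} the factor $L_{\tau}=p^{-n}\sum_{x\in\Fpcrossn}\barchi(\fbart(x))$ is a \emph{constant}, with no $s$-dependence at all. Consequently there is no $\frac{p^s-1}{p^{s+1}-1}$ term, no $L_{\tau}'(s_0)$, and the factor $p^{s_0+1}/(p^{s_0+1}-1)^2$ you worry about simply never arises. Because of this, the residue machinery of Sections~\ref{secgeval1art3}--\ref{secgeval7art3}, and the parallelepiped analysis of Section~\ref{fundpar} on which it rests, are not needed here at all. Instead, the paper proves Proposition~\ref{maintheoartdriekarakt} by three direct observations applied in each configuration: (i) for a vertex $V$ at height one, the inner sum $\sum_{z\in\Fpcross}\barchi(z)$ vanishes for non-trivial $\barchi$, so $L_V=0$ and the term drops out; (ii) for a $B_1$-simplex $\tau_0$ with base $[AB]$, a one-line computation gives $L_{\tau_0}=-\tfrac{1}{p-1}L_{[AB]}$, which combined with $S(\Delta_{\tau_0})=\tfrac{1}{p^{\sigma_0+m_0s}-1}$ and $S(\Delta_{[AB]})=\tfrac{1}{(p^{\sigma_0+m_0s}-1)(p-1)}$ makes those two terms cancel outright; (iii) for a simplicial subcone $\delta_A$ of a vertex cone $\Delta_A$ meeting the edge $[AC]$, the same argument gives $L_A=-(p-1)L_{[AC]}$ and the corresponding terms cancel. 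The unknown character sums therefore never reappear to be cancelled later---the whole contributing part of $Z^0_{f,\chi}$ is shown to vanish term by term before any residue is computed. Your proposed dichotomy (character sum vanishes versus reduces to the untwisted value times a constant) is not how the cancellation works, and the combinatorial identities \eqref{finalcheckcas3}, \eqref{finalcheckcas3dn}, \eqref{finalcheckcasevier} play no role in this case.

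So while your first two stages are sound, the third stage as you describe it would not close: you would be hunting for cancellations of $N_{\tau,\chi}$ inside a residue formula whose $L_{\tau}'$ terms don't exist, and you would be carrying around Section~\ref{fundpar} machinery that is unnecessary. The fix is to drop the residue framework entirely for non-trivial $\chi$, exploit the $s$-independence of $L_{\tau}$, and verify the three elementary cancellation identities above directly.
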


The reason that we restrict to characters $\chi$ that are trivial on $1+p\Zp$, is that in this case we have a nice analogue of Denef and Hoornaert's formula (Theorem~\ref{formdenhoor}) for $Z_{f,\chi}^0$. We give the formula below, but first we introduce a notation that simplifies the statement of the formula.

\begin{notation}\label{notchibarartdrieh}
Let $p$ be a prime number and $\chi:\Zpx\to\Ccross$ a character of \Zpx. Assume that $\chi$ is trivial on the (multiplicative) subgroup $1+p\Zp$ of \Zpx. We shall identify the quotient group $\Zpx/(1+p\Zp)$ with \Fpcross, and we shall denote by $\pi:\Zpx\to\Fpcross$ the natural surjective homomorphism. Since $1+p\Zp\subset\ker\chi$, there exists a unique homomorphism $\barchi:\Fpcross\to\Ccross$ such that $\chi=\barchi\circ\pi$. In order for $\barchi$ to be defined on the whole of \Fp, we shall formally put $\barchi(0)=0$.
\end{notation}

\begin{theorem}\label{formdenhoorkarakters}
\textup{\cite[Thm.~3.4]{Hoo01}}. Let $p$ be a prime number; let $f(x)=f(x_1,\ldots,x_n)$ be a nonzero polynomial in $\Zp[x_1,\ldots,x_n]$ satisfying $f(0)=0$. Suppose that $f$ is non-degenerated over \Fp\ with respect to all the compact faces of its Newton polyhedron \Gf. Let $\chi:\Zpx\to\Ccross$ be a non-trivial character of \Zpx, and assume that $\chi$ is trivial on $1+p\Zp$. Then the local Igusa zeta function associated to $f$ and $\chi$ is the meromorphic complex function
\begin{equation*}
Z_{f,\chi}^0:s\mapsto\sum_{\substack{\tau\mathrm{\ compact}\\\mathrm{face\ of\ }\Gf}}L_{\tau}S(\Dtu)(s),
\end{equation*}
with
\begin{gather*}
L_{\tau}=p^{-n}\sum_{x\in\Fpcrossn}\barchi\bigl(\fbart(x)\bigr)\\\shortintertext{and}
S(\Dtu)(s)=\sum_{k\in\Zn\cap\Dtu}p^{-\sigma(k)-m(k)s}
\end{gather*}
for every compact face $\tau$ of \Gf. Hereby $\ft,\fbart$, and \barchi\ are defined as in Notations~\ref{notftauart3}, \ref{notftaubarart3}, and \ref{notchibarartdrieh}, respectively; the definitions of $\sigma(k),m(k)$, and \Dtu\ can be found in Notation~\ref{notsigmakartdrieintro} and Definitions~\ref{def_mfad} and \ref{def_Dfad}, respectively. The sums $S(\Dtu)(s)$ can be calculated in the same way as in Theorem~\ref{formdenhoor}.
\end{theorem}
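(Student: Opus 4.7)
The plan is to reduce Theorem~\ref{mcigusandsskarakter} to the same three-step structure that was used to prove the trivial-character version (Theorem~\ref{mcigusandss}), using Theorem~\ref{formdenhoorkarakters} as the combinatorial engine. The decisive observation is that, unlike in the trivial-character case, the factors $L_\tau$ appearing in Theorem~\ref{formdenhoorkarakters} are \emph{constants in $s$}: they are character sums over $\Fpcrossn$. Consequently the candidate poles of $Z_{f,\chi}^0$ are exactly the numbers $s_0 = -\sigma(v_j)/m(v_j) + 2k\pi i/(m(v_j)\log p)$ coming from primitive normals $v_j$ of facets of $\Gf$; no extra candidate pole at $s=-1$ arises from the $L_\tau$. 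The notions of contributing facet/cone (Definition~\ref{defcontrinlad}) and of $B_1$-facet (Definition~\ref{defbeenfacetad}) transport to this setting unchanged.

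First I would establish the character-analogues of Theorem~\ref{theoAenL} and Proposition~\ref{propAenL}. Both of these assert, for appropriate configurations of contributing facets (respectively: at least one contributing facet which is not $B_1$; or two contributing $B_1$-facets for different variables sharing an edge), that $e^{2\pi i\Re(s_0)}$ is an eigenvalue of the local monodromy of $f$ at some point of $f^{-1}(0)$ close to $0$. Since $\Re(s_0)=-\sigma(v_\tau)/m(v_\tau)$ depends only on $\Gf$ and not on $\chi$, and since the proofs of Lemahieu--Van Proeyen rest on Varchenko's and A'Campo's formulas for the monodromy zeta function (inputs that do not see $\chi$), these statements and their proofs carry over literally. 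After this step only the case where every contributing facet is $B_1$ and the configuration hypotheses of Theorem~\ref{maintheoartdrie} hold remains, and the goal reduces to showing that $s_0$ is in fact not a pole.

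The main body of the proof is then the character-analogue of Theorem~\ref{maintheoartdrie}. I would run the very same seven-case subdivision as Sections~\ref{secgeval1art3}--\ref{secgeval7art3}. All lattice-point enumerations in the three-dimensional fundamental parallelepipeds of Section~\ref{fundpar}, all multiplicities, and all sums $\Sigma(\delta_i)(s_0)$ and $\Sigma(\delta_i)'(s_0)$ are intrinsic to $\Gf$ and $s_0$ and hence unchanged. The polynomial identities in $q=p^{-s_0-1}$ involving floor/ceiling functions (e.g.\ the key Lemma~\ref{lemmaspecialevorm}) are reused verbatim. The only quantities to recompute are the $L_\tau$, and two simplifications occur: (i) $L_\tau'(s_0)=0$, killing a large family of auxiliary terms in the expansions of $R_1$, and (ii) for a vertex $V=(v_1,v_2,v_3)$ one has
\begin{equation*}
L_V = p^{-3}\barchi(\overline{a_V})\prod_{i=1}^{3}\sum_{x\in\Fpcross}\barchi(x)^{v_i},
\end{equation*}
which vanishes unless the order $e$ of $\barchi$ divides every coordinate of $V$. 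For edges and facets $L_\tau$ is an explicit character sum which, when $\barchi$ is restricted to the orbits of the monomial exponents, factors as a product of Gauss-type sums over a core exponential sum.

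The main obstacle I expect is exactly step (ii): making sure the correct combination of the $L_\tau$'s still vanishes in the residue formulas. In the trivial-character proof, the algebraic cancellation that eliminated the unknown numbers $N_\tau$ was driven by the specific $s$-dependence of $L_\tau$. Here the cancellations must instead come from additive identities among character sums along the edges and facets of $\Gf$. The identity I expect to exploit is the splitting, for a $B_1$-simplex $\tau_0$ with vertices $A,B$ in $\{z=0\}$ and $C$ in $\{z=1\}$,
\begin{equation*}
\sum_{x,y,z\in\Fpcross}\barchi\bigl(f_{[AB]}(x,y)+\overline{a_C}x^{x_C}y^{y_C}z\bigr)
=\sum_{x,y}\Bigl(\sum_{z}\barchi\bigl(\overline{a_C}x^{x_C}y^{y_C}z+f_{[AB]}(x,y)\bigr)\Bigr),
\end{equation*}
where the inner sum, after the change of variables $u = \overline{a_C}x^{x_C}y^{y_C}z$, is a complete character sum to which orthogonality applies, yielding a clean relation $L_{\tau_0}+(p-1)^{-1}L_{[AB]}=\barchi$-weighted term that vanishes if $\barchi$ restricted to $\fbart[AB]$'s image is non-trivial. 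Feeding such identities into the seven residue computations should reproduce $R_2=R_1=0$ exactly as in Sections~\ref{secgeval1art3}--\ref{secgeval7art3}. Once each individual $B_1$-configuration yields vanishing residues, the general case (several disjoint clusters with at most one common point) decouples into its pieces as in Section~\ref{secgeval7art3}, and the theorem follows.
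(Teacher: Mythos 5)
Your proposal does not prove the statement at hand. The statement is Theorem~\ref{formdenhoorkarakters} itself --- the combinatorial formula expressing $Z^0_{f,\chi}$ as $\sum_\tau L_\tau S(\Dtu)$ with $L_\tau=p^{-n}\sum_{x\in\Fpcrossn}\barchi(\fbart(x))$ --- whereas what you outline is a proof of the downstream Theorem~\ref{mcigusandsskarakter} (the Monodromy Conjecture for $Z^0_{f,\chi}$), explicitly \emph{using} Theorem~\ref{formdenhoorkarakters} ``as the combinatorial engine.'' Nothing in your text establishes the formula; you take it as given. (For what it is worth, the paper itself does not reprove it either: it cites the result to Hoornaert.)

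A proof of the actual statement runs along the lines of Denef--Hoornaert's argument for the trivial character. One decomposes $p\Zpn$ according to $k=\ord_p x\in\Zplusnuln$; the sets $\Zn\cap\Dtu$, for $\tau$ ranging over the compact faces of \Gf, partition \Zplusnuln. Writing $x=(p^{k_1}u_1,\ldots,p^{k_n}u_n)$ with $u\in\Zpxn$, one gets $\abs{dx}=p^{-\sigma(k)}\abs{du}$ and $f(x)=p^{m(k)}\bigl(\ft(u)+p\,g_k(u)\bigr)$ with $\tau=F(k)$, so the integral over $\{\ord_p x=k\}$ equals $p^{-\sigma(k)-m(k)s}$ times $\int_{\Zpxn}\chi\bigl(\ac(\ft(u)+pg_k(u))\bigr)\abs{\ft(u)+pg_k(u)}^s\abs{du}$. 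On the residue classes $\overline{u}$ with $\fbart(\overline{u})\neq0$ the integrand is constant equal to $\barchi\bigl(\fbart(\overline{u})\bigr)$ (this is where the hypothesis that $\chi$ is trivial on $1+p\Zp$ enters); on the classes with $\fbart(\overline{u})=0$, non-degeneracy and Hensel's lemma reduce the integral to a multiple of $\int_{p\Zp}\chi(\ac v)\abs{v}^s\abs{dv}$, which vanishes because \barchi\ is a non-trivial character of \Fpcross. Hence the inner integral equals $L_\tau$, independent of $k$ and of $s$, and summing over $k\in\Zn\cap\Dtu$ and then over the compact faces $\tau$ gives the claim. None of these steps --- in particular the vanishing on the singular residue classes, which is exactly where both the non-triviality of $\chi$ and the non-degeneracy hypothesis are used --- appears in your proposal, so as a proof of Theorem~\ref{formdenhoorkarakters} it has to be redone from scratch.
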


Note that, contrary to the trivial character case, the $L_{\tau}$ do not depend on the variable $s$. Consequently, $Z_{f,\chi}^0$ for a non-trivial character $\chi$, has \lq fewer\rq\ candidate poles than \Zof.

\begin{corollary}
Let $f$ and $\chi$ be as in Theorem~\ref{formdenhoorkarakters}. Let $\gamma_1,\ldots,\gamma_r$ be all the facets of \Gf, and let $v_1,\ldots,v_r$ be the unique primitive vectors in $\Zplusn\setminus\{0\}$ that are perpendicular to $\gamma_1,\ldots,\gamma_r$, respectively. From Theorem~\ref{formdenhoorkarakters} and the rational expression for $S(\Dtu)(s)$ obtained in Theorem~\ref{formdenhoor}, it follows that the poles of $Z_{f,\chi}^0$ are among the numbers
\begin{equation}\label{candpoleskarakterad}
-\frac{\sigma(v_j)}{m(v_j)}+\frac{2k\pi i}{m(v_j)\log p},
\end{equation}
with $j\in\{1,\ldots,r\}$ such that $m(v_j)\neq0$, and $k\in\Z$. We shall refer to these numbers as the candidate poles of $Z_{f,\chi}^0$.
\end{corollary}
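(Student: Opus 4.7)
The plan is to derive the corollary directly by inspecting the rational expression for $Z_{f,\chi}^0$ that the two quoted theorems together provide, and then locating all possible poles of that expression.

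First I would apply Theorem~\ref{formdenhoorkarakters} to write
\[
Z_{f,\chi}^0(s)=\sum_{\tau}L_{\tau}\,S(\Delta_\tau)(s),
\]
where the sum ranges over the compact faces of $\Gamma_f$ and, crucially, each $L_\tau$ is a \emph{constant} independent of $s$. Hence every pole of $Z_{f,\chi}^0$ must be a pole of some $S(\Delta_\tau)(s)$. For each such $\tau$ I would then invoke the rational formula supplied by Theorem~\ref{formdenhoor}: choose a simplicial decomposition $\{\delta_i\}_{i\in I}$ of $\Delta_\tau$ without introducing new rays, whence
\[
S(\Delta_\tau)(s)=\sum_{i\in I}\frac{\Sigma(\delta_i)(s)}{\prod_{j\in J_i}\bigl(p^{\sigma(w_j)+m(w_j)s}-1\bigr)},
\]
with $\Sigma(\delta_i)(s)$ an entire function of $s$ (a finite sum of exponentials).

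The key observation is then combinatorial: because the decompositions $\{\delta_i\}$ do not introduce new rays, the primitive generators $w_j$ occurring in the denominators are rays of the cones $\overline{\Delta_\tau}$, and by Lemma~\ref{lemma_struc_Dftad} these rays are exactly (some of) the primitive inward normal vectors $v_1,\ldots,v_r$ to the facets $\gamma_1,\ldots,\gamma_r$ of $\Gamma_f$. Consequently every denominator factor has the form $p^{\sigma(v_j)+m(v_j)s}-1$ for some $j\in\{1,\ldots,r\}$. Such a factor vanishes only when
\[
s=-\frac{\sigma(v_j)}{m(v_j)}+\frac{2k\pi i}{m(v_j)\log p}\qquad(k\in\Z),
\]
which is possible precisely when $m(v_j)\neq 0$; if $m(v_j)=0$ then the factor equals the nonzero constant $p^{\sigma(v_j)}-1$ (note $\sigma(v_j)\geq 1$ since $v_j\in\Zplusn\setminus\{0\}$ is primitive), and so contributes no pole.

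Putting these pieces together gives the claimed list of candidate poles. The only step that requires any care is checking that the simplicial decomposition can be chosen to reuse the facet-normal vectors $v_j$ as the sole rays, which is exactly the statement of the simplicial decomposition lemma cited after Lemma~\ref{lemma_struc_Dftad}; there is no genuine obstacle, only bookkeeping, so I would present the argument in essentially the three short steps above.
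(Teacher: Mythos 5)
Your argument is correct and is exactly the reasoning the paper intends: since the $L_\tau$ in Theorem~\ref{formdenhoorkarakters} are independent of $s$, all poles come from the denominators $p^{\sigma(v_j)+m(v_j)s}-1$ in the rational expression for $S(\Dtu)(s)$, whose primitive generators are facet normals by Lemma~\ref{lemma_struc_Dftad}, and the factors with $m(v_j)=0$ are nonzero constants because $\sigma(v_j)\geqslant1$. No gaps.
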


Now suppose that $f,U,p,\chi$, and $s_0$ are as in Theorem~\ref{mcigusandsskarakter}. Then $s_0$ is one of the numbers \eqref{candpoleskarakterad}. Theorem~\ref{theoAenL} tells us that if $s_0$ is contributed (cfr.\ Definition~\ref{defcontrinlad}) by a facet of \Gf\ that is not a $B_1$-facet (cfr.\ Definition~\ref{defbeenfacetad}), then $e^{2\pi i\Re(s_0)}$ is an eigenvalue of monodromy of $f$ at some point of $f^{-1}(0)\cap U$. Proposition~\ref{propAenL} says that the same is true if $s_0$ is contributed by two $B_1$-facets of \Gf\ that are not $B_1$ for a same variable and that have an edge in common. Therefore, in order to obtain Theorem~\ref{mcigusandsskarakter}, it is sufficient to verify the following proposition.

\begin{proposition}[On candidate poles of $Z_{f,\chi}^0$ only contributed by $B_1$-facets]\label{maintheoartdriekarakt}
Let $p$ be a prime number and let $f(x,y,z)\in\Zp[x,y,z]$ be a nonzero polynomial in three variables with $f(0,0,0)=0$. Suppose that $f$ is non-degenerated over \Fp\ with respect to all the compact faces of its Newton polyhedron. Let $\chi:\Zpx\to\Ccross$ be a non-trivial character of \Zpx\ that is trivial on $1+p\Zp$. Suppose that $s_0$ is a candidate pole of $Z_{f,\chi}^0$ that is only contributed by $B_1$-facets of \Gf. Further assume that for any pair of contributing $B_1$-facets, we have that
\begin{itemize}
\item[-] either they are $B_1$-facets for a same variable,
\item[-] or they have at most one point in common.
\end{itemize}
Then $s_0$ is not a pole of $Z_{f,\chi}^0$.
\end{proposition}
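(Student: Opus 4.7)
\medskip

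\noindent\textbf{Proof plan.} The plan is to follow the seven-case decomposition of the proof of Theorem~\ref{maintheoartdrie} (Sections~\ref{secgeval1art3}--\ref{secgeval7art3}), exploiting the substantial simplifications afforded by the character setting. Because $\barchi$ is non-trivial, Theorem~\ref{formdenhoorkarakters} expresses $Z_{f,\chi}^0(s)=\sum_{\tau}L_\tau\,S(\Delta_\tau)(s)$ with constants $L_\tau$ that do not depend on $s$. The candidate-pole structure is therefore determined entirely by the denominator factors $p^{\sigma(v_j)+m(v_j)s}-1$; the sums $\Sigma(\delta)(s_0)$ and the cone-multiplicity data from Sections~\ref{fundpar}--\ref{secgeval7art3} remain valid verbatim; and every higher-order residue at $s_0$ is obtained from the trivial-character analysis by replacing each $L_\tau(s)$ with the constant $L_\tau$ and each $L_\tau'(s)$ with $0$.

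The only additional input needed is a short character-sum lemma exploiting the non-triviality of $\barchi$. For any $B_1$-configuration, with respect to a variable that we may assume is $z$, arising in any of Cases~I--VI, one has:
(i) if $V=(x_V,y_V,1)$ is a vertex at height one, then $\bar f_V=\bar a_V x^{x_V}y^{y_V}z$ and
\begin{equation*}
L_V=p^{-3}\barchi(\bar a_V)\Bigl(\textstyle\sum_{x\in\Fpcross}\barchi(x)^{x_V}\Bigr)\Bigl(\sum_{y\in\Fpcross}\barchi(y)^{y_V}\Bigr)\Bigl(\sum_{z\in\Fpcross}\barchi(z)\Bigr)=0,
\end{equation*}
because $\sum_{z\in\Fpcross}\barchi(z)=0$ for the non-trivial $\barchi$;
(ii) for any compact edge $[V_1V_2]$ with $V_1\in\{z=0\}$ and $V_2\in\{z=1\}$, the substitution $u=\bar a_{V_2}x^{x_{V_2}}y^{y_{V_2}}z$ in the inner $z$-sum together with $\sum_{u\in\Fp}\barchi(c+u)=0$ yields $L_{[V_1V_2]}=-L_{V_1}/(p-1)$;
(iii) for any $B_1$-simplex $\tau$ with vertices $V_1,V_2\in\{z=0\}$ and $V_3\in\{z=1\}$, the same substitution yields $L_\tau=-L_{[V_1V_2]}/(p-1)$. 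Analogous identities hold for the compact subfaces of a non-compact $B_1$-facet.

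Substituting these identities into the residue expressions produces vanishing termwise in every case. For example, in Case~I the residue takes the form
\begin{equation*}
R_1=\frac{L_A\Sigma_A}{F_2(p-1)}+\frac{L_B\Sigma_B}{F_1(p-1)}+\frac{L_{[AB]}}{p-1}+\frac{L_{[AC]}\Sigma_A}{F_2}+\frac{L_{[BC]}\Sigma_B}{F_1}+L_{\tau_0}
\end{equation*}
(the $L_C$-term having already disappeared by~(i)); applying~(ii)--(iii) to $L_{[AC]},L_{[BC]},L_{\tau_0}$ cancels the six remaining terms in pairs. Cases~II--V proceed identically; since $L_\tau'(s)=0$, the higher-order residues $R_2,R_1$ of Cases~III--V collapse under the same pairwise cancellation, with no need for the delicate lattice-point identities worked out in Sections~\ref{studysigmaeenacc3}--\ref{ssfinalsscasedrie}. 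Case~VII reduces to Cases~I--VI by the cluster-splitting argument of Section~\ref{secgeval7art3}, since the clusters share at most a point.

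Case~VI, which was excluded in Theorem~\ref{maintheoartdrie} by the hypothesis $\Re(s_0)\neq-1$, can genuinely occur here since Proposition~\ref{maintheoartdriekarakt} imposes no such restriction. Nevertheless the same argument applies: the common vertex $V=(1,1,1)$ of all contributing $B_1$-facets again satisfies $L_V=0$ by~(i); every edge and every $B_1$-simplex through $V$ is controlled by~(ii)--(iii); and every simplicial subcone used to decompose the non-simplicial $\Delta_V$ contributes with $L_V=0$ as prefactor in Theorem~\ref{formdenhoorkarakters}, hence contributes nothing. The main difficulty I anticipate is essentially organisational: one must verify, for each of the seven configurations and at each expected pole order, that the pairings $(V_1,[V_1V_2])$ and $([V_1V_2],\tau)$ together with the vanishings coming from~(i) exhaust every term of the relevant residue. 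Once this bookkeeping is in place, no new combinatorial input beyond the trivial-character analysis is required.
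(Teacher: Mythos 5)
Your proposal is correct and takes essentially the same approach as the paper: the three identities you list — $L_V=0$ for a vertex at height one, $L_{[V_1V_2]}=-L_{V_1}/(p-1)$ for a vertical compact edge, and $L_{\tau}=-L_{[V_1V_2]}/(p-1)$ for a $B_1$-simplex over its base — are precisely the three observations the paper makes, and the termwise pairings you exhibit are the same cancellations the paper describes (pairing each $B_1$-simplex with its base, and each vertical edge with the corresponding simplicial subcone of its base-vertex cone). Your explicit remark about Case~VI is a welcome addition: the paper silently relies on the fact that its three observations cover that configuration too (since Proposition~\ref{maintheoartdriekarakt} imposes no restriction $\Re(s_0)\neq-1$), and you make this explicit and show why the vanishing $L_V=0$ disposes of all subcones of $\Delta_V$ at the common apex.
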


Let $p,f,\chi$, and $s_0$ be as in the proposition. Let us consider the same seven cases as in the proof of Theorem~\ref{maintheoartdrie}. The three observations below show that in every case, the relevant terms in the formula for $Z_{f,\chi}^0$ from Theorem~\ref{formdenhoorkarakters}, are either zero or they cancel in pairs. In what follows we shall use the notations of Theorem~\ref{formdenhoor}.

First consider a vertex $V(x_V,y_V,1)$ of \Gf\ at \lq height\rq\ one. The corresponding polynomial $\overline{f_V}$ has the form $\overline{f_V}=\overline{a_V}x^{x_V}y^{y_V}z$ with $\overline{a_V}\in\Fpcross$. The factor $L_V$ is thus given by
\begin{align}
L_V&=p^{-3}\sum_{(x,y,z)\in(\Fpcross)^3}\barchi\bigl(\overline{a_V}x^{x_V}y^{y_V}z\bigr)\notag\\
&=p^{-3}\barchi(\overline{a_V})\sum_{x\in\Fpcross}\barchi^{x_V}(x)\sum_{y\in\Fpcross}\barchi^{y_V}(y)\sum_{z\in\Fpcross}\barchi(z).\label{lastsumkaraktad}
\end{align}
Since $\chi$ is non-trivial but trivial on $1+p\Zp$, the character $\barchi$ of \Fpcross\ is also non-trivial. It is well-known that in this case the last sum of \eqref{lastsumkaraktad} equals zero. Indeed, for any $u\in\Fpcross$ the map $\Fpcross\to\Fpcross:z\mapsto uz$ is a permutation. Consequently,
\begin{equation*}
\sum_{z\in\Fpcross}\barchi(z)=\sum_{z\in\Fpcross}\barchi(uz)=\barchi(u)\sum_{z\in\Fpcross}\barchi(z).
\end{equation*}
As $\barchi$ is non-trivial, there exists a $u\in\Fpcross$ with $\barchi(u)\neq1$, and for such $u$ the above equation implies that $\sum_{z\in\Fpcross}\barchi(z)=0$. We conclude that $L_V=0$ and the term associated to $V$ vanishes.

Next let us consider a $B_1$-simplex $\tau_0$, say for the variable $z$. Let $A$ and $B$ be the two vertices of $\tau_0$ in the plane $\{z=0\}$, and let $C(x_C,y_C,1)$ be the vertex of $\tau_0$ at distance one of this plane. For $L_{[AB]}$ we find
\begin{equation*}
L_{[AB]}=p^{-3}\sum_{(x,y,z)\in(\Fpcross)^3}\barchi\bigl(\overline{f_{[AB]}}(x,y)\bigr)=p^{-3}(p-1)\sum_{(x,y)\in(\Fpcross)^2}\barchi\bigl(\overline{f_{[AB]}}(x,y)\bigr),
\end{equation*}
while $L_{\tau_0}$ is given by
\begin{equation}\label{ltnulkarakad}
L_{\tau_0}=p^{-3}\sum_{(x,y)\in(\Fpcross)^2}\sum_{z\in\Fpcross}\barchi\bigl(\overline{f_{[AB]}}(x,y)+\overline{a_C}x^{x_C}y^{y_C}z\bigr)
\end{equation}
for some $\overline{a_C}\in\Fpcross$. Fix $(x,y)\in(\Fpcross)^2$. If $z$ runs through \Fpcross, the argument of $\barchi$ in \eqref{ltnulkarakad} runs through all elements of the set $\Fp\setminus\bigl\{\overline{f_{[AB]}}(x,y)\bigr\}$. Consequently,
\begin{align*}
L_{\tau_0}&=p^{-3}\sum_{(x,y)\in(\Fpcross)^2}\Bigl(\sum\nolimits_{u\in\Fp}\barchi(u)-\barchi\bigl(\overline{f_{[AB]}}(x,y)\bigr)\Bigr)\\
&=-p^{-3}\sum_{(x,y)\in(\Fpcross)^2}\barchi\bigl(\overline{f_{[AB]}}(x,y)\bigr).
\end{align*}
Together with the fact that
\begin{equation*}
S(\Delta_{\tau_0})(s)=\frac{1}{p^{\sigma_0+m_0s}-1}\qquad\text{and}\qquad S(\Delta_{[AB]})(s)=\frac{1}{(p^{\sigma_0+m_0s}-1)(p-1)}
\end{equation*}
(with $(m_0,\sigma_0)$ the numerical data associated to $\tau_0$), we now easily find that the sum of the terms associated to $\tau_0$ and $[AB]$ equals zero.

Finally, consider any $B_1$-facet $\tau_0$ (compact or not), and assume that $\tau_0$ is $B_1$ for the variable $z$. Let $A$ be a vertex of $\tau_0$ in the plane $\{z=0\}$ and $C(x_C,y_C,1)$ the vertex of $\tau_0$ at \lq height\rq\ one. Denote by $\tau_1$ the other facet of \Gf\ that contains the edge $[AC]$, and let $\tau_2$ be the facet in $\{z=0\}$. Denote by $\delta_A$ the simplicial subcone of $\Delta_A$ strictly positively spanned by the primitive vectors $v_0,v_1,v_2\in\Zplus^3\setminus\{0\}$ that are perpendicular to $\tau_0,\tau_1,\tau_2$, respectively. In the same way as in the previous paragraph we find that
\begin{equation*}
L_A=-(p-1)L_{[AC]}.
\end{equation*}
If we combine this identity with the expressions
\begin{align*}
S(\Delta_{[AC]})(s)&=\frac{\Sigma(\Delta_{[AC]})(s)}{(p^{\sigma_0+m_0s}-1)(p^{\sigma_1+m_1s}-1)}\qquad\text{and}\\
S(\delta_A)(s)&=\frac{\Sigma(\Delta_{[AC]})(s)}{(p^{\sigma_0+m_0s}-1)(p^{\sigma_1+m_1s}-1)(p-1)}
\end{align*}
(where $(m_0,\sigma_0)$ and $(m_1,\sigma_1)$ denote the numerical data of $\tau_0$ and $\tau_1$, re\-spec\-tive\-ly), we find again that the terms associated to $[AC]$ and $\delta_A$ cancel out.

This concludes (the sketch of) the proof of Proposition~\ref{maintheoartdriekarakt} and Theorem~\ref{mcigusandsskarakter}.

\section{The main theorem in the motivic setting}\label{sectmotivisch}
\subsection{The local motivic zeta function and the motivic Monodromy Conjecture}
The theory of motivic integration was invented by Kontsevich and further developed by a.o.\ Denef--Loeser \cite{DLmoteen,DLmottwee,DLmotdrie}, Loeser--Sebag \cite{LSmot,Sebag}, and Cluckers--Loeser \cite{CLmot}. Denef and Loeser introduced the motivic zeta function and the corresponding monodromy conjecture in \cite{DL98}. For an introduction to motivic integration, motivic zeta functions, and the (motivic) Monodromy Conjecture, we refer to \cite{nicaisemot} and \cite{veysmot}. In this section we will only give the definitions that are needed to state the results.

In motivic integration theory, one associates to each algebraic variety $X$ over \C, and to each $l\in\Zplus$, a space $\mathcal{L}_l(X)$ of so-called $l$-jets on $X$. Informally speaking, this jet space $\mathcal{L}_l(X)$ is an algebraic variety over \C\ whose points with coordinates in \C\ correspond to points of $X$ with coordinates in $\C[t]/(t^{l+1})$, and vice versa. For all $l'\geqslant l$, there are natural \emph{truncation maps} $\pi_l^{l'}:\mathcal{L}_{l'}(X)\to\mathcal{L}_l(X)$, sending $l'$-jets to their reduction modulo $t^{l+1}$.

Next one obtains the space $\mathcal{L}(X)$ of \emph{arcs} on $X$ as the inverse limit $\varprojlim\mathcal{L}_l(X)$ of the system $\bigl((\mathcal{L}_l(X))_{l\geqslant0},(\pi_l^{l'})_{l'\geqslant l\geqslant0}\bigr)$. The arc space $\mathcal{L}(X)$ should be thought of as an \lq algebraic variety of infinite dimension\rq\ over \C\ whose points with coordinates in \C\ agree with the points of $X$ with coordinates in $\C[[t]]$. It comes together with natural truncation maps $\pi_l:\mathcal{L}(X)\to\mathcal{L}_l(X)$, sending arcs to their reduction modulo $t^{l+1}$.

In this section, the only algebraic variety we will consider, is the $n$-dimensional affine space $X=\mathbf{A}^n(\C)$. In this case, $\mathcal{L}_l(\mathbf{A}^n(\C))\cong\mathbf{A}^{n(l+1)}(\C)$ and $\mathcal{L}(\mathbf{A}^n(\C))$ can be identified with $(\C[t]/(t^{l+1}))^n$ and $(\C[[t]])^n$, respectively. We will use these identifications throughout the section. The truncation maps are as expected:
\begin{align*}
\pi_l^{l'}:&\ (\C[t]/(t^{l'+1}))^n\to(\C[t]/(t^{l+1}))^n:\bigl(\phi_{\rho}+(t^{l'+1})\bigr)_{\rho}\mapsto\bigl(\phi_{\rho}+(t^{l+1})\bigr)_{\rho},\\
\pi_l:&\ (\C[[t]])^n\to(\C[t]/(t^{l+1}))^n:\bigl({\textstyle\sum_{\kappa}}\phi_{\rho,\kappa}t^{\kappa}\bigr)_{\rho}\mapsto\bigl({\textstyle\sum_{\kappa=0}^{l}}\phi_{\rho,\kappa}t^{\kappa}+(t^{l+1})\bigr)_{\rho}.
\end{align*}
In motivic integration, the discrete valuation ring $\C[[t]]$ and its uniformizer $t$ play the role that $\Zp$ and $p$ play in $p$-adic integration.

The Grothendieck group of (complex) algebraic varieties is the abelian group $K_0(Var_{\C})$ generated by the isomorphism classes $[X]$ of algebraic varieties $X$, modulo the relations $[X]=[X\setminus Y]+[Y]$ if $Y$ is Zariski-closed in $X$. The Grothendieck group is turned into a Grothendieck ring by putting $[X]\cdot[Y]=[X\times Y]$ for all algebraic varieties $X$ and $Y$. The class of a (complex) algebraic variety in the Grothendieck ring is the universal invariant of an algebraic variety with respect to the additive and multiplicative relations above; it is a refinement of the topological Euler characteristic.

We call a subset $C$ of an algebraic variety $X$ constructible if it can be written as a finite disjoint union of locally closed\footnote{w.r.t.\ the Zariski-topology on $X$} subvarieties $Y_1,\ldots,Y_r$ of $X$. For such a constructible subset $C=\bigsqcup_jY_j$, the class $[C]=\sum_j[Y_j]$ of $C$ in the Grothendieck ring is well-defined, i.e., is independent of the chosen decomposition. We denote the class of a point by $1$ and the class of the affine line $\mathbf{A}^1(\C)$ by \LL. Finally, we denote by $\MC=K_0(Var_{\C})[\LL^{-1}]$ the localization of $K_0(Var_{\C})$ with respect to \LL. It is known that $K_0(Var_{\C})$ is not a domain \cite{Poonen}; however, it is still an open question whether \MC\ is a domain or not.

We shall call a subset $A$ of $(\C[[t]])^n$ cylindric if $A=\pi_l^{-1}(C)$ for some $l\in\Zplus$ and some constructible subset $C$ of $(\C[t]/(t^{l+1}))^n$. For such a cylindric subset $A=\pi_l^{-1}(C)$, one has that
\begin{gather*}
\pi_{l'}(A)\cong C\times\mathbf{A}^{n(l'-l)}(\C)\qquad\text{for all }l'\geqslant l;\\\shortintertext{therefore,}
\mu(A)=[C]\LL^{-n(l+1)}=\lim_{l'\to\infty}[\pi_{l'}(A)]\LL^{-n(l'+1)}\in\MC
\end{gather*}
is independent of $l$. We call $\mu(A)$ the naive motivic measure of $A$. Its definition and in particular the chosen normalization are inspired by the $p$-adic Haar measure; note that $\mu((t^l\C[[t]])^n)=\LL^{-nl}$ for all $l\in\Zplus$.

For $\phi_{\rho}=\phi_{\rho,0}+\phi_{\rho,1}t+\phi_{\rho,2}t^2+\cdots\in\C[[t]]\setminus\{0\}$, we define $\ord_t\phi_{\rho}$ as the smallest $\kappa\in\Zplus$ such that $\phi_{\rho,\kappa}\neq0$; additionally, we agree that $\ord_t0=\infty$. If $\phi=(\phi_1,\ldots,\phi_n)\in(\C[[t]])^n$, then we put
\begin{equation*}
\ord_t\phi=(\ord_t\phi_1,\ldots,\ord_t\phi_n)\in(\Zplus\cup\{\infty\})^n.
\end{equation*}
Let us recall the definition of the local $p$-adic zeta function. If $f(x)=f(x_1,\ldots,x_n)$ is a nonzero polynomial in $\Zp[x_1,\ldots,x_n]$ with $f(0)=0$, then
\begin{align*}
\Zofs&=\int_{p\Zpn}\abs{f(x)}^s\abs{dx}\\
&=\sum_{l\geqslant1}\mu(\{x\in p\Zpn\mid\ord_pf(x)=l\})p^{-ls}\\
&=p^{-n}\sum_{l\geqslant1}\#\bigl\{x+p^{l+1}\Zpn\in\bigl(p\Zp/p^{l+1}\Zp\bigr)^n\mid\ord_pf(x)=l\bigr\}\cdot(p^{-n}p^{-s})^l,
\end{align*}
with $\mu(\cdot)$ the Haar measure on \Qpn, so normalized that $\mu(\Zpn)=1$. This is the motivation for the following definition.

\begin{definition}[Local motivic zeta function]\label{deflocmotzf}
Let $f(x)=f(x_1,\ldots,x_n)$ be a nonzero polynomial in $\C[x_1,\ldots,x_n]$ satisfying $f(0)=0$. Put
\begin{equation*}
\mathcal{X}_l^0=\bigl\{\phi+\bigl(t^{l+1}\C[t]\bigr)^n\in\bigl(t\C[t]/t^{l+1}\C[t]\bigr)^n\mid\ord_tf(\phi)=l\bigr\}
\end{equation*}
for $l\in\Zplusnul$. Then the local motivic zeta function $\Zmotof(s)$ associated to $f$ is by definition the following element of $\MC[[\LL^{-s}]]$:
\begin{align*}
\Zmotof(s)&=\sum_{l\geqslant1}\mu(\{\phi\in(t\C[[t]])^n\mid\ord_tf(\phi)=l\})(\LL^{-s})^l\\
&=\LL^{-n}\sum_{l\geqslant1}[\mathcal{X}_l^0](\LL^{-n}\LL^{-s})^l\in\MC[[\LL^{-s}]].
\end{align*}
Here $\LL^{-s}$ should be seen as a formal indeterminate. In what follows we shall always denote $\LL^{-s}$ by $T$; i.e., we define the local motivic zeta function \Zmotoft\ of $f$ as
\begin{equation*}
\Zmotoft=\LL^{-n}\sum_{l\geqslant1}[\mathcal{X}_l^0](\LL^{-n}T)^l\in\MC[[T]].
\end{equation*}
\end{definition}

The (local) motivic zeta function \Zmotoft\ is thus by definition a formal power series in $T$ with coefficients in \MC. By means of resolutions of singularities, Denef and Loeser proved that it is also a rational function in $T$. More precisely, they proved that there exists a finite set $S\subset\Zplusnul^2$ such that
\begin{equation*}
\Zmotoft\in\MC\left[\frac{\LL^{-\sigma}T^m}{1-\LL^{-\sigma}T^m}\right]_{(m,\sigma)\in S}\subset\MC[[T]].
\end{equation*}

Denef and Loeser also formulated a motivic version of the Monodromy Conjecture. One should be careful, however, when translating the $p$-adic (or topological) statement of the conjecture to the motivic setting; since it is not known whether \MC\ is a domain or not, the notion of pole of \Zmotoft\ is not straightforward.

\begin{conjecture}[Motivic Monodromy Conjecture]
Let $f(x)=f(x_1,\ldots,x_n)$ be a non\-zero polynomial in $\C[x_1,\ldots,x_n]$ satisfying $f(0)=0$. Then there exists a finite set $S\subset\Zplusnul^2$ such that
\begin{equation*}
\Zmotoft\in\MC[T]\left[\frac{1}{1-\LL^{-\sigma}T^m}\right]_{(m,\sigma)\in S}\subset\MC[[T]],
\end{equation*}
and such that, for each $(m,\sigma)\in S$, the complex number $e^{-2\pi i\sigma/m}$ is an eigenvalue of the local monodromy of $f$ at some point of the complex zero locus $f^{-1}(0)\subset\C^n$ close to the origin.
\end{conjecture}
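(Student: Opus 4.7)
\medskip

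The plan is to follow the standard template that has succeeded for the $p$-adic and topological Monodromy Conjectures in the cases treated in this paper and elsewhere, namely: choose an embedded resolution of singularities, write down the resulting explicit rational expression for $\Zmotoft$, extract the finite set $S$ of candidate pole data, and then show that every $(m,\sigma)\in S$ which survives cancellation contributes a monodromy eigenvalue. Concretely, let $h:Y\to\C^n$ be an embedded resolution of $f^{-1}(0)$ whose exceptional divisor plus strict transform is a simple normal crossing divisor $\bigcup_{i\in I}E_i$ with numerical data $(N_i,\nu_i)_{i\in I}$ (multiplicity of $f\circ h$ and of the relative canonical divisor plus one along $E_i$). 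Denef and Loeser's change of variables formula then yields the closed form
\begin{equation*}
\Zmotoft=\LL^{-n}\sum_{\emptyset\neq J\subset I}[E_J^\circ\cap h^{-1}(0)]\prod_{i\in J}\frac{(\LL-1)\LL^{-\nu_i}T^{N_i}}{1-\LL^{-\nu_i}T^{N_i}},
\end{equation*}
where $E_J^\circ=\bigcap_{i\in J}E_i\setminus\bigcup_{i\notin J}E_i$. This exhibits $\Zmotoft$ as an element of $\MC[T]\bigl[(1-\LL^{-\nu_i}T^{N_i})^{-1}\bigr]_{i\in I_0}$ with $I_0=\{i\in I\mid h(E_i)\ni 0\}$, so one may take $S\subset\{(N_i,\nu_i)\mid i\in I_0\}$.

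The second step is the cancellation analysis: one must decide, for each candidate pair $(N_i,\nu_i)$, whether the denominator $1-\LL^{-\nu_i}T^{N_i}$ can actually be removed from the expression (and, if so, remove it from $S$) or whether it is \emph{essential}. The strategy is to group the strata $E_J^\circ$ according to the value of $\nu_i/N_i$ among $i\in J$, apply inclusion-exclusion on the Grothendieck classes of the strata, and use local toric arguments near each $E_i$ to reduce the question to a combinatorial identity in $\MC$ analogous to the $B_1$-cancellations worked out in Sections~\ref{secgeval1art3}--\ref{secgeval7art3} of this paper. The three-dimensional non-degenerate case treated here provides the model: the $B_1$-facet calculations should be reinterpreted as cancellation statements in $\MC$ between $E_J^\circ$-strata associated to toric charts of the Newton resolution.

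The third step is the monodromy link. For each $(m,\sigma)\in S$ that survives cancellation, pick a divisor $E_i$ with $(N_i,\nu_i)=(m,\sigma)$ and $h(E_i)\ni 0$. A'Campo's formula
\begin{equation*}
\zeta_{f,x}(t)=\prod_{i\in I}\bigl(1-t^{N_i}\bigr)^{\chi(E_i^\circ\cap h^{-1}(x))}
\end{equation*}
for the monodromy zeta function at a point $x\in f^{-1}(0)$ close to the origin, together with a suitable non-vanishing statement for the Euler characteristic $\chi(E_i^\circ\cap h^{-1}(x))$ at some such $x$, will then produce $e^{-2\pi i\sigma/m}$ as an eigenvalue of monodromy. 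The needed non-vanishing is exactly what one extracts from the essentialness of $(m,\sigma)$ in the second step, via the fact that the coefficient of $(1-\LL^{-\sigma}T^m)^{-1}$ in the partial-fraction decomposition of $\Zmotoft$ specializes, under the Euler characteristic realization $\MC\to\Z$, to the corresponding Euler characteristic.

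The hard part will be the second step, i.e.\ the general cancellation analysis in $\MC$: unlike in the topological or $p$-adic settings, one cannot argue by simply comparing orders of poles of a meromorphic function, because the coefficient ring $\MC$ is not known to be a domain and the very definition of \emph{pole} of an element of $\MC[[T]]$ is subtle. The combinatorial identities of Sections~\ref{secgeval1art3}--\ref{secgeval7art3}, rephrased motivically, give the template but handle only the three-dimensional non-degenerate situation; extending them to arbitrary $n$ and arbitrary resolutions requires an analogue in higher dimension of the classification of $B_1$-facets (one expects a hierarchy of $B_k$-configurations) together with an inductive cancellation argument on the combinatorial type. This last point is where the proposal remains genuinely open in full generality and is the main obstacle to pushing the method of this paper beyond the non-degenerate surface case.
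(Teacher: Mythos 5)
The statement you are asked to prove is stated in the paper as a \emph{conjecture} (due to Denef and Loeser), and the paper does not prove it in general; its actual contribution is Theorem~\ref{mcmotivndss}, the special case $n=3$ with $f$ non-degenerated, proved via the Newton-polyhedron formula of Theorem~\ref{formlocmotzf} rather than via an arbitrary embedded resolution. Your proposal is therefore not a proof, and you concede as much: your second step (deciding which denominators $1-\LL^{-\nu_i}T^{N_i}$ are essential and cancelling the rest) is precisely the open content of the conjecture, and no amount of reorganizing the Denef--Loeser formula $\Zmotoft=\LL^{-n}\sum_{J}[E_J^\circ\cap h^{-1}(0)]\prod_{i\in J}(\LL-1)\LL^{-\nu_i}T^{N_i}\bigl(1-\LL^{-\nu_i}T^{N_i}\bigr)^{-1}$ supplies it. For what it is worth, the paper's method of certifying cancellation in the non-degenerate surface case is to pass from $\MC$ to the unique factorization domain $\Z[S,S^{-1}]$ (replacing $\LL$ by an indeterminate $S$) and verify divisibility of explicit numerators by $1-S^{-\sigma}T^{m}$ root by root in $\overbar{\Q}\{\{S\}\}$; this sidesteps the fact that $\MC$ is not known to be a domain, but it relies entirely on the explicit $B_1$-facet combinatorics and has no known analogue for a general resolution.

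Your third step also contains a gap beyond the admitted one. You claim that essentialness of $(m,\sigma)$ yields $\chi(E_i^\circ\cap h^{-1}(x))\neq0$ via the Euler-characteristic realization of the partial-fraction coefficient. But the Euler-characteristic specialization $\MC\to\Z$ sends $\LL$ to $1$, hence kills every factor $\LL-1$ in the Denef--Loeser formula, so the coefficient of $\bigl(1-\LL^{-\sigma}T^m\bigr)^{-1}$ generically specializes to $0$ and carries no information about $\chi(E_i^\circ)$; moreover several divisors may share the same numerical data, so a surviving denominator does not single out a stratum with nonvanishing Euler characteristic. This is exactly why the implication from poles to eigenvalues is a conjecture and not a formal consequence of A'Campo's formula. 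Note that the paper's route in the proven case runs in the opposite logical direction: Theorem~\ref{theoAenL} and Proposition~\ref{propAenL} show (via Varchenko's formula, independently of any pole analysis) that all candidate data not coming exclusively from $B_1$-configurations induce eigenvalues, and the pole analysis is only used to discard the remaining candidates.
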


The goal of this section is to prove the motivic Monodromy Conjecture for a polynomial in three variables that is non-degenerated over \C\ with respect to its Newton polyhedron, i.e., to prove the following motivic version of Theorem~\ref{mcigusandss}.

\begin{theorem}[Monodromy Conjecture for the local motivic zeta function of a non-degenerated surface singularity]\label{mcmotivndss}
Let $f(x,y,z)\in\C[x,y,z]$ be a nonzero polynomial in three variables satisfying $f(0,0,0)=0$, and let $U\subset\C^3$ be a neighborhood of the origin. Suppose that $f$ is non-degenerated over \C\ with respect to all the compact faces of its Newton polyhedron. Then there exists a finite set $S\subset\Zplusnul^2$ such that
\begin{equation*}
\Zmotoft\in\MC[T]\left[\frac{1}{1-\LL^{-\sigma}T^m}\right]_{(m,\sigma)\in S},
\end{equation*}
and such that, for each $(m,\sigma)\in S$, the complex number $e^{-2\pi i\sigma/m}$ is an eigen\-value of the local monodromy of $f$ at some point of $f^{-1}(0)\cap U\subset\C^3$.
\end{theorem}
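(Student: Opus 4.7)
The plan is to mirror the three-step architecture behind Theorem~\ref{mcigusandss}. First I would establish the motivic Denef--Hoornaert formula announced at the end of the introduction: for $f$ non-degenerated over \C\ with respect to its compact faces, one has in $\MC[[T]]$
\begin{equation*}
\Zmotoft=\sum_{\substack{\tau\text{ compact}\\\text{face of }\Gf}}L_\tau\,S(\Dtu)(T),
\end{equation*}
where $L_\tau\in\MC$ encodes the class of $\{\fbart\ne0\}$ in the torus and each $S(\Dtu)(T)$ is the standard rational function of $T$ whose denominators run over the factors $(1-\LL^{-\sigma(v_j)}T^{m(v_j)})$ for the primitive generators $v_j$ of a simplicial subdivision of $\Dtu$. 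The candidate poles of \Zmotoft\ are then the pairs $(m(v_j),\sigma(v_j))$ coming from the facets of \Gf; classifying them by the type of facet that contributes yields the same dichotomy as in the $p$-adic case.

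For candidate poles $(m,\sigma)$ contributed by at least one non-$B_1$ facet, the monodromy-eigenvalue conclusion is obtained by exactly the argument of \cite{LVmcndss}: Varchenko's formula for the zeta function of monodromy depends only on the Newton polyhedron, so Theorem~\ref{theoAenL} transports verbatim. Similarly, Proposition~\ref{propAenL} transfers word for word to pairs of $B_1$-facets that are $B_1$ for different variables and share an edge. These two steps produce the candidates $(m,\sigma)$ that will belong to the set $S$ of the theorem, and for each of them $e^{-2\pi i\sigma/m}$ is already known to be a monodromy eigenvalue near the origin.

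The main obstacle is the motivic analogue of Theorem~\ref{maintheoartdrie}: showing that the remaining candidate poles, whose contributing facets are $B_1$ and satisfy the compatibility condition, do not appear in any localization of $\Zmotoft$. Since \MC\ is not known to be a domain, ``no pole'' must mean a divisibility statement in $\MC[T]$: after multiplying \Zmotoft\ by $\prod_{(m,\sigma)\in S}(1-\LL^{-\sigma}T^m)$ and by $(1-\LL^{-\sigma_0}T^{m_0})^{e}$ for the problematic candidate $(m_0,\sigma_0)$ of expected order $e\in\{1,2\}$, the resulting element of $\MC[[T]]$ is divisible by $(1-\LL^{-\sigma_0}T^{m_0})^{e}$, equivalently its image in $\MC[T]/(1-\LL^{-\sigma_0}T^{m_0})^{e}$ vanishes. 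The strategy is to reread each of the seven cases of Sections~\ref{secgeval1art3}--\ref{secgeval7art3} and verify that every manipulation is algebraic: the enumeration of lattice points in fundamental parallelepipeds in Section~\ref{fundpar} is purely combinatorial; the sums $\Sigma_\bullet$ are finite geometric sums in the formal variable $q=\LL^{-\sigma_0/m_0}T$ (more precisely, one interprets $q$ as satisfying $q^{m_0}=\LL^{-\sigma_0}$ modulo the appropriate power of $(1-\LL^{-\sigma_0}T^{m_0})$); and each of the final identities, including \eqref{eqprovecaseeen}, \eqref{finalcheckcas3}, \eqref{finalcheckcas3dn}, \eqref{finalcheckcasevier}, is already stated and proved as a polynomial identity over $\Z$ in $q$ (and auxiliary roots of unity in the subtler subcases). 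Substituting $p\rightsquigarrow\LL$ and $p^{-s_0}\rightsquigarrow q$ turns each $p$-adic vanishing into the required congruence in $\MC[T]/(1-\LL^{-\sigma_0}T^{m_0})^{e}$.

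The hard part will be the bookkeeping for the doubly-contributed candidate poles of Cases III--V, where the $p$-adic proof uses derivatives $(\log p)$-linear combinations that translate into first-order jet computations in $\MC[T]/(1-\LL^{-\sigma_0}T^{m_0})^{2}$; in particular, the quantities $\Sigma_B',\Sigma_1',\Sigma_A'$ must be redefined as coefficients in a Taylor-style expansion along the curve $\LL^{-\sigma_0}T^{m_0}=1$, not as genuine derivatives, and one has to check that the $(\log p)$-factors which cancel at the end of each $p$-adic computation also cancel formally. The delicate Case~III.$n^{\ast}\nmid n$ subcase, where a nontrivial root-of-unity character of $H_B$ enters, requires one to split \MC\ by idempotents associated to the cyclotomic factors of $1-\LL^{-\sigma_0}T^{m_0}$, or equivalently to extend scalars to $\MC[\zeta]$ for a suitable root of unity $\zeta$; the combinatorial identity~\eqref{finalcheckcas3dn} is already written as a polynomial identity in $q$ and $\beta$ and survives this extension. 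Once the translation framework is in place, the seven cases go through, yielding Theorem~\ref{mcmotivndss} with $S$ consisting precisely of the pairs $(m,\sigma)$ handled by Theorem~\ref{theoAenL} and Proposition~\ref{propAenL}.
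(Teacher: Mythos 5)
The high-level architecture you propose is the same as the paper's: prove the motivic Denef--Hoornaert formula (Theorem~\ref{formlocmotzf}), import Theorem~\ref{theoAenL} and Proposition~\ref{propAenL} for candidates contributed by non-$B_1$ facets or by variable-incompatible $B_1$-facets sharing an edge, and show the remaining candidates are ``not poles'' by porting the combinatorics of Sections~\ref{secgeval1art3}--\ref{secgeval7art3}. You also correctly locate the real difficulty: \MC\ is not known to be a domain, so ``not a pole'' must become a divisibility statement in $\MC[T]$.

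But your proposed mechanism for carrying out that divisibility argument differs from the paper's in a way that matters, and as written it leaves a gap. You want to work directly modulo $(1-\LL^{-\sigma_0}T^{m_0})^e$ in $\MC[T]$ and ``interpret $q=\LL^{-\sigma_0/m_0}T$'' in that quotient. Two problems: (i) $\LL^{-\sigma_0/m_0}$ is not an element of \MC, and it is not immediate how to introduce it into the quotient ring without knowing more about the structure of \MC; (ii) to deduce $\Zmotoft\in\MC[T][1/(1-\LL^{-\sigma}T^m)]_{(m,\sigma)\in S'}$ from divisibility of the cleared-denominator polynomial you also need to separate the distinct denominator factors, i.e.\ you need coprimality of $1-\LL^{-\sigma_0}T^{m_0}$ and $1-\LL^{-\sigma}T^m$ for $\sigma/m\neq\sigma_0/m_0$ inside $\MC[T]$, which again is unclear over a possibly non-reduced coefficient ring. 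The paper sidesteps both problems by the same device: it first notes that for the relevant faces the classes $[\mathcal{X}_\tau]$, $[\mathcal{X}_\tau']$ are explicit polynomials in \LL\ (vertices give $0$, edges from height $0$ to height $1$ give $\LL-1$ or $(\LL-1)^2$, $B_1$-simplices cancel against their bases via $[\mathcal{X}_{\tau_0}]=(\LL-1)^2-[\mathcal{X}_{[AB]}']$), so the relevant part of $\Zmotoft$ lies in $\Z[\LL,\LL^{-1}][T][1/(1-\LL^{-\sigma}T^m)]$. It then replaces \LL\ by a genuine indeterminate $S$, works in $\Z[S,S^{-1}][T]$, which is a polynomial ring over a UFD, and establishes the required divisibility by evaluating the numerator polynomials $N_1(T)$, $N_2(T)$ and their $T$-derivatives at the roots of $1-S^{-\sigma_0}T^{m_0}$ in the Puiseux field $\overbar{\Q}\{\{S\}\}$; those evaluations are exactly the computations of Cases~I--V. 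Finally it pushes the identity down through the ring morphism $\Z[S,S^{-1}][T][\cdots]\to\MC[T][\cdots]$, $S\mapsto\LL$. This makes your ``formal variable $q$'' literal, trivializes coprimality (distinct ratios $\sigma/m$ give coprime denominators in a UFD), and replaces the $(\log p)$ and $s$-derivative bookkeeping of Case~III by honest $T$-derivatives of a polynomial.

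Separately from the ring issue, your proposal never addresses the general case (Section~\ref{secgeval7art3}), where several ``clusters'' of $B_1$-facets, pairwise sharing at most a point, contribute to the same candidate pole. The paper handles this by fixing a bad ratio $q$, defining maximal $q$-clusters, and choosing, at each vertex, a simplicial decomposition of the associated cone that is \emph{nice} in the sense that each subcone meets at most one maximal $q$-cluster. Only then does the relevant part of $\Zmotoft$ split into per-cluster contributions, each falling under one of Cases~(i)--(v). Without this splitting your reduction to the earlier cases does not go through when $\geqslant2$ disjoint clusters of $B_1$-facets contribute to $q$.
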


We discuss a proof of Theorem~\ref{mcmotivndss} in Subsection~\ref{finsspmtms}. The essential formula for this proof is treated in the next subsection.

\subsection{A formula for the local motivic zeta function of a non-degenerated polynomial}
We will prove a combinatorial formula \`{a} la Denef--Hoornaert \cite{DH01} for the local motivic zeta function associated to a polynomial that is non-degenerated over the complex numbers. This was also done (in less detail) by Guibert \cite{guibert}. We state the formula below, but first we recall the precise notion of non-degeneracy we will be dealing with.

\begin{definition}[Non-degenerated over \C]
Let $f(x)=f(x_1,\ldots,x_n)$ be a nonzero polynomial in $\C[x_1,\ldots,x_n]$ satisfying $f(0)=0$. We say that $f$ is non-degenerated over \C\ with respect to all the compact faces of its Newton polyhedron \Gf, if for every compact face $\tau$ of \Gf, the zero locus $\ft^{-1}(0)\subset\C^n$ of \ft\ has no singularities in \Ccrossn\ (cfr.\ Notation~\ref{notftauart3}).
\end{definition}

Looking for an analogue for the motivic zeta function of Denef and Hoornaert's formula for Igusa's $p$-adic zeta function, one roughly expects to recover their formula with $p$, $p^{-s}$, and $N_{\tau}$ replaced by \LL, $T$, and the class of $\{x\in\Ccrossn\mid\ft(x)=0\}$ in the Grothendieck ring of complex varieties, respectively. We have to be careful however. Neither $T^{-1}$, nor $(1-\LL^{-1})^{-1}$ are elements in $\MC[[T]]$; especially, whereas $\sum_{\lambda=0}^{\infty}p^{-\lambda}=(1-p^{-1})^{-1}$ in \R, the corresponding $\sum_{\lambda=0}^{\infty}\LL^{-\lambda}=(1-\LL^{-1})^{-1}$ does not make sense in $\MC[[T]]$. To avoid the appearance of $T^{-1}$ in the formula, we adopt a slightly different notion of fundamental parallelepiped; to avoid dividing by $1-\LL^{-1}$, we have to treat compact faces lying in coordinate hyperplanes differently.\footnote{The reason is that for such a face $\tau$, at least one $v$ among the primitive vectors spanning \Dtu, has numerical data $(m(v),\sigma(v))=(0,1)$.}

\begin{theorem}\label{formlocmotzf}
Let $f(x)=f(x_1,\ldots,x_n)$ be a nonzero polynomial in $\C[x_1,\ldots,x_n]$ satisfying $f(0)=0$. Suppose that $f$ is non-degenerated over \C\ with respect to all the compact faces of its Newton polyhedron \Gf. Then the local motivic zeta function associated to $f$ is given by
\begin{multline*}
\Zmotoft=\\
\sum_{\substack{\tau\mathrm{\ compact\ face\ of\ }\Gf,\\\tau\nsubseteq\{x_{\rho}=0\}\mathrm{\ for\ all\ }\rho}}\!L_{\tau}S(\Dtu)+\sum_{\substack{\tau\mathrm{\ compact\ face\ of\ }\Gf,\\\tau\subset\{x_{\rho}=0\}\mathrm{\ for\ some\ }\rho}}\!L_{\tau}'S(\Dtu)'\in\MC[[T]],
\end{multline*}
where the $L_{\tau},S(\Dtu),L_{\tau}',S(\Dtu)'$ are as defined below.

For $\tau$ not contained in any coordinate hyperplane, we have
\begin{gather*}
L_{\tau}=\bigl(1-\LL^{-1}\bigr)^n-\LL^{-n}[\mathcal{X}_{\tau}]\frac{1-T}{1-\LL^{-1}T}\in\MC[[T]],\\\shortintertext{with}
\mathcal{X}_{\tau}=\left\{x\in\Ccrossn\;\middle\vert\;\ft(x)=0\right\},\\\shortintertext{and}
S(\Dtu)=\sum_{k\in\Zn\cap\Delta_{\tau}}\LL^{-\sigma(k)}T^{m(k)}\in\MC[[T]].
\end{gather*}
By $S(\Dtu)\in\MC[[T]]$ we mean, more precisely, the following. First choose a decomposition $\{\delta_i\}_{i\in I}$ of the cone \Dtu\ into simplicial cones $\delta_i$ without introducing new rays, and put $S(\Dtu)=\sum_{i\in I}S(\delta_i)$, with
\begin{equation*}
S(\delta_i)=\sum_{k\in\Zn\cap\delta_i}\LL^{-\sigma(k)}T^{m(k)}\in\MC[[T]]
\end{equation*}
for all $i\in I$. Then assuming that the cone $\delta_i$ is strictly positively spanned by the linearly independent primitive vectors $v_j$, $j\in J_i$, in $\Zplusn\setminus\{0\}$, the element $S(\delta_i)\in\MC[[T]]$ is defined as\,\footnote{Since $\tau$ is not contained in any coordinate hyperplane, all $m(v_j)$ are positive integers. Hence $\left(1-\LL^{-\sigma(v_j)}T^{m(v_j)}\right)^{-1}=\sum_{\lambda=0}^{\infty}\left(\LL^{-\sigma(v_j)}T^{m(v_j)}\right)^{\lambda}\in\MC[[T]]$ for all $j\in\bigcup_{i\in I}J_i$.}
\begin{equation*}
S(\delta_i)=\frac{\tilde{\Sigma}(\delta_i)}{\prod_{j\in J_i}\bigl(1-\LL^{-\sigma(v_j)}T^{m(v_j)}\bigr)}\in\MC[[T]],
\end{equation*}
with
\begin{equation*}
\tilde{\Sigma}(\delta_i)=\sum_h\LL^{-\sigma(h)}T^{m(h)}\in\MC[T],
\end{equation*}
where $h$ runs through the elements of the set
\begin{equation*}
\tilde{H}(v_j)_{j\in J_i}=\Z^n\cap\tilde{\lozenge}(v_j)_{j\in J_i},
\end{equation*}
with
\begin{equation*}
\tilde{\lozenge}(v_j)_{j\in J_i}=\left\{\sum\nolimits_{j\in J_i}h_jv_j\;\middle\vert\;h_j\in(0,1]\text{ for all }j\in J_i\right\}
\end{equation*}
the fundamental parallelepiped\,\footnote{with opposite boundaries as before} spanned by the vectors $v_j$, $j\in J_i$.

Suppose now that the compact face $\tau$ of \Gf\ is contained in at least one coordinate hyperplane. Define $P_{\tau}\subset\{1,\ldots,n\}$ such that $\rho\in P_{\tau}$ if and only if $\tau\subset\{(x_1,\ldots,x_n)\in\Rplusn\mid x_{\rho}=0\}$. Note that $1\leqslant\abs{P_{\tau}}\leqslant n-1$ and that \ft\ only depends on the variables $x_{\rho}$, $\rho\not\in P_{\tau}$. If we put
\begin{equation*}
\mathcal{X}_{\tau}'=\left\{(x_{\rho})_{\rho\not\in P_{\tau}}\in(\C^{\times})^{n-\abs{P_{\tau}}}\;\middle\vert\;\ft(x_{\rho})_{\rho\not\in P_{\tau}}=0\right\},
\end{equation*}
then we have
\begin{equation*}
L_{\tau}'=\bigl(1-\LL^{-1}\bigr)^{n-\abs{P_{\tau}}}-\LL^{-(n-\abs{P_{\tau}})}[\mathcal{X}_{\tau}']\frac{1-T}{1-\LL^{-1}T}\in\MC[[T]].
\end{equation*}
Denoting the standard basis of \Rn\ by $(e_{\rho})_{1\leqslant\rho\leqslant n}$, it follows that \Dtu\ is strictly positively spanned by the vectors $e_{\rho}$, $\rho\in P_{\tau}$, and one or more other primitive vectors $v_j$, $j\in J_{\tau}$, in $\Zplusn\setminus\{0\}$.\footnote{Indeed, as $\tau$ is compact and contained in $\bigcap_{\rho\in P_{\tau}}\{x_{\rho}=0\}$, we have that $\dim\tau\leqslant n-\abs{P_{\tau}}-1$; hence $\dim\Dtu\geqslant\abs{P_{\tau}}+1$.} Choose a decomposition $\{\delta_i\}_{i\in I}$ of the cone \Dtu\ into simplicial cones $\delta_i$ without introducing new rays, and assume that $\delta_i$ is strictly positively spanned by the linearly independent primitive vectors $e_{\rho},v_j$; $\rho\in P_i,j\in J_i$; with $\emptyset\subset P_i\subset P_{\tau}$ and $\emptyset\varsubsetneq J_i\subset J_{\tau}$. For $i\in I$, put
\begin{align*}
\delta_i'&=\tilde{\lozenge}(e_{\rho})_{\rho\in P_i}+\cone(v_j)_{j\in J_i}\\
&=\left\{\sum\nolimits_{\rho\in P_i}h_{\rho}e_{\rho}+\sum\nolimits_{j\in J_i}\lambda_jv_j\;\middle\vert\;h_{\rho}\in(0,1],\lambda_j\in\Rplusnul\text{ for all }\rho,j\right\}\subset\delta_i.
\end{align*}
Then $S(\Dtu)'$ is given by\,\footnote{Note again that all $m(v_j)$ are positive; therefore, $\left(1-\LL^{-\sigma(v_j)}T^{m(v_j)}\right)^{-1}\in\MC[[T]]$ for all $j\in J_{\tau}=\bigcup_{i\in I}J_i$.}
\begin{align*}
S(\Dtu)'&=\sum_{i\in I}\bigl(1-\LL^{-1}\bigr)^{\abs{P_{\tau}}-\abs{P_i}}\sum_{k\in\Zn\cap\delta_i'}\LL^{-\sigma(k)}T^{m(k)}\\
&=\sum_{i\in I}\bigl(1-\LL^{-1}\bigr)^{\abs{P_{\tau}}-\abs{P_i}}\frac{\tilde{\Sigma}(\delta_i)}{\prod_{j\in J_i}\bigl(1-\LL^{-\sigma(v_j)}T^{m(v_j)}\bigr)}\in\MC[[T]],
\end{align*}
with
\begin{equation*}
\tilde{\Sigma}(\delta_i)=\sum_h\LL^{-\sigma(h)}T^{m(h)}\in\MC[T],
\end{equation*}
where $h$ runs through the elements of the set
\begin{equation*}
\tilde{H}(e_{\rho},v_j)_{\rho,j}=\Z^n\cap\left\{\sum\nolimits_{\rho\in P_i}h_{\rho}e_{\rho}+\sum\nolimits_{j\in J_i}h_jv_j\;\middle\vert\;h_{\rho},h_j\in(0,1]\text{ for all }\rho,j\right\}.
\end{equation*}
\end{theorem}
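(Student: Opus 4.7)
The plan is to adapt Denef--Hoornaert's argument for Igusa's $p$-adic zeta function (Theorem~\ref{formdenhoor}) to the motivic setting, with special care to stay inside $\MC[[T]]$, since neither $T^{-1}$ nor $(1-\LL^{-1})^{-1}$ belongs to this ring. Working directly from the series definition of \Zmotoft, I would for each $l\geqslant1$ partition $\mathcal{X}_l^0$ according to the order tuple $k=(\ord_t\phi_1,\ldots,\ord_t\phi_n)$. The relevant indices are those $k$ for which $P_k:=\{\rho\mid k_\rho=\infty\}$ is a proper subset of $\{1,\ldots,n\}$; each such $k$ determines a compact face $\tau=F(k)$ of \Gf\ via $m(k)=\min_{\omega\in\Gf}k\cdot\omega$ (with the convention $0\cdot\infty=0$), and necessarily $\tau\subset\{x_\rho=0\}$ for every $\rho\in P_k$, so $P_k\subset P_\tau$.

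Next, for each admissible $k$, I would compute the corresponding contribution. Parameterising $\phi_\rho=t^{k_\rho}(a_\rho+t\tilde\phi_\rho)$ for $\rho\notin P_k$ with $a_\rho\in\Ccross$ and $\tilde\phi_\rho$ a truncated power series, and $\phi_\rho=0$ for $\rho\in P_k$, the Taylor expansion of $f$ yields
\[
f(\phi)=t^{m(k)}\bigl(\ft(a)+tR(a,\tilde\phi)\bigr),
\]
where \ft\ is naturally viewed as a polynomial in the variables $x_\rho$, $\rho\notin P_\tau$. On the open stratum $\ft(a)\neq0$ one has $\ord_tf(\phi)=m(k)$, and summing the resulting cylinder classes over $l$ produces a contribution of the form $\bigl((1-\LL^{-1})^{n-\abs{P_k}}-\LL^{-(n-\abs{P_k})}[\mathcal{X}]\bigr)\LL^{-\sigma(k)}T^{m(k)}$, where $\mathcal{X}$ is $\mathcal{X}_\tau$ or $\mathcal{X}_\tau'$ according as $P_\tau=\emptyset$ or not. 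On the closed stratum $\ft(a)=0$, the non-degeneracy hypothesis ensures that $\mathcal{X}$ is smooth in $(\Ccross)^{n-\abs{P_\tau}}$; locally some $\partial_{x_{\rho_0}}\ft(a)$ is invertible, so a motivic Hensel-type argument lets me iteratively solve for the higher Taylor coefficients of $\phi_{\rho_0}$, realising the closed stratum as a tower of cylinders whose contributions telescope into the factor $(1-T)/(1-\LL^{-1}T)$. Adding the two strata gives the clean identity $I_k=L_\tau^{(\prime)}\cdot\LL^{-\sigma(k)}T^{m(k)}$, uniformly for $k$ in the relative interior of \Dtu.

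Finally, I would sum these contributions over $k$, grouped by face $\tau$. For $\tau$ not contained in any coordinate hyperplane, the relevant $k$ are exactly those in $\Zn\cap\Dtu$, all with $P_k=\emptyset$; decomposing \Dtu\ into simplicial subcones $\delta_i=\cone(v_j)_{j\in J_i}$ without introducing new rays and using the unique representation $k=h+\sum_{j\in J_i}\lambda_jv_j$ with $h\in\tilde H(v_j)_{j\in J_i}$ and $\lambda_j\in\Zplus$, the sum factors as $\tilde\Sigma(\delta_i)/\prod_{j\in J_i}(1-\LL^{-\sigma(v_j)}T^{m(v_j)})$; crucially, the choice of the half-open parallelepiped $\tilde\lozenge$ (coefficients in $(0,1]$ rather than $[0,1)$) guarantees that every $h$ has strictly positive $m(h)$, so that $\tilde\Sigma(\delta_i)\in\MC[T]$ and $S(\delta_i)\in\MC[[T]]$; this avoids the ghost term $T^0$ that would otherwise force dividing by $1-\LL^{-1}$. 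For a face $\tau$ contained in some coordinate hyperplanes, the relevant $k$ range over a larger set including indices with some $k_\rho=\infty$ for $\rho\in P_\tau$; writing each simplicial $\delta_i\subset\Dtu$ as $\tilde\lozenge(e_\rho)_{\rho\in P_i}+\cone(v_j)_{j\in J_i}$ for various $P_i\subset P_\tau$, the $\rho\in P_\tau\setminus P_i$ directions integrate out into factors of $(1-\LL^{-1})$, which matches the $(1-\LL^{-1})^{\abs{P_\tau}-\abs{P_i}}$ prefactor in the definition of $S(\Dtu)'$.

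The main obstacle will be the motivic Hensel-type computation on the closed stratum: one must verify that the iterative solving of higher Taylor coefficients produces the rational factor $(1-T)/(1-\LL^{-1}T)$ independently of the chosen pivot variable $\rho_0$, so that the local construction glues globally on $\mathcal{X}_\tau$. A secondary technical issue is bookkeeping inside $\MC[[T]]$: every rearrangement of an infinite series must be justified only by factors $1-\LL^{-\sigma(v_j)}T^{m(v_j)}$ with $m(v_j)>0$, and never by $1-\LL^{-1}$ alone, which explains the asymmetric treatment of faces lying in coordinate hyperplanes.
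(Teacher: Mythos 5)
Your proposal follows essentially the same route as the paper's proof: partition the jet-order tuples $k$ (extended to allow $k_\rho=\infty$) according to the compact face $F(k)$ they select; compute the motivic measure of each fiber $\{\phi:\ord_t\phi=k,\ \ord_tf(\phi)=l\}$ via the expansion $f(\phi)=t^{m(k)}\bigl(\ft(a)+tR\bigr)$, using non-degeneracy and a Hensel-type lifting on the closed stratum $\ft(a)=0$ to obtain the telescoping factor $(1-T)/(1-\LL^{-1}T)$; and then sum over $k$ by decomposing each cone $\Dtu$ into simplicial pieces, with the half-open parallelepipeds $(0,1]$ ensuring $m(h)>0$ so that each $S(\delta_i)$ remains a genuine element of $\MC[[T]]$. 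Your observation that the faces in coordinate hyperplanes must be treated asymmetrically, and that the $\rho\in P_\tau\setminus P_i$ directions account for the prefactor $(1-\LL^{-1})^{\abs{P_\tau}-\abs{P_i}}$, matches the paper's use of the extended cones $\Dti$, the sets $k\vee P_i$, and Lemma~\ref{motlemmatwee}. The only places where the paper expends more care than your sketch are the measurability assertions (Corollaries~\ref{motcoroleen} and~\ref{motcoroltwee}, which show that for fixed $l$ only finitely many $k$ contribute, so the relevant unions are cylindric) and the justification of the change of summation order in $\MC[[T]]$ by comparison of coefficients of $T^l$; both are exactly the ``bookkeeping'' concerns you flag, and are resolved in the way you anticipate.
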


The formula as stated above is obtained from Denef and Hoornaert's formula by first replacing $p$, $p^{-s}$, and $N_{\tau}$ by their proper analogues and then rewriting the formula in such a way that everything lives in $\MC[[T]]$. The proof is naturally similar to its $p$-adic counterpart, but we have to make adaptations due to some restrictions in comparison with the $p$-adic case.

One important restriction is that the (naive) motivic measure is not $\sigma$-additive. As mentioned earlier, we can no longer give meaning to countable sums of measures as $\sum_{\lambda=0}^{\infty}\LL^{-\lambda}$ in $\MC$. This results in a necessary different treatment of compact faces that are contained in coordinate hyperplanes. It also makes that we have---in some sense---less measurable subsets and therefore less freedom in the way we calculate things. For example, where in the $p$-adic case we start the proof by splitting up the integration domain $p\Zpn$ according to the $p$-order of its elements, we cannot copy this approach in the present setting, as it would give rise to unmeasurable sets. Another example is the following.

In the $p$-adic case, when calculating $\int_{p\Zpn}\abs{f(x)}^s\abs{dx}$, we could ignore the $x\in p\Zpn$ with one or more coordinates equal to zero, because this part of the integration domain has measure zero. In the motivic setting, working with the naive motivic measure, we don't have this luxury; the corresponding $\{(\phi_1,\ldots,\phi_n)\in(t\C[[t]])^n\mid\phi_{\rho}=0\text{ for some }\rho\}$ is not a cylindric subset of $(t\C[[t]])^n$, hence is not measurable. In what follows, we adapt some familiar notions to better describe this new situation.

We consider the extended non-negative real numbers $\Rplusbar=\Rplus\cup\{\infty\}$ with the usual order \lq$\leqslant$\rq\ and addition \lq$+$\rq. We extend the usual multiplication in \Rplus\ to a multiplication in \Rplusbar\ by putting $\infty\cdot0=0\cdot\infty=0$ and $\infty\cdot x=x\cdot\infty=\infty$ for $x\in\Rplusnulbar=\Rplusnul\cup\{\infty\}$. This allows us to also extend the dot product on \Rplusn\ to a dot product
\begin{equation*}
\cdot:\Rplusbarn\times\Rplusbarn\to\Rplusbar:\bigl((x_{\rho})_{\rho},(y_{\rho})_{\rho}\bigr)\mapsto(x_{\rho})_{\rho}\cdot(y_{\rho})_{\rho}=\sum\nolimits_{\rho=1}^nx_{\rho}y_{\rho}
\end{equation*}
on \Rplusbarn. The motivation for this definition is that, in this way,
\begin{equation*}
\ord_t\phi^{\omega}=\ord_t\phi_1^{\omega_1}\cdots\phi_n^{\omega_n}=(\ord_t\phi_1,\ldots,\ord_t\phi_n)\cdot(\omega_1,\ldots,\omega_n)=(\ord_t\phi)\cdot\omega
\end{equation*}
for $\phi=(\phi_1,\ldots,\phi_n)\in(t\C[[t]])^n$ and $\omega=(\omega_1,\ldots,\omega_n)\in\Rplusn$, even if $\phi_{\rho}=0$ for some $\rho$.

Next we extend $m(\cdot)$ and $F(\cdot)$ to \Rplusbarn\ in the expected way:
\begin{equation*}
m(k)=\inf_{x\in\Gf}k\cdot x=\min_{\omega\in\supp(f)}k\cdot\omega\in\Rplusbar,\qquad F(k)=\{x\in\Gf\mid k\cdot x=m(k)\}
\end{equation*}
for $k\in\Rplusbarn$. We have the following properties.

\begin{proposition}
Let $k\in\Rplusbarn$ and put $P_k=\{\rho\mid k_{\rho}=\infty\}\subset\{1,\ldots,n\}$.
\begin{enumerate}
\item If $k=(0,\ldots,0)$ or $m(k)=\infty$, then $F(k)=\Gf$, otherwise $F(k)$ is a proper face of \Gf;
\item the face $F(k)$ is compact if and only if $k\in\Rplusnulbarn$ and $m(k)<\infty$;
\item if $P_k\neq\emptyset$ and $m(k)<\infty$, then $F(k)$ is contained in $\bigcap_{\rho\in P_k}\{x_{\rho}=0\}$.
\end{enumerate}
\end{proposition}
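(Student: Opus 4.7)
The plan is to dispatch the three statements in the order (iii), then (i), then (ii), since (iii) gives a structural fact that simplifies the discussion in the other two parts, and (ii) naturally exploits what (i) establishes.

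For (iii), I would argue directly from the extended arithmetic on $\Rplusbar$. Let $x\in F(k)$ and $\rho\in P_k$. Since $m(k)<\infty$, we have $k\cdot x=m(k)<\infty$; but if $x_\rho>0$ then the summand $k_\rho x_\rho=\infty\cdot x_\rho=\infty$ forces $k\cdot x=\infty$, a contradiction. Hence $x_\rho=0$ for every $\rho\in P_k$, so $F(k)\subset\bigcap_{\rho\in P_k}\{x_\rho=0\}$.

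For (i), the case $k=(0,\ldots,0)$ is immediate from the definition. If $m(k)=\infty$, then by the remark below Definition~\ref{def_mfad} this minimum is attained on $\supp(f)$, so for every $\omega\in\supp(f)$ there is some $\rho(\omega)\in P_k$ with $\omega_{\rho(\omega)}>0$. Taking convex combinations $\omega'=\sum_i\lambda_i\omega_i$ with all $\lambda_i>0$, one checks that $\omega'$ still has a positive coordinate in $P_k$ (otherwise each $\omega_i$ with $\lambda_i>0$ would vanish on all of $P_k$, contradicting the choice of $\rho(\omega_i)$); adding any $r\in\Rplusn$ preserves this, so every $x\in\Gf=\Gglf+\Rplusn$ satisfies $k\cdot x=\infty=m(k)$, whence $F(k)=\Gf$. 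Conversely, assume $k\neq0$ and $m(k)<\infty$; I would show $F(k)$ is a proper face by reducing to the classical finite case. Pick $N$ strictly larger than $\max_{\omega\in\supp(f)}\sum_{\rho\notin P_k}k_\rho\omega_\rho$ and replace the infinite entries of $k$ by $N$ to obtain $k^{(N)}\in\Rplusn$. The choice of $N$ guarantees that $m(k^{(N)})$ is attained only at those $\omega\in\supp(f)$ supported on $\{1,\ldots,n\}\setminus P_k$, i.e.\ at exactly the $\omega$ minimizing $k\cdot\omega$; combined with (iii) one verifies $F(k^{(N)})=F(k)$, and since $F(k^{(N)})$ is the intersection of $\Gf$ with a supporting hyperplane, it is a proper face of $\Gf$.

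For (ii), the \emph{if} direction uses (i) together with the boundedness observation: when $k\in\Rplusnulbarn$ and $m(k)<\infty$, every $x\in F(k)$ satisfies $x_\rho=0$ for $\rho\in P_k$ by (iii), and $x_\rho\leqslant m(k)/k_\rho<\infty$ for $\rho\notin P_k$, so $F(k)$ is bounded; it is closed as a face, hence compact. The \emph{only if} direction is a ray-argument: if $k=0$ or $m(k)=\infty$ then $F(k)=\Gf$ is unbounded, and if some $k_\rho=0$ then for any $x\in F(k)$ the entire ray $x+\Rplus e_\rho$ lies in $F(k)$, again precluding compactness. The main technical point throughout is keeping the extended arithmetic under control; the subtlest step is the reduction $F(k^{(N)})=F(k)$ in part (i), since one must verify both that large finite weights reproduce the infinite ones on $\supp(f)$ and that no spurious minimizers are created outside the original first meet locus.
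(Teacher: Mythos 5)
The paper states this proposition without proof, so there is no "paper argument" to compare against; I can only evaluate your argument on its own merits.

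Your plan is sound and the arguments for (iii) and (ii) are complete and correct. In (i), the one place that is genuinely under-justified is the identity $F(k^{(N)})=F(k)$. Part (iii) only gives you the easy inclusion $F(k)\subseteq F(k^{(N)})$ (for $x\in F(k)$, (iii) forces $x_\rho=0$ on $P_k$, hence $k^{(N)}\cdot x = k\cdot x = m(k) = m(k^{(N)})$). For the reverse inclusion $F(k^{(N)})\subseteq F(k)$, part (iii) is of no use, because $P_{k^{(N)}}=\emptyset$; you must independently show that every $x\in F(k^{(N)})$ has $x_\rho=0$ for all $\rho\in P_k$, and your matching of minimizers over $\supp(f)$ does not by itself say anything about minimizers in $\Gf$. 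You do flag this as "the subtlest step," but you do not actually supply the argument. It can be filled in: write $x=\sum_i\lambda_i\omega_i+r$ with $\omega_i\in\supp(f)$, $\lambda_i>0$, $\sum_i\lambda_i=1$, $r\in\Rplusn$; then $k^{(N)}\cdot x=\sum_i\lambda_i\,k^{(N)}\cdot\omega_i+k^{(N)}\cdot r\geqslant m(k^{(N)})$, with equality forcing $k^{(N)}\cdot\omega_i=m(k)$ for all $i$ (so each $\omega_i$ is supported away from $P_k$) and $k^{(N)}\cdot r=0$ (so $r_\rho=0$ whenever $k^{(N)}_\rho>0$, in particular for $\rho\in P_k$); hence $x_\rho=0$ on $P_k$, and then $k\cdot x=k^{(N)}\cdot x=m(k)$. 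With that one paragraph added, your proof is complete.
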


The map $F:\Rplusbarn\to\{\text{faces of }\Gf\}$ induces an equivalence relation on \Rplusbarn. For every face $\tau$ of \Gf, we put $\Dti=F^{-1}(\tau)$ and call it the (extended) cone associated to $\tau$. These equivalence classes are subject to the following properties.

\begin{proposition}
Let $\tau$ be a face of \Gf\ and put $\emptyset\subset P_{\tau}=\{\rho\mid\tau\subset\{x_{\rho}=0\}\}\subsetneq\{1,\ldots,n\}$. Suppose that \Dtu\ is strictly positively spanned by the primitive vectors $e_{\rho},v_j$; $\rho\in P_{\tau},j\in J_{\tau}$; in $\Zplusn\setminus\{0\}$.\footnote{We agree that $J_{\tau}=\emptyset$ if $\tau=\Gf$; this is, $\Delta_{\Gf}=\{(0,\ldots,0)\}$ is strictly positively spanned by the empty set.} Then we have
\begin{enumerate}
\item $\Dtu=\Dti\cap\Rplusn$;
\item if $\tau=\Gf$, then $\Dti=\{(0,\ldots,0)\}\cup\{k\in\Rplusbarn\mid m(k)=\infty\}$;
\item if $\tau$ is a proper face of \Gf, then
\begin{equation*}
\Dti=\left\{\sum\nolimits_{\rho\in P_{\tau}}\bar{\lambda}_{\rho}e_{\rho}+\sum\nolimits_{j\in J_{\tau}}\lambda_jv_j\;\middle\vert\;\bar{\lambda}_{\rho}\in\Rplusnulbar,\lambda_j\in\Rplusnul\text{ for all }\rho,j\right\};
\end{equation*}
\item in particular, if $\tau$ is a proper face not contained in any coordinate hyperplane, then $\Dti=\Dtu$.
\end{enumerate}
Furthermore,
\begin{enumerate}
\setcounter{enumi}{4}
\item the family $\{\Dti\mid\tau\text{ is a face of }\Gf\}$ of all extended cones forms a partition of \Rplusbarn, while
\item $\{\Dti\mid\tau\text{ is a compact face of }\Gf\}$ partitions $\{k\in\Rplusnulbarn\mid m(k)<\infty\}$.
\end{enumerate}
\end{proposition}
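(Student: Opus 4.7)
The plan is to prove the six parts in order, using the structure lemma (Lemma~\ref{lemma_struc_Dftad}) for the classical cones and the preceding proposition for the extension to \Rplusbarn. Parts (i)-(iv) give explicit descriptions of the extended cones; parts (v)-(vi) are partition statements that follow once the fibers $F^{-1}(\tau)$ are understood. The bulk of the work will be (iii), and the proofs of the other items will be short by comparison.

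For (i), the key observation is that for any $k\in\Rplusn$ the extended definitions of $m(k)$ and $F(k)$ coincide with the classical ones, because no infinite coordinates are involved; thus $\Dti\cap\Rplusn=\{k\in\Rplusn\mid F(k)=\tau\}=\Dtu$. For (ii), I would simply combine the definition $\Dti^{\infty}=F^{-1}(\Gf)$ with part (i) of the preceding proposition, which already states that $F(k)=\Gf$ iff $k=0$ or $m(k)=\infty$.

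The main work is (iii). First I would show the easy inclusion ``$\supseteq$'': pick $k=\sum_{\rho\in P_\tau}\bar\lambda_\rho e_\rho+\sum_{j\in J_\tau}\lambda_jv_j$ as in the statement and compute $F(k)$. The finite part $k^{\mathrm{fin}}=\sum_{j\in J_\tau}\lambda_jv_j\in\Dtu$ satisfies $F(k^{\mathrm{fin}})=\tau$ by Lemma~\ref{lemma_struc_Dftad}, so $k^{\mathrm{fin}}\cdot x$ achieves its minimum precisely on $\tau$. Since $\tau\subset\{x_\rho=0\}$ for all $\rho\in P_\tau$, adding the terms $\bar\lambda_\rho e_\rho$ with $\bar\lambda_\rho\in\Rplusnulbar$ does not enlarge the minimizing set (the new contribution vanishes on $\tau$ and is strictly positive, possibly $\infty$, off $\tau$ at a point with $x_\rho>0$); hence $F(k)=\tau$. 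The reverse inclusion ``$\subseteq$'' is the harder one: given $k\in\Dti$, let $P_k=\{\rho\mid k_\rho=\infty\}$. By part (iii) of the preceding proposition, $\tau\subset\bigcap_{\rho\in P_k}\{x_\rho=0\}$, so $P_k\subset P_\tau$. For each $\rho\in P_k$ choose $\bar\lambda_\rho=\infty$, and set $\bar\lambda_\rho=0$ for $\rho\in P_\tau\setminus P_k$. Then the residual vector $k-\sum_{\rho\in P_k}\bar\lambda_\rho e_\rho$ has only finite coordinates; I would argue that its class modulo $\sum_{\rho\in P_\tau}\R e_\rho$ sits inside the relatively open cone $\cone(v_j)_{j\in J_\tau}$, by comparing $m$-values on $\tau$ versus nearby faces. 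This yields a decomposition of the required form.

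Item (iv) is a direct specialization of (iii) with $P_\tau=\emptyset$, combined with Lemma~\ref{lemma_struc_Dftad}. For (v), one observes that the map $F:\Rplusbarn\to\{\text{faces of }\Gf\}$ assigns to every $k$ a unique face, so its fibers partition \Rplusbarn. Finally, (vi) follows by restricting the partition in (v) to $\{k\in\Rplusnulbarn\mid m(k)<\infty\}$ and invoking part (ii) of the preceding proposition, which characterizes exactly when $F(k)$ is compact.

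The main obstacle will be the ``$\subseteq$'' inclusion in (iii): one must verify that stripping off the infinite coordinates of $k\in\Dti$ leaves a vector in $\Dtu$ (up to reindexing of components in $P_\tau\setminus P_k$). The delicate point is that when $P_k\subsetneq P_\tau$, the residual vector need not lie in \Dtu\ directly, but only in its closure relative to the coordinate hyperplanes indexed by $P_\tau\setminus P_k$; handling this requires using both the openness of \Dtu\ in its affine span and the primitivity of the spanning vectors.
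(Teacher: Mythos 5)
Your overall plan is sound, and items (i), (ii), (iv), (v), (vi) are handled correctly. The gap is in (iii), and it affects both inclusions. For ``$\supseteq$'' you assert that $k^{\mathrm{fin}}=\sum_{j\in J_\tau}\lambda_j v_j$ lies in $\Dtu$ with $F(k^{\mathrm{fin}})=\tau$; this fails as soon as $P_\tau\neq\emptyset$, because $\Dtu$ is the \emph{relatively open} cone strictly positively spanned by both the $e_\rho$ ($\rho\in P_\tau$) and the $v_j$, so $k^{\mathrm{fin}}$ (which has all $e_\rho$-coefficients equal to zero) lies on the boundary of $\overbar{\Dtu}$ and $F(k^{\mathrm{fin}})$ is a strictly larger face. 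Your subsequent reasoning (``adding $\bar{\lambda}_\rho e_\rho$ does not enlarge the minimizing set'') would then only give $F(k)\subset F(k^{\mathrm{fin}})$, not $F(k)=\tau$. For ``$\subseteq$'' you set $\bar{\lambda}_\rho=0$ for $\rho\in P_\tau\setminus P_k$, which is not allowed (the statement requires $\bar{\lambda}_\rho\in\Rplusnulbar$, i.e.\ strictly positive or $\infty$), and then pass to the residual $\tilde k$ obtained by zeroing out the infinite coordinates. This $\tilde k$ need not lie anywhere near $\overbar{\Dtu}$: with $\Gf=\{(x,y)\mid x,y\geqslant0,\ x+y\geqslant1\}$, $\tau=(1,0)$, $P_\tau=\{2\}$, and $k=(2,\infty)\in\Dti$, the residual $\tilde k=(2,0)$ lies in the cone of the opposite non-compact facet, not in $\overbar{\Dtu}=\{(x,y)\mid 0\leqslant x\leqslant y\}$. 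Your fallback claim --- that the class of $\tilde k$ modulo $\sum_{\rho\in P_\tau}\R e_\rho$ lies in $\cone([v_j])$ --- is true, but it only determines the $\lambda_j$, not the $e_\rho$-coefficients $a_\rho$; it gives $\tilde k = \sum_j\lambda_j v_j + \sum_{\rho\in P_\tau}a_\rho e_\rho$ with $a_\rho\in\R$ arbitrary, and you still must show $a_\rho>0$ for $\rho\in P_\tau\setminus P_k$. Your closing remark acknowledges this delicacy but does not resolve it.

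Both gaps close at once with a different substitution: replace each infinite coordinate of $k$ by a \emph{large finite} value $c$ (rather than by $0$), producing $k_c\in\Rplusn$. Using $\Gf=\Gglf+\Rplusn$ and the finiteness of $\supp(f)$, one checks that for $c$ large enough both $F_{\Gglf}(k_c)=F_{\Gglf}(k)$ and $\{y\in\Rplusn\mid k_c\cdot y=0\}=\{y\in\Rplusn\mid k\cdot y=0\}$, hence $F(k_c)=F(k)$. Thus $k\in\Dti$ if and only if $k_c\in\Dtu$ for large $c$, and Lemma~\ref{lemma_struc_Dftad} gives $k_c=\sum_{\rho\in P_\tau}a_\rho e_\rho+\sum_{j\in J_\tau}\lambda_j v_j$ with all $a_\rho,\lambda_j>0$; absorbing $a_\rho$ into $\bar{\lambda}_\rho=\infty$ for $\rho\in P_k$ and keeping $\bar{\lambda}_\rho=a_\rho$ for $\rho\in P_\tau\setminus P_k$ yields exactly the representation in (iii), and conversely this substitution proves ``$\supseteq$'' as well. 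I would rework the proof of (iii) around this observation.
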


Let us do some more preliminary work to facilitate the actual proof of the theorem. In the lemmas and corollaries that follow we calculate the (naive) motivic measure of some cylindric subsets of $(t\C[[t]])^n$, but first we introduce a notation.

\begin{notation}
For $K\subset\Zplusnulbarn=(\Zplusnul\cup\{\infty\})^n$ and $l\in\Zplusnul$, we put
\begin{equation*}
X_{K,l}=\{\phi\in(t\C[[t]])^n\mid\ord_t\phi\in K\text{ and }\ord_tf(\phi)=l\}.
\end{equation*}
If $k\in\Zplusnuln$, then we usually write $X_{k,l}$ instead of $X_{\{k\},l}$.
\end{notation}

\begin{lemma}\label{motlemmaeen}
Let $f$ be as in Theorem~\ref{formlocmotzf}. Suppose that $\tau$ is a compact face of \Gf, and put $\mathcal{X}_{\tau}=\left\{x\in\Ccrossn\;\middle\vert\;\ft(x)=0\right\}$. Let $k\in\Zn\cap\Dtu$ and $l\in\Zplusnul$. Then $k\in\Zplusnuln$, $m(k)\in\Zplusnul$, and
\begin{equation*}
\mu(X_{k,l})=
\begin{cases}
{\ds0,}&\text{if \,$l<m(k)$};\\
{\ds\bigl((\LL-1)^n-[\mathcal{X}_{\tau}]\bigr)\LL^{-n-\sigma(k)},}&\text{if \,$l=m(k)$};\\
{\ds[\mathcal{X}_{\tau}](\LL-1)\LL^{-n+m(k)-\sigma(k)-l},}&\text{if \,$l>m(k)$}.
\end{cases}
\end{equation*}
\end{lemma}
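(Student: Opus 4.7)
The plan is to compute $\mu(X_{k,l})$ directly from its definition as $[\pi_{l'}(X_{k,l})] \LL^{-n(l'+1)}$ for any sufficiently large $l'$. Since $\tau$ is compact and $k \in \Dtu$, one has $k \in \Zplusnuln$ and $m(k) \in \Zplusnul$. Every arc $\phi$ with $\ord_t \phi = k$ can be written uniquely as $\phi_\rho = t^{k_\rho} u_\rho$ with $u_\rho = \sum_{j \geq 0} u_{\rho,j} t^j$ and $u_{\rho,0} \neq 0$. Setting $y = (u_{1,0}, \ldots, u_{n,0}) \in \Ccrossn$ and $\vec u_j = (u_{1,j}, \ldots, u_{n,j})$ for $j \geq 1$, expansion gives
\[
f(\phi) = t^{m(k)}\Bigl[\ft(y) + \sum_{j \geq 1} H_j(y, \vec u_1, \ldots, \vec u_j)\, t^j\Bigr],
\]
where each $H_j$ is a polynomial whose expansion in $\vec u_j$ reads $H_j = \nabla \ft(y) \cdot \vec u_j + R_j(y, \vec u_1, \ldots, \vec u_{j-1})$.

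If $l < m(k)$ then $X_{k,l}$ is empty, so $\mu(X_{k,l}) = 0$. If $l = m(k)$ the condition $\ord_t f(\phi) = l$ becomes $\ft(y) \neq 0$; choosing $l' \geq \max_\rho k_\rho$, the truncation $\pi_{l'}(X_{k,l})$ is parametrized by $y \in \Ccrossn \setminus \mathcal{X}_\tau$ together with $\sum_\rho(l' - k_\rho) = nl' - \sigma(k)$ unconstrained parameters $u_{\rho,j}$ with $1 \leq j \leq l'-k_\rho$, so
\[
[\pi_{l'}(X_{k,l})] = \bigl((\LL-1)^n - [\mathcal{X}_\tau]\bigr) \LL^{nl' - \sigma(k)},
\]
which after multiplication by $\LL^{-n(l'+1)}$ yields the announced value.

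The main case is $l > m(k)$. Here the conditions read $\ft(y) = 0$ together with $H_j = 0$ for $1 \leq j \leq l - m(k) - 1$ and $H_{l-m(k)} \neq 0$. The non-degeneracy assumption over $\C$ ensures that $\nabla \ft(y) \neq 0$ for every $y \in \mathcal{X}_\tau$; this is the key input. Stratifying $\mathcal{X}_\tau$ by the open subvarieties $\mathcal{Y}_\rho = \{y \in \mathcal{X}_\tau \mid \partial \ft/\partial x_\rho(y) \neq 0\}$, which cover $\mathcal{X}_\tau$, each linear equation $H_j = 0$ can be solved on $\mathcal{Y}_\rho$ for the one coordinate $u_{\rho,j}$ in terms of the remaining data, exhibiting the locus of $(y, \vec u_1, \ldots, \vec u_j)$ satisfying the first $j$ equations as a Zariski locally trivial $\mathbf{A}^{n-1}$-bundle over the locus at level $j-1$. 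Choosing $l' \geq l - m(k) + \max_\rho k_\rho$, so that every $\vec u_j$ for $1 \leq j \leq l-m(k)$ has all $n$ components present and the remaining $u_{\rho,j}$ with $j > l-m(k)$ are unconstrained and contribute a factor $\LL^{nl' - \sigma(k) - n(l - m(k))}$, and imposing the final non-vanishing condition on $\vec u_{l-m(k)}$ (which contributes $(\LL-1)\LL^{n-1}$), one assembles
\[
[\pi_{l'}(X_{k,l})] = [\mathcal{X}_\tau](\LL-1)\LL^{(n-1)(l-m(k)-1) + (n-1) + nl' - \sigma(k) - n(l-m(k))} = [\mathcal{X}_\tau](\LL-1)\LL^{nl' + m(k) - l - \sigma(k)},
\]
hence $\mu(X_{k,l}) = [\mathcal{X}_\tau](\LL-1)\LL^{-n + m(k) - \sigma(k) - l}$. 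The main obstacle is verifying that the sequence of affine constraints really yields Zariski locally trivial bundles on each stratum, so that the class factorization is genuinely valid in $\MC$; this is settled by the stratification above, where on each $\mathcal{Y}_\rho$ the solution set at step $j$ is literally the graph of a regular function in the coordinate $u_{\rho,j}$.
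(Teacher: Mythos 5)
Your proof is correct and follows essentially the same route as the paper: the change of variables $\phi_\rho = t^{k_\rho}u_\rho$, the identity $f(\phi)=t^{m(k)}\bigl(\ft(y)+\cdots\bigr)$ coming from $k\in\Dtu$, and a Hensel-type lifting based on non-degeneracy. The one genuine difference is that the paper computes $\mu(X_{k,\geqslant l})$ first (where the conditions are all equations) and then obtains $\mu(X_{k,l})$ by the subtraction $\mu(X_{k,\geqslant l})-\mu(X_{k,\geqslant l+1})$, whereas you compute $\mu(X_{k,l})$ directly by adjoining the non-vanishing condition $H_{l-m(k)}\neq 0$, which contributes the factor $(\LL-1)\LL^{n-1}$. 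Both are valid; the paper's variant keeps every step purely equational so the fiber at each level is literally an affine hyperplane, while yours is a touch shorter at the cost of one extra case in the level-$(l-m(k))$ fiber. Your explicit stratification remark is actually more careful than the paper's terse ``$[\pi_{l'-1}(\widetilde X_{k,\geqslant l})]=[\mathcal X_\tau\times\C^{(n-1)(l'-1)}]$'', which tacitly uses the same bundle argument; just note that the $\mathcal Y_\rho$ you introduce are an open cover, not disjoint strata, so to make the class additivity literal you should pass to the standard refinement $Z_\rho=\mathcal Y_\rho\setminus(\mathcal Y_1\cup\cdots\cup\mathcal Y_{\rho-1})$, which are locally closed and partition $\mathcal X_\tau$; on each $Z_\rho$ the same coordinate $u_{\rho,j}$ can be solved for at every level, so the graph/bundle description and the resulting factorization of classes in $\MC$ go through as you intend.
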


\begin{proof}
Let $\phi=(\phi_1,\ldots,\phi_n)\in(t\C[[t]])^n$ with $\ord_t\phi=k=(k_1,\ldots,k_n)$, and let $\psi=(\psi_1,\ldots,\psi_n)\in(\C[[t]]^{\times})^n$ be such that $\phi_{\rho}=t^{k_{\rho}}\psi_{\rho}$ for all $\rho$. Then, for $\omega=(\omega_1,\ldots,\omega_n)\in\Zplusn$, we have that
\begin{equation*}
\phi^{\omega}=\phi_1^{\omega_1}\cdots\phi_n^{\omega_n}=t^{k_1\omega_1}\psi_1^{\omega_1}\cdots t^{k_n\omega_n}\psi_n^{\omega_n}=t^{k\cdot\omega}\psi^{\omega}.
\end{equation*}
Write
\begin{equation*}
f(x)=\sum_{\omega\in\Zplusn}a_{\omega}x^{\omega}\qquad\text{and}\qquad\ft(x)=\sum_{\omega\in\Zn\cap\tau}a_{\omega}x^{\omega}.
\end{equation*}
It follows from $k\in\Dtu$ that $k\cdot\omega=m(k)$ for all $\omega\in\supp(f)\cap\tau$,\footnote{Recall that $\supp(f)=\{\omega\in\Zplusn\mid a_{\omega}\neq0\}$.} whereas $k\cdot\omega\geqslant m(k)+1$ for $\omega\in\supp(f)\setminus\tau$. Hence we can write $f(\phi)$ as
\begin{gather*}
f(\phi)=t^{m(k)}\bigl(\ft(\psi)+t\tilde{f}_{\tau,k}(t,\psi)\bigr),\\\shortintertext{with}
\tilde{f}_{\tau,k}(t,\psi)=\sum_{\omega\in\supp(f)\setminus\tau}a_{\omega}t^{k\cdot\omega-m(k)-1}\psi^{\omega}.
\end{gather*}

First of all, we see that $\ord_tf(\phi)\geqslant m(k)$; hence $\mu(X_{k,l})=0$ for $l<m(k)$. Secondly, we observe that $\ord_tf(\phi)=m(k)$ if and only if $\ord_t\ft(\psi)=0$. If we write $\psi=(\psi_{\rho,0}+\psi_{\rho,1}t+\psi_{\rho,2}t^2+\cdots)_{1\leqslant\rho\leqslant n}$, then $\ft(\psi)\in\ft(\psi_{1,0},\ldots,\psi_{n,0})+t\C[[t]]$. Consequently, the set
\begin{align*}
\widetilde{X}_{k,m(k)}&=\{\psi\in(\C[[t]]^{\times})^n\mid\ord_tf(\phi)=m(k)\}\\
&=\{\psi\in(\C[[t]]^{\times})^n\mid\ft(\psi_{1,0},\ldots,\psi_{n,0})\neq0\}\\
&=\pi_0^{-1}\bigl(\pi_0\bigl(\widetilde{X}_{k,m(k)}\bigr)\bigr)
\end{align*}
is a cylindric subset of $(\C[[t]])^n$ of motivic measure
\begin{equation*}
\mu\bigl(\widetilde{X}_{k,m(k)}\bigr)=\bigl[\pi_0\bigl(\widetilde{X}_{k,m(k)}\bigr)\bigr]\LL^{-n}=[\Ccrossn\setminus\mathcal{X}_{\tau}]\LL^{-n}=\bigl((\LL-1)^n-[\mathcal{X}_{\tau}]\bigr)\LL^{-n}.
\end{equation*}
The corresponding set $X_{k,m(k)}$ has motivic measure
\begin{equation*}
\mu\bigl(X_{k,m(k)}\bigr)=\LL^{-\sigma(k)}\mu\bigl(\widetilde{X}_{k,m(k)}\bigr)=\bigl((\LL-1)^n-[\mathcal{X}_{\tau}]\bigr)\LL^{-n-\sigma(k)}.
\end{equation*}

Suppose now that $l>m(k)$. Let us first calculate the measure of
\begin{equation*}
X_{k,\geqslant l}=\{\phi\in(t\C[[t]])^n\mid\ord_t\phi=k\text{ and }\ord_tf(\phi)\geqslant l\}.
\end{equation*}
From our expression for $f(\phi)$ we see that $\ord_tf(\phi)\geqslant l$ if and only if
\begin{equation*}
\ord_t\bigl(\ft(\psi)+t\tilde{f}_{\tau,k}(t,\psi)\bigr)\geqslant l'=l-m(k)\geqslant1,
\end{equation*}
or, equivalently, if and only if
\begin{equation}\label{motcondle}
\ft(\psi)+t\tilde{f}_{\tau,k}(t,\psi)\equiv0\mod t^{l'}\C[[t]].
\end{equation}

Whether $\psi$ satisfies the above condition, only depends on the complex numbers
\begin{equation*}
\psi_{\rho,\kappa};\qquad\rho=1,\ldots,n;\ \kappa=0,\ldots,l'-1.
\end{equation*}
Clearly, for $\psi$ to satisfy \eqref{motcondle}, it is necessary that $\ft(\psi_{1,0},\ldots,\psi_{n,0})=0$. Fix such an $n$-tuple $(\psi_{1,0},\ldots,\psi_{n,0})$. Since $f$ is non-degenerated over \C\ with respect to $\tau$, there exists a $\rho_0$ such that $(\partial\ft/\partial x_{\rho_0})(\psi_{1,0},\ldots,\psi_{n,0})\neq0$. Therefore, Hensel's lifting lemma returns, for every free choice of $(n-1)(l'-1)$ complex numbers
\begin{equation*}
\psi_{\rho,\kappa};\qquad\rho=1,\ldots,\rho_0-1,\rho_0+1,\ldots,n;\ \kappa=1,\ldots,l'-1;
\end{equation*}
unique $\psi_{\rho_0,1},\ldots,\psi_{\rho_0,l'-1}\in\C$ such that $\psi$ satisfies \eqref{motcondle}. It follows that
\begin{align*}
\widetilde{X}_{k,\geqslant l}&=\{\psi\in(\C[[t]]^{\times})^n\mid\ord_tf(\phi)\geqslant l\}\\
&=\{\psi\in(\C[[t]]^{\times})^n\mid\ft(\psi)+t\tilde{f}_{\tau,k}(t,\psi)\equiv0\bmod t^{l'}\C[[t]]\}\\
&=\pi_{l'-1}^{-1}\bigl(\pi_{l'-1}\bigl(\widetilde{X}_{k,\geqslant l}\bigr)\bigr)
\end{align*}
is a cylindric subset of $(\C[[t]])^n$ of motivic measure
\begin{multline*}
\mu\bigl(\widetilde{X}_{k,\geqslant l}\bigr)=\bigl[\pi_{l'-1}\bigl(\widetilde{X}_{k,\geqslant l}\bigr)\bigr]\LL^{-nl'}\\
=\bigl[\mathcal{X}_{\tau}\times\C^{(n-1)(l'-1)}\bigr]\LL^{-nl'}=[\mathcal{X}_{\tau}]\LL^{-n+m(k)-l+1}.
\end{multline*}
The corresponding set $X_{k,\geqslant l}$ therefore has motivic measure
\begin{equation*}
\mu(X_{k,\geqslant l})=\LL^{-\sigma(k)}\mu\bigl(\widetilde{X}_{k,\geqslant l}\bigr)=[\mathcal{X}_{\tau}]\LL^{-n+m(k)-\sigma(k)-l+1}.
\end{equation*}

By additivity of the motivic measure, finally, we obtain that
\begin{gather*}
\mu(X_{k,l})=\mu(X_{k,\geqslant l}\setminus X_{k,\geqslant l+1})=\mu(X_{k,\geqslant l})-\mu(X_{k,\geqslant l+1})=\\
[\mathcal{X}_{\tau}]\LL^{-n+m(k)-\sigma(k)-l+1}-[\mathcal{X}_{\tau}]\LL^{-n+m(k)-\sigma(k)-l}=[\mathcal{X}_{\tau}](\LL-1)\LL^{-n+m(k)-\sigma(k)-l},
\end{gather*}
which concludes the proof of the lemma.
\end{proof}

\begin{corollary}\label{motcoroleen}
Let $f$ be as in Theorem~\ref{formlocmotzf} and suppose that $\tau$ is a com\-pact face of \Gf\ that is not contained in any coordinate hyperplane. Let $l\in\Zplusnul$. Then $X_{\Zn\cap\Dtu,l}$ is a cylindric subset of $(t\C[[t]])^n$; i.e., $\mu(X_{\Zn\cap\Dtu,l})$ exists.
\end{corollary}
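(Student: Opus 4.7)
The strategy is to write $X_{\Zn\cap\Dtu,l}$ as a \emph{finite} disjoint union of the cylindric sets $X_{k,l}$ already analyzed in Lemma~\ref{motlemmaeen}. Observe first that, by definition,
\begin{equation*}
X_{\Zn\cap\Dtu,l}=\bigsqcup_{k\in\Zn\cap\Dtu}X_{k,l},
\end{equation*}
with the union disjoint because $X_{k,l}\subset\{\phi\mid\ord_t\phi=k\}$. Lemma~\ref{motlemmaeen} already gives that each $X_{k,l}$ is cylindric (as the difference of the cylindric sets $X_{k,\geqslant l}$ and $X_{k,\geqslant l+1}$, or, when $l=m(k)$, directly as a cylinder over $\Ccrossn\setminus\mathcal{X}_\tau$), and shows that $X_{k,l}=\emptyset$ whenever $l<m(k)$. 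So only the indices in
\begin{equation*}
K_l=\left\{k\in\Zn\cap\Dtu\;\middle\vert\;m(k)\leqslant l\right\}
\end{equation*}
contribute, and the whole thing will reduce to proving that $K_l$ is finite.

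To establish finiteness of $K_l$, I would first argue that $\Dtu\subseteq\Rplusnuln$ under the hypotheses on $\tau$. By Lemma~\ref{lemma_struc_Dftad}, $\Dtu$ is strictly positively spanned by the primitive vectors $v_1,\ldots,v_r\in\Zplusn\setminus\{0\}$ perpendicular to the facets of \Gf\ that contain~$\tau$. Fix any coordinate index $\rho$; if all of these $v_j$ had $\rho$-th coordinate equal to zero, then each facet $\sigma\supseteq\tau$ would, because $\Gf=\Gf+\Rplusn$, contain the entire ray $x+\Rplus e_\rho$ for every $x\in\sigma$, and hence $\tau=\bigcap_{\sigma\supseteq\tau}\sigma$ would contain such rays too, contradicting the compactness of~$\tau$. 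Thus for every $\rho$ at least one $v_j$ has $v_{j,\rho}>0$, and any $k=\sum_j\lambda_jv_j\in\Dtu$ (with $\lambda_j>0$) satisfies $k_\rho>0$; i.e.\ $\Dtu\subseteq\Rplusnuln$.

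Next, the hypothesis that $\tau$ is not contained in any coordinate hyperplane gives a point $x^\ast\in\tau\cap\Rplusnuln$. Setting $c=\min_\rho x^\ast_\rho>0$, one has for every $k\in\Zn\cap\Dtu$ that
\begin{equation*}
m(k)=k\cdot x^\ast\geqslant c\,\sigma(k),
\end{equation*}
so $m(k)\leqslant l$ forces $\sigma(k)\leqslant l/c$. Combined with $k\in\Zplusnuln$ (from the previous paragraph), this cuts $K_l$ down to finitely many lattice points.

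Putting it together, $X_{\Zn\cap\Dtu,l}=\bigsqcup_{k\in K_l}X_{k,l}$ is a finite disjoint union of cylindric subsets of $(t\C[[t]])^n$, hence itself cylindric, and its naive motivic measure $\mu(X_{\Zn\cap\Dtu,l})=\sum_{k\in K_l}\mu(X_{k,l})$ is well defined in \MC. The only real obstacle is the finiteness step, which is why I would treat the proof that $\Dtu\subseteq\Rplusnuln$ carefully; everything else is bookkeeping on top of Lemma~\ref{motlemmaeen}.
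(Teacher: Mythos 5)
Your proof is correct and follows essentially the same route as the paper: decompose $X_{\Zn\cap\Dtu,l}$ into the $X_{k,l}$, restrict to $m(k)\leqslant l$ via Lemma~\ref{motlemmaeen}, and use a point $x^{\ast}\in\tau\cap\Rplusnuln$ to force finiteness of the index set. The only difference is that your detour through $\Dtu\subseteq\Rplusnuln$ is unnecessary: the paper simply notes that $\{k\in\Rplusn\mid k\cdot x^{\ast}\leqslant l\}$ is already closed and bounded (since $x^{\ast}$ has strictly positive coordinates) and hence contains finitely many integral points, which gives the same conclusion directly from $k\in\Zplusn\cap\Dtu\subset\Rplusn$.
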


\begin{proof}
Clearly, $X_{\Zn\cap\Dtu,l}$ equals the disjoint union
\begin{equation}\label{disjuncoreen}
X_{\Zn\cap\Dtu,l}=\bigsqcup_{k\in\Zn\cap\Dtu}X_{k,l}.
\end{equation}
We know that $X_{k,l}=\emptyset$ for $k\in\Zplusnuln$ with $m(k)>l$; hence we may restrict the above union to $k$ satisfying $m(k)\leqslant l$. Choose $x\in\tau\cap\Rplusnuln\neq\emptyset$. Then $m(k)=k\cdot x$ for all $k\in\Dtu$. Moreover, $\{k\in\Rplusn\mid k\cdot x\leqslant l\}$ is a closed and bounded subset of \Rn, containing finitely many integral points. The union \eqref{disjuncoreen} so boils down to a finite disjoint union of sets $X_{k,l}$ that, by Lemma~\ref{motlemmaeen}, are cylindric subsets of $(t\C[[t]])^n$. Consequently,
\begin{equation*}
\mu(X_{\Zn\cap\Dtu,l})=\sum_{\substack{k\in\Zn\cap\Dtu,\\m(k)\leqslant l}}\mu(X_{k,l})
\end{equation*}
is well-defined.
\end{proof}

\begin{lemma}\label{motlemmatwee}
Let $f$ be as in Theorem~\ref{formlocmotzf} and suppose that $\tau$ is a compact face of \Gf\ that is contained in at least one coordinate hyperplane. Define $P_{\tau}\subset\{1,\ldots,n\}$ such that $\rho\in P_{\tau}$ if and only if $\tau\subset\{x_{\rho}=0\}$, and denote
\begin{equation*}
\mathcal{X}_{\tau}'=\left\{(x_{\rho})_{\rho\not\in P_{\tau}}\in(\C^{\times})^{n-\abs{P_{\tau}}}\;\middle\vert\;\ft(x_{\rho})_{\rho\not\in P_{\tau}}=0\right\}.
\end{equation*}
Let $k\in\Zn\cap\Dtu$, $\emptyset\subset P\subset P_{\tau}$, and put
\begin{equation*}
k\vee P=k+\sum_{\rho\in P}\Zplusbar e_{\rho}\subset\Zplusnulbarn\cap\Dti.\footnote{Hereby $(e_{\rho})_{1\leqslant\rho\leqslant n}$ denotes the standard basis of \Rn, and $\Zplusbar=\Zplus\cup\{\infty\}\subset\Rplusbar$.}
\end{equation*}
Note that $m(k')=m(k)\in\Zplusnul$ for all $k'\in k\vee P$. Finally, let $l\in\Zplusnul$. Then
\begin{multline*}
\mu(X_{k\vee P,l})=\\
\begin{cases}
{\ds0,}&\text{if \,$l<m(k)$};\\
{\ds\bigl((\LL-1)^{n-\abs{P_{\tau}}}-[\mathcal{X}_{\tau}']\bigr)(\LL-1)^{\abs{P_{\tau}}-\abs{P}}\LL^{-n+\abs{P}-\sigma(k)},}&\text{if \,$l=m(k)$};\\
{\ds[\mathcal{X}_{\tau}'](\LL-1)^{\abs{P_{\tau}}-\abs{P}+1}\LL^{-n+\abs{P}+m(k)-\sigma(k)-l},}&\text{if \,$l>m(k)$}.
\end{cases}
\end{multline*}
\end{lemma}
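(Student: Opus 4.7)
The key observation is that, despite $k\vee P$ containing points with some infinite coordinates (corresponding to $\phi_\rho=0$ for $\rho\in P$), the set $X_{k\vee P,l}$ is nevertheless cylindric in $(t\C[[t]])^n$. Indeed, the requirement $\ord_t\phi_\rho\in\{k_\rho,k_\rho+1,\ldots,\infty\}$ amounts to the closed condition $\phi_\rho\in t^{k_\rho}\C[[t]]$, which is cylindric, being defined by the vanishing of the Taylor coefficients of order strictly less than $k_\rho$. My plan is therefore to compute $\mu(X_{k\vee P,l})$ directly by picking $l_0$ sufficiently large and determining the class of the constructible set $\pi_{l_0}(X_{k\vee P,l})\subset(\C[t]/(t^{l_0+1}))^n\cong\mathbf{A}^{n(l_0+1)}$.

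First I would introduce the substitution $\phi_\rho=t^{k_\rho}\psi_\rho$, where $\psi_\rho$ ranges freely over $\C[[t]]$ for $\rho\in P$ and over $\C[[t]]^\times$ for $\rho\notin P$. Because $k\cdot\omega=m(k)$ for $\omega\in\tau$ and $k\cdot\omega\geqslant m(k)+1$ for $\omega\in\supp(f)\setminus\tau$, and because $\omega_\rho=0$ for every $\omega\in\tau$ and $\rho\in P_\tau$, we obtain the factorization
\[
f(\phi)=t^{m(k)}\bigl(\ft(\psi_\rho)_{\rho\notin P_\tau}+t\,\tilde{f}_{\tau,k}(t,\psi)\bigr),
\]
in which $\ft$ only depends on the $\psi_\rho$ with $\rho\notin P_\tau$. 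This immediately disposes of the case $l<m(k)$, for which $X_{k\vee P,l}=\emptyset$.

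For $l=m(k)$ the remaining condition is the nonvanishing of $\ft\bigl((\psi_{\rho,0})_{\rho\notin P_\tau}\bigr)$, which only constrains the constant terms of the $\psi_\rho$. The class of $\pi_{l_0}(X_{k\vee P,l})$ then factors into the contributions of four independent families of coordinates: (i) a factor $\LL^{\abs{P}}$ coming from $\psi_{\rho,0}\in\C$ for $\rho\in P$; (ii) a factor $(\LL-1)^{\abs{P_\tau}-\abs{P}}$ from $\psi_{\rho,0}\in\Ccross$ for $\rho\in P_\tau\setminus P$; (iii) a factor $(\LL-1)^{n-\abs{P_\tau}}-[\mathcal{X}_\tau']$ from $(\psi_{\rho,0})_{\rho\notin P_\tau}\in(\Ccross)^{n-\abs{P_\tau}}\setminus\mathcal{X}_\tau'$; and (iv) a factor $\LL^{nl_0-\sigma(k)}$ for the free higher-order coefficients $\psi_{\rho,\kappa}$ with $\kappa\geqslant 1$. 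Dividing by $\LL^{n(l_0+1)}$ yields the stated formula.

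For $l>m(k)$, setting $l'=l-m(k)\geqslant 1$, I would replicate the Hensel-lifting argument from the proof of Lemma~\ref{motlemmaeen}. The non-degeneracy hypothesis guarantees that for every $(\psi_{\rho,0})_{\rho\notin P_\tau}\in\mathcal{X}_\tau'$ some partial derivative $(\partial\ft/\partial x_{\rho_0})$ is non-zero at $(\psi_{\rho,0})_{\rho\notin P_\tau}$; after stratifying $\mathcal{X}_\tau'$ according to the choice of $\rho_0$, Hensel's lemma determines the $l'-1$ coefficients $\psi_{\rho_0,1},\ldots,\psi_{\rho_0,l'-1}$ uniquely in terms of the remaining free coefficients. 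A careful dimension count produces
\[
\mu(X_{k\vee P,\geqslant l})=[\mathcal{X}_\tau'](\LL-1)^{\abs{P_\tau}-\abs{P}}\LL^{-n+\abs{P}+m(k)-\sigma(k)-l+1},
\]
and the lemma follows from $\mu(X_{k\vee P,l})=\mu(X_{k\vee P,\geqslant l})-\mu(X_{k\vee P,\geqslant l+1})$. The main obstacle in adapting Lemma~\ref{motlemmaeen} is the bookkeeping imposed by the fact that, for $\rho\in P$, the constant term $\psi_{\rho,0}$ is free in $\C$ rather than confined to $\Ccross$; this is precisely what converts factors $(\LL-1)^{\abs{P}}$ into $\LL^{\abs{P}}$, and it also explains why the naive idea of summing $\mu(X_{k',l})$ over $k'\in\Zplusnuln\cap(k\vee P)$ cannot work: such a sum would involve the divergent geometric series $\sum_{c\geqslant 0}\LL^{-c}$, which has no meaning in $\MC$.
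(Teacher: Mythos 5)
Your proof is correct and follows exactly the route the paper intends: the paper's own argument is a single-sentence appeal to the analogy with Lemma~\ref{motlemmaeen}, together with the stated formula for $\mu(X_{k\vee P,\geqslant l})$, and your substitution $\phi_\rho=t^{k_\rho}\psi_\rho$ (with $\psi_\rho$ ranging over $\C[[t]]$ for $\rho\in P$ and over $\C[[t]]^{\times}$ otherwise), the class count for $l=m(k)$, and the Hensel-lifting-plus-stratification argument for $l>m(k)$ supply precisely the bookkeeping the paper leaves implicit. Your closing observation about why one cannot recover the result by summing $\mu(X_{k',l})$ over the individual $k'\in k\vee P$ also correctly explains why the lemma must be formulated for the sets $X_{k\vee P,l}$ rather than fibrewise.
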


\begin{proof}
The proof is analogous to the proof of Lemma~\ref{motlemmaeen}. Essential is that \ft\ only depends on the variables $x_{\rho}$, $\rho\not\in P_{\tau}$. The measure of
\begin{gather*}
X_{k\vee P,\geqslant l}=\{\phi\in(t\C[[t]])^n\mid\ord_t\phi\in k\vee P\text{ and }\ord_tf(\phi)\geqslant l\}\\\shortintertext{equals}
\mu(X_{k\vee P,\geqslant l})=[\mathcal{X}_{\tau}'](\LL-1)^{\abs{P_{\tau}}-\abs{P}}\LL^{-n+\abs{P}+m(k)-\sigma(k)-l+1}
\end{gather*}
for $l>m(k)$.
\end{proof}

\begin{corollary}\label{motcoroltwee}
Let $f$ be as in Theorem~\ref{formlocmotzf} and suppose that $\tau$ is a compact face of \Gf\ that is contained in at least one coordinate hyperplane. Let $l\in\Zplusnul$. Then $X_{\Zplusnulbarn\cap\Dti,l}$ is a cylindric subset of $(t\C[[t]])^n$; i.e., $\mu(X_{\Zplusnulbarn\cap\Dti,l})$ exists.
\end{corollary}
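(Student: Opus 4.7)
The approach mirrors Corollary~\ref{motcoroleen}, with Lemma~\ref{motlemmatwee} replacing Lemma~\ref{motlemmaeen} and with care taken to group the many integer points of $\Dti$ having infinite coordinates into finitely many cosets of the form $k\vee P$. The starting observation is that $\tau\subset\{x_\rho=0\}$ for every $\rho\in P_\tau$ forces $\omega\cdot e_\rho=0$ for all $\omega\in\tau$, hence $m(e_\rho)=0$ while $\sigma(e_\rho)=1$. Consequently $m$ is constant on every coset $k\vee P$ with $k\in\Zn\cap\Dtu$ and $P\subset P_\tau$, so by Lemma~\ref{motlemmatwee} one has $X_{k\vee P,l}=\emptyset$ whenever $l<m(k)$.

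The main step is to exhibit a disjoint decomposition of $\Zplusnulbarn\cap\Dti$ into such cosets. I would fix a simplicial decomposition $\{\delta_i\}_{i\in I}$ of $\Dtu$ without introducing new rays, writing $\delta_i=\cone(\{e_\rho\}_{\rho\in P_i}\cup\{v_j\}_{j\in J_i})$ with $P_i\subset P_\tau$ and $\emptyset\neq J_i\subset J_\tau$; every generating ray of $\overline{\Dtu}$ thus equals an $e_\rho$ with $\rho\in P_\tau$ or a $v_j$ with $j\in J_\tau$. The half-open fundamental parallelepiped of $\delta_i$'s generators, combined with $\Zplus$-translates by those generators, partitions $\Zn\cap\delta_i$ into cosets in the standard way; allowing the translates in the $e_\rho$-directions (with $\rho\in P_i$) to range over $\Zplusbar$ instead of $\Zplus$ then extends each simplicial piece to its closure in $\Rplusbarn$ and yields a partition
\begin{equation*}
\Zplusnulbarn\cap\Dti\;=\;\bigsqcup_{i\in I}\ \bigsqcup_{k\in B_i}\,k\vee P_i,
\end{equation*}
where $B_i\subset\Zn\cap\overline{\delta_i}$ is the resulting set of base points (the lattice points of the half-open parallelepiped shifted by all $\Zplus$-combinations of the $v_j$, $j\in J_i$).

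To conclude, fix $l\in\Zplusnul$. Every base point $k\in B_i$ lies in $\overline{\Dtu}$, so $F(k)\supset\tau$ and $m(k)=k\cdot x$ for any $x\in\tau$. By definition of $P_\tau$ one can choose $x\in\tau$ with $x_\rho>0$ precisely for $\rho\notin P_\tau$, so the inequality $m(k)=\sum_{\rho\notin P_\tau}k_\rho x_\rho\leq l$ bounds the $(\rho\notin P_\tau)$-coordinates of $k$; its $(\rho\in P_i)$-coordinates are bounded by the half-open fundamental parallelepiped convention. Hence only finitely many base points $k$ satisfy $m(k)\leq l$, and each of the corresponding $X_{k\vee P_i,l}$ is cylindric by Lemma~\ref{motlemmatwee}. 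The remaining pieces are empty, so $X_{\Zplusnulbarn\cap\Dti,l}$ is a finite disjoint union of cylindric sets, and is therefore itself cylindric. The main obstacle is the combinatorial book-keeping in the middle step: one must verify that the cosets $k\vee P_i$ coming from different simplicial cones $\delta_i$ are pairwise disjoint, especially near shared boundary rays $e_\rho$ (with $\rho\in P_\tau$) that could be absorbed into the $\vee$-operation from several simplicial pieces at once; this is handled by the half-open convention on the fundamental parallelepipeds together with the partition $\Dtu=\bigsqcup_i\delta_i$.
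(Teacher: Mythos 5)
Your approach is essentially the same as the paper's: partition $\Dti$ into the extended simplicial pieces $\delta_i^\infty$ coming from a simplicial decomposition of $\Dtu$ that does not introduce new rays, then decompose each piece into cosets $k\vee P_i$ with $k$ ranging over $\Zn\cap\delta_i'$, and reduce to Lemma~\ref{motlemmatwee} plus a finiteness argument on base points with $m(k)\leqslant l$.

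The one point to tighten is the final bounding step. You bound the $(\rho\notin P_\tau)$-coordinates of $k$ from $m(k)=\sum_{\rho\notin P_\tau}k_\rho x_\rho\leqslant l$, and then assert that the $(\rho\in P_i)$-coordinates are bounded ``by the half-open fundamental parallelepiped convention.'' This conflates the standard coordinates $k_\rho$ with the simplicial-basis coefficients $h_\rho$: for $\rho\in P_i$ one has $k_\rho=h_\rho+\sum_{j\in J_i}\lambda_j(v_j)_\rho$, so the half-open convention alone bounds $h_\rho$, not $k_\rho$; and the $(\rho\in P_\tau\setminus P_i)$-coordinates (which are $\sum_j\lambda_j(v_j)_\rho$) are not addressed at all. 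The clean version, which is what the paper (implicitly) uses, works in the simplicial basis: since $m(e_\rho)=0$ for $\rho\in P_i$ and $m(v_j)\geqslant1$ for $j\in J_i$ (because $J_\tau$ only indexes facets not contained in coordinate hyperplanes), the constraint $m(k)=\sum_{j\in J_i}\lambda_jm(v_j)\leqslant l$ forces $\lambda_j\leqslant l$ for each $j$, and together with $h_\rho\in(0,1]$ this puts $k$ in a bounded subset of $\Rn$, hence $\Zn\cap\delta_i'\cap\{m\leqslant l\}$ is finite. With that fix your argument is complete and matches the paper.
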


\begin{proof}
Put $\emptyset\subsetneq P_{\tau}=\{\rho\mid\tau\subset\{x_{\rho}=0\}\}\subsetneq\{1,\ldots,n\}$ as usual, and suppose that \Dtu\ is strictly positively spanned by the primitive vectors $e_{\rho},v_j$; $\rho\in P_{\tau},j\in J_{\tau}\neq\emptyset$; in $\Zplusn\setminus\{0\}$. Choose a decomposition $\{\delta_i\}_{i\in I}$ of the cone \Dtu\ into simplicial cones $\delta_i$ without introducing new rays, and assume that $\delta_i$ is strictly positively spanned by the linearly independent primitive vectors $e_{\rho},v_j$; $\rho\in P_i,j\in J_i$; with $\emptyset\subset P_i\subset P_{\tau}$ and $\emptyset\varsubsetneq J_i\subset J_{\tau}$. Then the \emph{extended simplicial cones}
\begin{equation*}
\delta_i^{\infty}=\left\{\sum\nolimits_{\rho\in P_i}\bar{\lambda}_{\rho}e_{\rho}+\sum\nolimits_{j\in J_i}\lambda_jv_j\;\middle\vert\;\bar{\lambda}_{\rho}\in\Rplusnulbar,\lambda_j\in\Rplusnul\text{ for all }\rho,j\right\},\quad i\in I,
\end{equation*}
clearly partition \Dti, and so we are looking at the finite disjoint union
\begin{equation*}
X_{\Zplusnulbarn\cap\Dti,l}=\bigsqcup_{i\in I}X_{\Zplusnulbarn\cap\delta_i^{\infty},l}.
\end{equation*}

Next we decompose $\Zplusnulbarn\cap\delta_i^{\infty}$, and subsequently $X_{\Zplusnulbarn\cap\delta_i^{\infty},l}$, as
\begin{equation}\label{disjuncortwee}
\Zplusnulbarn\cap\delta_i^{\infty}=\bigsqcup_{k\in\Zn\cap\delta_i'}k\vee P_i\quad\text{ and }\quad X_{\Zplusnulbarn\cap\delta_i^{\infty},l}=\bigsqcup_{k\in\Zn\cap\delta_i'}X_{k\vee P_i,l},
\end{equation}
with
\begin{equation*}
\delta_i'=\left\{\sum\nolimits_{\rho\in P_i}h_{\rho}e_{\rho}+\sum\nolimits_{j\in J_i}\lambda_jv_j\;\middle\vert\;h_{\rho}\in(0,1],\lambda_j\in\Rplusnul\text{ for all }\rho,j\right\}\subset\delta_i.
\end{equation*}

Recall that $X_{k\vee P_i,l}=\emptyset$ for $k\in\Zplusnuln$ with $m(k)>l$. We may therefore restrict the second union of \eqref{disjuncortwee} to $k$ satisfying $m(k)\leqslant l$. Choose $x=(x_1,\ldots,x_n)\in\tau$ with $x_{\rho}>0$ for all $\rho\not\in P_{\tau}$. Then $m(k)=k\cdot x$ for all $k\in\delta_i'\subset\Dtu$, and $\{k\in\delta_i'\mid k\cdot x\leqslant l\}$ is a bounded subset of \Rn, containing finitely many integral points. It follows that the second union of \eqref{disjuncortwee} is actually a finite disjoint union of cylindric\footnote{See Lemma~\ref{motlemmatwee}.} subsets $X_{k\vee P_i,l}$ of $(t\C[[t]])^n$. We conclude that the finite sum
\begin{equation*}
\mu\bigl(X_{\Zplusnulbarn\cap\Dti,l}\bigr)=\sum_{i\in I}\sum_{\substack{k\in\Zn\cap\delta_i',\\m(k)\leqslant l}}\mu(X_{k\vee P_i,l})
\end{equation*}
is well-defined.
\end{proof}

\begin{proof}[Proof of Theorem~\ref{formlocmotzf}]
By definition we have\footnote{Note the difference between $\mathcal{X}_l^0$ (see Definition~\ref{deflocmotzf}) and $X_l^0$.}
\begin{gather*}
\Zmotoft=\LL^{-n}\sum_{l\geqslant1}[\mathcal{X}_l^0](\LL^{-n}T)^l=\sum_{l\geqslant1}\mu(X_l^0)T^l,\\\shortintertext{with}
X_l^0=\{\phi\in(t\C[[t]])^n\mid\ord_tf(\phi)=l\},\qquad l\in\Zplusnul.
\end{gather*}
If $\phi\in X_l^0$, then $\ord_t\phi\in\Zplusnulbarn$ and $m(\ord_t\phi)\leqslant\ord_tf(\phi)=l<\infty$. Further, $\{\Dti\mid\tau\text{ is a compact face of }\Gf\}$ forms a partition of $\{k\in\Rplusnulbarn\mid m(k)<\infty\}$. Hence we may write each $X_l^0$ as the finite disjoint union
\begin{equation*}
X_l^0=\bigsqcup_{\tau}X_{\Zplusnulbarn\cap\Dti,l}=\bigsqcup_{\substack{\tau,\\P_{\tau}=\emptyset}}X_{\Zn\cap\Dtu,l}\sqcup\bigsqcup_{\substack{\tau,\\P_{\tau}\neq\emptyset}}X_{\Zplusnulbarn\cap\Dti,l},
\end{equation*}
where all unions are over compact faces $\tau$ of \Gf, and $P_{\tau}=\{\rho\mid\tau\subset\{x_{\rho}=0\}\}$ as usual. By Corollaries~\ref{motcoroleen} and \ref{motcoroltwee}, all $X_{\Zn\cap\Dtu,l}$ and $X_{\Zplusnulbarn\cap\Dti,l}$ are cylindric subsets of $(t\C[[t]])^n$, which allows us to write $\mu(X_l^0)$ as the finite sum
\begin{equation*}
\mu(X_l^0)=\sum_{\substack{\tau,\\P_{\tau}=\emptyset}}\mu\bigl(X_{\Zn\cap\Dtu,l}\bigr)+\sum_{\substack{\tau,\\P_{\tau}\neq\emptyset}}\mu\bigl(X_{\Zplusnulbarn\cap\Dti,l}\bigr).
\end{equation*}
This leads to
\begin{equation*}
\Zmotoft=\sum_{\substack{\tau,\\P_{\tau}=\emptyset}}\sum_{l\geqslant1}\mu\bigl(X_{\Zn\cap\Dtu,l}\bigr)T^l+\sum_{\substack{\tau,\\P_{\tau}\neq\emptyset}}\sum_{l\geqslant1}\mu\bigl(X_{\Zplusnulbarn\cap\Dti,l}\bigr)T^l.
\end{equation*}

If $\tau$ is not contained in any coordinate hyperplane, then by Corollary~\ref{motcoroleen}, we have
\begin{align}
\sum_{l\geqslant1}\mu\bigl(X_{\Zn\cap\Dtu,l}\bigr)T^l&\notag\\
&\hspace{-1cm}=\sum_{l\geqslant1}\,\sum_{\substack{k\in\Zn\cap\Dtu,\\m(k)\leqslant l}}\mu(X_{k,l})T^l\in\MC[[T]]\label{motexpeen}\\
&\hspace{-1cm}=\sum_{k\in\Zn\cap\Dtu}\,\sum_{l\geqslant m(k)}\mu(X_{k,l})T^l\label{motexptwee}\\
&\hspace{-1cm}=\sum_{k\in\Zn\cap\Dtu}\mu\bigl(X_{k,m(k)}\bigr)T^{m(k)}+\sum_{k\in\Zn\cap\Dtu}\,\sum_{l\geqslant m(k)+1}\mu(X_{k,l})T^l.\notag
\end{align}
Replacing the motivic measures $\mu(\cdot)$ by the expressions found in Lemma~\ref{motlemmaeen}, we obtain
\begin{align*}
\sum_{l\geqslant1}\mu\bigl(X_{\Zn\cap\Dtu,l}\bigr)T^l&=\bigl((\LL-1)^n-[\mathcal{X}_{\tau}]\bigr)\LL^{-n}\sum_{k\in\Zn\cap\Dtu}\LL^{-\sigma(k)}T^{m(k)}\\
&\quad\,+[\mathcal{X}_{\tau}](\LL-1)\LL^{-n}\sum_{k\in\Zn\cap\Dtu}\LL^{-\sigma(k)}\sum_{l\geqslant m(k)+1}\LL^{m(k)-l}T^l,
\end{align*}
and since
\begin{equation*}
\sum_{l\geqslant m(k)+1}\LL^{m(k)-l}T^l=T^{m(k)}\sum_{l\geqslant1}\LL^{-l}T^l=T^{m(k)}\frac{\LL^{-1}T}{1-\LL^{-1}T}\in\MC[[T]],
\end{equation*}
we eventually find
\begin{multline*}
\sum_{l\geqslant1}\mu\bigl(X_{\Zn\cap\Dtu,l}\bigr)T^l\\
=\left(\bigl(1-\LL^{-1}\bigr)^n-\LL^{-n}[\mathcal{X}_{\tau}]\frac{1-T}{1-\LL^{-1}T}\right)\sum_{k\in\Zn\cap\Dtu}\LL^{-\sigma(k)}T^{m(k)}.
\end{multline*}

This last sum, denoted $S(\Dtu)$, can be calculated as follows. First choose a decomposition $\{\delta_i\}_{i\in I}$ of the cone \Dtu\ into simplicial cones $\delta_i$ without introducing new rays. Then $S(\Dtu)=\sum_{i\in I}S(\delta_i)$, whereby
\begin{equation*}
S(\delta_i)=\sum_{k\in\Zn\cap\delta_i}\LL^{-\sigma(k)}T^{m(k)}
\end{equation*}
for all $i\in I$. Next assume that the cone $\delta_i$ is strictly positively spanned by the linearly independent primitive vectors $v_j$, $j\in J_i$, in $\Zplusn\setminus\{0\}$. Then $\Zn\cap\delta_i$ equals the finite disjoint union $\bigsqcup_hh+\sum_{j\in J_i}\Zplus v_j$, where $h$ runs through the elements of
\begin{equation*}
\tilde{H}(v_j)_{j\in J_i}=\Z^n\cap\tilde{\lozenge}(v_j)_{j\in J_i}=\Z^n\cap\left\{\sum\nolimits_{j\in J_i}h_jv_j\;\middle\vert\;h_j\in(0,1]\text{ for all }j\in J_i\right\}.
\end{equation*}
Consequently,
\begin{align*}
S(\delta_i)&=\adjustlimits\sum_{h\in\tilde{H}(v_j)_j}\sum_{(\lambda_j)_j\in\Zplus^{\abs{J_i}}}\LL^{-\sigma\left(h+\sum_j\lambda_jv_j\right)}T^{m\left(h+\sum_j\lambda_jv_j\right)};\\\intertext{then exploiting the linearity\footnotemark\ of $m(\cdot)$ on $\overbar{\Dtu}\supset\delta_i$, we find}
S(\delta_i)&=\sum_{h\in\tilde{H}(v_j)_j}\LL^{-\sigma(h)}T^{m(h)}\prod_{j\in J_i}\sum_{\lambda_j\geqslant0}\left(\LL^{-\sigma(v_j)}T^{m(v_j)}\right)^{\lambda_j}\\
&=\frac{\sum_{h\in\tilde{H}(v_j)_j}\LL^{-\sigma(h)}T^{m(h)}}{\prod_{j\in J_i}\bigl(1-\LL^{-\sigma(v_j)}T^{m(v_j)}\bigr)}\in\MC[[T]].
\end{align*}\footnotetext{Recall that for any $x\in\tau$ we have that $m(k)=k\cdot x$ for all $k\in\overbar{\Dtu}$.}%
Note that since all $m(v_j)$ are positive, we indeed obtain an element of $\MC[[T]]$.

The eventual formula for $\sum_{l\geqslant1}\mu\bigl(X_{\Zn\cap\Dtu,l}\bigr)T^l$ is thus
\begin{equation*}
\left(\bigl(1-\LL^{-1}\bigr)^n-\LL^{-n}[\mathcal{X}_{\tau}]\frac{1-T}{1-\LL^{-1}T}\right)\sum_{i\in I}\frac{\sum_{h\in\tilde{H}(v_j)_{j\in J_i}}\LL^{-\sigma(h)}T^{m(h)}}{\prod_{j\in J_i}\bigl(1-\LL^{-\sigma(v_j)}T^{m(v_j)}\bigr)}\in\MC[[T]],
\end{equation*}
as announced in the theorem. To rigorously prove that \eqref{motexpeen} equals this last expression in $\MC[[T]]$, in particular to defend the change of summation order in going from \eqref{motexpeen} to \eqref{motexptwee}, one compares the coefficients of $T^l$ in both elements and finds twice the same finite sum in $\MC$.

From now suppose that $\tau$ is contained in at least one coordinate hyperplane; i.e., $P_{\tau}\neq\emptyset$. Let \Dtu\ be strictly positively spanned by the primitive vectors $e_{\rho},v_j$; $\rho\in P_{\tau},j\in J_{\tau}\neq\emptyset$; in $\Zplusn\setminus\{0\}$. Choose a decomposition $\{\delta_i\}_{i\in I}$ of \Dtu\ into simplicial cones $\delta_i$ without introducing new rays, and assume that $\delta_i$ is strictly positively spanned by the linearly independent primitive vectors $e_{\rho},v_j$; $\rho\in P_i,j\in J_i$; with $\emptyset\subset P_i\subset P_{\tau}$ and $\emptyset\varsubsetneq J_i\subset J_{\tau}$. Finally, put $\delta_i'=\tilde{\lozenge}(e_{\rho})_{\rho\in P_i}+\cone(v_j)_{j\in J_i}$ as before.

We proceed as in the $P_{\tau}=\emptyset$ case. Corollary~\ref{motcoroltwee} yields
\begin{align*}
\sum_{l\geqslant1}\mu\bigl(X_{\Zplusnulbarn\cap\Dti,l}\bigr)T^l&\\
&\hspace{-2.6cm}=\sum_{l\geqslant1}\sum_{i\in I}\sum_{\substack{k\in\Zn\cap\delta_i',\\m(k)\leqslant l}}\mu(X_{k\vee P_i,l})T^l\in\MC[[T]]\\
&\hspace{-2.6cm}=\sum_{i\in I}\sum_{k\in\Zn\cap\delta_i'}\mu\bigl(X_{k\vee P_i,m(k)}\bigr)T^{m(k)}+\sum_{i\in I}\sum_{k\in\Zn\cap\delta_i'}\,\sum_{l\geqslant m(k)+1}\mu(X_{k\vee P_i,l})T^l.
\end{align*}
Then applying Lemma~\ref{motlemmatwee}, we find
\begin{align*}
&\,\sum_{l\geqslant1}\mu\bigl(X_{\Zplusnulbarn\cap\Dti,l}\bigr)T^l\\
&=\bigl((\LL-1)^{n-\abs{P_{\tau}}}-[\mathcal{X}_{\tau}']\bigr)\LL^{-n}\sum_{i\in I}(\LL-1)^{\abs{P_{\tau}}-\abs{P_i}}\LL^{\abs{P_i}}\sum_{k\in\Zn\cap\delta_i'}\LL^{-\sigma(k)}T^{m(k)}\\
&\hspace{.7cm}+[\mathcal{X}_{\tau}']\LL^{-n}\sum_{i\in I}(\LL-1)^{\abs{P_{\tau}}-\abs{P_i}+1}\LL^{\abs{P_i}}\sum_{k\in\Zn\cap\delta_i'}\LL^{-\sigma(k)}\sum_{l\geqslant m(k)+1}\LL^{m(k)-l}T^l\\
&=\left(\bigl(1-\LL^{-1}\bigr)^{n-\abs{P_{\tau}}}-\LL^{-(n-\abs{P_{\tau}})}[\mathcal{X}_{\tau}']\frac{1-T}{1-\LL^{-1}T}\right)\\
&\hspace{4.87cm}\cdot\sum_{i\in I}\bigl(1-\LL^{-1}\bigr)^{\abs{P_{\tau}}-\abs{P_i}}\sum_{k\in\Zn\cap\delta_i'}\LL^{-\sigma(k)}T^{m(k)}.
\end{align*}
This last double sum, which we denote by $S(\Dtu)'$, can be calculated in the same way as we calculated $S(\Dtu)$ in the $P_{\tau}=\emptyset$ case. We obtain
\begin{equation*}
S(\Dtu)'=\sum_{i\in I}\bigl(1-\LL^{-1}\bigr)^{\abs{P_{\tau}}-\abs{P_i}}\frac{\sum_h\LL^{-\sigma(h)}T^{m(h)}}{\prod_{j\in J_i}\bigl(1-\LL^{-\sigma(v_j)}T^{m(v_j)}\bigr)}\in\MC[[T]],
\end{equation*}
where $h$ runs through the elements of the set
\begin{equation*}
\tilde{H}(e_{\rho},v_j)_{\rho,j}=\Z^n\cap\left\{\sum\nolimits_{\rho\in P_i}h_{\rho}e_{\rho}+\sum\nolimits_{j\in J_i}h_jv_j\;\middle\vert\;h_{\rho},h_j\in(0,1]\text{ for all }\rho,j\right\}.
\end{equation*}
Note again that since all $m(v_j)$ are positive, we indeed find an element of $\MC[[T]]$. This concludes the proof of Theorem~\ref{formlocmotzf}.
\end{proof}

\subsection{A proof of the main theorem in the motivic setting}\label{finsspmtms}
In this final subsection we explain why (and how) Theorem~\ref{mcmotivndss} can be proved in the same way as Theorem~\ref{mcigusandss}. Let us start with a small overview.

Let $f$ be as in Theorem~\ref{mcmotivndss}. By the general rationality result of Denef--Loeser, we know that there exists a finite set $\widetilde{S}\subset\Zplusnul^2$ such that
\begin{equation*}
\Zmotoft\in\MC\left[\frac{\LL^{-\sigma}T^m}{1-\LL^{-\sigma}T^m}\right]_{(m,\sigma)\in\widetilde{S}}\subset\MC[[T]].
\end{equation*}
Our formula for non-degenerated $f$ (Theorem~\ref{formlocmotzf}), on the other hand, yields
\begin{equation*}
\Zmotoft\in\MC[T]\left[\frac{1}{1-\LL^{-\sigma}T^m}\right]_{(m,\sigma)\in S}\subset\MC[[T]],
\end{equation*}
whereby
\begin{multline}\label{defsetS}
S=\{(1,1)\}\cup\{(m(v),\sigma(v))\mid\text{$v$ is the primitive vector associated to a}\\
\text{facet of \Gf\ that is not contained in any coordinate hyperplane}\}\subset\Zplusnul^2.
\end{multline}
Now we want to prove that there exists a subset $S'\subset S$ such that
\begin{equation}\label{tbpfinmotmaintheo}
\Zmotoft\in\MC[T]\left[\frac{1}{1-\LL^{-\sigma}T^m}\right]_{(m,\sigma)\in S'},
\end{equation}
and such that $e^{-2\pi i\sigma/m}$ is an eigenvalue of monodromy (in the sense of the theorem) for each $(m,\sigma)\in S'$.

Let us introduce some notations and terminology. Consider the set $S$ from \eqref{defsetS}, and put $Q=\{\sigma/m\mid(m,\sigma)\in S\}\subset\Qplusnul$. Let $q\in Q$ and let $\tau$ be a facet of \Gf\ that is not contained in any coordinate hyperplane. We say that $\tau$ contributes to $q$ if $\sigma(v)/m(v)=q$, with $v$ the unique primitive vector in $\Zplusn\setminus\{0\}$ perpendicular to $\tau$. We shall call a ratio $q\in Q$ \emph{good} if
\begin{itemize}
\item $q=1$,
\item or $q$ is contributed by a facet of \Gf\ that is not a $B_1$-facet,
\item or $q$ is contributed by two $B_1$-facets of \Gf\ that are \underline{not} $B_1$ for a same variable and that have an edge in common.
\end{itemize}
We shall call $q\in Q$ \emph{bad} if $q$ is not good, i.e., if
\begin{itemize}
\item $q\neq1$;
\item and $q$ is only contributed by $B_1$-facets of \Gf;
\item and for any pair of contributing $B_1$-facets, we have that
\begin{itemize}
\item either they are $B_1$-facets for a same variable,
\item or they have at most one point in common.
\end{itemize}
\end{itemize}
Finally, we shall call a facet $\tau$ of \Gf\ \emph{bad} if it contributes to a bad $q\in Q$. This implies that $\tau$ is a $B_1$-facet.

Let us now define
\begin{equation*}
S'=\{(m,\sigma)\in S\mid\sigma/m\text{ is good}\}\subset S.
\end{equation*}
Then by Theorem~\ref{theoAenL} and Proposition~\ref{propAenL} by Lemahieu and Van Proeyen, we know that $e^{-2\pi i\sigma/m}$ is an eigenvalue of monodromy for each $(m,\sigma)\in S'$. It remains to prove that \eqref{tbpfinmotmaintheo} holds for the $S'$ proposed above.

The formula for \Zmotoft\ in Theorem~\ref{formlocmotzf} associates a term to every compact face $\tau$ of \Gf. If $\tau$ is not contained in any bad facet, then its associated term clearly belongs to
\begin{equation*}
\MC[T]\left[\frac{1}{1-\LL^{-\sigma}T^m}\right]_{(m,\sigma)\in S'}.
\end{equation*}
Hence it suffices to consider the sum of the terms associated to bad $B_1$-simplices or compact subfaces of bad $B_1$-facets. We will refer to this sum as the relevant part of \Zmotoft.

If we look at the formula carefully, we see that it is a rational expression (with integer coefficients) in \LL\ and $T$, except for the presence of $[\mathcal{X}_{\tau}]$ and $[\mathcal{X}_{\tau}']$ in $L_{\tau}$ and $L_{\tau}'$, respectively. Fortunately, for the relevant faces, these classes have a fairly simple form. For any vertex $V$, we have $[\mathcal{X}_V]=[\mathcal{X}_V']=0$. If $[CD]$ is any edge with one vertex in a coordinate hyperplane and the other vertex at distance one of this hyperplane, then $[\mathcal{X}_{[CD]}]=(\LL-1)^2$ if $[CD]$ is not contained in any coordinate hyperplane, and $[\mathcal{X}_{[CD]}']=\LL-1$ otherwise.

Lastly, let $\tau_0$ be a $B_1$-simplex with a base\footnote{If a $B_1$-simplex $\tau_0$ has two vertices $A$ and $B$ in a coordinate hyperplane and one vertex at distance one of this hyperplane, then we shall call $[AB]$ a base of $\tau_0$. A $B_1$-simplex has by definition at least one base, but can have several.} $[AB]$. Then we have the relation
\begin{equation}\label{beensconid}
[\mathcal{X}_{\tau_0}]=(\LL-1)^2-[\mathcal{X}_{[AB]}'].
\end{equation}
Let us write down the contributions of $\tau_0$ and $[AB]$ to \Zmotoft. If we denote by $v_0$ the unique primitive vector in $\Zplusn\setminus\{0\}$ perpendicular to $\tau_0$, then
\begin{align}
&L_{\tau_0}S(\Delta_{\tau_0})+L_{[AB]}'S(\Delta_{[AB]})'\notag\\
&\ \ \;=\left[\bigl(1-\LL^{-1}\bigr)^3-\LL^{-3}\bigl((\LL-1)^2-[\mathcal{X}_{[AB]}']\bigr)\frac{1-T}{1-\LL^{-1}T}\right]\frac{\LL^{-\sigma(v_0)}T^{m(v_0)}}{1-\LL^{-\sigma(v_0)}T^{m(v_0)}}\notag\\
&\qquad\hspace{1.692cm}+\left[\bigl(1-\LL^{-1}\bigr)^2-\LL^{-2}[\mathcal{X}_{[AB]}']\frac{1-T}{1-\LL^{-1}T}\right]\frac{\LL^{-\sigma(v_0)-1}T^{m(v_0)}}{1-\LL^{-\sigma(v_0)}T^{m(v_0)}}\notag\\
&\ \ \;=\frac{\bigl(1-\LL^{-1}\bigr)^3}{1-\LL^{-1}T}\,\frac{\LL^{-\sigma(v_0)}T^{m(v_0)}}{1-\LL^{-\sigma(v_0)}T^{m(v_0)}}.\label{conaftcanc}
\end{align}
Like we observed in the $p$-adic case, Identity~\eqref{beensconid}, together with the fact that $\mult\Delta_{[AB]}=1$, causes the cancellation of $[\mathcal{X}_{[AB]}']$. We shall call \eqref{conaftcanc} the contribution of $\tau_0$ and $[AB]$ to \Zmotoft\ after cancellation.

After these cancellations (one for every bad $B_1$-simplex), the relevant part of \Zmotoft\ is indeed a rational expression in \LL\ and $T$. More precisely, it is an element of the ring
\begin{equation}\label{motringeen}
\Z[\LL,\LL^{-1}][T]\left[\frac{1}{1-\LL^{-\sigma}T^m}\right]_{(m,\sigma)\in S}\subset\MC[T]\left[\frac{1}{1-\LL^{-\sigma}T^m}\right]_{(m,\sigma)\in S},
\end{equation}
whereby $\Z[\LL,\LL^{-1}]\subset\MC$ denotes the smallest subring of \MC\ containing $\Z,\LL$, and $\LL^{-1}$. We can now replace \LL\ by a new indeterminate $S$ and study the relevant part of \Zmotoft\ in the ring
\begin{equation}\label{motringtwee}
\Z[S,S^{-1}][T]\left[\frac{1}{1-S^{-\sigma}T^m}\right]_{(m,\sigma)\in S}\subset\Z[S,S^{-1}](T),
\end{equation}
where $\Z[S,S^{-1}]$ is the ring of formal Laurent polynomials over \Z. The advantage is that the coefficients of $T$ now live in the unique factorization domain $\Z[S,S^{-1}]$. There clearly exists a surjective ring morphism from \eqref{motringtwee} to \eqref{motringeen}; so if we can prove equality in \eqref{motringtwee}, equality in \eqref{motringeen} follows.

The goal is now to prove that the relevant part of \Zmotoft\ (seen as an element in this new ring) also belongs to
\begin{equation*}
\Z[S,S^{-1}][T]\left[\frac{1}{1-S^{-\sigma}T^m}\right]_{(m,\sigma)\in S'}.
\end{equation*}
The advantage of working in a unique factorization domain is that we may now choose a bad $q\in Q$ randomly and restrict ourselves to proving that the relevant part of \Zmotoft\ is an element of
\begin{equation*}
\Z[S,S^{-1}][T]\left[\frac{1}{1-S^{-\sigma}T^m}\right]_{(m,\sigma)\in S\setminus S_q},
\end{equation*}
with $S_q=\{(m,\sigma)\in S\mid\sigma/m=q\}\subset S$ and $S'\subset S\setminus S_q\subset S$. Indeed, if $\sigma_1/m_1\neq\sigma_2/m_2$, then $1-S^{-\sigma_1}T^{m_1}$ and $1-S^{-\sigma_2}T^{m_2}$ have no common irreducible factors in $\Z[S,S^{-1}][T]$.

So from now on, $q$ is a fixed bad ratio in $Q$. We define a \emph{$q$-cluster} as a family $\mathcal{C}$ of (bad $B_1$-) facets contributing to $q$, such that for any two facets $\tau,\tau'\in\mathcal{C}$, there exists a chain $\tau=\tau_0,\tau_1,\ldots,\tau_t=\tau'$ of $B_1$-facets in $\mathcal{C}$ with the property that $\tau_{j-1}$ and $\tau_j$ share an edge for all $j\in\{1,\ldots,t\}$. A \emph{maximal $q$-cluster} is a $q$-cluster that is not contained in a strictly bigger one. Note that every facet contributing to $q$ is contained in precisely one maximal $q$-cluster. Also note that the supports\footnote{By the support of a $q$-cluster we mean the union of its facets.} of two distinct maximal $q$-clusters may share a vertex of \Gf, but never share an edge.

%
Let $V$ be a vertex of \Gf, and let $\tau_j$, $j\in J$, be all the facets of \Gf\ that contain $V$. Denote for each $j\in J$, by $v_j$ the unique primitive vectors in $\Zplusn\setminus\{0\}$ perpendicular to $\tau_j$. Then $\Delta_V$ is the cone strictly positively spanned by the vectors $v_j$, $j\in J$. Let $\{\delta_i\}_{i\in I}$ be a decomposition of $\Delta_V$ into simplicial cones $\delta_i$ without introducing new rays, and assume that $\delta_i$ is strictly positively spanned by the vectors $v_j$, $j\in J_i$. We shall say that a cone $\delta_i$ \emph{meets} a $q$-cluster $\mathcal{C}$ if $\{\tau_j\mid j\in J_i\}\cap\mathcal{C}\neq\emptyset$. We shall call $\{\delta_i\}_{i\in I}$ a \emph{nice} decomposition if every $\delta_i$ meets at most one maximal $q$-cluster. By construction of the maximal $q$-clusters, a nice decomposition of $\Delta_V$ always exists.

Let us now choose a nice decomposition $\{\delta_{V,i}\}_{i\in I_V}$ of $\Delta_V$ for every relevant vertex $V$ of \Gf. The relevant part of \Zmotoft\ contains a term for every such $V$. According to the formula in Theorem~\ref{formlocmotzf}, this term can be split up into terms, one for each simplicial cone $\delta_{V,i}$ in the decomposition of $\Delta_V$. Let $\mathcal{C}$ be a maximal $q$-cluster. We define the part of \Zmotoft\ associated to $\mathcal{C}$ as the sum of the following terms:
\begin{itemize}
\item for each $B_1$-simplex $\tau\in\mathcal{C}$ with chosen base $b_{\tau}$, the contribution of $\tau$ and $b_{\tau}$ to \Zmotoft\ after cancellation;
\item the terms associated to the other compact edges of the $B_1$-facets in $\mathcal{C}$;
\item the terms associated to the simplicial cones $\delta_{V,i}$ that meet $\mathcal{C}$.
\end{itemize}
Note that in this way no term is assigned to more than one maximal $q$-cluster.

It follows that the relevant part of \Zmotoft\ is given by
\begin{equation*}
\sum_{\substack{\text{$\mathcal{C}$ maximal}\\\text{$q$-cluster}}}(\text{part of \Zmotoft\ associated to $\mathcal{C}$})+(\text{sum of remaining terms}).
\end{equation*}
By construction the sum of the remaining terms is certainly an element of
\begin{equation}\label{motringdrie}
\Z[S,S^{-1}][T]\left[\frac{1}{1-S^{-\sigma}T^m}\right]_{(m,\sigma)\in S\setminus S_q}.
\end{equation}
The problem is therefore reduced to proving that for every maximal $q$-cluster $\mathcal{C}$, the part of \Zmotoft\ associated to $\mathcal{C}$ belongs to \eqref{motringdrie}.

A maximal $q$-cluster contains no more than two $B_1$-facets, otherwise $q$ would equal one (see Case~VI). Moreover, two $B_1$-facets belonging to the same maximal $q$-cluster, are always $B_1$ for a same variable, otherwise $q$ would be good. This leaves us five possible configurations of a maximal $q$-cluster $\mathcal{C}$; it consists of
\begin{enumerate}
\item one $B_1$-simplex $\tau_0$,
\item or one non-compact $B_1$-facet $\tau_0$,
\item or two $B_1$-simplices $\tau_0$ and $\tau_1$ for a same variable,
\item or two non-compact $B_1$-facets $\tau_0$ and $\tau_1$ for a same variable,
\item or one non-compact $B_1$-facet $\tau_0$ and one $B_1$-simplex $\tau_1$ for a same variable.
\end{enumerate}
Pictures can be found in Figures~\ref{figcase1}, \ref{figcase2}, \ref{figcase3}, \ref{figcase4}, and \ref{figcase5}, respectively.

In Cases~(i) and (ii), the part of \Zmotoft\ associated to $\mathcal{C}$ has the form
\begin{align*}
\frac{N_1(T)}{(1-S^{-1}T)F_0F_1F_2}&\in\Z[S,S^{-1}][T]\left[\frac{1}{1-S^{-\sigma}T^m}\right]_{(m,\sigma)\in S}\subset\Z[S,S^{-1}](T),\\\intertext{while in Cases~(iii)--(v), it has the form}
\frac{N_2(T)}{(1-S^{-1}T)F_0F_1F_2F_3}&\in\Z[S,S^{-1}][T]\left[\frac{1}{1-S^{-\sigma}T^m}\right]_{(m,\sigma)\in S}\subset\Z[S,S^{-1}](T).
\end{align*}
Hereby $N_1(T)$ and $N_2(T)$ are polynomials in $T$ with coefficients in $\Z[S,S^{-1}]$, and so are
\begin{equation*}
F_j=1-S^{-\sigma_j}T^{m_j}\subset\Z[S,S^{-1}][T];\qquad j=0,\ldots,3.
\end{equation*}

In Cases~(i) and (ii), the factor $F_0$ corresponds to $\tau_0$, while $F_1$ and $F_2$ correspond to neighbor facets\footnote{By a neighbor facet we mean a facet sharing an edge. A factor will appear in the denominator for every neighbor facet that does not lie in a coordinate hyperplane.}\addtocounter{footnote}{-1} of $\tau_0$. It follows that $\sigma_0/m_0=q$ and $\sigma_j/m_j\neq q$ for $j=1,2$. In Cases~(iii)--(v), factors $F_0$ and $F_1$ correspond to $\tau_0$ and $\tau_1$, where $F_2$ and $F_3$ come from neighbor facets\footnotemark\ of $\tau_0$ and $\tau_1$. We have $\sigma_0/m_0=\sigma_1/m_1=q$ and $\sigma_j/m_j\neq q$ for $j=2,3$.

Finally everything boils down to proving that (depending on the case)
\begin{equation}\label{motdivcond}
F_0\mid N_1(T)\qquad\text{or}\qquad F_0F_1\mid N_2(T)
\end{equation}
in the polynomial ring $\Z[S,S^{-1}][T]$. As $F_0$ and $F_1$ are monic polynomials (in the sense that their leading coefficients are units of $\Z[S,S^{-1}]$), the divisibility conditions \eqref{motdivcond} can be investigated equivalently over the fraction field $\Q(S)$ of $\Z[S,S^{-1}]$. Now we can decide divisibility by looking at the roots of $F_0$ and $F_1$ in some algebraic closure of the coefficient field $\Q(S)$. We shall consider the field $\overbar{\Q}\{\{S\}\}$ of formal Puiseux series over the field $\overbar{\Q}$ of algebraic numbers.

The polynomial $F_j=1-S^{-\sigma_j}T^{m_j}$ has $m_j$ distinct roots
\begin{equation*}
T_k^{(j)}=S^{\frac{\sigma_j}{m_j}}e^{\frac{2k\pi i}{m_j}};\qquad k=0,1,\ldots,m_j-1;
\end{equation*}
in $\overbar{\Q}\{\{S\}\}$ for $j=0,1$. Hence $F_0$ divides $N_1(T)$ if and only if $N_1\bigl(T_k^{(0)}\bigr)=0$ in $\overbar{\Q}\{\{S\}\}$ for all $k$. In Cases~(iii)--(v), we may conclude that $F_0F_1\mid N_2(T)$ as soon as $N_2\bigl(T_k^{(j)}\bigr)=0$ for all $k$ and $j=0,1$ \emph{and} $N_2'(T)$ vanishes in all common roots
\begin{equation*}
T_k^{(0,1)}=S^qe^{\frac{2k\pi i}{\gcd(m_0,m_1)}};\qquad k=0,1,\ldots,\gcd(m_0,m_1)-1;
\end{equation*}
of $F_0$ and $F_1$ in $\overbar{\Q}\{\{S\}\}$.

The proof of each of the identities $N_1\bigl(T_k^{(0)}\bigr)=0$, $N_2\bigl(T_k^{(j)}\bigr)=0$, $N_2'\bigl(T_k^{(0,1)}\bigr)=0$ is identical to one of the \lq residue vanishing proofs\rq\ in Cases~I--V. For example, in Case~(iii) of the current proof, the proof of $N_2\bigl(T_k^{(j)}\bigr)=0$ for a simple root $T_k^{(j)}$ of $F_0F_1$, corresponds to the proof of $R_1=0$ in Case~I. For a double root $T_k^{(0,1)}$ of $F_0F_1$, the proofs of $N_2\bigl(T_k^{(0,1)}\bigr)=0$ and $N_2'\bigl(T_k^{(0,1)}\bigr)=0$ are completely analogous to the proofs of $R_2=0$ and $R_1=0$, respectively, in Case~III. This ends the sketch of the proof of the main theorem in the motivic setting.

\bibliography{bartbib}

\providecommand{\bysame}{\leavevmode\hbox to3em{\hrulefill}\thinspace}
\providecommand{\MR}{\relax\ifhmode\unskip\space\fi MR }
\providecommand{\MRhref}[2]{%
  \href{http://www.ams.org/mathscinet-getitem?mr=#1}{#2}
}
\providecommand{\href}[2]{#2}
\begin{thebibliography}{10}

\bibitem{AC75}
N.~A'Campo, \emph{La fonction z\^eta d'une monodromie}, Comment. Math. Helv.
  \textbf{50} (1975), 233--248. \MR{0371889 (51 \#8106)}

\bibitem{Adkins}
W.~A. Adkins and S.~H. Weintraub, \emph{Algebra: An approach via module
  theory}, Graduate Texts in Mathematics, vol. 136, Springer--Verlag, New York,
  1992.

\bibitem{ACLM02}
E.~Artal~Bartolo, P.~Cassou-Nogu{\`e}s, I.~Luengo, and A.~Melle~Hern{\'a}ndez,
  \emph{Monodromy conjecture for some surface singularities}, Ann. Sci. \'Ecole
  Norm. Sup. (4) \textbf{35} (2002), no.~4, 605--640. \MR{1981174
  (2004e:32030)}

\bibitem{ACLM05}
\bysame, \emph{Quasi-ordinary power series and their zeta functions}, Mem.
  Amer. Math. Soc. \textbf{178} (2005), no.~841, vi+85. \MR{2172403
  (2007d:14005)}

\bibitem{BorIZFs}
B.~Bories, \emph{Igusa's $p$-adic local zeta function associated to a
  polynomial mapping and a polynomial integration measure}, Manuscripta Math.
  \textbf{138} (2012), no.~3--4, 395--417. \MR{2916319}

\bibitem{BorTVBN}
\bysame, \emph{Zeta functions and {B}ernstein--{S}ato polynomials for ideals in
  dimension two}, Rev. Mat. Complut. \textbf{26} (2013), no.~2, 753--772.
  \MR{3068618}

\bibitem{BMTmcha}
N.~Budur, M.~Musta{\c{t}}{\u{a}}, and Z.~Teitler, \emph{The monodromy
  con\-jec\-ture for hyperplane arrangements}, Geom. Dedicata \textbf{153}
  (2011), 131--137. \MR{2819667 (2012i:32035)}

\bibitem{CluckersDUKE}
R.~Cluckers, \emph{Igusa and {D}enef-{S}perber conjectures on nondegenerate
  {$p$}-adic exponential sums}, Duke Math. J. \textbf{141} (2008), no.~1,
  205--216. \MR{2372152 (2009b:11138)}

\bibitem{CluckersTAMS}
\bysame, \emph{Exponential sums: questions by {D}enef, {S}perber, and {I}gusa},
  Trans. Amer. Math. Soc. \textbf{362} (2010), no.~7, 3745--3756. \MR{2601607
  (2011g:11155)}

\bibitem{CLmot}
R.~Cluckers and F.~Loeser, \emph{Constructible motivic functions and motivic
  integration}, Invent. Math. \textbf{173} (2008), no.~1, 23--121. \MR{2403394
  (2009g:14018)}

\bibitem{Den84}
J.~Denef, \emph{The rationality of the {P}oincar\'e series associated to the
  {$p$}-adic points on a variety}, Invent. Math. \textbf{77} (1984), no.~1,
  1--23. \MR{751129 (86c:11043)}

\bibitem{Den18}
\bysame, \emph{Poles of {$p$}-adic complex powers and {N}ewton polyhedra},
  Groupe d'\'etude d'analyse ultram\'etrique (1984--1985), no.~17, 1--3.

\bibitem{Den95}
\bysame, \emph{Poles of {$p$}-adic complex powers and {N}ewton polyhedra},
  Nieuw Arch. Wisk. (4) \textbf{13} (1995), no.~3, 289--295. \MR{1378800
  (96m:11106)}

\bibitem{DH01}
J.~Denef and K.~Hoornaert, \emph{Newton polyhedra and {I}gusa's local zeta
  function}, J. Number Theory \textbf{89} (2001), no.~1, 31--64. \MR{1838703
  (2002g:11170)}

\bibitem{DL92}
J.~Denef and F.~Loeser, \emph{Caract\'eristiques d'{E}uler--{P}oincar\'e,
  fonctions z\^eta locales et modifications analytiques}, J. Amer. Math. Soc.
  \textbf{5} (1992), no.~4, 705--720. \MR{1151541 (93g:11118)}

\bibitem{DL98}
\bysame, \emph{Motivic {I}gusa zeta functions}, J. Algebraic Geom. \textbf{7}
  (1998), no.~3, 505--537. \MR{1618144 (99j:14021)}

\bibitem{DLmoteen}
\bysame, \emph{Germs of arcs on singular algebraic varieties and motivic
  integration}, Invent. Math. \textbf{135} (1999), no.~1, 201--232. \MR{1664700
  (99k:14002)}

\bibitem{DLmottwee}
\bysame, \emph{Definable sets, motives and {$p$}-adic integrals}, J. Amer.
  Math. Soc. \textbf{14} (2001), no.~2, 429--469 (electronic). \MR{1815218
  (2002k:14033)}

\bibitem{DLmotdrie}
\bysame, \emph{Geometry on arc spaces of algebraic varieties}, European
  {C}ongress of {M}athematics, {V}ol. {I} ({B}arcelona, 2000), Progr. Math.,
  vol. 201, Birkh\"auser, Basel, 2001, pp.~327--348. \MR{1905328 (2004c:14037)}

\bibitem{DenefSargos92}
J.~Denef and P.~Sargos, \emph{Poly\`edre de {N}ewton et distribution {$f^s_+$}.
  {II}}, Math. Ann. \textbf{293} (1992), no.~2, 193--211. \MR{1166118
  (93f:32037)}

\bibitem{denefsperber}
J.~Denef and S.~Sperber, \emph{Exponential sums mod {$p^n$} and {N}ewton
  polyhedra}, Bull. Belg. Math. Soc. Simon Stevin (2001), no.~suppl., 55--63, A
  tribute to Maurice Boffa. \MR{1900398 (2003b:11080)}

\bibitem{guibert}
G.~Guibert, \emph{Espaces d'arcs et invariants d'{A}lexander}, Comment. Math.
  Helv. \textbf{77} (2002), no.~4, 783--820. \MR{1949114 (2003k:14021)}

\bibitem{Hoo01}
K.~Hoornaert, \emph{Newton polyhedra and the poles of {I}gusa's local zeta
  function}, Bull. Belg. Math. Soc. Simon Stevin \textbf{9} (2002), no.~4,
  589--606. \MR{2016238 (2004g:11109)}

\bibitem{HMY07}
J.~Howald, M.~Musta{\c{t}}{\u{a}}, and C.~Yuen, \emph{On {I}gusa zeta functions
  of monomial ideals}, Proc. Amer. Math. Soc. \textbf{135} (2007), no.~11,
  3425--3433 (electronic). \MR{2336554 (2008j:11177)}

\bibitem{Igu74}
J.-i. Igusa, \emph{Complex powers and asymptotic expansions. {I}. {F}unctions
  of certain types}, J. Reine Angew. Math. \textbf{268/269} (1974), 110--130,
  Collection of articles dedicated to Helmut Hasse on his seventy-fifth
  birthday, II. \MR{0347753 (50 \#254)}

\bibitem{Igu88}
\bysame, \emph{{$b$}-functions and {$p$}-adic integrals}, Algebraic analysis,
  {V}ol.\ {I}, Academic Press, Boston, MA, 1988, pp.~231--241. \MR{992457
  (90i:11140)}

\bibitem{knuth92}
D.~E. Knuth, \emph{Two notes on notation}, Amer. Math. Monthly \textbf{99}
  (1992), no.~5, 403--422. \MR{1163629 (93f:05001)}

\bibitem{LVmcndss}
A.~Lemahieu and L.~Van~Proeyen, \emph{Monodromy conjecture for non\-degenerate
  surface singularities}, Trans. Amer. Math. Soc. \textbf{363} (2011), no.~9,
  4801--4829. \MR{2806692 (2012m:14019)}

\bibitem{LV09}
A.~Lemahieu and W.~Veys, \emph{Zeta functions and monodromy for surfaces that
  are general for a toric idealistic cluster}, Int. Math. Res. Not. IMRN
  (2009), no.~1, Art. ID rnn122, 11--62. \MR{2471295 (2011b:14026)}

\bibitem{Loe88}
F.~Loeser, \emph{Fonctions d'{I}gusa {$p$}-adiques et polyn\^omes de
  {B}ernstein}, Amer. J. Math. \textbf{110} (1988), no.~1, 1--21. \MR{926736
  (89d:11110)}

\bibitem{Loe90}
\bysame, \emph{Fonctions d'{I}gusa {$p$}-adiques, polyn\^omes de {B}ernstein,
  et poly\`edres de {N}ewton}, J. Reine Angew. Math. \textbf{412} (1990),
  75--96. \MR{1079002 (92c:11139)}

\bibitem{LSmot}
F.~Loeser and J.~Sebag, \emph{Motivic integration on smooth rigid varieties and
  invariants of degenerations}, Duke Math. J. \textbf{119} (2003), no.~2,
  315--344. \MR{1997948 (2004g:14026)}

\bibitem{malgrange}
B.~Malgrange, \emph{Polyn\^omes de {B}ernstein--{S}ato et cohomologie
  \'eva\-nes\-cente}, Analysis and topology on singular spaces, {II}, {III}
  ({L}uminy, 1981), Ast\'erisque, vol. 101, Soc. Math. France, Paris, 1983,
  pp.~243--267. \MR{737934 (86f:58148)}

\bibitem{NV10bis}
A.~N{\'e}methi and W.~Veys, \emph{Generalized monodromy conjecture in
  dimen\-sion two}, Geom. Topol. \textbf{16} (2012), no.~1, 155--217.
  \MR{2872581}

\bibitem{nicaisemot}
J.~Nicaise, \emph{An introduction to {$p$}-adic and motivic zeta functions and
  the monodromy conjecture}, Algebraic and analytic aspects of zeta functions
  and {$L$}-functions, MSJ Mem., vol.~21, Math. Soc. Japan, Tokyo, 2010,
  pp.~141--166. \MR{2647606 (2011g:11223)}

\bibitem{Poonen}
B.~Poonen, \emph{The {G}rothendieck ring of varieties is not a domain}, Math.
  Res. Lett. \textbf{9} (2002), no.~4, 493--497. \MR{1928868 (2003g:14010)}

\bibitem{Roc70}
R.~T. Rockafellar, \emph{Convex analysis}, Princeton University Press,
  Princeton, N.J., 1970.

\bibitem{Sebag}
J.~Sebag, \emph{Int\'egration motivique sur les sch\'emas formels}, Bull. Soc.
  Math. France \textbf{132} (2004), no.~1, 1--54. \MR{2075915 (2005e:14017)}

\bibitem{VVmcid2}
L.~Van~Proeyen and W.~Veys, \emph{The monodromy conjecture for zeta functions
  associated to ideals in dimension two}, Ann. Inst. Fourier (Grenoble)
  \textbf{60} (2010), no.~4, 1347--1362. \MR{2722244 (2011m:14036)}

\bibitem{Varchenko}
A.~N. Varchenko, \emph{Zeta-function of monodromy and {N}ewton's diagram},
  Invent. Math. \textbf{37} (1976), no.~3, 253--262. \MR{0424806 (54 \#12764)}

\bibitem{Vey93}
W.~Veys, \emph{Poles of {I}gusa's local zeta function and monodromy}, Bull.
  Soc. Math. France \textbf{121} (1993), no.~4, 545--598. \MR{1254752
  (95b:11110)}

\bibitem{veysmot}
\bysame, \emph{Arc spaces, motivic integration and stringy invariants},
  Singularity theory and its applications, Adv. Stud. Pure Math., vol.~43,
  Math. Soc. Japan, Tokyo, 2006, pp.~529--572. \MR{2325153 (2008g:14023)}

\bibitem{Vey06}
\bysame, \emph{Vanishing of principal value integrals on surfaces}, J. Reine
  Angew. Math. \textbf{598} (2006), 139--158. \MR{2270570 (2007g:11156)}

\end{thebibliography}
\bibliographystyle{amsplain}
\end{document}